\documentclass[12pt,centertags,oneside]{amsart}
\RequirePackage[utf8]{inputenc}
\usepackage{amstext,amsthm,amscd,typearea}
\usepackage{amsmath}
\usepackage{amssymb}
\usepackage{a4wide}
\usepackage{enumitem}
\usepackage[mathscr]{eucal}
\usepackage{mathrsfs}
\usepackage{typearea}
\usepackage{charter}
\usepackage{color}
\usepackage{pdfsync}
\usepackage[a4paper,width=16.2cm,top=3cm,bottom=3cm]{geometry}
\usepackage{appendix}
\usepackage{multicol}
\usepackage[colorlinks=true,   
            linkcolor=cyan,    
            citecolor=red,   
            urlcolor=cyan       
            ]{hyperref}

\numberwithin{equation}{section}

\newtheorem{theorem}{Theorem}[section]
\newtheorem{definition}[theorem]{Definition}
\newtheorem{proposition}[theorem]{Proposition}
\newtheorem{corollary}[theorem]{Corollary}
\newtheorem{lemma}[theorem]{Lemma}
\newtheorem{remark}[theorem]{Remark}
\newtheorem{example}[theorem]{Example}
\newtheorem*{assumption}{Main Assumption (MA)}

\newcommand{\cali}[1]{\mathscr{#1}}

\newcommand{\supp}{{\rm supp}}
\newcommand{\dist}{\mathop{\mathrm{dist}}\nolimits}

\newcommand{\ddc}{{\rm dd^c}}
\renewcommand{\d}{{\rm d}}
\newcommand{\nuinfty}{\nu^{(\infty)}}
\newcommand{\Sinfty}{S^{(\infty)}}
\newcommand{\ddbar}{\partial\bar\partial}
\newcommand{\PSH}{{\rm PSH}}
\newcommand{\DSH}{{\rm DSH}}
\newcommand{\id}{{\rm id}}

\newcommand{\vep}{\varepsilon}
\newcommand{\massump}{\hyperref[assumption]{(MA)}}
\newcommand{\shad}{{\rm shad}}
\newcommand{\Gr}{{\rm Gr}}
\newcommand{\Gl}{{\rm GL}}

\newcommand{\Sc}{\cali{S}}
\newcommand{\B}{\mathbb{B}}
\newcommand{\C}{\mathbb{C}}
\newcommand{\G}{\mathbb{G}}
\newcommand{\bbT}{\mathbb{T}}

\newcommand{\N}{\mathbb{N}}

\newcommand{\R}{\mathbb{R}}
\newcommand{\lp}{\langle}
\newcommand{\rp}{\rangle}
\renewcommand\P{\mathbb{P}}

\title{Equidistribution of saddle periodic points for H\'enon-like maps}

\author{Muhan Luo}
\address{Department of Mathematics,  National University of Singapore - 10, Lower Kent Ridge Road - Singapore 119076}
\email{e0708207@u.nus.edu}

\author{Qi Zhou}
\address{Department of Mathematics,  National University of Singapore - 10, Lower Kent Ridge Road - Singapore 119076}
\email{e1124859@u.nus.edu}

\begin{document}

\begin{abstract}
    We prove that under the natural assumption over the dynamical degrees, the saddle periodic points of a H\'enon-like map in any dimension equidistribute with respect to the equilibrium measure. Our work is a generalization of results of Bedford-Lyubich-Smillie, Dujardin and Dinh-Sibony along with improvements of their techniques. We also investigate some fine properties of Green currents associated with the map. On the pluripotential-theory side, in our non-compact setting, the wedge product of two positive closed currents of complementary bi-degrees can be defined using super-potentials and the density theory. We prove that these two definitions are coherent.  
\end{abstract}

\keywords{H\'enon-like maps, saddle periodic points, equilibrium measure, woven currents, super-potential, density of currents}

\maketitle
\setcounter{tocdepth}{1}
\tableofcontents

\section{Introduction}
The systematic study of the dynamics of (generalized) H\'enon maps was started by Bedford, Lyubich and Smillie. Using the tools from complex geometry and pluripotential theory, they proved in a series of papers \cite{BLS93a,BLS93b,BS91,BS92} that these maps enjoy many good dynamical properties. In particular, there exists a unique measure of maximal entropy which is mixing, hyperbolic and the periodic points equidistributes with respect to it. See also the survey \cite{FS92}.

Generalized H\'enon maps provide rich examples of non-uniformly hyperbolic maps. In order to study a wider range of dynamical systems with some hyperbolicity, Sibony introduced in \cite{Sibony} the notion of {\it regular automorphisms} (or {\it H\'enon-type maps}). These are polynomial automorphisms on $\C^k$ whose indeterminacy sets are disjoint. 
 With this simple condition, most of the properties of generalized H\'enon maps extend successfully to regular automorphisms. See for example \cite{dTh10,DS09,DS16} and the survey \cite{FS}. 

In stead of restricting ourselves to polynomial maps, we can also consider directly holomorphic maps with some weak hyperbolicity in a geometric sense, i.e., some rough contraction and expansion in different directions. They also arise from the restrictions of some global dynamical systems to some open subsets or making small perturbations of such maps. A pioneer work was done by Dujardin \cite{Dujardin04} where he considered {\it H\'enon-like maps} on $\C^2$ introduced by Hubbard and Oberste-Vorth in \cite{HO}. He proved that many of the properties of generalized H\'enon maps are still true in this case. Later, Dinh-Sibony \cite{DS06} initiated the study of H\'enon-like maps in any dimension. They showed the existence of an equilibrium measure $\mu$ which is mixing and of maximal entropy. A subsequent paper of Dinh-Nguyen-Sibony \cite{DNS} proved that $\mu$ is exponential mixing and hyperbolic. 

Whether some other properties of H\'enon maps hold for general H\'enon-like maps remains unclear until the recent release of two papers of Bianchi-Dinh-Rakhimov \cite{BDR23,BDR24}. Amongst other results they obtained, they proved that under a natural condition on dynamical degrees, the Green currents of a H\'enon-like map are woven. See Section \ref{sec:tame} for relevant definitions. In this paper, we address another important and long-standing question regarding H\'enon-like maps: the saddle periodic points are equidistributed with respect to the equilibrium measure $\mu$. 

To state our main theorem, we need to introduce some relevant definitions. We will treat them in Section \ref{sec:revisit} in a more rigorous and systematic way. Let $M$ and $N$ be two bounded convex domains of $\C^p$ and $\C^{k-p}$ respectively for some fixed $1\leq p\leq k-1$. Let $D:=M\times N\subset \C^k$. Then a {\it H\'enon-like map} $f$ is a holomorphic map from a vertical subdomain of $D$ to a horizontal subdomain of $D$. So in some sense, the map is roughly expanding in $p$ directions and contracting in $k-p$ directions. Related to such an $f$, we can define two sequences of positive numbers $\{d_l^+\}_{0\leq l\leq p}$ and $\{d_l^-\}_{0\leq l\leq k-p}$ called the {\it dynamical degrees}. They measure the growth rates of masses of positive closed vertical (or horizontal) currents of various bi-dimensions under the backward (or forward) iterations of $f$. In particular, $d:=d_p^+=d_{k-p}^-$ is an integer called the {\it main dynamical degree}. The topological entropy of $f$ and the entropy of $\mu$ are both equal to $\log d$. Throughout this paper, we make the following assumption for a H\'enon-like map $f$:
\begin{assumption}\label{assumption}
    $d>\max (d_{p-1}^+,d_{k-p-1}^-)$.
\end{assumption}
This is equivalent to $d>1$ when $k=2$, i.e., $f$ has positive entropy. It is easy to see that the family of maps $f$ satisfying \massump\ is open. Small perturbations of a regular automorphism on $\C^k$ also produce H\'enon-like maps satisfying \massump\ in a large enough polydisk. Therefore, this family is very rich and of infinite dimension. Our main theorem is as follows.

\begin{theorem}\label{thm:main}
    Let $f$ be a H\'enon-like map on $D$ satisfying \massump. Let $P_n$ be the set of periodic points of period $n$ of $f$ and let $SP_n\subset P_n$ be the saddle periodic points. Let $Q_n$ be either $P_n$ or $SP_n$. Then 
    \[
        d^{-n}\sum_{z\in Q_n}\delta_z\to \mu
    \]
    as $n$ goes to infinity where $\delta_z$ denotes the Dirac measure at $z$.
\end{theorem}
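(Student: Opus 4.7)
The plan is to adapt the strategy of Bedford-Lyubich-Smillie and Dinh-Sibony to the non-compact H\'enon-like setting, using the super-potential and density machinery built earlier in the paper, together with the woven structure of the Green currents $T^\pm$ established in \cite{BDR23,BDR24}. The standard reformulation lifts the question to the product $D \times D$: the graph $\Gamma_n$ of $f^n$ meets the diagonal $\Delta$ exactly at the periodic points of period $n$, so $d^{-n}\sum_{z\in P_n}\delta_z$ is obtained, with multiplicities, by intersecting $d^{-n}[\Gamma_n]$ with $[\Delta]$ and projecting onto the diagonal. The target identity then splits into two claims: (i) $d^{-n}[\Gamma_n]$ converges, as currents on $D\times D$, to the product current $\pi_1^*T^- \wedge \pi_2^*T^+$; (ii) restricting this product to $\Delta$ yields the equilibrium measure $\mu$.

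For step (i), the \massump\ ensures that $d$ strictly dominates the neighbouring dynamical degrees, so only the main bi-degree component survives in the limit. I would test $d^{-n}[\Gamma_n]$ against forms of the shape $\pi_1^*\alpha \wedge \pi_2^*\beta$ for suitable horizontal and vertical test forms, and use the projection formula to reduce the pairing to $d^{-n}\int_D \alpha\wedge (f^n)^*\beta$, whose behaviour is controlled by the convergences $d^{-n}(f^n)^*\to T^+$ and its dual for $T^-$ already available from \cite{DS06,BDR23,BDR24}. The woven structure of $T^\pm$ then allows one to express the limit as a genuine wedge of a horizontal and a vertical current in $D\times D$, so the tame-current formalism of Section \ref{sec:tame} applies and produces a well-defined positive closed current on the product.

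Step (ii) is where the two wedge-product constructions mentioned in the abstract become essential: the super-potential definition of $T^+ \wedge T^-$ on $D$ must agree with the density-theoretic one, because the restriction of $\pi_1^*T^-\wedge\pi_2^*T^+$ to $\Delta$ naturally appears as a Dinh-Sibony tangent/density object, while $\mu$ itself is defined via super-potentials. The coherence theorem proved earlier in the paper is designed precisely to bridge these two constructions, identifying the limit measure on $\Delta$ with $\mu$. Combined with a soft upper bound $\#P_n \leq d^n(1+o(1))$ that drops out of the same mass-convergence argument, this gives the equidistribution statement for $Q_n=P_n$.

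Finally, to replace $P_n$ by $SP_n$, I would show $\#(P_n\setminus SP_n)=o(d^n)$. Since $\mu$ is hyperbolic and exponentially mixing by Dinh-Nguyen-Sibony \cite{DNS}, a Pesin-type argument combined with uniform Lyapunov estimates at periodic points rules out a positive-density subsequence of non-saddle periodic points contributing to the limit. The main obstacle I anticipate is step (i): controlling the mass of $d^{-n}[\Gamma_n]$ near $\partial(D\times D)$ is delicate in this non-compact regime, since $\Gamma_n$ is vertical-like only factor-wise and the cross-term slicing must be matched carefully with the tame structure. It is here that \massump\ and the woven property of $T^\pm$ have to be exploited quantitatively rather than merely qualitatively.
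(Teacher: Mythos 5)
Your split into (i) convergence of $d^{-n}[\Gamma_n]$ to a product-type Green current and (ii) identification of its restriction to $\Delta$ with $\mu$ is the right framing, and the role you assign to the super-potential/density coherence theorem for step (ii) is close to how it is actually deployed. But the proposal has a genuine gap at the very step you treat as automatic: even granting (i) and (ii), it does \emph{not} follow that $d^{-n}[\Gamma_n]\wedge[\Delta]$ converges to $\big(\lim d^{-n}[\Gamma_n]\big)\wedge[\Delta]$, because the wedge product of a sequence of positive closed currents with a fixed current is not continuous with respect to weak convergence. This is exactly the identity \eqref{commu-intersec} that the paper singles out as the central difficulty — it can fail if $\Gamma_n$ has too much tangency with $\Delta$, and nothing in your argument excludes that. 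Controlling boundary mass, which you flag as the main obstacle, is comparatively soft; the hard work is in controlling directions.

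Closing this gap is what drives most of the technical machinery of the paper. One lifts $\Gamma_n$ to $\widehat{\Gamma}_n\subset\Gr(D\times D,k)$, proves that any limit of $d^{-n}[\widehat{\Gamma}_n]$ is a tame lift $\widehat{\mathbb T}^+$ of $\mathbb T^+$ (Section~\ref{sec:tame}), identifies $\widehat{\mathbb T}^+\curlywedge\Pi^*[\Delta]$ with the lift $\widehat{\mu}^\Delta$ of $\mu$ by the Oseledec splitting (Section~\ref{sec:oseledec} plus Proposition~\ref{prop:intersec-mu-hat}), and deduces that the density of the ``tangency locus'' current $\widetilde{\mathbb T}^+$ along $\widehat{\Delta}$ vanishes (Corollary~\ref{tangent-step-2}). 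This vanishing is then made quantitative by a Riemann--Hurwitz/Sibony--Wong graph-counting argument (Theorem~\ref{dynadesigraphs} and Corollary~\ref{cor:lotsgraphs}): for any $\delta>0$ and $n$ large, at least $(1-3\delta)d^n$ of the $d^n$ sheets of the ramified covering $\pi|_{\Gamma_n}$ over a fixed neighbourhood of $0$ are genuine graphs uniformly transverse to $\Delta$, which is what legitimizes passing to the limit in the intersection. Your saddle-point reduction is also underspecified: rather than a generic Pesin argument, the paper bounds $\lambda_{s,n}^\pm d^{-n}$ exponentially using \massump\ and uses Cauchy estimates on the individual inverse graphs to conclude that all but $O(\delta)$ of them carry a saddle fixed point (Corollary~\ref{saddlegraphs}). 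Without the graph structure, there is no clean place to run that estimate.
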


Here is our strategy of proof. Let $\Gamma_n$ be the graph of $f^n$ in $D\times D$ and $\Delta$ be the diagonal in $D\times D$. The proof is based on the observation that the set of periodic points of period $n$ can be identified with the intersection of $\Gamma_n$ with $\Delta$. On the other hand, we can consider the map $F=(f,f^{-1})$ which is still a H\'enon-like map on $D\times D$. Then $\Gamma_n$ is the pullback of $\Delta$ under some iterations of $F$. When $n$ goes to infinity, $d^{-n}[\Gamma_n]$ converges to the Green current of $F$ whose intersection with $\Delta$ is exactly the measure $\mu$. Therefore, it suffices to prove the following equality: 
 \begin{equation}\label{commu-intersec}
    \lim_{n\to\infty}d^{-n}[\Gamma_n]\wedge [\Delta]=(\lim_{n\to\infty}d^{-n}[\Gamma_n]) \wedge [\Delta].    
 \end{equation} 
 In the general setting of currents, this fails.  A reason is that there can be too much of tangent vectors of $\Gamma_n$ which are close to the tangent vectors of $\Delta$. Using density theory of currents, we quantify this property and show it is negligible when $n\to \infty$. This is done by lifting $\Gamma_n$ to $\widehat{\Gamma}_n$ in the Grassmannian bundle $\Gr(D\times D,k)$ where the tangent directions to $\Gamma_n$ can be easily observed. Denote by $\Pi:\Gr(D\times D,k)\to D\times D$ the projection map. The intersection of $\widehat{\Gamma}_n$ with $\Pi^{-1}(\Delta)$ tells the directions of $\Gamma_n$ when intersecting with $\Delta$. Therefore, we study the limit currents of $d^{-n}[\widehat{\Gamma}_n]$ and their intersections with $\Pi^{-1}(\Delta)$.
 
 Next, we consider the projection $\pi$ from $D\times D$ to $\C^k$ which maps $(x,y)$ to $z=y-x$. This is the projection along the directions of 
 $\Delta$. We show that this is a ramified covering of degree $d^n$ when restricted on $\Gamma_n$ near the diagonal. The tangency between $\Gamma_n$ and $\Delta$ reflects the ramification of $\pi|_{\Gamma_n}$ near the diagonal. Since it is negligible, $\pi|_{\Gamma_n}$ is almost a covering of degree $d^n$ near the diagonal when $n$ is large enough. This is enough to finish the proof.
 
 Theorem \ref{thm:main} is shown by Dinh-Sibony \cite{DS16} for regular automorphisms. Compared with their work, there are several difficulties that require extra effort to handle. In the case of regular automorphisms, many key estimates are deduced from the action of $f$ over the Hodge cohomology classes. When $f$ is a H\'enon-like map, this is not applicable since we are restricted to an open subset $D\subset\C^k$. On the other hand, the Green currents of $f$ are no longer wedge products of positive closed $(1,1)$-currents and are not extremal any more. All these cause many challenges in the study of H\'enon-like maps in higher dimension, see also \cite{BDR23,BDR24}. We need to develop and apply a different intersection theory. Some other techniques also need to be improved. 

The previous discussion motivates us to study the lifts of submanifolds to the Grassmannian bundle. Let $\Sigma$ be a horizontal submanifold of dimension $p$ in $D$ and $\widehat{\Sigma}$ be its canonical lift to $\Gr(D,p)=D\times \G(p,k)$, where $\G(p,k)$ denotes the Grassmannian manifold consisting of all $p$-dimensional linear subspaces of $\C^k$. Let $\widehat{f}$ be the canonical lift of $f$ and consider the iteration $\widehat{\Sigma}_n:=\widehat{f}^n(\widehat{\Sigma})$. By \cite{DNS,DS06}, $d^{-n}[f^n(\Sigma)]$ converges to a positive closed woven current $T^-$, called the {\it Green current} of $f$. In Section \ref{sec:tame}, we strengthen this result and prove that any limit of $d^{-n}[\widehat{\Sigma}_n]$ is a lift of $T^-$. The proof uses a result of de Th\'elin. Our idea is to consider the lifts of $[\Sigma_n]$ to the Grassmannian bundle for twice and bound their masses by $d^n$. It follows that any limit of $d^{-n}[\widehat{\Sigma}_n]$ is a lift of $T^-$ plus a possible positive closed current whose projection to $D$ is 0. To eliminate the latter one, we consider the {\it shadow} of it to $D$, which is a positive closed horizontal current of bi-dimension less than $p$. Then we take iteration of this current and apply \massump.

 For positive closed currents of higher bi-degrees, two intersection theories were established by Dinh-Sibony: the one given by {\it super-potentials} introduced in \cite{DS09} as generalizations of local potentials and the other coming from density theory in \cite{DS18}. One could ask whether these two theories agree with each other. This property is needed in our study. On a compact K\"ahler manifold, this is studied and confirmed in \cite{DNV}. For super-potentials defined in our non-compact setting, this is unanswered to the best of our knowledge. This question is of independent interest from the point of view of pluripotential theory. We refer the readers to \cite{DS18,HLV,KV,Nguyen,Vu} for the relations between density theory and other intersection theories. Green currents of a H\'enon-like map satisfying \massump\ are known to have continuous super-potentials, see \cite{BDR24,DNS}. This question is also important for our proof since we use these two notions of intersection simultaneously. In Section \ref{sec:sp}, we give it an affirmative answer to this question.

 In Section \ref{sec:oseledec}, we give a characterization of the {\it unstable Oseledec measure} denoted by $\mu_-$. It is a lift of $\mu$ to the Grassmannian bundle $\Gr(D,p)$. We show that for any positive smooth $(k-p+m,k-p+m)$-form $\widehat{\gamma}$ on $\Gr(D,p)$ whose support is not too bad, $d^{-n}(\widehat{f}^n)_*(\widehat{\gamma})\wedge\pi_D^*(T^+)$ converges to $\mu_-$ when $n$ goes to infinity. Although a similar result is shown in \cite{DS16} for regular automorphisms, their proof is not applicable to our case. We first reduce the problem to the case when $\widehat{\gamma}$ is of the form $\alpha_0\wedge\Omega_\G$ where $\alpha_0$ is cohomologous to $T^-$ and $\Omega_\G$ is a volume form on $\G(p,k)$. Then suppose $\nu$ is a limit value of the sequence, we prove that its backward iteration $(\widehat{f}^l)^*\nu$ converges to $\mu_-$. Our proof is based on an integration of Pesin theory with pluripotential theory. This allows us to show a more general result than the one in \cite{DS16}.

 We complete the proof of our main theorem in Section \ref{sec:main} following the strategy discussed previously. To keep the paper not too long, we focus on our main difficulties and most well-known arguments are sent to the references. Nevertheless, for reader's convenience, we include in the appendix brief proofs of two important results used in our paper. The interested readers are suggested to refer to the original proofs mentioned there.


\section{H\'enon-like maps revisited}\label{sec:revisit}

 In this section, we recall some basic definitions and properties related to H\'enon-like maps. Additionally, we prove several results which will be useful later but are not mentioned elsewhere.

 \subsection{Preliminaries on H\'enon-like maps}
 
 Let $M\subset \C^p$ and $N\subset \C^{k-p}$ be two bounded convex open sets. Consider the bounded convex domain $D= M\times N \subset \mathbb C^p \times \mathbb C^{k-p}$. Let $\partial_v D:=\partial M\times \overline{N}$ (resp. $\partial_h D:= \overline{M}\times \partial N$)  and we call it the \textit{vertical} (resp. \textit{horizontal}) boundary of $D$. We denote by $\pi_1$ and $\pi_2$
the first and the second projections of $D\times D$ on its factors respectively. 

\begin{definition}\rm\label{defi:HLM}
A \emph{H\'enon-like map} $f$ on $D$
is a holomorphic map whose graph $\Gamma\subset D\times D$ satisfies the following properties:
\begin{enumerate}
    \item $\Gamma$ is a submanifold of $D\times D$ of pure dimension $k$ (not necessarily connected);
    \item $\pi_1|_\Gamma$ and $\pi_2|_\Gamma$ are injective;
    \item $\overline \Gamma$ does not intersect ${\partial_v D} \times \overline D$ and
    $\overline D \times  {\partial_h D}$.
 \end{enumerate}
\end{definition}

Let $\pi_M$ and $\pi_N$ be the canonical projections from $D$ to $M$ and $N$ respectively. A subset $B\subset D$ is said to be {\it horizontal} (resp. {\it vertical}) if $\pi_N(B)\Subset N$ (resp. $\pi_M(B)\Subset M$). Define $D_v:=\pi_1(\Gamma)$ and $D_h:=\pi_2(\Gamma)$ which are the domain and image of $f$. Then they are vertical and horizontal subsets of $D$ respectively. By definition, we can find bounded convex domains $M''\Subset M' \Subset M$ and $N''\Subset N'\Subset N$ such that 
\begin{equation}\label{eq:M''}
    f^{-1}(D)\subset M''\times N \quad \text{and} \quad f(D)\subset M\times N''.
\end{equation}
 Explicitly, if $f$ is a H\'enon-like map, $f$ and $f^{-1}$ are given by 
\[
    f(x)=\pi_2(\pi_1^{-1}(x)\cap\Gamma) \qquad\text{ and }\qquad f^{-1}(x)=\pi_1(\pi_2^{-1}(x)\cap\Gamma).
\]
Let $\tau:(x,y)\mapsto (y,x)$ be the involution map on $D\times D$. Then the graph of $f^{-1}$ is given by $\tau(\Gamma)$ and $f^{-1}$ is also a H\'enon-like map. We can iterate $f$ and consider its dynamics. Notice that both two sequences $\{f^{-n}(D)\}$ and $\{f^n(D)\}$ are decreasing, so we have $f^{-n}(D)\subset M''\times N$ and $f^n(D)\subset M\times N''$ for all $n\geq 1$. 

We say a current $T$ is {\it horizontal} (resp. {\it vertical}) on 
$D$ if its support is horizontal (resp. vertical). The cone of all positive closed horizontal currents of bi-degree $(k-p,k-p)$ on $D$ is denoted by $\mathscr C_h(D)$. For any $T\in\mathscr C_h(D)$, recall from \cite{DS06} that we can define the {\it slice measure} $\lp T,\pi_1,a\rp$ for every $a\in M$ whose mass is independent of $a$ and is denoted by $\|T\|_h$. Explicitly, the {\it slice mass} is given by $\|T\|_h:=\lp (\pi_M)_*(T),\Omega_M\rp$ for any smooth probability measure $\Omega_M$ with compact support in $M$. The subset of $\mathscr C_h(D)$ with the extra condition that the slice mass is 1 is denoted by $\mathscr C_h^1(D)$. Similarly, we can define the sets $\mathscr C_v(D),\mathscr C_v^1(D)$ and the slice mass $\|\cdot\|_v$. Let $T\in\mathscr C_v(D)$ and $S\in\mathscr C_h(D)$. The intersection $T\wedge S$ is a well-defined positive measure of mass $\|T\|_v\|S\|_h$ with support in $\supp(T)\cap\supp(S)$, see \cite{DS06}.

\begin{example}\rm\label{example:ver-strict-posi}
    We construct some vertical positive closed currents that will be useful later. We use the coordinates $(x_1,x_2)$ for points in $D=M\times N$. Denote by $\omega_M$ and $\omega_D$ the standard K\"ahler forms on $M$ and $D$ respectively. 

    Let $\rho:\C^p\to [0,1]$ be a smooth cut-off function which is supported in $M'$ and equals to 1 in $M''$. Define $\Omega_M(x_1)=\rho\cdot\omega_M^p$. Then $\pi_M^*(\Omega_M)$ is a positive closed vertical $(p,p)$-current on $M'\times N$. Notice that $\pi_M^*(\Omega_M)$ is not strictly positive since it does not contain the $N$-directions. In order to have strict positivity on at least a vertical set, we need to do some perturbation on the direction of pullback.

    Let $ A$ be a $p\times (k-p)$ matrix. We define the projection $\pi_{M, A}:D\to \C^p$ by $\pi_{M, A}(x_1,x_2):=x_1+ A\cdot x_2$. Then when the entries of $ A$ are sufficiently small, $\pi_{M,A}^*(\Omega_M)$ is still positive closed and supported in $M'\times N$. Moreover, we can find a finite sum $\sum c_i\pi^*_{M, A_i}(\Omega_M)$ for some $c_i>0$ and $ A_i$ small enough such that on $M''\times N$, we have $\sum c_i\pi^*_{M, A_i}(\Omega_M)\geq \omega_D^p$. By wedging with $\omega_D$, we can obtain positive closed vertical smooth forms of higher bi-degrees which are strictly positive on $M''\times N$.
\end{example}

\begin{lemma}\label{masscontolhoriz}
Let $T$ be a positive closed horizontal current of bi-dimension $(s,s)$ in $M\times N'$. Then there exist positive constants $C$ and $r$, both independent of $T$, such that for any $A$ with $\|A \|<r$ we have
$$
 \| (\pi_{M,A})_{*}(T)\|_{M''} \leq C \| (\pi_{M})_{*}(T)\|_{M'}.
$$ 
In fact, $C$ and $r$ only depend on the choice of $M'',M'$ and $N'$.  
\end{lemma}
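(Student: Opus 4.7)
The plan is to bound $\|(\pi_{M,A})_*(T)\|_{M''}$ by the total mass of $T$ on a suitable compact subset of $D$, and then use a horizontal mass estimate adapted from Example \ref{example:ver-strict-posi} to control this by $\|(\pi_M)_*(T)\|_{M'}$. First I would fix a smooth cutoff $\chi\colon\C^p\to[0,1]$ with $\chi\equiv 1$ on $M''$ and $\supp\chi\Subset M'$. Positivity of $T$ gives
\[
    \|(\pi_{M,A})_*(T)\|_{M''}\leq \lp T,\pi_{M,A}^*(\chi\,\omega_{\C^p}^s)\rp.
\]
Since $T$ is horizontal with $\supp T\subset M\times\overline{N'}$, for $(x_1,x_2)\in\supp T$ the condition $x_1+Ax_2\in\supp\chi$ forces $x_1\in\supp\chi-A\overline{N'}$, which for $\|A\|<r$ small lies in some compact $K_1\Subset M'$. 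On $\overline{K_1\times N'}$ the coefficients of $\pi_{M,A}^*(\omega_{\C^p}^s)$ are bounded uniformly for $\|A\|<r$, so this form is dominated pointwise by $C_1\omega_D^s$. Combining these yields $\|(\pi_{M,A})_*(T)\|_{M''}\leq C_1\|T\|_{\overline{K_1\times N'}}$, with $C_1$ depending only on $M''$, $M'$ and $N'$.

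It remains to bound $\|T\|_{\overline{K_1\times N'}}$ by $\|(\pi_M)_*(T)\|_{M'}$. I would adapt Example \ref{example:ver-strict-posi} to $(s,s)$-forms: for any prescribed $\epsilon>0$ there exist matrices $B_j$ with $\|B_j\|<\epsilon$ and constants $c_j>0$ so that $\sum_j c_j\pi_{M,B_j}^*(\omega_{\C^p}^s)\geq\omega_D^s$ on $\overline{K_1\times N'}$, because the kernels of the maps $\pi_{M,B_j}$ in the tangent space can be arranged to have trivial common intersection even when the $B_j$ are arbitrarily small. This bounds $\|T\|_{\overline{K_1\times N'}}$ by a finite sum of pairings $\lp T,\pi_{M,B_j}^*(\chi'\omega_{\C^p}^s)\rp$ for a slightly enlarged cutoff $\chi'$. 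A Bott--Chern-type identity gives $\pi_{M,B_j}^*(\omega_{\C^p}^s)-\pi_M^*(\omega_{\C^p}^s)=dd^c\Psi_{B_j}$ with $\Psi_{B_j}$ smooth and of size $O(\|B_j\|)$ on compact sets; combining this with the closedness of $T$ and Stokes' theorem compares each such pairing to $\lp T,\pi_M^*(\chi''\omega_{\C^p}^s)\rp\leq C\|(\pi_M)_*(T)\|_{M'}$ up to an error of order $O(\|B_j\|)\|T\|_{\overline{K_1\times N'}}$.

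The main obstacle is the absorption step: the error terms involve the very quantity $\|T\|_{\overline{K_1\times N'}}$ we aim to bound. By choosing $\epsilon$ small enough, the factor $1-O(\epsilon)$ multiplying $\|T\|_{\overline{K_1\times N'}}$ remains bounded away from zero, so the error can be absorbed into the left-hand side; choosing $r$ accordingly then yields $\|T\|_{\overline{K_1\times N'}}\leq C_2\|(\pi_M)_*(T)\|_{M'}$ with $C_2$ depending only on $M''$, $M'$ and $N'$. Combined with the first step this proves the lemma, with $C=C_1C_2$.
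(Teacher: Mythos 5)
Your strategy is genuinely different from the paper's, and unfortunately it has a gap that cannot be fixed by tuning the parameters. The paper does not pass through a bound on the full mass $\|T\|_{\overline{K_1\times N'}}$ at all. Instead it introduces the local potentials $u=\|x_1\|^2$, $u_A=\|x_1+Ax_2\|^2$ and glues them into a single continuous psh function $v_A=\max\{u_A,(1+C_1)(u-4\vep^2)\}$, chosen so that $v_A$ agrees with $u_A$ on a smaller ball times $N'$ and with a fixed multiple of $u$ outside a larger ball times $N'$. This turns the comparison into a single Stokes identity with an explicit constant $(1+C_1)^s$ and no error terms, so no absorption is needed.

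The gap in your proposal is in the absorption step, and it is structural. You adapt Example \ref{example:ver-strict-posi} to produce matrices $B_j$ with $\|B_j\|<\epsilon$ and constants $c_j>0$ with $\sum_j c_j\,\pi_{M,B_j}^*(\omega_{\C^p}^s)\geq\omega_D^s$ on $\overline{K_1\times N'}$, and you use that the potential $\Psi_{B_j}$ in the Bott--Chern identity is $O(\|B_j\|)$ to get an error of size $\sum_j c_j\,O(\|B_j\|)\cdot\|T\|_{\overline{K_1\times N'}}$. But the constants $c_j$ cannot be taken bounded as $\epsilon\to 0$: they must blow up. To see this, test the positivity $\sum_j c_j\,\pi_{M,B_j}^*(\omega_{\C^p}^s)\geq\omega_D^s$ on an $s$-plane $P$ contained in the vertical subspace $\{x_1=0\}$ (which exists when $s\leq k-p$, and a ``maximally vertical'' plane works otherwise). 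On such $P$ the differential of $\pi_{M,B_j}$ is just $v\mapsto B_jv$, so $\pi_{M,B_j}^*(\omega_{\C^p}^s)|_P\leq\|B_j\|^{2s'}\,\omega_D^s|_P$ with $s'=\min(s,k-p)\geq 1$, which forces
\[
    \sum_j c_j\,\|B_j\|^{2s'}\geq 1,\qquad\text{hence}\qquad \sum_j c_j\gtrsim\epsilon^{-2s'}\geq\epsilon^{-2}.
\]
Consequently the total error is $\sum_j c_j\,O(\|B_j\|)\gtrsim\epsilon^{-2s'}\cdot\epsilon=\epsilon^{1-2s'}\geq\epsilon^{-1}$, which diverges as $\epsilon\to 0$ rather than becoming small. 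So the factor multiplying $\|T\|_{\overline{K_1\times N'}}$ is not $1-O(\epsilon)$ but $1-O(\epsilon^{-1})$, and the absorption fails for every choice of $\epsilon$. Intuitively, as the $B_j$ shrink, all the kernels of $\pi_{M,B_j}$ converge to the vertical subspace, so the positivity in the vertical directions degenerates at exactly the rate that the Bott--Chern gain improves; the two effects cancel unfavourably. This is precisely the difficulty that the paper's max-of-psh-functions construction is designed to avoid: it compares $(\ddc u_A)^s$ to a fixed multiple of $(\ddc u)^s$ directly, without ever producing a term proportional to the unknown mass $\|T\|_{\overline{K_1\times N'}}$.
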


\begin{proof}
    We can always cover $M$ and $N$ by finitely many open balls and thus assume $M''$, $M'$, $M$, $N'$ and $N$ are all open balls centered at the origin point. In particular, we can take $M''=\B_p(0,\vep)$ and $M'=\B_p(0,4\vep)$ for some $\vep>0$.
    
     Let $r$ be a positive constant such that all projections $\pi_{M,A}$ to $M'$ is well-defined when $\|A \|<r$. Let $u=\|x_1\|^2$ and $u_{A}=\|x_1+A\cdot x_2\|^2$ which are smooth positive plurisubharmonic (psh) functions on $D$. It is immediate to see that $\pi_{M}^{*}(\omega_M)=\ddc u$ and $\pi_{M,A}^{*}(\omega_M)=\ddc u_A$. 
     Consider the continuous psh function given by
     $$
         v_A:=\max \{ u_A, (1+C_1)(u-4\vep^2) \}.
     $$ 
     We can choose appropriate positive constant $C_1$ such that $v_A=u_A$ inside $\B_p(0,2\vep)\times N'$ and $v_A=(1+C_1)(u-4\vep^2)$ outside $\B_p(0,3\vep)\times N'$. In fact, by direct calculation, we can take $C_1=C_0\frac{\vep r+r^2}{\vep^2}$ for some constant $C_0>0$ only depending on $N'$. 
    Let $\chi$ be a cut-off function which equals to $1$ inside $\B_p(0,2\vep)$ and is 0 outside $\B_p(0,3\vep)$. By Stokes' formula, we have 
     \begin{align*}
         \|(\pi_{M,A})_{*}(T) \|_{M''}
         & = \int_{\B_p(0,\vep)\times N'} T\wedge (\ddc u_A)^s \\
         &\leq \int \pi_M^{*}( \chi) T\wedge (\ddc v_A)^s =(1+C_1)^s \int \pi_M^*( \chi) T\wedge (\ddc u)^s\\
         &\leq (1+C_1)^s \int_{\B_p(0,4\vep)\times N'} T\wedge (\ddc u)^s = (1+C_1)^s\| (\pi_{M})_{*}(T) \|_{M'}.
   \end{align*} 
   Take $C=(1+C_1)^s$ and $r$ as before, and we finish the proof.
\end{proof}

\begin{definition}\label{defi:degrees}\rm
    For each $0\leq l\leq p$, {\it the dynamical degree }$d_l^+$ is defined by
    \[
        d_l^+:=\limsup_{n\to\infty}\left\{\sup_T\|(f^n)_*T\|_{M'\times N}\right\}^{1/n}
    \]
    where $T$ runs over all positive closed horizontal currents of bi-dimension $(l,l)$ of mass 1 on $D'$.

    Similarly, for each $0\leq l\leq k-p$, {\it the dynamical degree }$d_l^-$ is defined by
    \[
        d_l^-:=\limsup_{n\to\infty}\left\{\sup_S\|(f^n)^*S\|_{M\times N'}\right\}^{1/n}
    \]
    where $S$ runs over all positive closed vertical currents of bi-dimension $(l,l)$ of mass 1 on $D'$. 
\end{definition}
 
The dynamical degrees defined above are independent of the choice of $M'$ and $N'$ as long as $f^{-1}(D)\subset M'\times N$ and $f(D)\subset M\times N'$. Moreover, $d_p^+=d_{k-p}^-:=d$ is an integer called {\it the main dynamical degree}. It is also characterized by the property that for any $T\in\mathscr{C}_h^1(D)$, $f_*(T)$ has slice mass $d$. By \cite[Theorem 1.1]{BDR23}, \massump\ implies that $d$ is strictly larger than all the other dynamical degrees. 

\begin{remark}\rm\label{remark:mass-hori}
     Lemma \ref{masscontolhoriz} and Example \ref{example:ver-strict-posi} together imply that in the definition of $d_l^+$, we may replace $\|(f^n)_*T\|_{M'\times N}$ by $\|(\pi_M\circ f^n)_*T\|_{M'}$ without changing their values. The same is true for $d^-_l$.
\end{remark}

\begin{example}\label{exam:product}\rm
    Let $f_1$ and $f_2$ be two H\'enon-like maps on $D_1=M_1\times N_1$ and $D_2=M_2\times N_2$ respectively. Suppose their main dynamical degrees are $d_1$ and $d_2$. Up to a permutation of coordinates, we can identify $D_1\times D_2$ with $(M_1\times M_2)\times (N_1\times N_2)$. Then it is easy to prove that the product map $f_1\times f_2$ is a H\'enon-like map on $D_1\times D_2$ whose main dynamical degree is $d_1d_2$. Suppose the Green currents associated to $f_i$ are $T_i^\pm$. Then the Green currents of $f_1\times f_2$ are $\mathbb T^+:=T_1^+\otimes T_2^+$ and $\mathbb T^-:=T_1^-\otimes T_2^-$. We will prove in Section \ref{sec:main} that if both $f_1$ and $f_2$ satisfy \massump, so does $f_1\times f_2$. 
\end{example}

\subsection{Convergence to the Green currents}

Recall that the {\it Green current} $T^+\in\mathscr C_v^1(D)$ is the unique positive closed vertical current of slice mass 1 such that $f^*T^+=dT^+$ and $T^-$ is defined similarly. It is well-known that when $f$ satisfies \massump, $d^{-n}(f^n)^*(T)$ converges to $T^+$ uniformly on $T\in\mathscr{C}_v^1(D)$. See for example \cite{DNS}. We prove an equidistribution result for non-closed smooth forms. 

\begin{proposition}\label{prop:non-closed-T^+}
    Suppose $f$ satisfies \massump. Let $\alpha$ be a positive but not necessarily closed smooth $(p,p)$-form with vertical support on $D$. Assume that $\lp T^-,\alpha\rp=1$. Then $d^{-n}(f^n)^*(\alpha)$ converges weakly to $T^+$.
\end{proposition}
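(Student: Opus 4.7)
My plan is to extract a weak cluster point $\tau$ of the sequence $s_n := d^{-n}(f^n)^*\alpha$, show that $\tau$ lies in $\mathscr{C}_v^1(D)$, and identify $\tau$ with $T^+$ through a backward-extraction argument that feeds into the known uniform convergence on $\mathscr{C}_v^1(D)$. First, I would use Example \ref{example:ver-strict-posi} to build a smooth closed positive vertical $(p,p)$-form $\alpha_0$ of slice mass one satisfying $\alpha_0\ge\omega_D^p$ on a vertical neighborhood of $\supp\alpha$; by smoothness this gives $\alpha\le C\alpha_0$ pointwise. Since the closed case is known, $d^{-n}(f^n)^*\alpha_0\to T^+$, and so the family $\{s_n\}$ has uniformly bounded mass on compact subsets. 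Compactness then produces a subsequence $n_k\to\infty$ with $s_{n_k}\to\tau$ weakly, where $\tau$ is automatically positive with vertical support in $\overline{M''}\times N$.

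Next, I would show $\tau\in\mathscr{C}_v^1(D)$. For the slice mass, pair $s_{n_k}$ against the closed horizontal smooth $(k-p,k-p)$-form $\pi_N^*\Omega_N$ of slice mass one: the dual convergence $d^{-n}(f^n)_*(\pi_N^*\Omega_N)\to T^-$ (symmetric to the closed vertical case) and the adjointness of pushforward/pullback give
\[
    \|\tau\|_v = \lim_k \langle s_{n_k},\pi_N^*\Omega_N\rangle = \int\alpha\wedge T^- = 1.
\]
For closedness, I need $d^{-n}(f^n)^*(d\alpha)\to 0$ weakly. Writing $d\alpha=\partial\alpha+\bar\partial\alpha$, the smooth positive vertical $(p+1,p+1)$-form $i\,\partial\alpha\wedge\overline{\partial\alpha}$ is dominated pointwise by $C'\alpha_0\wedge\omega_D$, which is smooth closed positive vertical of bi-dimension $(k-p-1,k-p-1)$. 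Its pullback mass grows at most like $(d_{k-p-1}^-)^n$, so a Cauchy--Schwarz passage from $L^2$ to $L^1$ yields
\[
    d^{-n}\|(f^n)^*\partial\alpha\|_{L^1_{loc}} = O\bigl((d_{k-p-1}^-/d^2)^{n/2}\bigr)\to 0
\]
by \massump. The analogous bound for $\bar\partial\alpha$ gives $d\tau=0$.

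Finally, I would identify $\tau$ with $T^+$ by backward extraction. For each $m\ge 0$, pass to a further subsequence along which $s_{n_{k_j}-m}\to\tau^{(-m)}$; the previous paragraph again places $\tau^{(-m)}\in\mathscr{C}_v^1(D)$. Continuity of $d^{-m}(f^m)^*$ on order-zero currents yields $\tau=d^{-m}(f^m)^*\tau^{(-m)}$. For any smooth test form $\Psi$ compactly supported in $D$, the uniform convergence $d^{-m}(f^m)^*T\to T^+$ on $\mathscr{C}_v^1(D)$ gives $\bigl|\langle d^{-m}(f^m)^*\tau^{(-m)}-T^+,\Psi\rangle\bigr|\to 0$ as $m\to\infty$, uniformly in $\tau^{(-m)}\in\mathscr{C}_v^1(D)$. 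The left-hand side equals $\bigl|\langle\tau-T^+,\Psi\rangle\bigr|$, which does not depend on $m$, so $\langle\tau,\Psi\rangle=\langle T^+,\Psi\rangle$ for every $\Psi$; hence $\tau=T^+$ and the whole sequence converges weakly to $T^+$.

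The hard part will be the closedness estimate: dominating $i\,\partial\alpha\wedge\overline{\partial\alpha}$ by a closed positive vertical $(p+1,p+1)$-form and converting the resulting $L^2$-type pullback bound into an $L^1$-bound on $\partial\alpha$ requires careful use of \massump. The regime $2p+1>k$, where $\partial\alpha\wedge\overline{\partial\alpha}$ vanishes for dimensional reasons, demands a Hodge-star variant of the $L^2$ identity, but the strict inequality $d_{k-p-1}^-<d$ still forces the estimate to zero.
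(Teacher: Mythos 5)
Your overall plan -- extract a cluster point $\tau$, show $\tau\in\mathscr{C}_v^1(D)$, then run a backward-extraction argument into the uniform convergence $d^{-m}(f^m)^*T\to T^+$ on $\mathscr{C}_v^1(D)$ -- matches the paper's proof, and your slice-mass computation and the final identification step are both sound.

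However, your closedness step has a genuine degree error. The form $i\,\partial\alpha\wedge\overline{\partial\alpha}$ has bi-degree $(2p+1,2p+1)$, not $(p+1,p+1)$: $\partial\alpha$ is a $(p+1,p)$-form and $\overline{\partial\alpha}=\bar\partial\alpha$ is a $(p,p+1)$-form, so their wedge has degree $2(2p+1)$. Your own caveat about the regime $2p+1>k$ tacitly concedes this, yet the claimed pointwise domination $i\,\partial\alpha\wedge\overline{\partial\alpha}\le C'\alpha_0\wedge\omega_D$ compares a $(2p+1,2p+1)$-form with a $(p+1,p+1)$-form and cannot hold as written. Consequently the ``$L^2$ to $L^1$'' passage does not have a well-defined starting point (and when $2p+1>k$ the object you propose to integrate vanishes identically, which is a sign that this quantity cannot be the right one). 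The bound you state also carries an extraneous factor of $d$ in the denominator, but that is minor next to the degree mismatch.

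The underlying idea -- show $d^{-n}(f^n)^*(\partial\alpha)\to 0$ weakly by Cauchy--Schwarz against a mass that grows like $(d_{k-p-1}^-)^n$ -- is essentially what the paper does, but the correct implementation needs the stronger reduction that the paper performs at the outset. Using Example~\ref{example:ver-strict-posi} one writes $\alpha$, up to finite linear combination, in the form $\alpha=\chi\alpha_0$ with $\alpha_0$ a \emph{closed} positive vertical $(p,p)$-form and $\chi$ a smooth cut-off. Then $\partial\alpha=\partial\chi\wedge\alpha_0$, so $(f^n)^*(\partial\alpha)=\partial(\chi\circ f^n)\wedge(f^n)^*\alpha_0$ with $(f^n)^*\alpha_0$ positive closed, and the Cauchy--Schwarz inequality for positive $(p,p)$-currents pairs this against $i\,\partial\chi\wedge\bar\partial\chi\wedge\alpha_0$ -- a genuine positive $(p+1,p+1)$-form dominated by $C\,\omega_D\wedge\alpha_0$. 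This produces the estimate
\[
    \bigl|\langle d^{-n}(f^n)^*\partial\alpha,\,\gamma\rangle\bigr|^2 \ \lesssim\ d^{-2n}\,(d_{k-p-1}^-)^n\,d^n = \bigl(d_{k-p-1}^-/d\bigr)^n\to 0
\]
under \massump. Your reduction only gives the coarser pointwise bound $\alpha\le C\alpha_0$, which controls the mass of $d^{-n}(f^n)^*\alpha$ but gives no handle on $\partial\alpha$; the $\alpha=\chi\alpha_0$ decomposition is therefore not optional.

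One genuine point in your favor: you aim directly at $d^{-n}(f^n)^*(\d\alpha)\to 0$, whereas the paper first proves $\ddc T_0=0$ and then detours through $T_0\otimes T_0$ on $D\times D$ and the de Th\'elin--Dinh tensor-product identity to deduce $\partial T_0=0$. Once the Cauchy--Schwarz estimate above is in hand, the direct route you envisage is available and is cleaner. But the degree error and the missing $\alpha=\chi\alpha_0$ reduction are real gaps that must be repaired before that shortcut can be claimed.
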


\begin{proof}
    Let $\Omega_{M, A}:=\pi_{M,A}^*(\Omega_M)$ be as constructed in Example \ref{example:ver-strict-posi} such that they are strictly positive in a neighbourhood of $\supp(\alpha)$. Then, we can choose a finite number of matrices $A_i$ such that for any point $x$ in a neighbourhood of $\supp(\alpha)$, the set $\{\Omega_{M,A_i}(x)\}$ is a base of the real cone of positive forms in $\bigwedge^pT_x^*\mathbb C^k$. Therefore $\alpha$ can be written as a linear combination of $\chi_i\Omega_{M,A_i}$ with some positive smooth function $\chi_i$. We can multiply some constant to $\Omega_{M,A_i}$ so that $0\leq \chi_i\leq 1$. By linearity, it suffices to prove the result for $\alpha=\chi\alpha_0$ for some positive closed vertical $(p,p)$-form $\alpha_0$ and smooth function $\chi:D\to [0,1]$.
    
    Since $\alpha\leq \alpha_0$, the mass of $d^{-n}(f^n)^*(\alpha)$ is bounded independent of $n$. Let $T_0$ be a limit value of $d^{-n}(f^n)^*(\alpha)$ and suppose $T_0=\lim_{i\to\infty}d^{-n_i}(f^{n_i})^*(\alpha)$. Then $\ddc T_0$ is a limit value of $d^{-n}(f^n)^*(\ddc\alpha)$. Let $f_0:=(f,f)$. This is still a H\'enon-like map on $D\times D$ and we have $T_0\otimes T_0=\lim_{i\to\infty} d^{-2n_i}(f_0^{n_i})^*(\alpha\otimes\alpha)$. 
    \begin{lemma}
        Both $T_0$ and $T_0\otimes T_0$ are $\ddc$-closed.
    \end{lemma}
    \begin{proof}[Proof of the lemma]
        By choosing a positive closed vertical $(p+1,p+1)$-form which is strictly positive on the support of $\alpha$, we can write $\ddc\alpha$ as a difference of two positive closed vertical $(p+1,p+1)$-forms. Then \massump\ implies the mass of $d^{-n}(f^n)^*(\ddc\alpha)$ converges to 0. Therefore $T_0$ is $\ddc$-closed. To prove the second result, notice that
        \[
            \ddc (\alpha\otimes \alpha)=(\ddc \alpha)\otimes \alpha+ \alpha\otimes (\ddc \alpha)+\frac{\sqrt{-1}}{\pi}(\partial \alpha\otimes \bar{\partial}\alpha-\bar{\partial}\alpha\otimes \partial \alpha).
        \]
        After applying $(f_0^n)^*$ on both sides and taking limit, it suffices to prove that $d^{-n}(f_0^n)^*(\partial \alpha\otimes \bar{\partial}\alpha)$ goes to 0 when $n$ tends to infinity. Recall that $\alpha=\chi\alpha_0$. Therefore $d^{-2n}(f_0^n)^*(\partial \alpha\otimes \bar{\partial}\alpha)=d^{-n}(f^n)^*(\partial\chi\wedge\alpha_0)\otimes d^{-n}(f^n)^*(\bar{\partial}\chi\wedge\alpha_0)$. An application of Cauchy-Schwarz inequality shows that both $d^{-n}(f^n)^*(\partial\chi\wedge\alpha_0)$ and $d^{-n}(f^n)^*(\bar{\partial}\chi\wedge\alpha_0)$ converge weakly to zero as $n\to\infty$.
    \end{proof}
    Next we proceed to prove that $T_0$ is also $\d$-closed. We apply the method in \cite{dTh-Dinh}. Notice that 
    \[
        \ddc (T_0\otimes T_0)=(\ddc T_0)\otimes T_0+ T_0\otimes (\ddc T_0)+\frac{\sqrt{-1}}{\pi}(\partial T_0\otimes \bar{\partial}T_0-\bar{\partial}T_0\otimes \partial T_0).
    \]
    By the previous lemma, we deduce that 
    \[
        \partial T_0\otimes \bar{\partial}T_0=\bar{\partial}T_0\otimes \partial T_0.
    \]
    Notice that the bi-degree on the left hand side is $(p+1,p)$ over the first coordinate and $(p,p+1)$ over the second. But on the right hand side the bi-degree over the first coordinate is $(p,p+1)$. Hence we must have $\partial T_0\otimes \bar{\partial}T_0=\bar{\partial}T_0\otimes \partial T_0=0$. Let $\beta$ be any smooth $(k-p-1,k-p)$-form on $D$ with compact support. Then since $T_0$ is real, we have
    \[
        |\lp \partial T_0,\beta\rp|^2=|\lp \partial T_0\otimes \bar{\partial}T_0, \beta\otimes \bar{\beta}\rp|=0.
    \]
    This implies $T_0$ is in fact $\d$-closed. 

    Notice that the above results are true for all limit values of $d^{-n}(f^n)^*(\alpha)$. For any $l\in\mathbb N$, suppose $T_l$ is a limit value of $d^{-n_i+l}(f^{n_i-l})^*(\alpha)$. Then we have $T_0=d^{-l}(f^l)^*(T_l)$ for all $l\in\mathbb N$. Let $\beta$ be a positive closed horizontal smooth $(k-p,k-p)$-form of slice mass 1. Then the slice mass of $T_l$ can be computed by 
    \[
       \|T_l\|_v=\lp T_l,\beta\rp=\lim_{i\to\infty}\lp d^{-n_i+l}(f^{n_i-l})_*\beta,\alpha\rp=\lp T^-,\alpha\rp=1.
    \]
    Therefore $T_l\in\mathscr{C}_v^1(D)$ for all $l\in\N$. We can apply uniform convergence on $\mathscr{C}_v^1(D)$ (see \cite[Theorem 4.6]{DNS})  and deduce that $T_0=\lim_{l\to\infty} d^{-l}(f^l)^*(T_l)=T^+$.
\end{proof}

\subsection{Super-potentials of positive closed vertical currents}
In the rest of the paper, we denote by $D':=M'\times N'$. Horizontal and vertical currents on $D'$ are defined similarly as on $D$.

Let $\PSH_h(D')$ be the set of horizontal currents $\Phi$ on $D'$ of bi-dimension $(p,p)$ such that $\ddc\Phi\geq 0$. We denote by $\DSH_h(D')$ the real vector space spanned by $\PSH_h(D')$. For any $\Phi\in\DSH_h(D')$, we define
\[
    \|\ddc\Phi\|_*:=\inf\{\|\Psi_1\|_{D'}+\|\Psi_2\|_{D'}\}
\]
where the infimum is taken over all $(k-p+1,k-p+1)$-currents $\Psi_1 $, $ \Psi_2$ which are positive closed horizontal in $D'$ and $\ddc\Phi=\Psi_1-\Psi_2$. 

The topology on $\DSH_h(D')$ is given by the following: a sequence $\{\Phi_n\}$ converges to $\Phi$ in $\DSH_h(D')$ if $\Phi_n\to\Phi$ weakly and $\|\ddc\Phi_n\|_*$ is bounded from above by some constant independent of $n$. 

The action of a vertical $(p,p)$-current $T$ on the smooth elements of $\DSH_h(D')$ is always well-defined. The meaning of having {\it continuous super-potentials} is that such action can be extended continuously to the whole space. 

\begin{definition}\rm
    Let $T$ be a positive closed $(p,p)$-current with vertical support in $M''\times N$. We say $T$ has {\it continuous super-potentials} if $T$ can be extended to a continuous linear functional on $\DSH_h(D')$ with respect to the topology introduced above.
\end{definition}

We can define continuous super-potentials for positive closed $(k-p,k-p)$-currents with horizontal support similarly. It is immediate to see that smooth positive closed $(p,p)$-forms with vertical support in $M''\times N$ always have continuous super-potentials. 

Let $T$ be a positive closed vertical $(p,p)$-current in $M''\times N$ and $S$ be a positive closed horizontal $(k-p,k-p)$-current in $M\times N''$. Assume that $T$ has continuous super-potentials. Then the intersection $T\wedge S$ via super-potential theory is a well-defined positive measure and coincides with the positive measure given in the discussion before Example \ref{example:ver-strict-posi}. 

\begin{theorem}[\cite{BDR24}, Theorem 4.1]\label{thm:Green-contin-sp}
    Suppose $f$ satisfies \massump, then the Green currents $T^\pm$ of $f$ have continuous super-potentials.
\end{theorem}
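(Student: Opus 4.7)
The plan is to prove continuity of $T^+$ as a linear functional on $\DSH_h(D')$ by approximating $T^+$ with smooth forms $T_n := d^{-n}(f^n)^*\alpha$ and showing that the sequence of functionals $\Phi \mapsto \langle T_n, \Phi\rangle$ is Cauchy uniformly on $\{\|\ddc\Phi\|_* \leq 1\}$. The continuous extension of $T^+$ to $\DSH_h(D')$ then follows from density of smooth elements. The statement for $T^-$ is handled symmetrically with $d_{k-p-1}^-$ replacing $d_{p-1}^+$.

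I would first pick a smooth positive closed vertical $(p,p)$-form $\alpha$ of slice mass $1$ supported in $M''\times N$, constructed as in Example \ref{example:ver-strict-posi}. By the uniform convergence recalled before Proposition \ref{prop:non-closed-T^+}, $T_n\to T^+$. The telescoping identity
\[
    T_{n+m} - T_n \;=\; \sum_{k=n}^{n+m-1} d^{-k}(f^k)^*(T_1 - \alpha)
\]
reduces everything to the smooth closed $(p,p)$-form $T_1-\alpha = d^{-1}f^*\alpha - \alpha$, which is supported in $M''\times N$. Since $D$ is convex (hence Stein), I invoke a support-controlled $\ddc$-lemma on the vertical set $M''\times N$ to produce a smooth $(p-1,p-1)$-form $W$ with $\ddc W = T_1 - \alpha$, supported in $M'\times N$ and uniformly bounded on $D'$.

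Granting such $W$, for a smooth $\Phi\in\DSH_h(D')$ with $\|\ddc\Phi\|_*\leq 1$, I decompose $\ddc\Phi = \Psi_1 - \Psi_2$ into positive closed horizontal $(k-p+1,k-p+1)$-currents of mass at most $2$. Stokes' theorem applied term by term gives
\[
    \langle T_{n+m}-T_n,\,\Phi\rangle \;=\; \sum_{k=n}^{n+m-1} d^{-k}\,\langle W,\,(f^k)_*(\Psi_1-\Psi_2)\rangle.
\]
Because $W$ has vertical support while $(f^k)_*\Psi_i$ is horizontal, the intersection of their supports is relatively compact in $D$, so each pairing is well-defined and bounded by $\|W\|_\infty\cdot\|(f^k)_*\Psi_i\|_{M'\times N}$. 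By Definition \ref{defi:degrees} together with Remark \ref{remark:mass-hori}, for any $\varepsilon>0$ and all large $k$ we have $\|(f^k)_*\Psi_i\|_{M'\times N}\leq C(d_{p-1}^+ + \varepsilon)^k$. Choosing $\varepsilon$ with $r := (d_{p-1}^+ + \varepsilon)/d < 1$, which is allowed by \massump, the sum is dominated by $4C\|W\|_\infty\, r^n/(1-r)$, which tends to $0$ as $n\to\infty$ uniformly in $m$ and in $\Phi$. Hence $\langle T_n,\Phi\rangle$ is uniformly Cauchy, and passing to the limit defines a continuous linear functional on $\DSH_h(D')$ extending the action of $T^+$ on smooth elements.

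The main obstacle is the construction of the quasi-potential $W$ with vertical support and uniform boundedness on $D'$. The naive application of the $\ddc$-lemma on $D$ produces only an unbounded solution with no support control; since $(f^k)_*\Psi_i$ need not have compact support in $D$, the pairing $\langle W,(f^k)_*\Psi_i\rangle$ could then diverge near $\partial M$. The remedy exploits that $T_1-\alpha$ has vertical support inside $M''\times N$ and vanishing slice mass, reducing the problem to a $\bar\partial$-equation on the convex base $M$ with compactly-supported data, whose solution can be taken bounded and supported in $M'$ by standard Stein-theoretic arguments. This analytic step is the technical heart of the proof and is carried out in detail in \cite{BDR24}; here I record only the structural skeleton.
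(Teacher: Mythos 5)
This theorem is not proved in the paper at all: it is cited from \cite{BDR24}, Theorem 4.1, so there is no internal proof to compare against. What I can do is assess your proposal on its own merits.

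The skeleton you set up — approximate $T^+$ by $T_n=d^{-n}(f^n)^*\alpha$, telescope $T_{n+m}-T_n$ onto the single fixed form $T_1-\alpha$, integrate $\ddc$ by parts to trade it for a quasi-potential $W$, and then bound the tail geometrically using $d_{p-1}^+<d$ — is the right mechanism, and your closing remark that uniform Cauchy-ness on $\{\|\ddc\Phi\|_*\le 1\}$ yields the continuous extension in the topology of Section 2.3 is correct (test $\langle T_m,\Phi_n-\Phi\rangle$ in the middle and use that $T_m$ is smooth).

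The genuine gap is the quasi-potential $W$. Your argument requires simultaneously: (i) $\ddc W=T_1-\alpha$, (ii) vertical support in $M'\times N$, and (iii) an $L^\infty$ bound, because your key estimate is $|\langle W,(f^k)_*\Psi_i\rangle|\le \|W\|_\infty\,\|(f^k)_*\Psi_i\|_{M'\times N}$ against a positive closed \emph{current} $\Psi_i$ that achieves (or nearly achieves) the infimum in $\|\ddc\Phi\|_*$. Requirements (ii) and (iii) together are not ``standard Stein-theoretic.'' First, the $\ddc$-equation for a $(p,p)$-form with $p\ge 2$ is a coupled system, not a single $\bar\partial$-equation on the base $M$, and the reduction you sketch is not justified. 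Second, even in the model case $p=1$, a function solving $\ddc W=\beta$ for compactly supported $\beta$ with zero total mass is a logarithmic potential: it decays but is never compactly supported in the base direction (a nonzero pluriharmonic function cannot vanish on an open set), so the vertical-support requirement forces $W\equiv 0$ unless $\beta\equiv 0$. The quasi-potentials actually available in this theory are $L^1$, not $L^\infty$: Lemma \ref{lemma:convolution-T} is built from $\phi_\vep$ carrying $\log\|x\|$-type singularities, and the structure theorem \cite[Theorem~3.6]{BDR24}, invoked in the proof of Lemma \ref{lemma:bounded-mass-S}, also only delivers negative $L^1$-forms with mass control. With $W$ merely $L^1$, the pairing $\langle W,(f^k)_*\Psi_i\rangle$ against a general positive closed current is not even defined, and your chain of inequalities does not close. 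So this is not a technical detail you can safely delegate to \cite{BDR24}; it is precisely the place where their argument must be doing something different from an $L^\infty$/mass duality — most likely an $L^1$/$\DSH$ duality in which the pairing is transferred back onto the quasi-potential of $(f^k)_*\Psi_i$ rather than onto $W$ itself — and your sketch would have to be restructured accordingly.
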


We define the equilibrium measure $\mu:=T^+\wedge T^-$. When $f$ satisfies \massump, $\mu$ is exponential mixing, hyperbolic and of maximal entropy, see \cite{BDR24,DNS,DS06}. As a corollary of the above theorem, we can prove a further result under the condition of Proposition \ref{prop:non-closed-T^+}.

\begin{proposition}\label{prop:non-closed-mu}
    Let $f$ and $\alpha$ be as in Proposition \ref{prop:non-closed-T^+}. Then $d^{-n}(f^n)^*(\alpha)\wedge T^-$ converges weakly to $\mu$. 
\end{proposition}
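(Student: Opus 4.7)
The plan is to upgrade Proposition \ref{prop:non-closed-T^+} from weak convergence to convergence in the DSH topology, and then test against smooth functions using the continuous super-potentials of $T^-$ provided by Theorem \ref{thm:Green-contin-sp}. As in the proof of Proposition \ref{prop:non-closed-T^+}, I reduce by linearity to $\alpha = \chi\alpha_0$ with $\alpha_0$ a smooth positive closed vertical $(p,p)$-form and $\chi:D\to[0,1]$ smooth, and set $\Phi_n := d^{-n}(f^n)^*(\alpha)$. Writing $\ddc\alpha = \beta^+ - \beta^-$ as a difference of positive closed vertical $(p+1,p+1)$-forms and applying \massump, the masses $\|d^{-n}(f^n)^*\beta^\pm\|$ vanish, so $\|\ddc\Phi_n\|_*\to 0$. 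Combined with the weak convergence $\Phi_n\to T^+$ from Proposition \ref{prop:non-closed-T^+}, this yields $\Phi_n\to T^+$ in the DSH topology of $\DSH_v(D')$.

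To extract measure convergence, I fix a smooth test function $\varphi$ with compact support in $D'$. Since $\langle \Phi_n\wedge T^-,\varphi\rangle = \langle T^-, \varphi\Phi_n\rangle$, it is enough to show $\varphi\Phi_n\to\varphi T^+$ in $\DSH_v(D')$; continuity of the super-potentials of $T^-$ will then give
\[
\langle T^-, \varphi\Phi_n\rangle \to \langle T^-, \varphi T^+\rangle = \langle T^+\wedge T^-, \varphi\rangle = \langle\mu,\varphi\rangle,
\]
which is the desired assertion. Weak convergence of $\varphi\Phi_n$ to $\varphi T^+$ is immediate. For the DSH norm, the Leibniz rule expands
\[
\ddc(\varphi\Phi_n) = \ddc\varphi\wedge\Phi_n + \varphi\ddc\Phi_n + \frac{i}{\pi}\bigl(\partial\varphi\wedge\bar\partial\Phi_n - \bar\partial\varphi\wedge\partial\Phi_n\bigr),
\]
where the first term has mass $\leq C\|\Phi_n\|$, the second tends to $0$ with $\|\ddc\Phi_n\|_*$, and the mixed terms need a uniform mass bound on $\partial\Phi_n$ and $\bar\partial\Phi_n$ on compact subsets of $D'$.

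The main obstacle is promoting the weak convergence $\partial\Phi_n,\bar\partial\Phi_n\to 0$ from Proposition \ref{prop:non-closed-T^+} into actual mass estimates. This follows from the same Cauchy-Schwarz scheme used there: after inserting the appropriate $i$-factor, $\partial\Phi_n\otimes\bar\partial\Phi_n = d^{-2n}(f_0^n)^*(\partial\alpha\otimes\bar\partial\alpha)$ is a positive current on $D\times D$ whose local mass controls $\|\partial\Phi_n\|^2$ up to constants, and the argument of Proposition \ref{prop:non-closed-T^+} shows this mass tends to zero. Consequently $\|\ddc(\varphi\Phi_n)\|_*$ stays uniformly bounded, giving the desired DSH convergence. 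A secondary technicality is realizing $\varphi\Phi_n$ and $\varphi T^+$ as genuine elements of $\DSH_v(D')$, which is handled by splitting $\varphi = \varphi^+ - \varphi^-$ into nonnegative smooth parts and treating each piece separately, combined with the fact that $T^+$ and $\Phi_n$ are positive with vertical support in $M''\times N$.
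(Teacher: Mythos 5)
Your overall framework matches the paper's: reduce to $\alpha = \chi\alpha_0$, write $\langle d^{-n}(f^n)^*(\alpha)\wedge T^-,\varphi\rangle = \langle T^-,\varphi\Phi_n\rangle$, invoke Theorem \ref{thm:Green-contin-sp}, and hence reduce to bounding $\|\ddc(\varphi\Phi_n)\|_*$. The Leibniz expansion is also the same. However, the treatment of the mixed terms $\partial\varphi\wedge\bar\partial\Phi_n$ and $\bar\partial\varphi\wedge\partial\Phi_n$ has a genuine gap, for two related reasons.

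First, the assertion that $\partial\Phi_n\otimes\bar\partial\Phi_n$ is a positive current on $D\times D$ ``after inserting the appropriate $i$-factor'' is false. Already for a real function $u$, the $(1,1)$-form $\sqrt{-1}\,\partial u\otimes\bar\partial u$ (or the real $(1,1)$-form obtained by symmetrizing) has an associated Hermitian matrix of the block form $\begin{pmatrix} 0 & a\bar b^T \\ b\bar a^T & 0 \end{pmatrix}$, which is never positive semi-definite unless it vanishes; the eigenvalues come in $\pm\lambda$ pairs. What the proof of Proposition \ref{prop:non-closed-T^+} actually establishes is only the \emph{weak} convergence of $d^{-n}(f^n)^*(\partial\chi\wedge\alpha_0)$ to zero by testing against fixed forms via Cauchy--Schwarz; it does not assert positivity of the tensor product, and you cannot extract a mass bound on $\partial\Phi_n$ this way.

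Second, even granting a uniform mass bound on $\partial\Phi_n$ and $\bar\partial\Phi_n$, this would not by itself bound $\|\ddc(\varphi\Phi_n)\|_*$. Recall that $\|\cdot\|_*$ is defined as the infimum of $\|\Psi_1\|+\|\Psi_2\|$ over decompositions $\ddc(\varphi\Phi_n)=\Psi_1-\Psi_2$ with $\Psi_i$ positive \emph{closed} vertical $(p+1,p+1)$-currents. A mass bound on the individual Leibniz terms does not produce such a decomposition, because none of those terms is closed on its own. The paper's actual mechanism is to apply a \emph{pointwise} Cauchy--Schwarz inequality of the form $|\sqrt{-1}\,\bar a\wedge b\wedge\beta|\leq\tfrac12(\sqrt{-1}\,a\wedge\bar a\wedge\beta+\sqrt{-1}\,b\wedge\bar b\wedge\beta)$ for $(1,0)$-forms $a,b$ and positive $\beta$, with $a=\partial\varphi$, $b=(f^n)^*(\partial\chi)$, $\beta=(f^n)^*(\alpha_0)$. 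The resulting dominating currents $\omega_D\wedge(f^n)^*(\alpha_0)$ and $(f^n)^*(\omega_D\wedge\alpha_0)$ are not just positive but \emph{closed}, which is what makes the choice $S_n=2cd^{-n}\big(\omega_D\wedge(f^n)^*(\alpha_0)+(f^n)^*(\omega_D\wedge\alpha_0)\big)$ usable: since $\ddc(\varphi\Phi_n)$ is automatically closed, one can take $\Psi_1=\ddc(\varphi\Phi_n)+S_n$, $\Psi_2=S_n$, both positive closed of controlled mass, with $\|S_n\|_{D'}=O(1)$ by \massump. This explicit construction of a positive closed dominating current is the step your proposal is missing.
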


\begin{proof}
     Let $\varphi$ be a smooth test function with compact support on $D$. We may assume $0\leq \varphi\leq 1$. Then 
    \[
        \lp d^{-n}(f^n)^*(\alpha)\wedge T^-,\varphi\rp=\lp T^-,  \varphi\cdot d^{-n}(f^n)^*(\alpha)\rp.
    \]
    By Proposition \ref{prop:non-closed-T^+}, $\varphi\cdot d^{-n}(f^n)^*(\alpha)$ converges weakly to $\varphi T^+$. Since $T^-$ has continuous super-potentials, it suffices to prove $\|\ddc(\varphi\cdot d^{-n}(f^n)^*(\alpha))\|_*$ is uniformly bounded for all $n$. For simplicity, for any current $T$ and positive current $S$, we will write $|T|\leq S$ to denote that $-S\leq T\leq S$. We will prove that for each $n\geq 1$, there exists positive closed vertical $(p+1,p+1)$-current $S_n$ such that $|\ddc(\varphi\cdot d^{-n}(f^n)^*(\alpha))|\leq S_n$ and $\|S_n\|_{D'}$ is bounded independent of $n$. Since $\ddc(\varphi\cdot d^{-n}(f^n)^*(\alpha))+S_n\leq 2S_n$ and is positive closed vertical, we finish the proof. 
    
    Recall from the proof of Proposition \ref{prop:non-closed-T^+} that we can assume $\alpha=\chi\alpha_0$ where $\alpha_0$ is a positive closed vertical $(p,p)$-form and $\chi:D\to [0,1]$ is a smooth function on $D$. We have
    \begin{equation*}
         \begin{aligned}
             \ddc\big(\varphi\cdot d^{-n}(f^n)^*(\alpha)\big)
             &=\ddc\varphi\wedge d^{-n}(f^n)^*(\alpha)+\varphi\cdot d^{-n}(f^n)^*(\ddc\alpha)\\
             &-\frac{\sqrt{-1}}{\pi}\bar{\partial}\varphi\wedge d^{-n}(f^n)^*(\partial\chi\wedge\alpha_0)+\frac{\sqrt{-1}}{\pi}{\partial}\varphi\wedge d^{-n}(f^n)^*(\bar{\partial}\chi\wedge\alpha_0).
         \end{aligned}
    \end{equation*}
    
    Let $c$ be a positive constant such that $|\ddc\varphi|\leq c\omega_D$, $|\ddc\chi|\leq c\omega_D$, $\sqrt{-1}\partial \varphi\wedge\bar{\partial}\varphi\leq c\omega_D$ and $\sqrt{-1}\partial\chi\wedge\bar{\partial}\chi\leq c\omega_D$. Then 
    \[
       |\ddc\varphi\wedge d^{-n}(f^n)^*(\alpha)|\leq c\omega_D\wedge d^{-n}(f^n)^*(\alpha_0)
    \]
    and 
    \[
        |\varphi\cdot d^{-n}(f^n)^*(\ddc\alpha)|\leq c d^{-n}(f^n)^*(\omega_D\wedge \alpha_0).
    \]
    For the third term, by Cauchy-Schwarz inequality we have 
    \begin{equation*}
        \begin{aligned}
            |\sqrt{-1}\bar{\partial}\varphi\wedge d^{-n}(f^n)^*(\partial\chi\wedge\alpha_0)|
            &\leq \frac{1}{2}d^{-n}\left(\sqrt{-1}\partial\varphi\wedge\bar{\partial}\varphi\wedge (f^n)^*(\alpha_0)+(f^n)^*(\sqrt{-1}\partial\chi\wedge\bar{\partial}\chi\wedge\alpha_0)\right)\\
            &\leq \frac{c}{2}d^{-n}\left(\omega_D\wedge (f^n)^*(\alpha_0)+(f^n)^*(\omega_D\wedge\alpha_0)\right).
        \end{aligned}
    \end{equation*}
    The last term can be bounded similarly. Finally, we choose $S_n=2cd^{-n}\big(\omega_D\wedge (f^n)^*(\alpha_0)+(f^n)^*(\omega_D\wedge\alpha_0)\big)$. It follows from \massump\ that $\|\omega_D\wedge d^{-n}(f^n)^*(\alpha_0)\|_{D'}$ and $\|d^{-n}(f^n)^*(\omega_D\wedge \alpha_0)\|_{D'}$ can be bounded from above independent of $n$.
\end{proof}

Given a positive closed horizontal current, by convolution we can always construct a smooth positive closed form such that it is $\ddc$-cohomologous to it with good support. We end this section by a concise demonstration of this result.

Let $\delta_0$ be the Dirac measure at $0\in\C^m$. Let $\nu_\vep$ be the smooth uniform probability measure on the sphere $S_{\vep}$ of radius $\vep$ centred at $0$. Notice that $\delta_0=(\ddc \log \|x\|)^m$ and $\nu_{\vep}=(\ddc \log \max(\|x\|,\vep ))^m$. Define 
\[
    \phi_{\vep}:=\log \|x\|(\ddc \log \|x\|)^{m-1}-\log \max(\|x\|,\vep )\big(\ddc \log \max(\|x\|,\vep )\big)^{m-1}.
\]
Then $\phi_\vep$ is a negative $L^1$-form supported in the closed ball of radius $\vep$ centred at 0. Moreover, we have $\delta_0-\nu_\vep=\ddc\phi_\vep$. Finally, by choosing a local coordinate chart, we can extend this result to any Dirac measure on a complex manifold.

\begin{lemma}\label{lemma:convolution-T}
Let $T$ be a positive closed horizontal $(k-p,k-p)$-current in $M\times N'$. Then there exist a smooth positive closed $(k-p,k-p)$-form $\alpha_0$ and a negative $L^1$-form $U$, both defined on $M'\times N$ and supported on $D'$, such that
\[
    T=\alpha_0+\ddc U \text{ on } M'\times N.
\] 
Moreover, $\alpha_0$ is strictly positive in a small neighbourhood of $\supp(T)$.
\end{lemma}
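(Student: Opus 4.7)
The plan is to apply the regularization suggested by the paragraph immediately preceding the lemma: the hint builds a potential making $\delta_0$ cohomologous to a smooth measure, and I would spread it against $T$ by convolution to get a global regularization. To this end, since $T$ is horizontal in $M\times N'$, pick $N''\Subset N'$ with $\supp T\subset M\times N''$, and fix $\vep>0$ small enough that $M'+\overline{B(0,\vep)}\Subset M$ and $N''+\overline{B(0,2\vep)}\Subset N'$. Let $\psi\colon[0,\vep]\to[0,\infty)$ be smooth, supported in $(\vep/2,\vep)$, with $\int_0^\vep\psi(s)\,ds=1$. Applying the hint with $m=k$ and integrating against $\psi(s)\,ds$ yields
\[
\chi_\vep:=\int_0^\vep \nu_s\,\psi(s)\,ds,\qquad \Phi_\vep:=\int_0^\vep \phi_s\,\psi(s)\,ds,
\]
so that $\delta_0-\chi_\vep=\ddc\Phi_\vep$ on $\C^k$. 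Here $\chi_\vep$ is a smooth positive probability measure and $\Phi_\vep$ is a negative $L^1$-form of bi-degree $(k-1,k-1)$, both supported in $\overline{B(0,\vep)}$.

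I would then set $\alpha_0:=T\ast \chi_\vep$ and $U:=T\ast\Phi_\vep$, interpreted as pushforwards of the tensor products $T\otimes\chi_\vep$ and $T\otimes\Phi_\vep$ under the addition map $(x,a)\mapsto x+a$ on $\C^k\times\C^k$. The choice of $\vep$ makes both convolutions well-defined on $M'\times N$: the integrand vanishes unless $x-a\in\supp T\subset M\times N''$, which confines $x$ to $M'\times(N''+\overline{B(0,\vep)})\subset D'$ and keeps $x-a$ in $M\times N'$ where $T$ is given. Smoothness of $\alpha_0$ follows from smoothness of $\chi_\vep$; positivity and closedness are preserved by convolution with a positive measure; and $U$ is a negative $L^1$-form since $\Phi_\vep\leq 0$ and $T\geq 0$. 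Using $\ddc T=0$ and the fact that pushforward commutes with $\ddc$, I get
\[
\ddc U=\ddc(T\ast\Phi_\vep)=T\ast\ddc\Phi_\vep=T\ast(\delta_0-\chi_\vep)=T-\alpha_0
\]
on $M'\times N$, which is the required decomposition; moreover both supports lie in $\supp T+\overline{B(0,\vep)}\subset M\times N'$, and restricted to $M'\times N$ they sit in $D'$.

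For the strict positivity of $\alpha_0$ in a neighborhood of $\supp T$, note that $\chi_\vep$ is strictly positive on the annulus $\{\vep/2<\|a\|<\vep\}$, so the fiber integration $\alpha_0(x)=\int T(x-a)\chi_\vep(a)\,da$ picks up nontrivial mass from $T$ at every point within distance $\vep/2$ of $\supp T$. I expect the delicate step of the argument to be this last claim, since convolving $T$ with a top-degree scalar kernel averages positions but not tangential directions; one may need to further average over several translated kernels or use a small combination of the strictly positive vertical forms constructed in Example \ref{example:ver-strict-posi} (applied in the horizontal setting) to guarantee that $\alpha_0$ dominates a positive multiple of $\omega_D^{k-p}$ near $\supp T$. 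The supporting sign and $L^1$ estimates for $U$ are standard consequences of convolution with the negative $L^1$-current $\Phi_\vep$.
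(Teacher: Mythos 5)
You set up the translation-convolution version of the hint preceding the lemma, and the reduction $T-\alpha_0=\ddc U$ together with the support bookkeeping, the negativity and $L^1$-regularity of $U$, and the smoothness, positivity, and closedness of $\alpha_0$ are all fine. The gap is the one you flag at the end, and it is fatal for translation convolution alone: convolving $T$ against a scalar kernel on $\C^k$ via the addition map averages only the \emph{position} of $T$ and never its tangential directions. Concretely, take $T=[L]$ for a $p$-dimensional affine subspace $L$. Every translate $L+a$ has the same tangent space, so near $L$ the form $\alpha_0$ you build is a nonnegative density times the single decomposable $(k-p,k-p)$-covector annihilating the tangent directions of $L$; this covector pairs to zero with the positive $(p,p)$-vector tangent to $L$ itself, so $\alpha_0$ is degenerate and cannot dominate a multiple of $\omega_D^{k-p}$. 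Your first remedy (averaging more translated kernels) keeps the same defect, and your second (adding a strictly positive closed horizontal form $\beta$) changes the $\ddc$-cohomology class of $\alpha_0$ unless $\beta$ is $\ddc$-exact, which it is not, so the identity $T=\alpha_0+\ddc U$ would be lost.

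The paper repairs exactly this by convolving over the general affine group rather than over translations: it sets $\alpha_0:=(\iota_0)_*(T\otimes\nu_\vep)=\int_\tau\tau_*(T)\,\d\nu_\vep(\tau)$ and $U:=(\iota_0)_*(T\otimes\phi_\vep)$, where $\nu_\vep$ is a smooth probability measure on ${\rm Aff}(\C^k)$ supported near $\id_{\C^k}$ with $\delta_{\id_{\C^k}}-\nu_\vep=\ddc\phi_\vep$ in a local chart. A generic affine map $\tau(x)=Ax+b$ near the identity shears tangent directions, so the currents $\tau_*(T)$ carry directions filling a full neighborhood in the Grassmannian; integrating against a strictly positive smooth $\nu_\vep$ then makes $\alpha_0$ strictly positive near $\supp(T)$. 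Your construction corresponds to concentrating $\nu_\vep$ on the translation subgroup $\{A=\id_{\C^k}\}$, which is precisely where the directional spread is lost. Replacing the addition map by $\iota_0$ and the kernel on $\C^k$ by one on ${\rm Aff}(\C^k)$ is the single change needed to make your argument complete.
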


\begin{proof}
    Let $ {\rm Aff}(\C^k)$ be the general affine group of $ \C^k$. Consider the map $\iota_0:\C^k\times {\rm Aff}(\C^k)\to \C^k$ given by $\iota_0(x,\tau)=\tau(x)$ for any $x\in\C^k $ and $ \tau\in{\rm Aff}(\C^k)$. Let $\nu$ be a probability measure on ${\rm Aff}(\C^k)$ supported in a small neighbourhood of $\id_{\C^k}$. Define $T_\nu:=(\iota_0)_*(T\otimes \nu)$. Then it is easy to verify that $T_\nu$ is well-defined and $T_\nu=\int_\tau \tau_*(T) \d\nu$.

    Consider a local coordinate chart of ${\rm Aff}(\C^k)$ centred at $\id_{\C^k}$. As per the previous discussion, we can find a smooth measure $\nu_\vep$ such that $\delta_{\id_{\C^k}}-\nu_\vep=\ddc\phi_\vep$. Let $\alpha_0:=(\iota_0)_*(T\otimes \nu_\vep)$. Since $T=(\iota_0)_*(T\otimes \delta_{\id_{\C^k}})$, we have $T-\alpha_0=\ddc U $ where $U=(\iota_0)_*(T\otimes \phi_\vep)$. Notice that $\alpha_0$ is a convolution of $T$ with respect to the measure $\nu_\vep$. When $\vep$ is small enough, all the remaining assertions are easy to check.
\end{proof}

\section{Tameness of Green currents}\label{sec:tame}

In first part of this section, we briefly recall some results on woven currents and (almost) tame currents. The reader is referred to \cite[Section 3]{DS16} for detailed discussion. See also the survey \cite{Dujardin22} and the references there. Then we proceed to our main result: the Green currents of a H\'enon-like map are tame.

\subsection{Woven, almost-tame and tame currents}
We restrict to a K\"ahler manifold $(V,\omega)$ of dimension $l$ and fix $0\leq p\leq l-1$. 

Let $\Sc_p(V) $ be the set of $p$-dimensional connected complex manifolds (not necessarily closed) $S$ in $V$ with locally finite mass. 
The last property means for any compact set $K\subset V$ we have 
\[
    \int_{S\cap K}\omega^p<\infty.
\]
Each $S\in\Sc_p(V)$ is called a {\it lame} and it defines a positive current $[S]$ of bi-dimension $(p,p)$ whose action on smooth $(p,p)$-forms is simply given by their integrations over $S$. We say a positive measure $\nu$ on $\Sc_p(V)$ is {\it locally finite} if for any compact set $K\subset V$, 
\[
    \int_{S\in \Sc_p(V)}\lp [S], \omega^p|_K \rp \d\nu(S)<\infty.
\]
\begin{definition}\rm
    Let $T$ be a $(l-p,l-p)$-current on $V$. Then $T$ is {\it woven} if there exists a locally finite positive measure $\nu$ on $\Sc_p(V)$ such that 
    \begin{equation}\label{equa:woven}
        T=\int_{S\in\Sc_p(V)}[S] \d\nu(S).
    \end{equation}
   We say $\nu$ is a {\it woven measure} of $T$.
\end{definition}

When a sequence of analytic subsets converges to a current, we wonder under what condition the limit is woven. A criterion is given by de Th\'elin \cite{dTh} by considering the lift of currents to the Grassmannian bundle.  Recall that the Grassmannian bundle $\Gr(V,p)$ is the set of points $(x,v)$ where $x\in V$ and $ v\in \Gr(T_{x}V,p)$ represents a $p$-dimensional subspace in $T_xV$. 
When $S$ is a smooth complex manifold of dimension $p$, its {\it lift} to $\Gr(V,p)$ is given by
\begin{equation}\label{eq:mfd-lift}
    \widehat{S}=\{(x,[v]):x\in S, v \text{ is the direction of }T_xS\}.
\end{equation}

Let $T$ be a woven current of bi-dimension $(p,p)$ on $V$ as (\ref{equa:woven}). If $\int_{S\in\Sc_p(V)}\|[\widehat{S}]\|\d\nu(S)<\infty$, then $T$ admits a {\it lift }with respect to $\nu$. 
This is a positive current on $\Gr(V,p)$ of bi-dimension $(p,p)$ given by
\begin{equation}\label{eq:woven-lift}
    \widehat{T}=\int_{S\in \Sc_p(V)}[\widehat{S}]\d\nu(S).
\end{equation}

Note that the lift may depend on the choice of $\nu$ but its push-forward to $V$ is always $T$.

\begin{example}\rm\label{exam:gamma-hat}
    Recall that the general affine group of $ \C^k$ is denoted by $ {\rm Aff}(\C^k)$. Let $\rho$ be a smooth probability measure on ${\rm Aff}(\C^k)$ with $\supp(\rho)$ contained in a small neighbourhood of $\id_{\C^k}$. Choose a vertical $(k-p)$-dimensional affine subspace $L$ in $M''\times N$. For any $\tau\in \supp(\rho)$, let $L_\tau:=\tau(L)$ which is still vertical in $M''\times N$ when $\tau$ is close to $\id_{\C^k}$. We define two currents $\gamma: = \int_{\tau\in {\rm Aff}(\C^k)} [L_\tau] \d\rho(\tau)$ and $ \widehat{\gamma}:= \int_{\tau \in {\rm Aff}(\C^k)} [\widehat{L_\tau}] \d\rho(\tau)$. When $\supp (\rho)$ is small enough, we have $\supp(\gamma) \subset M''\times N$ and $\supp(\widehat{\gamma}) \subset M''\times N\times \G(k-p,k) $. Both $\gamma$ and $\widehat{\gamma}$ are smooth positive closed forms and $\widehat{\gamma}$ is a lift of $\gamma$. 
\end{example}

\begin{theorem}[de Th\'elin \cite{dTh}] \label{thm:de-thelin}
    Let $\Sigma_n$ be a sequence of analytic subsets of pure dimension $p$ in $V$ and $\widehat{\Sigma}_n$ be the lift of $\Sigma_n$ to $\Gr(V,p)$. Let $v_n$ and $\hat{v}_n$ be the volume of $\Sigma_n$ and $\widehat{\Sigma}_n$ respectively. Suppose all $v_n$ and $\hat{v}_n$ are finite and $v_n^{-1}[\Sigma_n]$ converges to a current $T$. If $v_n^{-1}\hat{v}_n$ is bounded by some constant $c>0$ independent of $n$, then $T$ is woven.
\end{theorem}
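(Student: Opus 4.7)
The strategy is to lift to $\Gr(V,p)$, extract a subsequential limit $\widehat T$ there, and disintegrate $\widehat T$ by slicing to produce a woven measure on lames.

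\textbf{Setup.} The hypothesis $v_n^{-1}\hat v_n\leq c$ provides locally uniformly bounded masses for $v_n^{-1}[\widehat \Sigma_n]$ on $\Gr(V,p)$. Passing to a subsequence, these currents converge weakly to a positive closed $(p,p)$-current $\widehat T$ on $\Gr(V,p)$. Since the canonical lift is a biholomorphism from the smooth part of $\widehat \Sigma_n$ onto the smooth part of $\Sigma_n$, we have $\Pi_*[\widehat \Sigma_n]=[\Sigma_n]$, and passing to the limit yields $\Pi_*\widehat T=T$.

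\textbf{Local graph structure.} Cover $V$ by charts $U\simeq B_1\times B_2\subset \C^p\times \C^{l-p}$ and let $U^\sharp\subset \Pi^{-1}(U)$ be the open subset of $\Gr(V,p)$ parametrizing $p$-planes transverse to $\{0\}\times \C^{l-p}$; such planes are uniquely represented as graphs of $(l-p)\times p$ matrices $A$, so coordinates on $U^\sharp$ read $(z',z'',A)$. By varying the coordinate system in $V$ these $U^\sharp$ cover $\Gr(V,p)$. The key observation is that each sheet of $\widehat \Sigma_n\cap U^\sharp$ is the graph over a subdomain of $B_1$ of a map $z'\mapsto (\phi_n(z'),D\phi_n(z'))$, where $\phi_n:B_1\to B_2$ is a local holomorphic branch of $\Sigma_n$.

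\textbf{Slicing and reconstruction of lames.} Slice $\widehat T|_{U^\sharp}$ along the projection $p_1:U^\sharp\to B_1$, $(z',z'',A)\mapsto z'$. For almost every $z'_0\in B_1$ the slice is a positive measure on $p_1^{-1}(z'_0)\cong \C^{l-p}\times \C^{(l-p)p}$ with coordinates $(z'',A)$. Any point $(z'_0,z''_0,A_0)$ in its support arises as a limit of points $(z'_0,\phi_n(z'_0),D\phi_n(z'_0))$ coming from branches of the $\Sigma_n$; the mass bound converts into a uniform area estimate for these branches, and a Montel-type compactness argument produces a genuine holomorphic germ $\phi$ near $z'_0$ with $\phi(z'_0)=z''_0$ and $D\phi(z'_0)=A_0$. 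Patching such germs across overlapping polydiscs in $B_1$ yields a family of lames $\{S_\alpha\}$ together with a measure $\nu_U$ satisfying
\[
\widehat T|_{U^\sharp}=\int [\widehat S_\alpha]\,\d\nu_U(\alpha),
\]
and pushing this identity forward by $\Pi$ gives the local woven identity $T|_U=\int [S_\alpha]\,\d\nu_U(\alpha)$.

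\textbf{Globalization and main obstacle.} The local measures $\nu_U$ are assembled into one locally finite measure $\nu$ on $\Sc_p(V)$ via a partition of unity on $\Gr(V,p)$, compatible with the fact that $\widehat T$ is globally defined upstairs. The main obstacle is the Montel step of the previous paragraph: extracting bona fide holomorphic germs with prescribed $1$-jets from the slice supports, and patching them coherently into manifold lames rather than merely rectifiable pieces. This is precisely where both the positivity--closedness of $\widehat T$ and the analyticity of each $\Sigma_n$ are indispensable; the hypothesis $v_n^{-1}\hat v_n\leq c$ is exactly what converts into the uniform derivative bounds for $\phi_n$ required to apply Montel's theorem.
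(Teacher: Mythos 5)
Note first that the paper does not prove this theorem: it is quoted from de Th\'elin \cite{dTh}, so there is no internal proof to compare your argument against. I will therefore assess your proposal on its own merits.

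Your high-level strategy (lift to $\Gr(V,p)$, extract a weak limit $\widehat T$, disintegrate by slicing, reconstruct lames) is indeed the right skeleton, and the observation that $\Pi_*\widehat T=T$ together with the local $1$-jet coordinates on $U^\sharp$ is correct. However, the ``Montel step'' — which you yourself single out as the main obstacle — is where the proof actually has a genuine gap, and the way you propose to fill it does not work. You take a sequence of branches $\phi_n$ of $\Sigma_n$ whose $1$-jets at $z_0'$ converge to $(z_0'',A_0)$, and invoke Montel to get a limit germ. But Montel requires the $\phi_n$ to be holomorphic and uniformly bounded on a \emph{fixed} polydisc around $z_0'$, and nothing in the hypotheses prevents each branch of $\Sigma_n$ from being single-valued over only a tiny, $n$-dependent subdomain of $B_1$. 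The bound $v_n^{-1}\hat v_n\leq c$ controls the total area of $\widehat\Sigma_n$, including the vertical (curvature) contribution, but an area bound alone does not propagate to a lower bound on the domain of univalence of individual branches; one needs an Ahlfors/length--area type covering argument (as in the dimension-$2$ case of Bedford--Lyubich--Smillie and Dujardin) or, in de Th\'elin's higher-dimensional version, a structural argument exploiting the closedness and positivity of $\widehat T$ itself rather than of the approximants. Simply saying ``the mass bound converts into a uniform area estimate'' and then quoting Montel skips exactly the hard analytic content of the theorem.

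A secondary issue is the ``patching'' step. You need to pass from local graph germs to bona fide elements of $\Sc_p(V)$ (connected $p$-dimensional manifolds with locally finite mass) and to a single locally finite measure $\nu$ on that space. A partition of unity on $\Gr(V,p)$ does not by itself produce a coherent measure on the space of lames: one must verify that the local slice measures agree on overlaps as measures on germs of the \emph{same} lames, and this compatibility is not automatic from the weak convergence of $v_n^{-1}[\widehat\Sigma_n]$. Both points (uniform domain of univalence and coherent globalization) are where de Th\'elin's actual argument does real work, and they need to be supplied before your outline becomes a proof.
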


Let $T$ be a woven positive closed $(l-p,l-p)$-current which admits a lift in $\Gr(V,p)$. Its lift is always positive but may not be closed. The following property of woven currents is introduced and studied in \cite{DS16}.
\begin{definition}\rm
    Let $T$ be a woven positive closed $(l-p,l-p)$-current on $V$. We say $T$ is {\it almost tame} if $T$ admits a lift $\widehat{T}$ and there is a positive closed current $\widehat{T}'$ on $\Gr(V,p)$ such that $\widehat{T}'\geq \widehat{T}$ and the push-forward of $\widehat{T}'$ onto $V$ equals to $T$. We say $T$ is {\it tame} when it admits a lift $\widehat{T}$ which is closed itself. The measure $\nu$ associated with this lift is said to be {\it almost tame} or {\it tame} respectively. 
\end{definition}

\begin{proposition}[\cite{DS16}, Proposition 3.14]\label{prop:altame-dethelin}
    Let $\Sigma_n,v_n,\widehat{\Sigma}_n,\hat{v}_n$ and $T$ be as in Theorem \ref{thm:de-thelin}. Suppose the volume of the lift of $\widehat{\Sigma}_n$ to $\Gr(\Gr(V,p),p)$ is bounded by $cv_n$ for some constant $c>0$ independent of $n$, then $T$ is almost tame.
\end{proposition}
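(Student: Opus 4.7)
The plan is to apply Theorem~\ref{thm:de-thelin} twice. The first application, on the original sequence $\Sigma_n$, already produces a woven decomposition $T = \int [S]\,d\nu(S)$ of $T$, together with the associated lift $\widehat{T} := \int [\widehat{S}]\,d\nu(S)$, a positive (but in general not closed) current on $W := \Gr(V,p)$.

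To obtain a dominating closed current, I use that the hypothesis of Theorem~\ref{thm:de-thelin} forces $\hat{v}_n/v_n$ to be uniformly bounded. Thus $v_n^{-1}[\widehat{\Sigma}_n]$ has bounded mass on $W$, and each $[\widehat{\Sigma}_n]$ is positive closed (as an analytic subset). A subsequence therefore converges weakly to a positive closed current $\widehat{T}'$ on $W$ satisfying $\pi_*\widehat{T}' = T$, where $\pi : W \to V$ is the projection. This is our candidate for the dominating current.

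The core of the argument is to arrange that $\widehat{T}' \geq \widehat{T}$, and for this I would redefine $\widehat{T}$ using a woven decomposition of $T$ derived from $\widehat{T}'$ itself. To obtain one, I apply Theorem~\ref{thm:de-thelin} a second time, now to $\widehat{\Sigma}_n$ viewed as a sequence of analytic subsets of $W$: their volumes are $\hat{v}_n$ and the volumes of their lifts in $\Gr(W,p)$ are $\hat{\hat{v}}_n$. Choosing on $W$ a K\"ahler form $\omega_W := \pi^*\omega_V + \omega_F$ with $\omega_F$ any K\"ahler form on $W$, we have $\omega_W^p \geq (\pi^*\omega_V)^p$ pointwise, and since $\pi$ is birational from the regular part of $\widehat{\Sigma}_n$ onto that of $\Sigma_n$, integration gives $\hat{v}_n \geq v_n$. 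Combined with the hypothesis $\hat{\hat{v}}_n \leq c v_n$, we deduce $\hat{\hat{v}}_n/\hat{v}_n \leq c$, so Theorem~\ref{thm:de-thelin} applies to the sequence $\widehat{\Sigma}_n$ in $W$. After normalizing by $\hat{v}_n$ and extracting a subsequence so that $v_n/\hat{v}_n \to \lambda \in [0,1]$, the limit $\lambda\widehat{T}'$ is woven on $W$; hence $\widehat{T}' = \int [\tilde{R}]\,d\tilde{\nu}(\tilde{R})$ for some locally finite measure $\tilde{\nu}$ on $\Sc_p(W)$. Pushing forward by $\pi$ yields a woven decomposition $T = \int [\pi(\tilde{R})]\,d\tilde{\nu}$, and we redefine $\widehat{T} := \int [\widehat{\pi(\tilde{R})}]\,d\tilde{\nu}$. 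Almost tameness then reduces to the pointwise comparison $[\tilde{R}] \geq [\widehat{\pi(\tilde{R})}]$ at the lame level.

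The main obstacle is exactly this pointwise comparison. The favourable fact is that each $\widehat{\Sigma}_n$ is by construction a Gauss lift, so that its horizontal tangent planes always project to the Gauss direction of the base; this property generically survives in the weak limit, forcing $\tilde{R}$ to coincide with the Gauss lift of $\pi(\tilde{R})$ on a dense open subset and giving the desired inequality there. The delicate part is the behaviour on the singular loci where $\pi|_{\tilde{R}}$ may drop rank or tangent planes may acquire vertical components; the excess positive mass produced there contributes to the slack $\widehat{T}' - \widehat{T} \geq 0$ and is what distinguishes almost tameness from tameness.
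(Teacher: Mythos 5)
Your overall architecture is the right one and is essentially that of \cite{DS16}: take $\widehat{T}' := \lim v_n^{-1}[\widehat{\Sigma}_n]$ (positive, closed, with $\pi_*\widehat{T}'=T$) as the dominating current; apply Theorem~\ref{thm:de-thelin} a second time, now to $\widehat{\Sigma}_n$ inside $W:=\Gr(V,p)$, legitimate since $\hat{\hat{v}}_n\le c v_n\le c\hat{v}_n$ by your inequality $\hat{v}_n\ge v_n$; obtain a woven decomposition $\widehat{T}'=\int[\tilde{R}]\,\d\tilde{\nu}(\tilde{R})$; push it forward to $V$; and compare the resulting lift of $T$ with $\widehat{T}'$. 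A minor imprecision: $\pi_*[\tilde{R}]$ is in general a finite sum of lames of $V$, not literally $[\pi(\tilde{R})]$ (as the discussion following the proposition records), so the induced woven measure needs a little bookkeeping, but this is cosmetic.

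The genuine gap is in the final step, where you assert that the Gauss-lift property of $\widehat{\Sigma}_n$ ``generically survives in the weak limit'' so that $\tilde{\nu}$-a.e.\ $\tilde{R}\in\Sc_p^1(W)$ is the Gauss lift of its image. This is exactly the content of the proposition and is not automatic: the woven decomposition supplied by de Th\'elin's criterion is abstract, and there is no a priori reason its lames must be tangent to the tautological distribution of $p$-planes $H\subset T_{(x,\alpha)}W$ with $\d\pi_{(x,\alpha)}(H)\subset\alpha$. The standard way to close this is through the tautological contact form: let $\theta_{(x,\alpha)} := q_\alpha\circ\d\pi_{(x,\alpha)}$ be the smooth $(1,0)$-form on $W$ with values in the quotient bundle (where $q_\alpha:T_xV\to T_xV/\alpha$ is the projection), and set $\Theta:=\sqrt{-1}\sum_i\theta^i\wedge\overline{\theta^i}$ in a local unitary frame, a smooth semi-positive $(1,1)$-form. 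Since every $\widehat{\Sigma}_n$ is a Gauss lift, $\Theta|_{\widehat{\Sigma}_n}\equiv 0$, so $\lp\widehat{T}',\chi\Theta\wedge\omega_W^{p-1}\rp=0$ for every cut-off $\chi\ge 0$; as $\chi\Theta\wedge\omega_W^{p-1}$ is a positive $(p,p)$-form, the woven decomposition forces $\Theta|_{\tilde{R}}\equiv 0$ for $\tilde{\nu}$-a.e.\ $\tilde{R}$, and for $\tilde{R}\in\Sc_p^1(W)$ this yields $\d\pi(T_{(x,\alpha)}\tilde{R})=\alpha$ at generic points, identifying $\tilde{R}$ with the Gauss lift of its image outside a set of measure zero. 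Without an argument of this type (or a direct appeal to the internal structure of de Th\'elin's construction), the pointwise comparison $[\tilde{R}]\ge[\widehat{\pi(\tilde{R})}]$ you invoke remains unsubstantiated.
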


Finally, we see how the woven structure is inherited under a submersion. Let $\pi:V\to W$ be a holomorphic submersion onto a complex manifold $W$ of dimension larger or equal to $p$. We define $\Sc_p^1(V)$ to be the set of all lames $S\in \Sc_p(V)$ such that the restriction of $\pi$ on $S$ is generically of rank $p$ and $\Sc_p^2(V):=\Sc_p(V)\setminus\Sc_p^1(V)$.
Then for any woven positive $(l-p,l-p)$-current $T$ on $V$, we can decompose it as $T=T_1+T_2$ where $T_i=\int_{S\in\Sc_p^i(V)}[S]\d\nu(S)$. It is immediate to see that such decomposition is unique and independent of the choice of $\nu$. Moreover, it is shown in \cite[Lemma 3.11]{DS16} that when $T$ is almost tame, both $T_1$ and $T_2$ are closed. 

When $\pi$ is proper, we can consider the push-forward of $T$, $T_1$ and $T_2$ onto $W$. By definition, we have $\pi_*(T_2)=0$ in the sense of currents. On the other hand, for any $S\in\Sc_p^1(V)$, after removing the singular part of $\pi(S)$ which does not change the current it defines, we can write $\pi_*([S])$ as a finite sum of currents defined by elements in $\Sc_p(W)$. Therefore $\pi_*(T)=\pi_*(T_1)$ is still a woven current.

\subsection{Tameness of Green currents}
We consider a H\'enon-like map $f$ on $D=M\times N$. Notice that the Grassmannian bundle of $D$ is trivial: $\Gr(D,p)=D\times \G(p,k)$. Let $T^\pm$ be the Green currents of $f$. 
The following is shown by Bianchi-Dinh-Rakhimov \cite{BDR24}:

\begin{theorem}\label{thm:laminar}
    Let $f$ be a H\'enon-like map on $D$ satisfying \massump. Then the Green currents $T^\pm$ of $f$ are woven. 
\end{theorem}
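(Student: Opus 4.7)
The plan is to deduce the wovenness of $T^-$ from the criterion of de Th\'elin (Theorem \ref{thm:de-thelin}); the claim for $T^+$ follows by the same argument applied to $f^{-1}$, which is again a H\'enon-like map on $D$ after interchanging horizontal and vertical boundaries. Fix a generic horizontal affine subspace $\Sigma \subset D$ of pure dimension $p$ and set $\Sigma_n := f^n(\Sigma)$, a horizontal analytic subvariety of pure dimension $p$. From the characterization of the main dynamical degree $d = d_{k-p}^-$ (equivalently, the slice-mass description of $f_*$ used right after Definition \ref{defi:degrees}) together with the smoothing forms from Example \ref{example:ver-strict-posi}, one has $v_n := \volume(\Sigma_n) \leq c\,d^n$, and $v_n^{-1}[\Sigma_n]$ converges, at least along a subsequence, to a positive multiple of $T^-$.

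To apply Theorem \ref{thm:de-thelin} one must bound the volume $\hat{v}_n$ of the canonical lift $\widehat{\Sigma}_n \subset \Gr(D,p) = D \times \G(p,k)$, defined via the Gauss map $\sigma_n\colon \Sigma_n \to \G(p,k)$, $y \mapsto T_y\Sigma_n$. Taking $\pi_D^*\omega_D + \pi_{\G}^{*}\omega_\G$ as a K\"ahler form on $\Gr(D,p)$, one has, up to a combinatorial constant depending only on $p$,
\[
    \hat{v}_n \;\asymp\; \int_{\Sigma_n}\bigl(\omega_D|_{\Sigma_n} + \sigma_n^{*}\omega_\G\bigr)^{p} \;=\; \sum_{j=0}^{p}\binom{p}{j}\int_{\Sigma_n}\omega_D^{p-j}\wedge \sigma_n^{*}\omega_\G^{\,j}.
\]
The $j=0$ term is precisely $v_n$, and is exactly the quantity we want as the leading order. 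For each $j\geq 1$, the idea is to dominate $\sigma_n^{*}\omega_\G^{\,j}\wedge \omega_D^{p-j}$ on $\Sigma_n$ by the restriction to $\Sigma_n$ of a smooth positive closed horizontal form $\eta_j$ on $D$ of bi-degree $(k-p+j,k-p+j)$. This is reasonable because $\sigma_n^{*}\omega_\G$ is pointwise controlled by a second-fundamental-form expression that, in flat ambient coordinates, is dominated by a smooth positive closed $(1,1)$-form on a neighbourhood of $D$.

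Granting that reduction, the $j$-th summand is bounded by $\lp (f^n)_*[\Sigma], \eta_j\rp$; by Definition \ref{defi:degrees} and Remark \ref{remark:mass-hori} this pairing grows at most like $(d_{p-j}^{+})^n$. Under \massump\ we have $d_{p-j}^{+}\leq d_{p-1}^{+}<d$ for every $j\geq 1$, so every subdominant term contributes $o(d^n)$ and hence $\hat{v}_n \leq c\,d^n$. Applying Theorem \ref{thm:de-thelin} then shows that any cluster value of $v_n^{-1}[\Sigma_n]$, in particular $T^-$, is a woven current.

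The main obstacle is precisely the identification step in the second paragraph: comparing $\sigma_n^{*}\omega_\G^{\,j}$ to the restriction of an ambient smooth positive closed form whose push-forward to $D$ has the correct bi-degree. The difficulty is genuine because the Gauss map is highly nonlinear and $\omega_\G$ is not a restriction of a form on $\C^k$; a clean route is a local computation in Pl\"ucker coordinates combined with a Chern-class argument, using that the tautological bundle on $\G(p,k)$ carries a curvature form representing $\omega_\G$ and that its pull-back along $\sigma_n$ computes characteristic numbers of $\Sigma_n$ realised in the flat ambient geometry. A more conceptual alternative, hinted at in the introduction, is to lift once more and bound the volume of the double lift in $\Gr(\Gr(D,p),p)$ by $c\,d^n$; by Proposition \ref{prop:altame-dethelin} this would simultaneously yield almost-tameness of $T^-$, strictly stronger than wovenness.
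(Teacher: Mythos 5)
Your overall strategy (de Th\'elin's criterion via a volume bound $\hat v_n\lesssim d^n$ for the canonical lift) is the right one, and the final ``conceptual alternative'' you sketch is in fact the paper's approach: the paper does not prove Theorem~\ref{thm:laminar} directly but cites \cite{BDR24}, and then strengthens it to tameness via double lifts, Proposition~\ref{prop:controlmass} and Proposition~\ref{prop:altame-dethelin}. However, your main proposed route has a genuine gap at exactly the point you flag.

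The gap is the claimed pointwise domination $\sigma_n^*\omega_{\G}^{\,j}\wedge\omega_D^{p-j}\leq \eta_j|_{\Sigma_n}$ by a fixed smooth positive closed horizontal form $\eta_j$. The Gauss map pulls back $\omega_\G$ to a form controlled by the second fundamental form of $\Sigma_n$, and this is a second-order (curvature) quantity; it is \emph{not} dominated, uniformly in $n$, by the restriction of any ambient $(1,1)$-form. If it were, the lifts $\widehat\Sigma_n$ would have uniformly bounded mass density relative to $\Sigma_n$ and wovenness would be essentially automatic, which is not the case. Nor does a Chern--class/Pl\"ucker argument save the step: $\sigma_n^*\omega_\G^{\,j}$ does represent a characteristic class of the tangent bundle of $\Sigma_n$, but what you need is a pointwise inequality on $\Sigma_n\subset D$ (an open set where cohomological considerations give no uniform bound), not a cohomological one. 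Consequently the reduction to $\lp (f^n)_*[\Sigma],\eta_j\rp$ and hence to the subdominant degrees $d^+_{p-j}$ is unjustified.

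What replaces it is exactly Proposition~\ref{prop:controlmass}: one bounds the \emph{total} mass of $[\widehat\Sigma_n]=(\widehat f^n)_*[\widehat\Sigma]$ by $Cd^n$ all at once, using that $\widehat f$ is a lift of $f$ which acts cohomologically trivially on the compact fibre. The proof (Appendix~\ref{appendix:a}) still decomposes the mass by $D$-dimension as you do, but the mechanism for killing the lower-dimensional contributions is not a pointwise comparison. It is a compactness argument: if $I_l(n)/d^n$ were unbounded for some $l<p$, one extracts a nontrivial limit, takes its shadow (a positive \emph{closed} horizontal current of bi-dimension $(l,l)$, by the cohomological triviality of the lift, cf.\ Lemma~\ref{lemma:shadow}), and iterates to contradict \massump\ via $d>d^+_l$. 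So the dynamical degree $d^+_l$ does enter, but only after passing to closed shadows; no fixed ambient form dominates $\sigma_n^*\omega_\G$ directly. If you replace your second paragraph with this shadow-and-contradiction argument (or simply invoke Proposition~\ref{prop:controlmass} with $Y=\G(p,k)$ and $G=\widehat f$, then Theorem~\ref{thm:de-thelin}), the proof goes through.
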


We will apply the idea of the above theorem to show that $T^\pm$ is tame. In fact, we will prove a stronger result, see Theorem \ref{thm:tame}. For simplicity, we only treat the case of $T^-$. The proof for $T^+$ is completely symmetric. In the remaining part of this section, we use $X$ to denote a compact K\"ahler manifold whose K\"ahler form is denoted by $\omega_X$. The K\"ahler form of $D\times X$ is denoted by $\omega:=\pi_D^*(\omega_D)+\pi_X^*(\omega_X)$. Here $\pi_D$ and $\pi_X$ are the canonical projections from $D\times X$ to $D$ and $X$.

 Given a H\'enon-like map $f$ on $D$, it admits a {\it canonical lift} to $D\times \G(p,k)$ given by $\widehat{f}(z,v):=(f(z),\d f_z(v))$ for $z\in D$ and $v\in \G(p,k)$. We can extend this notion to a more general setting.

\begin{definition}\rm
    Let $X$ be a compact K\"ahler manifold and suppose $F:D_v\times X\to D_h\times X$ is bi-holomorphic. We say $F$ is a {\it lift} of $f$ to $D\times X$ if $\pi_D\circ F=f\circ \pi_D$ and for every $z\in D_v$, $\pi_X\circ F(z,\cdot)$ induces a trivial action on the Hodge cohomology classes of $X$.
\end{definition}
We will see later in the proof of Proposition \ref{prop:controlmass} and Lemma \ref{lemma:shadow} that the condition that $\pi_X\circ F(z,\cdot)$ is cohomologically trivial allows us to extend the properties of canonical lifts to all lifts. 

The following lemma justifies the terminology: the canonical lift is indeed a lift. 

\begin{lemma}\label{lem:basicinva}
     $\widehat{f}$ is a lift of $f$ to $D\times \G(p,k)$.
\end{lemma}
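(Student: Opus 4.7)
The plan is to verify the two defining conditions for a lift by direct inspection of the formula $\widehat{f}(z,v) = (f(z), \d f_z(v))$. Both conditions turn out to follow from standard facts; the real content of the lemma is simply to check that the canonical construction fits the general framework introduced by the preceding definition.

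First I would check that $\widehat{f}$ is a bi-holomorphism from $D_v \times \G(p,k)$ onto $D_h \times \G(p,k)$ and that the commutation relation holds. The commutation $\pi_D \circ \widehat{f} = f \circ \pi_D$ is immediate from the formula. For the bi-holomorphism, the first component is $f$, which is bi-holomorphic from $D_v$ onto $D_h$ since $f^{-1}$ exists as a H\'enon-like map; for each $z \in D_v$, the second component $v \mapsto \d f_z(v)$ is the natural action of the invertible linear map $\d f_z \in \Gl(k,\C)$ on $\G(p,k)$, which is a bi-holomorphism of $\G(p,k)$. The explicit inverse is $(w,u) \mapsto (f^{-1}(w), \d(f^{-1})_w(u))$.

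Second, I would verify that for each fixed $z \in D_v$, the map $\phi_z := \pi_X \circ \widehat{f}(z,\cdot) : \G(p,k) \to \G(p,k)$ given by $v \mapsto \d f_z(v)$ acts trivially on Hodge cohomology. The key observation is that $\phi_z$ is the action of an element of $\Gl(k,\C)$ on the Grassmannian, and $\Gl(k,\C)$ is path-connected. Choosing a continuous path $g_t$ in $\Gl(k,\C)$ with $g_0 = \id$ and $g_1 = \d f_z$ gives a continuous homotopy from $\id_{\G(p,k)}$ to $\phi_z$ through holomorphic automorphisms of $\G(p,k)$. The induced pullback on singular cohomology with complex coefficients (equivalently, on de Rham cohomology) is a homotopy invariant, so $\phi_z^* = \id^*$ on $H^*(\G(p,k),\C)$. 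Since $\phi_z$ is holomorphic, its pullback preserves the Hodge decomposition, and hence the induced action on each Hodge component $H^{p,q}(\G(p,k))$ is the identity.

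There is no real technical obstacle here; the verification is essentially routine. The substance of the definition of a lift lies not in this lemma but in how the cohomological triviality on fibers is exploited later to extend mass-control arguments and the shadow construction, which originally used only the canonical lift, to arbitrary lifts.
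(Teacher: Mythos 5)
Your proof is correct and uses the same core argument as the paper: connect $\d f_z$ to the identity by a path in $\Gl(k,\C)$ and invoke homotopy invariance of cohomology. The extra checks you include (bi-holomorphism, commutation, preservation of the Hodge decomposition by a holomorphic map) are reasonable details that the paper leaves implicit.
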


\begin{proof}
    It suffices to prove that for every $z\in D_v$, $\d f_z$ induces a trivial action on the Hodge cohomology classes of $\G(p,k)$. Notice that $\d f_z$ is given by a matrix in $\Gl(k,\C)$. As $\Gl(k,\mathbb{C})$ is path connected, we can use a smooth curve to connect the identity matrix $\id_{\C^k}$ with $\d f_z$. This gives a homotopy between $\id_{\C^k}$ and $\d f_z$. Thus, $\d f_z$ acts trivially on ${H}^{*}(\mathbb{G}(p,k),\C)$.   
\end{proof}

Notice that $\Gr(D\times X, p)$ is the set of points $(z,w,H_w)$ where $z\in D, w\in X$ and $H_w$ represents a $p$-dimensional subspace in $\C^k\oplus T_wX$. Therefore, $\Gr(D\times X, p)$ can be identified with $D\times \Gr(\mathbb{C}^{k}\oplus TX,p)$. Here $\C^k\oplus TX$ is the direct sum of the trivial bundle $\C^k$ over $X$ with the tangent bundle of $X$. Let $F:D_v\times X \to D_h\times X$ be a lift of $f$. It has a canonical lift $\widehat{F}$ to $\Gr(D\times X, p)$ given by $\widehat{F}(z,w,H_w) :=\big(F(z,w),\d F_{(z,w)}(H_w)\big)$.

\begin{lemma} \label{lem: invacoho}
    Let $F$ and $\widehat{F}$ be as above. Then $\widehat{F}$ is also a lift of $f$ to $D\times \Gr(\mathbb{C}^{k}\oplus TX,p)$.
\end{lemma}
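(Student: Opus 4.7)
The plan is to extend the homotopy argument of Lemma \ref{lem:basicinva} to the bundle setting. The relation $\pi_D \circ \widehat{F} = f \circ \pi_D$ follows at once from $\pi_D \circ F = f \circ \pi_D$ and the construction of $\widehat{F}$, so the real content is to show that for each $z \in D_v$ the induced map
\[
    \Phi_z := \pi_{\Gr(\C^k \oplus TX, p)} \circ \widehat{F}(z, \cdot) : \Gr(\C^k \oplus TX, p) \longrightarrow \Gr(\C^k \oplus TX, p)
\]
acts trivially on Hodge cohomology. The compatibility $\pi_D \circ F = f \circ \pi_D$ forces $F$ to have the form $F(z, w) = (f(z), g_z(w))$, and inverting $F$ shows that $g_z := \pi_X \circ F(z, \cdot) : X \to X$ is a biholomorphism; by hypothesis it acts trivially on $H^*(X, \C)$. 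A direct computation gives
\[
    \Phi_z(w, H_w) = \bigl(g_z(w),\ \d F_{(z, w)}(H_w)\bigr),
\]
where the Jacobian takes the block-triangular form
\[
    \d F_{(z,w)} = \begin{pmatrix} \d f_z & 0 \\ A(z, w) & \d g_z(w) \end{pmatrix},
\]
the upper-right block being zero because the first coordinate of $F$ is independent of $w$; here $A(z, w) : \C^k \to T_{g_z(w)}X$ denotes the mixed partial.

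The next step is to deform $\d F_{(z, w)}$ through fiberwise linear isomorphisms in two stages. Replacing $A(z, w)$ by $(1 - t) A(z, w)$ gives a homotopy from $\d F_{(z,w)}$ to the block-diagonal map $\d f_z \oplus \d g_z(w)$; invertibility is preserved since the diagonal blocks are untouched. A second homotopy, induced by any smooth path in $\Gl(k, \C)$ from $\d f_z$ to $\id_{\C^k}$ (which exists by path-connectedness of $\Gl(k, \C)$), further deforms this to $\id_{\C^k} \oplus \d g_z(w)$. Together these exhibit $\Phi_z$ as homotopic to
\[
    \Phi'_z(w, H_w) := \bigl(g_z(w),\ (\id_{\C^k} \oplus \d g_z(w))(H_w)\bigr).
\]

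To conclude, I would show that $(\Phi'_z)^* = \id$ on cohomology by applying Leray-Hirsch to the Grassmannian bundle $\pi : \Gr(\C^k \oplus TX, p) \to X$. Its cohomology is generated as an $H^*(X)$-module by the Chern classes of the tautological subbundle $\tau \subset \pi^*(\C^k \oplus TX)$, all of which are Hodge classes. The identity $\pi \circ \Phi'_z = g_z \circ \pi$ together with $g_z^* = \id$ takes care of classes pulled back from $X$, while the bundle isomorphism $\id_{\C^k} \oplus \d g_z : \C^k \oplus TX \to g_z^*(\C^k \oplus TX)$ covering $g_z$ restricts to an isomorphism from $\tau$ to $(\Phi'_z)^* \tau$, giving $(\Phi'_z)^* c_i(\tau) = c_i(\tau)$. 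Since both families of generators are fixed, $(\Phi'_z)^*$ is the identity on $H^*$, and a fortiori on Hodge cohomology. The main point to verify is that every intermediate matrix in the two homotopies is actually invertible, ensuring that each stage yields a well-defined continuous self-map of the Grassmannian bundle; this is secured by the block-triangular structure preserving the two invertible diagonal blocks.
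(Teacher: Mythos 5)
Your proof is correct, but it takes a partly different and, in one respect, more careful route than the paper. The paper goes straight to the Leray--Hirsch argument: it defines the bundle isomorphism $\Phi\colon E\to E$, $\Phi(w,v)=(\phi(w),\d F_{(z,w)}(v))$ covering $\phi=g_z$, observes $\pi'\circ\Phi'=\phi\circ\pi'$, asserts $(\Phi')^*h=h$ for $h$ the Chern class of the ``canonical line bundle'' $L$ of $\Gr(E,p)$, and invokes Leray--Hirsch with the classes $h^l$. Your version differs in two ways. First, you prepend a two-stage homotopy of bundle isomorphisms to reduce $\Phi$ to $\id_{\C^k}\oplus\d g_z$. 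This is a natural extension of the homotopy idea used in Lemma~\ref{lem:basicinva}, and each step does preserve invertibility as you check, but it is in fact \emph{redundant}: the bundle isomorphism argument you then apply to $\Phi'_z$ works verbatim for $\Phi_z$ itself, since $\Phi$ already restricts to an isomorphism $\tau\to\Phi_z^*\tau$ of tautological subbundles, giving $\Phi_z^*c_i(\tau)=c_i(\tau)$ directly. Second, you apply Leray--Hirsch with the full set of Chern classes $c_1(\tau),\dots,c_p(\tau)$ of the tautological subbundle, whose monomials restrict to a basis of $H^*(\G(p,k+\dim X))$ on each fibre. This is more robust than the paper's formulation, which uses powers of a single class $h$ and claims that $h$ restricted to a fibre ``generates multiplicatively the cohomology of the fibre''; that statement holds only when $p=1$ or $p=k+\dim X-1$, i.e.\ for projective rather than general Grassmannian fibres. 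The paper's argument is easily repaired by exactly the substitution you make, so this is a minor imprecision there rather than a real gap, but your treatment is the cleaner one. In short: drop the two homotopies, apply your tautological-bundle argument to $\Phi_z$ directly, and you obtain a streamlined proof that also tightens the Leray--Hirsch step.
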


\begin{proof}
    Denote by $E=\C^k\oplus TX$ which is a holomorphic vector bundle over $X$. For every $z\in D_v$, we need to show that $\pi_\Gr\circ \widehat{F}(z,\cdot)$ preserves the Hodge cohomology classes of $\Gr(E,p)$. Here $\pi_\Gr$ is the projection to $\Gr(E,p)$. Let $\phi=\pi_X\circ F(z,\cdot)$ which is an automorphism of $X$. Define $\Phi:E\to E$ an isomorphism of $E$ given by $\Phi(w,v)=(
    \phi(w),\d F_{(z,w)}(v))$ for any $w\in X$ and $v\in \C^k\oplus T_wX$. Then $\Phi$ induces an automorphism of $\Gr(E,p)$ denoted by $\Phi'$. It is easy to check that $\Phi'=\pi_\Gr\circ \widehat{F}(z,\cdot)$. Let $\pi':\Gr(E,p)\to X$ be the canonical projection. Then $\pi'\circ \Phi'=\phi\circ \pi'$.

    Let $h$ be the Chern class of the canonical line bundle $L$ of $\Gr(E,p)$. Then $(\Phi')^*(h)=h$. By Leray-Hirsch theorem, any element in $H^*(\Gr(E,p),\mathbb{C})$ can be expressed as a linear combination of classes of the form $h^l\smile (\pi')^*(c)$ where $c$ is a class in $ H^*(X,\mathbb{C})$. We can apply Leray-Hirsch theorem in this case as the Chern class of $L$ restricted to any fibre of $\Gr(E,p)$ generates multiplicatively the cohomology of the fibre. 
    
    We deduce that $(\Phi')^*(h^l\smile (\pi')^*(c))=h^l\smile (\pi')^*(\phi^*c)=h^l\smile (\pi')^*(c)$ since $\phi$ acts trivially on $H^*(X,\C)$. Thus $\Phi'$ acts trivially on $H^*(\Gr(E,p),\C)$.
\end{proof}

Now we discuss some properties of lifts of $f$ which are essential in our proof of Theorem \ref{thm:tame}. They are proven in \cite{BDR24} for canonical lifts but as mentioned before, the proofs extend to all lifts without difficulty.

First, we introduce the notions of the {\it $D$-dimension} and {\it shadow} of a positive current on $D\times X$. They are closely related to the h-dimension discussed in Section \ref{sec:sp}.

\begin{definition}[\cite{BDR24}, Definition 5.6] \rm\label{defshaw}
    Let $T$ be a nonzero positive (not necessarily closed) current of bi-dimension $(p,p)$ on $D\times X$. The {\it $D$-dimension} of $T$ is the maximal positive integer $l$ such that $T\wedge (\pi_D)^*(\omega_D^l)\neq 0$. The {\it shadow} of $T$ on $D$ is the nonzero positive current of bi-dimension $(l,l)$ on $D$ given by the formula
    \[
        \shad(T):=(\pi_D)_*\big(T\wedge (\pi_X)^*\omega_X^{p-l}\big).
    \]
\end{definition}

 The following lemma is essentially \cite[Lemma 5.15]{BDR24}. To show how the properties of a lift are used, we repeat the proof here.
 
\begin{lemma}\label{lemma:shadow}
    Let $F: D_{v}\times X \to D_{h}\times X$ be a lift of $f$. Let $T$ be a positive closed current of bi-dimension $(p,p)$ in $M\times N' \times X$. Then $\shad({(F^n)_{*}T})=(f^n)_{*}(\shad({T}))$ for any $n\geq 1$.
\end{lemma}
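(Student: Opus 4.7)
The proof mirrors that of Lemma~5.15 in \cite{BDR24} for canonical lifts, adapted to a general lift $F$. By induction—using that compositions of cohomologically trivial fibre automorphisms remain cohomologically trivial, so $F^n$ is itself a lift of $f^n$—it suffices to prove the case $n = 1$.

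\emph{Step 1: $D$-dimension preservation and reduction.} Using $\pi_D \circ F = f \circ \pi_D$ and the projection formula,
\[
F_{*}T \wedge \pi_D^{*}\omega_D^j = F_{*}\bigl(T \wedge \pi_D^{*} f^{*}\omega_D^j\bigr).
\]
Since $f^{*}\omega_D$ is a smooth strictly positive $(1,1)$-form on $D_v$, locally comparable to $\omega_D$ on compact subsets, the maximal $j$ for which this is non-zero agrees with that for $T$; thus $D$-dim$(F_{*}T) = l$. Applying the projection formula once more yields
\[
\shad(F_{*}T) = f_{*}(\pi_D)_{*}\bigl(T \wedge F^{*}\pi_X^{*}\omega_X^{p-l}\bigr),
\]
so the lemma reduces to proving $(\pi_D)_{*}(T \wedge \eta) = 0$, where $\eta := F^{*}\pi_X^{*}\omega_X^{p-l} - \pi_X^{*}\omega_X^{p-l}$.

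\emph{Step 2: Fibrewise $\ddc$-exactness and Stokes.} The lift property ensures that, for each $z \in D_v$, the fibre automorphism $\phi_z := \pi_X \circ F(z, \cdot)$ of $X$ acts trivially on $H^{1,1}(X, \R)$. By the $\ddc$-lemma on the compact K\"ahler manifold $X$, one gets $\phi_z^{*}\omega_X = \omega_X + \ddc u_z$ with $u_z$ smoothly depending on $z$ (after a zero-mean normalization on $X$). Consequently, $\eta$ restricts on each fibre as $\eta|_{\{z\}\times X} = \phi_z^{*}\omega_X^{p-l} - \omega_X^{p-l} = \ddc \psi_z$ for a smooth $(p-l-1, p-l-1)$-form $\psi_z$ on $X$. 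Slicing $T$ along the $\pi_D$-fibres—by Federer--Fleming when $p \geq k$, and by smooth regularization of $T$ followed by passage to a weak limit otherwise—yields a positive closed current $T_z$ on the compact K\"ahler manifold $X$. By Stokes' theorem on $X$,
\[
\langle T_z, \ddc \psi_z\rangle = \langle \ddc T_z, \psi_z\rangle = 0.
\]
The Fubini-type slicing formula then gives $(\pi_D)_{*}(T \wedge \eta) = 0$, completing the proof.

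The main technical hurdle is the slicing/regularization argument in Step~2: for $p \geq k$, slicing theory for normal currents applies directly; for $p < k$, one must first approximate $T$ by smooth positive closed forms (for instance by convolution adapted to the horizontal/vertical structure on $D \times X$) and exploit the weak continuity of both the shadow and the push-forward by $\pi_D$ to pass to the limit.
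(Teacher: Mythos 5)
Your Step~1 reduction is correct and essentially matches the opening of the paper's proof: $\shad(F_*T) - f_*\shad(T) = f_*(\pi_D)_*(T\wedge\eta)$ with $\eta = F^*\pi_X^*\omega_X^{p-l} - \pi_X^*\omega_X^{p-l}$, so the lemma is equivalent to $(\pi_D)_*(T\wedge\eta)=0$. (The paper phrases this with $F_*$ instead of $F^*$ and does not bother reducing to $n=1$, but these are cosmetic.)

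Step~2, however, takes a genuinely different route from the paper and has a real gap. In this paper's setting $1\le p\le k-1$ always, so $p<k$; slicing a bi-dimension $(p,p)$ current on $D\times X$ along $\pi_D$ (base of complex dimension $k$) would produce slices of bi-dimension $(p-k,p-k)$, which is negative, so Federer--Fleming slicing is \emph{never} available here, not just ``when $p<k$.'' Your fallback via smooth regularization faces the same obstruction: a smooth form of bi-degree $(k+m-p,k+m-p)$ with $p<k$ restricts to zero on every $m$-dimensional fibre $\{z\}\times X$, so the fibrewise integrals you want to invoke carry no information; and moreover neither $\shad$ nor $(\pi_D)_*(\,\cdot\wedge\eta)$ is weakly continuous on positive closed currents without mass/support control, so passing to the limit is not justified. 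There is also a dimension mismatch even where slicing were defined: the pairing $\lp T_z,\ddc\psi_z\rp$ needs $T_z$ of bi-dimension $(p-l,p-l)$, whereas the slice would have bi-dimension $(p-k,p-k)$ with $l\neq k$ in general. Finally, even if all of this could be repaired, your argument only addresses the pure-fibre part of $\eta$; the mixed components of $\eta$ (those carrying $\d x$-factors) are never treated. The paper handles exactly this: it writes $\eta=\ddc U + V$ with $V$ vanishing on fibres, applies Stokes to transfer $\ddc$ from $U$ onto $\gamma$, and then kills both remaining terms using the fact that a current of $D$-dimension $l$ annihilates any pullback from $D$ of degree $\ge 2l+1$ (a Cauchy--Schwarz argument). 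That $D$-dimension/Cauchy--Schwarz step is the key ingredient of the paper's proof and is absent from yours; without it the mixed contributions are unaccounted for.
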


\begin{proof}
   Suppose the $D$-dimension of $T$ is $l\leq p$ and so is that of $(F^n)_*T$. Since the cohomology classes of $X$ are invariant under $\pi_X\circ F(z,\cdot)$ for any $z\in D$, we have 
   \[
       (F^n)_*( (\pi_{X})^* \omega_{X}^{p-l})=(\pi_{X})^* \omega_{X}^{p-l} + \ddc U_n + V_n,
   \]
   for some form $U_n$ and a form $V_n$ which vanishes on the fibres of $\pi_D$. Therefore we can write $V_n$ as the sum of currents of the form $(\pi_D)^{*}(\alpha)\wedge V'_n$, where $\alpha$ is a 1-form on $D$. Let $\gamma$ be a positive $(l,l)$-form on $D$ with compact support. Then by Stokes' formula,
   \begin{align*}
       \lp \shad((F^n)_*T),\gamma\rp & = \int_{D\times X} (F^n)_{*}(T)\wedge (\pi_X)^{*} \omega^{p-l}_X \wedge (\pi_D)^{*}(\gamma) \\
       =& \int_{D\times X} (F^n)_{*}\big(T\wedge (\pi_X)^*\omega_X^{p-l}\big)\wedge (\pi_D)^{*}(\gamma)\\
       &- \int_{D\times X} (F^n)_{*}(T)\wedge  U_n\wedge (\pi_D)^{*}(\ddc\gamma)\\
       &- \int_{D\times X} (F^n)_{*}(T)\wedge V_n\wedge (\pi_D)^{*}(\gamma).
   \end{align*}
The first integral in the last term is $\lp T\wedge (\pi_X)^{*}\omega^{p-l}_{X}, (\pi_D\circ F^n)^{*}(\gamma)\rp $, which is exactly $\lp  (f^n)_{*}(\shad({T})), \gamma\rp $ by semi-conjugacy between $F$ and $f$. Since the $D$-dimension of $T$ is $l$, one can use Cauchy-Schwarz inequality to show that $T\wedge (\pi_D)^{*}(\alpha)=0$ for any degree $(2l+1)$-form $\alpha$ on $D$. Notice that the last integral is a sum of $\int_{D\times X}(F^n)_{*}(T)\wedge V_n'\wedge (\pi_D)^{*}(\alpha\wedge\gamma)$. As a result, the last two integrals equal to 0.
\end{proof}

The following proposition is an improvement of \cite[Proposition 5.9]{BDR24}. We give a sketch of the proof in Appendix \ref{appendix:a}.

\begin{proposition}\label{prop:controlmass}
    Let $f$ be a H\'enon-like map on $D$ satisfying \massump. Let $Y$ be a compact K\"ahler manifold. Let $G:D_v\times Y\to D_h\times Y$ be a lift of $f$. Then there exists a constant $C>0$ depending on $M'$ and $M''$ such that for any positive closed current $S$ on $D\times Y$ with $\supp(S)\subset M\times N'\times Y$ and any $n\geq 1$, we have
    \begin{equation}\label{inequality mass control}
        \|d^{-n} (G^n)_*S\|_{M''\times N\times Y}\leq C\|S\|_{M'\times N\times Y}.
    \end{equation}
\end{proposition}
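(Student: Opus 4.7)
The strategy is to reduce the estimate on $D\times Y$ to a horizontal-mass estimate on $D$ via the shadow of $S$, and then invoke \massump\ through the dynamical degrees of $f$. Assume $S$ has bi-dimension $(p,p)$ and $D$-dimension $l\leq p$; the case of other bi-dimensions is analogous. Expanding $\omega^p=(\pi_D^*\omega_D+\pi_Y^*\omega_Y)^p$ binomially, the terms of $D$-degree strictly larger than $2l$ pair trivially with $S$ by the definition of $D$-dimension, so it suffices to control
\[
    \big\langle (G^n)_*S,(\pi_D^*\omega_D)^l\wedge(\pi_Y^*\omega_Y)^{p-l}\big\rangle_{M''\times N\times Y}.
\]

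First, I would replace $(\pi_D^*\omega_D)^l$ on $M''\times N$ by a smooth positive closed vertical test form $\Theta$ on $D$ of bi-degree $(l,l)$ that dominates $\omega_D^l$ there. Such a $\Theta$ is produced by the construction of Example \ref{example:ver-strict-posi}, applied to $\omega_M^l$ in place of $\omega_M^p$: take a finite sum $\sum c_i\pi_{M,A_i}^*(\rho\omega_M^l)$ of perturbed vertical pullbacks. Lifting to $D\times Y$ and wedging with $(\pi_Y^*\omega_Y)^{p-l}$ yields a smooth positive closed form $\widetilde\Theta$ on $D\times Y$ dominating $\omega^p$ on $M''\times N\times Y$.

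Second, I would use the lift hypothesis on $G$ to transport the pairing of $(G^n)_*S$ with $\widetilde\Theta$ to a pairing on $D$. Since $\pi_Y\circ G(z,\cdot)$ acts trivially on $H^*(Y,\C)$ for every $z\in D_v$, the cohomological argument in the proof of Lemma \ref{lemma:shadow} yields a decomposition
\[
    (G^n)^*\bigl((\pi_Y^*\omega_Y)^{p-l}\bigr) = (\pi_Y^*\omega_Y)^{p-l} + \ddc U_n + V_n,
\]
where $V_n$ is a sum of forms of the type $\pi_D^*(\alpha)\wedge V_n'$ with $\alpha$ a $1$-form on $D$. Applying Stokes' formula, the semi-conjugacy $\pi_D\circ G=f\circ\pi_D$, and the Cauchy--Schwarz identity $S\wedge\pi_D^*(\beta)=0$ for any $(2l+1)$-form $\beta$ on $D$ (valid because the $D$-dimension of $S$ is $l$), the $\ddc U_n$ and $V_n$ contributions drop out and one obtains $\big\langle (G^n)_*S,\widetilde\Theta\big\rangle = \big\langle (f^n)_*\shad(S),\Theta\big\rangle$.

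Third, I would apply the dynamical degrees. Since $\shad(S)$ is horizontal in $M\times N'$ of bi-dimension $(l,l)$, Definition \ref{defi:degrees} and Remark \ref{remark:mass-hori} give $\|(f^n)_*\shad(S)\|_{M'\times N}\leq C_\epsilon(d_l^+ +\epsilon)^n\|\shad(S)\|_{M'\times N}$. Under \massump\ one may choose $\epsilon$ so that $d_l^+ + \epsilon\leq d$, giving the bound $C\,d^n\,\|\shad(S)\|_{M'\times N}\leq C'\,d^n\,\|S\|_{M'\times N\times Y}$; the last inequality follows directly from Definition \ref{defshaw}. Combining everything and dividing by $d^n$ yields (\ref{inequality mass control}).

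The main obstacle is the second step. For the canonical lift $\widehat{f}$ treated in \cite[Proposition 5.9]{BDR24}, one has explicit formulas for $\widehat{f}^*$ on the Grassmannian cohomology and the proof exploits this directly. For a general lift $G$ one has only the cohomological triviality hypothesis, so one must rerun the argument of Lemma \ref{lemma:shadow} carefully to verify that the potentials $U_n$ and the fibre-vanishing terms $V_n$ really drop out after pairing against $S$, using only the semi-conjugacy $\pi_D\circ G=f\circ\pi_D$ and the $D$-dimension constraint on $S$.
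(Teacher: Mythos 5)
Your Step 1 contains the decisive gap. After expanding $\omega^p=(\pi_D^*\omega_D+\pi_Y^*\omega_Y)^p$ you discard only the terms of $D$-degree strictly larger than $2l$ (where $l$ is the $D$-dimension of $S$), and then assert that it ``suffices to control'' the single term $(\pi_D^*\omega_D)^l\wedge(\pi_Y^*\omega_Y)^{p-l}$. But the binomial expansion also contains terms $(\pi_D^*\omega_D)^j\wedge(\pi_Y^*\omega_Y)^{p-j}$ for $\max(0,p-m)\le j< l$ (with $m=\dim Y$), and none of them pair trivially with a generic $S$ of $D$-dimension $l$; the $D$-dimension hypothesis only kills the $j>l$ range. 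Consequently your form $\widetilde\Theta = \pi_D^*(\Theta)\wedge(\pi_Y^*\omega_Y)^{p-l}$, which is purely of $D$-bidegree $(l,l)$, cannot dominate $\omega^p$ as a positive $(p,p)$-form — the difference has strictly negative components of the other $D$-bidegrees — so the reduction at the end of Step 1 fails.

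The shadow identity you invoke in Step 2 is indeed available, but only for $j=l$: the proof of Lemma \ref{lemma:shadow} uses the vanishing $S\wedge\pi_D^*(\alpha)=0$ for $\alpha$ of degree $2l+1$, which is exactly the defining property of the $D$-dimension being $l$. For $j<l$ the analogous vanishing for degree-$(2j+1)$ forms is false, the $V_n$-terms do not drop out, and there is no clean identity $\big\langle(G^n)_*S,(\pi_D^*\omega_D)^j\wedge(\pi_Y^*\omega_Y)^{p-j}\big\rangle = \big\langle(f^n)_*(\cdot),\cdot\big\rangle$. This is precisely why the paper does not argue directly. Its proof handles only $I_h(n)$ (the top-$D$-dimension term) by the shadow identity, and for the lower-dimensional terms $I_j(n)$, $j<h$, it proceeds by contradiction: assuming some $C_j(n)=d^{-n}I_j(n)$ is unbounded, it invokes the normalization scheme of \cite[Lemma 5.10]{BDR24}, constructs limit currents $S_\infty^{(s)}$, shows their shadows are closed horizontal currents of bi-dimension $l<p$ satisfying a backward recursion, and derives $d\le d_l^+$, contradicting \massump. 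Without some such argument the lower-degree pieces of the mass are simply unbounded by your method. A minor additional slip: when $l=p$ one has $d_l^+=d$ and the ``choose $\epsilon$ so that $d_l^++\epsilon\le d$'' step is impossible; one then has to appeal instead to the slice-mass invariance $\| (f^n)_*T\|_h = d^n\|T\|_h$ together with Lemma \ref{masscontolhoriz}, as the paper does. Finally, the paper does not aim at the uniform bound directly: it first proves the non-uniform $O(d^n)$ statement (Proposition \ref{non-unif mass control}) and upgrades to uniformity by the standard summing trick $S=\sum 2^{-i}S_i$; your proof, had it worked, would have produced uniformity immediately, another sign that the direct route is too strong.
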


Let $\Sigma$ be a horizontal submanifold of dimension $p$ in $M\times N'$. We say a submanifold $\Sigma'$ of dimension $p$ in $D\times X$ is a {\it lift} of $\Sigma$ if $\pi_D$ is a biholomorphic map onto $\Sigma$ when restricted on $\Sigma'$. When $X=\G(p,k)$, the lift defined in (\ref{eq:mfd-lift}) is referred to as the {\it canonical lift} of $\Sigma$ and is denoted by $\widehat{\Sigma}$. Again, the canonical lift of a submanifold is also a lift. If $F$ is a lift of $f$, then it is easy to see that ${\Sigma}'_n:=F^n({\Sigma}')$ is a lift of $\Sigma_n:=f^n(\Sigma)$. Proposition \ref{prop:controlmass} implies the mass of $d^{-n}[\Sigma'_n]$ is uniformly bounded from above for all $n$.

\begin{lemma}\label{lem almtame}
Let $\Sigma'_n$ be as above. Then any limit current of  $d^{-n}[\Sigma'_n]$ is almost tame.
\end{lemma}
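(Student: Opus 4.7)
The plan is to apply Proposition \ref{prop:altame-dethelin} to the submanifolds $\Sigma'_n=F^n(\Sigma')$ inside the ambient manifold $V=D\times X$, using the normalization $d^{-n}$ in place of $v_n^{-1}$. The criterion goes through verbatim for any normalization $\lambda_n$ for which $\lambda_n^{-1}[\Sigma'_n]$ has a weak limit and the lifted and double-lifted volumes are $O(\lambda_n)$; a nonzero weak limit then forces $v_n\sim d^n$, while a zero limit is trivially almost tame.

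The key ingredients are two successive canonical lifts of the map. Let $\widehat{F}$ be the canonical lift of $F$ to $\Gr(D\times X,p)=D\times\Gr(\C^k\oplus TX,p)$, and let $\widehat{\widehat{F}}$ be the further canonical lift of $\widehat{F}$ to $\Gr(\Gr(D\times X,p),p)$. By Lemma \ref{lem: invacoho} applied twice, both $\widehat{F}$ and $\widehat{\widehat{F}}$ are themselves lifts of $f$, so Proposition \ref{prop:controlmass} applies to each. Write $\widehat{\Sigma}'$ for the canonical lift of $\Sigma'$ to $\Gr(D\times X,p)$ and $\widehat{\widehat{\Sigma}}'$ for the canonical lift of $\widehat{\Sigma}'$ to $\Gr(\Gr(D\times X,p),p)$. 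A direct chain-rule computation from the defining formula (\ref{eq:mfd-lift}) shows that the canonical lift of $\Sigma'_n$ to $\Gr(D\times X,p)$ equals $\widehat{F}^n(\widehat{\Sigma}')$, and its canonical lift to the double Grassmannian bundle equals $\widehat{\widehat{F}}^n(\widehat{\widehat{\Sigma}}')$. Two applications of Proposition \ref{prop:controlmass}---first to $\widehat{F}$ with initial current $[\widehat{\Sigma}']$, then to $\widehat{\widehat{F}}$ with initial current $[\widehat{\widehat{\Sigma}}']$---therefore produce mass bounds of the form $Cd^n$ for both iterates.

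The main technical obstacle I foresee is bookkeeping: Proposition \ref{prop:controlmass} delivers its estimate on the vertical slab $M''\times N\times(\cdot)$, whereas the iterates $\Sigma'_n$ are supported in the horizontal region $M\times N''\times X$ (since $f^n(D)\subset M\times N''$ for $n\geq 1$). Because the auxiliary data $M''\Subset M'\Subset M$ in the definition of a H\'enon-like map can be locally adjusted without modifying $f$, I would cover the $M$-projection of the support by finitely many relatively compact open sets $U''_i\Subset U'_i\Subset M$ and apply Proposition \ref{prop:controlmass} with each pair $(U''_i,U'_i)$ in place of $(M'',M')$. Summing the local inequalities yields uniform $Cd^n$ bounds on the total volumes of $\Sigma'_n$, $\widehat{F}^n(\widehat{\Sigma}')$ and $\widehat{\widehat{F}}^n(\widehat{\widehat{\Sigma}}')$ over any compact subset of the relevant bundle. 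With these three volume bounds in hand, Proposition \ref{prop:altame-dethelin} identifies every weak limit of $d^{-n}[\Sigma'_n]$ as almost tame.
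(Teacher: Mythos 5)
Your overall strategy coincides with the paper's: lift twice to $\Gr(\Gr(D\times X,p),p)$, apply Lemma \ref{lem: invacoho} twice to see that both canonical lifts $\widehat{F}$ and $\widehat{\widehat{F}}$ remain lifts of $f$, invoke Proposition \ref{prop:controlmass} to get the mass bound $\lesssim d^n$ on the slab $M''\times N\times(\cdot)$, and then conclude with Proposition \ref{prop:altame-dethelin}. The place where you diverge from the paper, and where there is a genuine gap, is the ``bookkeeping'' step: passing from a mass bound on $M''\times N\times(\cdot)$ to a mass bound on the full support of the double lift $\Sigma_n'''$.

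Your covering argument does not work, because the slab pair $(M'',M')$ in Proposition \ref{prop:controlmass} is not freely localizable. By (\ref{eq:M''}), $M''$ is constrained to satisfy $f^{-1}(D)\subset M''\times N$, and the entire proof of Proposition \ref{prop:controlmass} (Appendix \ref{appendix:a}) is built on this: the shadows $\shad(S^{(s)}_\infty)$ are closed precisely on $M''\times N$, and the contradiction is obtained by comparing their masses over $M'\times N$ to the dynamical degrees $d_l^+$, whose very definition requires $f^{-1}(D)\subset M'\times N$. A small relatively compact $U''_i\Subset M$ does not contain $\pi_M(f^{-1}(D))$, so neither the statement nor the proof of Proposition \ref{prop:controlmass} applies to the pair $(U''_i,U'_i)$; the clause ``$C$ depends on $M'$ and $M''$'' describes the behaviour of the constant as one enlarges admissible slabs, not a licence to shrink and localize them. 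The paper instead iterates $F_2$ once more: since $F_2$ is a lift of $f$, (\ref{eq:M''}) gives $F_2^{-1}(D\times X_2)=M''\times N\times X_2$, so $\Sigma_n'''=F_2(\Sigma_{n-1}'''|_{M''\times N\times X_2})$ and hence $\|\Sigma_n'''\|_{D\times X_2}\lesssim\|\Sigma_{n-1}'''\|_{M''\times N\times X_2}\lesssim d^n$. This single extra application of $F_2$ is the ingredient your argument is missing; without it (or a correct substitute) the volume hypotheses of Proposition \ref{prop:altame-dethelin} are not verified.
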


\begin{proof}
     In order to apply Proposition \ref{prop:altame-dethelin}, we need to lift $\Sigma_n'$ twice to $\Gr(\Gr(D\times X,p),p)$. Recall that we have $\Gr(D\times X, p)=D\times X_1$ where $X_1=\Gr(\C^k\oplus TX,p)$. Then $\Gr(\Gr(D\times X,p),p)=\Gr(D\times X_1,p):=D\times X_2$ where $X_2=\Gr(\C^k\oplus TX_1,p)$. Let $F_1$ be the canonical lift of $F$ to $D\times X_1$ and $F_2$ be the canonical lift of $F_1$ to $D\times X_2$. We apply Lemma \ref{lem: invacoho} twice: by applying to $F$, we deduce that $F_1$ is a lift of $f$. Then we apply it to $F_1$ and deduce that $F_2$ is also a lift of $f$.
     
     The canonical lift of $\Sigma'$ to $\Gr(D\times X,p)$ is denoted by $\Sigma''$ and the canonical lift of $\Sigma''$ to $\Gr(\Gr(D\times X,p),p)$ is denoted by $\Sigma'''$. Similarly, we define $\Sigma_n''$ and $\Sigma_n'''$ as lifts of $\Sigma_n$. It follows that $\Sigma_n''=F_1^n(\Sigma'')$ and $\Sigma_n'''=F_2^n(\Sigma''')$.
     
     Apply Proposition \ref{prop:controlmass} to the case $Y=X_2$ and $G=F_2$. Then we can find some constant $C>0$ independent of $n$ such that $\|d^{-n}\Sigma_n'''\|_{M''\times N\times X_2}\leq C$. In order to extend this to $D\times X_2$, notice that $\Sigma_n'''=F_2(\Sigma_{n-1}''')$ and (\ref{eq:M''}) implies that $F_2^{-1}(D\times X_2)=M''\times N\times X_2$. Therefore 
     \[
         \|\Sigma_n'''\|=\|F_2(\Sigma_{n-1}'''|_{M''\times N\times X_2})\|\lesssim\|\Sigma_{n-1}'''\|_{M''\times N\times X_2}\lesssim d^n.
     \]
    The result follows directly from Proposition \ref{prop:altame-dethelin}.
\end{proof}

Let $\Sigma$ and $\widehat{\Sigma}$ be as before. Define $\widehat{\Sigma}_n:=\widehat{f}^n(\widehat{\Sigma})$ which is the canonical lift of $\Sigma_n:=f^n(\Sigma)$. 

\begin{theorem}\label{thm:tame}
    Let $f$ be a H\'enon-like map on $D$ satisfying \massump. Then any limit current of $d^{-n}[\widehat{\Sigma}_n]$ is a lift of $T^-$. In particular, $T^-$ is tame. 
\end{theorem}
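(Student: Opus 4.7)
The plan is to strengthen Lemma \ref{lem almtame} from ``almost tame'' to ``tame'' by ruling out any contribution with vanishing horizontal projection, using the shadow machinery of Definition \ref{defshaw} together with \massump. Let $\widehat{T}$ be any limit of a subsequence of $d^{-n}[\widehat{\Sigma}_n]$. Applying Lemma \ref{lem almtame} with $X=\G(p,k)$, $\Sigma'=\widehat{\Sigma}$ and $F=\widehat{f}$ (which is a lift of $f$ by Lemma \ref{lem:basicinva}), I conclude that $\widehat{T}$ is almost tame on $\Gr(D,p)=D\times\G(p,k)$. Pushing down by $\pi_D$ and using the classical convergence $d^{-n}[\Sigma_n]\to T^-$ of \cite{DNS,DS06}, I get $(\pi_D)_*\widehat{T}=T^-$. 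Moreover, a standard limit argument applied to $\widehat{f}_*[\widehat{\Sigma}_n]=[\widehat{\Sigma}_{n+1}]$ yields the invariance $\widehat{f}_*\widehat{T}=d\widehat{T}$.

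With respect to the holomorphic submersion $\pi_D:\Gr(D,p)\to D$, I next decompose $\widehat{T}=\widehat{T}_1+\widehat{T}_2$ into the $\Sc^1_p$- and $\Sc^2_p$-parts introduced at the end of the previous subsection. By \cite[Lemma 3.11]{DS16}, almost tameness of $\widehat{T}$ makes both $\widehat{T}_1$ and $\widehat{T}_2$ positive closed currents. Since $(\pi_D)_*\widehat{T}_2=0$, we have $(\pi_D)_*\widehat{T}_1=T^-$, so $\widehat{T}_1$ is a lift of $T^-$ in the sense of (\ref{eq:woven-lift}). Because $\widehat{f}$ is semi-conjugate to $f$ via $\pi_D$ (and $f$ is bijective between $D_v$ and $D_h$), the map $\widehat{f}_*$ preserves each $\Sc^i_p$; hence uniqueness of the decomposition combined with the invariance $\widehat{f}_*\widehat{T}=d\widehat{T}$ gives $\widehat{f}_*\widehat{T}_j=d\widehat{T}_j$ for $j=1,2$.

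The hard part is to eliminate $\widehat{T}_2$. Suppose by contradiction that $\widehat{T}_2\neq 0$, and let $l\leq p-1$ be its $D$-dimension. Then $R:=\shad(\widehat{T}_2)$ is a nonzero positive closed horizontal current on $D$ of bi-dimension $(l,l)$, supported in $M\times N'$. Applying Lemma \ref{lemma:shadow} with $X=\G(p,k)$ and $F=\widehat{f}$ to the identity $\widehat{f}^n_*\widehat{T}_2=d^n\widehat{T}_2$ yields $f^n_*R=d^nR$ for every $n\geq 1$. Hence for any intermediate $M'\Subset M$,
\[
    \|R\|_{M'\times N}=d^{-n}\|f^n_*R\|_{M'\times N}.
\]
On the other hand, \cite[Theorem 1.1]{BDR23} asserts that under \massump\ one has $d>d_l^+$ for every $l<p$. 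So from the definition of $d_l^+$ in Definition \ref{defi:degrees}, the right-hand side decays exponentially in $n$, forcing $\|R\|_{M'\times N}=0$ for every $M'$, and hence $R=0$. This contradicts $\widehat{T}_2\neq 0$. Therefore $\widehat{T}=\widehat{T}_1$ is a closed lift of $T^-$, which simultaneously proves the main claim and shows that $T^-$ is tame.
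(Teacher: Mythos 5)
Your overall strategy—almost tameness via Lemma \ref{lem almtame}, the $\Sc^1_p/\Sc^2_p$ decomposition, passing to shadows, and using \massump\ to kill the bad part—is exactly the one the paper follows. But there is a genuine gap in the middle.

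The claim that ``a standard limit argument applied to $\widehat{f}_*[\widehat{\Sigma}_n]=[\widehat{\Sigma}_{n+1}]$ yields $\widehat{f}_*\widehat{T}=d\widehat{T}$'' does not follow. If $\widehat{T}=\lim_{i}d^{-n_i}[\widehat{\Sigma}_{n_i}]$, then by continuity of push-forward
\[
\widehat{f}_*\widehat{T}=\lim_{i} d^{-n_i}[\widehat{\Sigma}_{n_i+1}]=d\cdot\lim_{i} d^{-(n_i+1)}[\widehat{\Sigma}_{n_i+1}],
\]
and the last limit is taken along the \emph{shifted} subsequence $\{n_i+1\}$; it is a priori a different subsequential limit $\widehat{T}'$, not $\widehat{T}$. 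Equality would require uniqueness of the limit of the whole sequence $d^{-n}[\widehat{\Sigma}_n]$, which is essentially what one is in the middle of proving. Without the invariance $\widehat{f}_*\widehat{T}=d\widehat{T}$ you cannot conclude $\widehat{f}^n_*\widehat{T}_2=d^n\widehat{T}_2$, and hence not $f^n_*R=d^nR$, which is the linchpin of your contradiction.

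The fix, which the paper carries out, is to go backward rather than forward: pick, for each $n$, a limit current $\widehat{T}^{(n)}$ of $d^{-n_i+n}[\widehat{\Sigma}_{n_i-n}]$ along a further subsequence, so that $\widehat{T}=d^{-n}(\widehat{f}^n)_*(\widehat{T}^{(n)})$ holds exactly. Since $\widehat{f}_*$ preserves the $\Sc^i_p$-decomposition (your observation here is correct), uniqueness of the decomposition gives $\widehat{T}_2=d^{-n}(\widehat{f}^n)_*(\widehat{T}^{(n)}_2)$. Now $\widehat{T}^{(n)}_2$ varies with $n$, but its mass is uniformly bounded (by Proposition \ref{prop:controlmass}), so after taking shadows (Lemma \ref{lemma:shadow}) one has $\shad(\widehat{T}_2)=d^{-n}(f^n)_*(\shad(\widehat{T}^{(n)}_2))$ with $\|\shad(\widehat{T}^{(n)}_2)\|$ uniformly bounded, and Definition \ref{defi:degrees} together with \massump\ kills the right-hand side. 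So the degree estimate must be run against a \emph{family} of currents with a uniform mass bound, not against a single invariant current $R$.
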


\begin{proof}
    Suppose $d^{-n_i}[\widehat{\Sigma}_{n_i}]$ converges to $\widehat{T}$ for some subsequence $\{n_i\}$. It follows from Lemmas \ref{lem:basicinva} and \ref{lem almtame} that $\widehat{T}$ is almost tame. By the discussion after Proposition \ref{prop:altame-dethelin}, we can decompose $\widehat{T}=\widehat{T}_1+\widehat{T}_2$ as a sum of two woven positive closed currents. We also know that $(\pi_D)_*(\widehat{T}_2)=0$ and $(\pi_D)_*(\widehat{T}_1)$ which is exactly $T^-$, has a woven structure inherited from $\widehat{T}_1$. Therefore, $\widehat{T}_1$ is a lift of $T^-$ and it remains to prove $\widehat{T}_2=0$.
    
   The above results hold for all limit values of $d^{-n}[\widehat{\Sigma}_n]$. Consider a limit of $d^{-n_i+1}[\widehat{\Sigma}_{n_i-1}]$ denoted by $\widehat{T}^{(1)}$. Then we have $\widehat{T}=d^{-1}\widehat{f}_*(\widehat{T}^{(1)})$. It follows from the uniqueness of the decomposition that $\widehat{T}_2=d^{-1}\widehat{f}_*(\widehat{T}^{(1)}_2)$ where $\widehat{T}^{(1)}_2$ comes from the decomposition of $\widehat{T}^{(1)}$. By induction, we can find a sequence of positive closed currents $\{\widehat{T}_2^{(n)}\}$ such that $\widehat{T}_{2}=d^{-n}(\widehat{f}^n)_*(\widehat{T}^{(n)}_2)$. Consider the shadow of these currents, then Lemma \ref{lemma:shadow} implies $\shad({\widehat{T}_{2}})=d^{-n}(f^n)_*\big(\shad(\widehat{T}^{(n)}_2)\big)$. 
   
   By definition, the $D$-dimensions of $\widehat{T}_2$ and $\widehat{T}^{(n)}_2$ are the same and strictly less than $p$. Hence $\shad(\widehat{T}_2)$ and $\shad(\widehat{T}^{(n)}_2)$ are horizontal positive closed currents of bi-dimension $(l,l)$ for some $l<p$. Since the mass of $\widehat{T}_2^{(n)}$ is uniformly bounded from above by some constant independent of $n$, by  Definition \ref{defi:degrees}, we have 
  \[
      \limsup_{n\to\infty} \|(f^n)_*(\shad(\widehat{T}_2^{(n)}))\|_{M'\times N}^{1/n}\leq d_l^+<d.
  \]
  It then easily follows that $\shad(\widehat{T}_2)=0$ and as a result $\widehat{T}_2=0$.
\end{proof}

\section{Super-potential and density} \label{sec:sp}
In this section, we recall some elements of density theory developed in Dinh-Sibony \cite{DS18} as a general intersection theory for positive closed currents. In the second part, we prove that the intersection induced from super-potentials is coherent with density theory. 

\subsection{Density theory for positive closed currents}
Let $(X,\omega)$ be a K\"ahler manifold of dimension $k$. Suppose $V\subset X$ is an $l$-dimensional complex submanifold. 

Let $\pi: N_{V|X}\to V$ denote the normal vector bundle to $V$ in $X$. For $\lambda \in \C^{*}$, let $A_{\lambda}: N_{V|X} \to N_{V|X} $ be the multiplication by $\lambda$ on the fibres of $\pi$. So, $V$ is invariant under the action of $A_{\lambda}$. Consider a diffeomorphism $\tau$ from a neighbourhood of $V$ in $X$ to a neighbourhood of $V$ in $N_{V|X}$ such that the restriction of $\tau$ to $V$ is the identity map. Consequently, $\d \tau$ induces a real isomorphism of $N_{V|X}$. We choose a  $\tau$ such that this real isomorphism is the identity map. We call such $\tau$ an {\it admissible map}. In general, it is not holomorphic.

Fix an admissible $\tau$ as above. Let $T$ be a positive closed current of bi-degree $(p,p)$ on $X$ with no mass on $V$ such that $\supp(T) \cap V$ is compact. Consider the following family of closed currents of degree $2p$ in $N_{V|X}$ indexed by $\lambda \in \C^{*}$:
$$
  T_{\lambda} = (A_{\lambda}\circ \tau)_{*}(T).
$$ 
Since $\tau$ may not be holomorphic, the current $T_{\lambda}$ may not be of bi-degree $(p,p)$ and we cannot talk about its positivity. However, for any sequence $\{ \lambda_n\}^{\infty}_{n=1}$ in $\C^{*}$ converging to infinity, there is a subsequence $\{\lambda_{n_j} \}^{\infty}_{j=1}$ such that $ T_{\lambda_{n_j}}$ converges to some positive closed current $\mathcal R$ of bi-degree $(p,p)$ in $N_{V|X}$ as $j\to \infty$ (see \cite[Theorem 4.6]{DS18}). We say $\mathcal R$ is a {\it tangent current} of $T$ along $V$. It may depend on the sequence $\lambda_{n_j}$ but is independent of the choice of $\tau$. 

Although tangent currents may not be unique, if we consider their zero extensions to $\overline{N}_{V|X}$, they belong to the same cohomology class in $H^{2p}_{c}(\overline{N}_{V|X}, \mathbb{R})$. Here $\overline{N}_{V|X}$ denotes the projective compactification of $N_{V|X}$ along the fibre direction, i.e. every fibre is bi-holomorphic to $\mathbb{CP}^{k-l}$, and $H_{c}^{*}(\cdot,\R)$ is the de Rham cohomology group defined by forms with compact support. We denote this cohomology class by $\kappa^{V}(T)$ and call it {\it the total tangent class} of $T$ along $V$. Let $-h_{\overline{N}_{V|X}}$ be the tautological class on $\overline{N}_{V|X}$ which is the Chern class of the tautological line bundle $O_{\overline{N}_{V|X}}(-1)$ over $\overline{N}_{V|X}$. Leray's theorem implies that the cohomology ring $\bigoplus H^{*}_{c}(\overline{N}_{V|X},\R)$ is a free $\bigoplus H^{*}_{c}(V,\R)$-module generated by the classes $1, h_{\overline{N}_{V|X}},\cdots, h^{k-l}_{\overline{N}_{V|X}}$. So, we can decompose $\kappa^{V}(T)$ in a unique way as 
$$
\kappa^{V}(T)= \sum\limits_{j=\max(0,l-p)}^{\min (l,k-p)} \pi^{*}(\kappa^{V}_{j}(T))\smile h^{j-l+p}_{\overline{N}_{V|X}}, 
$$ where $ \kappa^{V}_{j}(T)$ is a class in $H^{2l-2j}_{c}(V,\R)$, called {\it the density class of dimension j}. The maximal $j$ such that $ \kappa^{V}_{j}(T)\neq 0$ is called {\it the horizontal dimension} (or {\it h-dimension} for short) of $T$ along $V$. When $\kappa^{V}(T)=0$, the h-dimension is $\max (l-p,0)$ by convention. Equivalently, given a density current $\mathcal R$, in the same spirit of Definition \ref{defshaw}, the h-dimension of $T$ also equals to the maximal $j$ such that $\mathcal R\wedge \pi^{*}(\omega^{j}|_{V}) \neq 0$. This is independent of the choice of $\mathcal R$. We mainly focus on the case when h-dimension is minimal, i.e., $\max (l-p,0)$. If $p\leq l$ and h-dimension is minimal, it is known that there is a  positive closed $(p,p)$-current $\shad(\mathcal R)$ called the {\it shadow} of $\mathcal R$ on $V$ such that $\mathcal R=\pi^*(\shad(\mathcal R))$.

Let $S$ be a positive closed $(q,q)$-current on $X$. Assume the intersection of $\supp (T)$ and $\supp(S)$ is compact. We define the tangent currents between $T$ and $S$ as the tangent currents of $T\otimes S$ along the diagonal $\Delta$ in $X\times X$. We also define the density classes between $T$ and $S$ by $\kappa(T,S):=\kappa^\Delta(T\otimes S)$ and $\kappa_j(T,S):=\kappa_j^\Delta(T\otimes S)$. They are symmetric with $T$ and $S$. Identifying $\Delta$ with $X$, we regard $\kappa_j(T,S)$ a class in $H^{2k-2j}_c(X,\R)$. In particular, we have $\kappa_j(T,[V])$ equals to the canonical image of $\kappa_j^V(T)$ in $H_c^{2k-2j}(X,\R)$. 

We consider the case when $p+q\leq k$. If h-dimension is minimal and the tangent current $\mathcal R$ is unique, we define $T\curlywedge S:=\shad(\mathcal R)$ and say that $T\curlywedge S$ is {\it well-defined.} For clarity, this is referred as intersection {\it in the sense of Dinh-Sibony}.

\subsection{Intersection given by super-potential}
    Let $X$ be a compact K\"ahler manifold of dimension $m$. Let $T_v$ be a positive closed vertical current of bi-dimension $(k-p,k-p)$ in $M''\times N$ with continuous super-potentials. Suppose $\widehat{T}_h$ is a positive closed current of bi-dimension $(p,p)$ in $M\times N''\times X$. Then we can define an intersection $\widehat{T}_h\wedge\pi_D^*(T_v)$  by
\begin{equation}\label{eq:sp}
    \lp\widehat{T}_h\wedge\pi_D^*(T_v), \varphi\rp:=\lp T_v, (\pi_D)_*(\varphi\widehat{T}_h)\rp
\end{equation}
    for any test function $\varphi$ on $D\times X$. The right hand side is well-defined since we can check that $(\pi_D)_*(\varphi\widehat{T}_h)\in\DSH_h(D')$. Note that the intersection also enjoys some continuity property: suppose $\{\widehat{T}_{h,n}\}$ is a sequence of positive closed currents of bi-dimension $(p,p)$ in $M\times N''\times X$ converging to $\widehat{T}_h$ as $n\to\infty$. Then we have \begin{equation}\label{eq:continuous-sp}
        \widehat{T}_h\wedge\pi_D^*(T_v)=\lim_{n\to\infty}\widehat{T}_{h,n}\wedge\pi_D^*(T_v).
    \end{equation}
    This follows from the fact that for any fixed test function $\varphi$, $\{(\pi_D)_*(\varphi\widehat{T}_{h,n})\}$ is a bounded family in $\DSH_h(D')$ and the super-potentials of $T_v$ are continuous. 

\begin{theorem}\label{thm:sp-density}
     Let $X, T_v,\widehat{T}_h$ be as above. Then the intersection $\widehat{T}_h\curlywedge\pi_D^*(T_v)$ is well-defined and equals to $\widehat{T}_h\wedge\pi_D^*(T_v)$ defined in (\ref{eq:sp}).
\end{theorem}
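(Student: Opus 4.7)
The plan is to prove the identity by reducing to the smooth case via approximation of $T_v$, and then using the pullback structure of $\pi_D^*(T_v)$ to identify the tangent current along the diagonal. First, using convolution with a smooth probability measure on $\mathrm{Aff}(\C^k)$ supported near $\id_{\C^k}$ (cf.\ Lemma~\ref{lemma:convolution-T}), I construct smooth positive closed vertical $(p,p)$-forms $T_v^{(n)}$ with $T_v^{(n)}\to T_v$ weakly and supports in a small neighbourhood of $\supp(T_v)$. Regularizing any $\Phi\in\DSH_h(D')$ by the same procedure yields $\Phi_n\in\DSH_h(D')$ converging to $\Phi$ in $\DSH_h(D')$, so Fubini and the continuity of the super-potentials of $T_v$ give $\lp T_v^{(n)},\Phi\rp=\lp T_v,\Phi_n\rp\to\lp T_v,\Phi\rp$. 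In particular, with $\Phi:=(\pi_D)_*(\varphi\widehat T_h)$ for a smooth test function $\varphi$ on $D\times X$, this yields
\[
\lp\widehat T_h\wedge\pi_D^*(T_v^{(n)}),\varphi\rp\longrightarrow\lp\widehat T_h\wedge\pi_D^*(T_v),\varphi\rp.
\]

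For each $n$, $\pi_D^*(T_v^{(n)})$ is a smooth $(p,p)$-form, so $\widehat T_h\wedge\pi_D^*(T_v^{(n)})$ defined via (\ref{eq:sp}) coincides with the classical wedge product. Since one factor is smooth, the tangent current of $\widehat T_h\otimes\pi_D^*(T_v^{(n)})$ along $\Delta\subset (D\times X)^2$ is unique and the density intersection equals the same classical product, giving $\widehat T_h\curlywedge\pi_D^*(T_v^{(n)})=\widehat T_h\wedge\pi_D^*(T_v^{(n)})$ for all $n$.

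The heart of the argument, and the main obstacle, is to push this identity to the limit on the density side: to show that the tangent current of $\widehat T_h\otimes\pi_D^*(T_v)$ along $\Delta$ is unique and that its shadow matches the super-potential intersection computed above. The structural input is that $\pi_D^*(T_v)$ is a pullback: in local coordinates $(z_1,w_1,u,v)$ around $\Delta$ with $u=z_2-z_1$ and $v=w_2-w_1$, it does not depend on $v$. Combined with the bi-degree count (the sum of bi-degrees equals the ambient dimension, forcing the h-dimension along $\Delta$ to take its minimal value $0$), any tangent current $\mathcal R$ must have the form $\pi^*R$ for a positive measure $R$ on $\Delta\cong D\times X$. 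To identify $R$, I would test against forms pulled back from $\Delta$ and compare with the smooth approximants: by the previous step the shadow of the tangent current of $\widehat T_h\otimes\pi_D^*(T_v^{(n)})$ is $\widehat T_h\wedge\pi_D^*(T_v^{(n)})$, which converges to $\widehat T_h\wedge\pi_D^*(T_v)$ by the first paragraph; the horizontal/vertical support conditions render the tangent pairings independent of the dilation sequence $\lambda_n\to\infty$ and the admissible map $\tau$, pinning down $R$ and simultaneously yielding uniqueness. The compact K\"ahler argument of~\cite{DNV} does not transfer directly to our non-compact setting; the pullback form of $\pi_D^*(T_v)$, the horizontal/vertical support conditions, and the continuity of super-potentials of $T_v$ together play the role of compactness.
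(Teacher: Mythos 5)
Your overall structure — approximate $T_v$ by smooth forms $T_v^{(n)}$, observe that the super-potential intersection passes to the limit, observe that density = classical wedge when one factor is smooth, then try to pass to the limit on the density side — is not the route the paper takes, and it has a genuine gap exactly at the step you yourself flag as ``the heart of the argument.''

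The gap is that density intersections are \emph{not} continuous under weak convergence of the factors; by \cite[Theorem 4.11]{DS18} one only has upper semi-continuity of density classes. So knowing $\widehat{T}_h\curlywedge\pi_D^*(T_v^{(n)})=\widehat{T}_h\wedge\pi_D^*(T_v^{(n)})$ for each $n$, together with $T_v^{(n)}\to T_v$ weakly, does not yield $\widehat{T}_h\curlywedge\pi_D^*(T_v)=\lim_n\widehat{T}_h\curlywedge\pi_D^*(T_v^{(n)})$. Your final paragraph gestures at the needed stability (``the horizontal/vertical support conditions render the tangent pairings independent of the dilation sequence\dots pinning down $R$'') but does not prove it, and as stated it is not a consequence of the support conditions alone. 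Also, the parenthetical claim that the bi-degree count ``forces the h-dimension along $\Delta$ to take its minimal value $0$'' is false: the bi-degree count only shows that $0$ is the \emph{minimal possible} value; proving that the h-dimension actually equals $0$ is precisely the content of Lemma~\ref{lemma:t<k}, and it is non-trivial.

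The paper avoids approximation altogether. For a fixed test form $\gamma$ one writes $\lp(A_\lambda)_*(\widehat{T}_h\otimes T_v),\gamma\rp=\lp T_v,\Phi_\lambda\rp$ with $\Phi_\lambda=(\pi_D)_*\big(A_\lambda^*(\gamma)\wedge\pi_{D\times X}^*(\widehat{T}_h)\big)$, a smooth horizontal form depending on $\lambda$. The continuity of the super-potentials of $T_v$ then lets one pass to the limit \emph{provided} $\|\ddc\Phi_\lambda\|_*$ is bounded uniformly in $\lambda$. Establishing that bound is the technical crux: one must write $A_\lambda^*(\ddc\alpha)$ as a difference of positive $(t+1,t+1)$-forms with bounded mass and support near the diagonal, and for $(t,t')\ne(k,k)$ such forms cannot be taken closed. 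This is handled by the explicit construction in Lemma~\ref{lemma:basic-cons} (the forms $\theta_{\lambda,t}$ with bounded mass, prescribed behaviour near $\|z\|<1/\lambda$, and $\lambda$-independent differential), followed by a closing-up correction in Lemma~\ref{lemma:Phi-lambda-bounded}. None of this appears in your proposal, and without a substitute for it the proof does not close.
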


When $m=0$, i.e., $X$ is a point, an immediate corollary is that if $T_h$ is a positive closed horizontal current of bi-dimension $(p,p)$ in $M\times N''$, the intersection $T_h\curlywedge T_v$ is well-defined and equals to $T_h\wedge T_v$. This is proven by Dinh-Nguyen-Vu in \cite{DNV} in the compact K\"ahler setting. 

In stead of proving the theorem directly, we will first prove the following result. Let $\Delta$ be the diagonal of $D\times D$. We use the coordinates $(x,y)$ for a point in $D\times D$. We also use the coordinate $(x,z)$ where $z:=y-x$. Then $\Delta=\{z=0\}$. Consider the submanifold $\Delta\times X=\{(x,\xi,x):x\in D,\xi\in X\}$ in $D\times X\times D$. Let $\pi_{\Delta\times X}:\mathbb E\to \Delta\times X$ be the normal vector bundle of $\Delta\times X$. We identify $\Delta\times X$ with the zero section of $\mathbb E$. Then $\mathbb E$ can be identified with $\Delta\times X\times \C^k$. The dilation map $A_\lambda:\mathbb E\to\mathbb E$ is given by $A_\lambda(x,\xi,z)=(x,\xi,\lambda z)$. The projection $\pi_{\Delta\times X}$ is given by $(x,\xi,z)\mapsto (x,\xi,0)$. Let $\iota:D\times X\to \Delta\times X$ be the canonical identification $(x,\xi)\mapsto ((x,x),\xi)$. 

\begin{proposition}\label{equidensity}
    The tangent current of $\widehat{T}_h\otimes T_v$ along $\Delta\times X$ is unique and given by $\pi_{\Delta\times X}^*(\iota_*(\widehat{T}_h\wedge\pi_D^*(T_v)))$.
\end{proposition}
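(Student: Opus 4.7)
The approach is to compute the tangent current directly using the global holomorphic admissible map $\tau_0(x,\xi,y)=(x,\xi,y-x)$, which identifies a neighbourhood of $\Delta\times X$ in $D\times X\times D$ biholomorphically with a neighbourhood of the zero section of $\mathbb E$; under this identification $A_\lambda\circ\tau_0\colon(x,\xi,y)\mapsto(x,\xi,\lambda(y-x))$. For any smooth test form $\varphi$ on $\mathbb E$ of complementary bi-degree, I aim to show
\[
\lim_{\lambda\to\infty}\bigl\langle(A_\lambda\circ\tau_0)_*(\widehat T_h\otimes T_v),\varphi\bigr\rangle=\bigl\langle\pi_{\Delta\times X}^*\iota_*(\widehat T_h\wedge\pi_D^*(T_v)),\varphi\bigr\rangle.
\]
Convergence along the full net $\lambda\to\infty$ establishes both the claimed formula and the uniqueness of the tangent current.

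Writing $\widehat T_h\otimes T_v=\pi_1^*\widehat T_h\wedge\pi_2^*T_v$, the left-hand side becomes $\langle T_v,\Phi_\lambda\rangle$ with $\Phi_\lambda:=(\pi_2)_*\bigl(\pi_1^*\widehat T_h\wedge(A_\lambda\circ\tau_0)^*\varphi\bigr)$. Using the duality $\langle\pi_{\Delta\times X}^*\iota_*S,\varphi\rangle=\langle S,\tilde\varphi\rangle$ with $\tilde\varphi:=(\pi_{\Delta\times X})_*(\varphi)$ the fibrewise integral (viewed on $D\times X$ via $\iota$), together with the super-potential definition (\ref{eq:sp}) of $\widehat T_h\wedge\pi_D^*(T_v)$, the proposition reduces to showing $\Phi_\lambda\to(\pi_D)_*(\tilde\varphi\,\widehat T_h)$ in $\DSH_h(D')$. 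Then the continuity of super-potentials of $T_v$ (Theorem \ref{thm:Green-contin-sp}) closes the argument.

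The key step is the explicit computation of $(A_\lambda\circ\tau_0)^*\varphi$. Because $\tau_0$ is holomorphic with $\tau_0^*(\d z_j)=\d y_j-\d x_j$, decomposing $\varphi$ by bi-degree in the fibre variable $z$ shows that only the top-$z$-degree component can contribute in the limit, as lower-degree components vanish by $\lambda$-power counting. For the top component $\varphi=g(x,\xi,z)\,\Omega_z$, expanding $\bigwedge_j(\d y_j-\d x_j)\wedge(\d\bar y_j-\d\bar x_j)$ binomially yields a sum indexed by pairs $(S,T)\subset[k]^2$, and bi-degree matching against $\pi_1^*\widehat T_h\wedge\pi_2^*T_v$ isolates exactly those with $|S|=|T|=k-p$. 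The classical Dirac concentration $\lambda^{2k}h(\lambda z)\,\Omega_z\to\delta_0$ then collapses the surviving terms onto $\{y=x\}$, and a multilinear identity reassembles the sum into $(\pi_D)_*(\tilde\varphi\,\widehat T_h)$, yielding the desired weak convergence of $\Phi_\lambda$.

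The main obstacle is the uniform bound $\|\ddc\Phi_\lambda\|_*\leq C$ required for convergence in $\DSH_h(D')$, rather than merely weakly. Since $\widehat T_h$ is closed, $\ddc\Phi_\lambda=(\pi_2)_*\bigl(\pi_1^*\widehat T_h\wedge(A_\lambda\circ\tau_0)^*\ddc\varphi\bigr)$, and the same analysis applied to $\ddc\varphi$, together with a Cauchy--Schwarz argument in the spirit of the proof of Proposition \ref{prop:non-closed-mu}, provides the bound. The holomorphicity of $\tau_0$ is essential throughout, since it keeps all pulled-back forms of pure bi-type and eliminates spurious contributions from the coordinate change; this is what makes a direct computation tractable in the non-compact setting without resorting to smoothing $\widehat T_h$.
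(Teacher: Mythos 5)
Your high-level strategy matches the paper's: express the tangent-current pairing as $\langle T_v,\Phi_\lambda\rangle$, show that $\Phi_\lambda$ converges to the expected form \emph{in} $\DSH_h(D')$, and close with the continuity of the super-potentials of $T_v$. The gap is in the very step you flag as ``the main obstacle,'' namely the $\lambda$-uniform bound on $\|\ddc\Phi_\lambda\|_*$, where the proposal hand-waves. You assert it follows from ``the same analysis applied to $\ddc\varphi$, together with a Cauchy--Schwarz argument in the spirit of Proposition~\ref{prop:non-closed-mu}.'' That device worked in Proposition~\ref{prop:non-closed-mu} because there was a ready-made dominating current $S_n$ which was already positive, closed, vertical, and of mass controlled by \massump. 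Here the obstruction is of a different nature: to bound the $*$-norm one must write $A_\lambda^*(\ddc\alpha)$ as a difference $\Theta_{1,\lambda}-\Theta_{2,\lambda}$ of two \emph{positive closed} $(t+1,t+1)$-forms \emph{supported near the diagonal} (so that the push-forwards against $\widehat T_h$ remain horizontal) with $\lambda$-uniform mass. As the paper points out immediately after stating Lemma~\ref{lemma:Phi-lambda-bounded}, when the $z$-bi-degree $t$ satisfies $t<k-1$ such forms do not exist for geometric reasons, and this is precisely why the compact-K\"ahler method of \cite{DNV} fails to adapt to the present setting. Cauchy--Schwarz does not manufacture those nonexistent forms.

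What your proposal is missing is the bespoke construction that resolves this: Lemma~\ref{lemma:basic-cons} produces smooth positive $(t,t)$-forms $\theta_{\lambda,t}$ of uniformly bounded mass that equal $(\ddc\|x\|^2+\lambda^2\ddc\|z\|^2)^t$ near $\{z=0\}$ and, crucially, have $\d\theta_{\lambda,t}$ \emph{independent of $\lambda$}; these are deliberately not closed. In the proof of Lemma~\ref{lemma:Phi-lambda-bounded} the positivity gives the domination, the $\lambda$-independence of $\d\theta_{\lambda,t}$ makes $\Omega_{1,\lambda}-\Omega_{1,\lambda_0}$ closed, and adding a fixed positive closed horizontal $\Omega_0$ restores positivity, yielding the $\Psi_{i,\lambda}$. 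This two-step scheme --- dominate by non-closed forms whose differential is $\lambda$-independent, then correct to closed --- is the actual content of the argument, and nothing in your sketch supplies it. A secondary but related point: your ``$\lambda$-power counting'' showing that lower-$z$-degree components do not contribute only proves $\Phi_\lambda\to 0$ weakly; since $T_v$ is singular, weak convergence of $\Phi_\lambda$ does not give $\langle T_v,\Phi_\lambda\rangle\to 0$, so this heuristic is exactly the assertion the $\DSH_h$ bound must certify, not a shortcut around it.
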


Let $\mathcal{R}$ be a limit current of $(A_\lambda)_*(\widehat{T}_h\otimes T_v)$ on $\Delta\times X\times \C^k$ as $\lambda \to \infty$. Denote the measure $\widehat{T}_h\wedge\pi_D^*(T_v)$ by $\mu$ for simplicity. We need to prove that $\mathcal R=\pi_{\Delta\times X}^*(\iota_*(\mu))$. We identify $D\times X\times D$ with an open subset of $\Delta\times X\times \C^k$. Let $\gamma$ be a smooth $(k,k)$-form compactly supported in $\Delta\times X\times \C^k$. Since both $\mathcal R$ and $\pi_{\Delta\times X}^*(\iota_*(\mu))$ are invariant under the action of $A_\lambda$, we may assume $\gamma$ is supported in $D\times X\times D$ by replacing $\gamma$ with $A_{\lambda_0}^*(\gamma)$ if necessary with some large enough $\lambda_0>0$. Then we have
\begin{equation}\label{eq:Phi-lambda}
    \begin{aligned}
        \lp \mathcal R,\gamma\rp&=\lim_{n\to\infty}\lp (A_{\lambda_n})_{*}(\widehat{T}_h\otimes T_v), \gamma\rp \\
        &= \lim_{n\to\infty}\lp \widehat{T}_h\otimes T_v, A_{\lambda_n}^*(\gamma)\rp = \lim_{n\to\infty}\lp T_v,\Phi_{\lambda_n}\rp.
    \end{aligned}
\end{equation}
where $\Phi_{\lambda}=(\pi_{D})_*\big(A_{\lambda}^*(\gamma)\wedge \pi_{D\times X}^*(\widehat{T}_h)\big)$ for each $\lambda\in\C^*$. By \cite[Lemma 3.1 \& 3.2]{DNV}, $\Phi_{\lambda}$ is a smooth $(k-p,k-p)$-form on $D$ and converges weakly to $(\pi_{D})_*\big(\iota^*((\pi_{\Delta\times X})_*(\gamma))\wedge \widehat{T}_h\big)$ as $\lambda\to \infty$. Notice that when $\lambda$ goes to infinity, the support of $\Phi_\lambda$ converges to a subset of $\supp((\pi_D)_*\widehat{T}_h)$. Therefore, in the rest of the proof we fix a $\delta>0$ such that when $|\lambda|>1/\delta$, $\supp(\Phi_\lambda)$ is horizontal and thus $\lp T_v,\Phi_\lambda\rp$ is well-defined.

Let $\alpha(x,z)$ be a smooth $(t,t')$-form on $D \times D$ with compact support and let $\beta(\xi)$ be a smooth $(k-t,k-t')$-form on $X$. Without loss of generality, we may assume $\supp(\alpha)\subset\{\|z\|\leq 1\}$. Here $0\leq t\leq k$ and $0\leq t'\leq k.$ By Weierstrass approximation theorem, it is enough to show that the value of $\mathcal R$ on $\gamma=\alpha(x,z)\wedge\beta(\xi)$ equals to the one of $\pi_{\Delta\times X}^*(\iota_*(\mu))$. We divide such forms into two types:
\begin{enumerate}
    \item Type I: $(t,t')\neq (k,k)$;
    \item Type II: $(t,t')=(k,k)$. 
\end{enumerate}
For forms of type I, since $(\pi_{\Delta\times X})_*(\alpha\wedge\beta)=0$, we need to show that $\lp \mathcal R,\alpha\wedge\beta\rp=0.$ This implies that the h-dimension of $\widehat{T}_h\otimes T_v$ along $\Delta\times X$ is 0 and thus $\mathcal R$ is the pullback of some measure on $\Delta\times X$. This is done in the following lemma.

\begin{lemma}\label{lemma:t<k}
    When $(t,t')\neq (k,k)$, we have $\lp \mathcal R, \alpha\wedge\beta\rp=0.$
\end{lemma}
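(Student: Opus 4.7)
My plan is to show that $\Phi_\lambda$ from the expression in (\ref{eq:Phi-lambda}) actually vanishes identically, so that $\lp \mathcal R, \gamma\rp = \lim \lp T_v, \Phi_{\lambda_n}\rp = 0$ trivially. The argument is a straightforward bi-degree count, and in particular the continuity of the super-potentials of $T_v$ plays no role here.

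First I would observe that since $\beta$ is a $(k-t,k-t')$-form on $X$, its bi-degree must be non-negative, which forces $0 \leq t \leq k$ and $0 \leq t' \leq k$. Combined with the assumption $(t,t') \neq (k,k)$, this gives $t < k$ or $t' < k$.

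Next I would track the $z$-bi-degree through the integrand $A_\lambda^*(\gamma) \wedge \pi_{D\times X}^*(\widehat{T}_h)$ of $\Phi_\lambda$. Three facts come together. First, $\beta(\xi)$ carries no $dz$ or $d\bar z$. Second, since $\pi_{D\times X}: D\times X\times D \to D \times X$ is a submersion whose fibre coordinate is $z$, in adapted local coordinates $(x,\xi,z)$ the pullback $\pi_{D\times X}^*(\widehat{T}_h)$ involves only $dx, d\bar x, d\xi, d\bar\xi$ differentials and no $dz, d\bar z$. Third, $A_\lambda^*\alpha$ has the same $z$-bi-degree decomposition as $\alpha$ itself, since the scaling only multiplies coefficients by factors of the form $\lambda^{b_1}\bar\lambda^{b_2}$ without creating or destroying differentials. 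Decomposing $\alpha$ into pure $(x,z)$-bi-degree components, every component has $z$-bi-degree $(b_1,b_2)$ with $b_1 \leq t$ and $b_2 \leq t'$. Hence the integrand has $z$-bi-degree at most $(t,t')$ componentwise.

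Finally, the pushforward $(\pi_D)_*$ integrates along the fibre $X \times D$ with coordinates $(\xi,z)$, and so produces a nonzero form on $D$ only when the integrand attains top $z$-bi-degree $(k,k)$. Since $(t,t') \neq (k,k)$ with $t,t' \leq k$, no component of the integrand reaches $z$-bi-degree $(k,k)$, which forces $\Phi_\lambda \equiv 0$ for every $\lambda$. Then (\ref{eq:Phi-lambda}) gives $\lp \mathcal R,\gamma\rp = 0$ at once. The only subtle point is the assertion that $\pi_{D\times X}^*(\widehat{T}_h)$ carries no $z$-differentials, which follows from the standard description of pullback of currents by a submersion in adapted local coordinates.
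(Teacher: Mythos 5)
The bi-degree count is carried out in an inconsistent pair of coordinate systems, and the conclusion $\Phi_\lambda\equiv 0$ is in fact false. The observation that $\pi_{D\times X}^*(\widehat{T}_h)$ carries no $dz,d\bar z$ is valid in the coordinates $(x,\xi,z)$, where $z$ is the fibre of $\pi_{D\times X}$. But the push-forward $\Phi_\lambda=(\pi_D)_*(\cdots)$ appears precisely because of the Fubini identity $\lp \widehat{T}_h\otimes T_v, A_\lambda^*(\gamma)\rp=\lp T_v,\Phi_\lambda\rp$, which forces $\pi_D$ to be the projection onto the copy of $D$ carrying $T_v$, i.e.\ the coordinate $y$. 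Its fibre is therefore parametrised by $(x,\xi)$, and in the $(x,\xi,z)$-chart this fibre is the \emph{tilted} slice $\{x+z=\mathrm{const}\}$, not a coordinate plane. To treat $z$ as a fibre coordinate of $\pi_D$ one must switch to $(y,\xi,z)$-coordinates, and there $\pi_{D\times X}^*(\widehat{T}_h)$ \emph{does} pick up $dz,d\bar z$ via $dx=dy-dz$. Your three facts never hold simultaneously in a single chart adapted to $\pi_D$, so the degree count does not close.

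A concrete counterexample to $\Phi_\lambda\equiv 0$: with $k=2$, $p=1$, $m=1$, take $\alpha=a(x,z)\,dz_1\wedge d\bar z_1$, so $(t,t')=(1,1)\neq(2,2)$. Expanding $dz_1\wedge d\bar z_1=(dy_1-dx_1)\wedge(d\bar y_1-d\bar x_1)$, the $dy_1\wedge d\bar y_1$ component pairs with the top $(x,\xi)$-degree part of $\beta\wedge\pi_{D\times X}^*(\widehat{T}_h)$ and gives $\Phi_\lambda\neq 0$ for finite $\lambda$. What \emph{is} true, and what the paper proves, is only that $\Phi_\lambda\to 0$ weakly as $\lambda\to\infty$, because $(\pi_{\Delta\times X})_*(\alpha\wedge\beta)=0$ when $\alpha$ is not of top $z$-degree. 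To pass from weak convergence $\Phi_\lambda\to 0$ to $\lp T_v,\Phi_\lambda\rp\to 0$ one must control $\Phi_\lambda$ in the $\DSH_h$-topology — this is Lemma~\ref{lemma:Phi-lambda-bounded}, which is the technical heart of the section (it occupies the rest of Section~\ref{sec:sp}) — and then invoke the continuity of the super-potentials of $T_v$. Your assertion that super-potentials play no role here is therefore a second, independent error, and it is precisely the source of the difficulty the paper describes after the statement of Lemma~\ref{lemma:Phi-lambda-bounded}: when $t<k-1$ one cannot dominate $A_\lambda^*(\ddc\alpha)$ by positive closed forms supported near the diagonal, which is why the approach of \cite{DNV} breaks down and a new argument is needed.
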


\begin{proof}
    Suppose the lemma is true for $t=t'$. Now if $t\neq t'$, suppose $t<t'$ without loss of generality. By linearity, we may assume $\alpha=\alpha_t\wedge g$ where $\alpha_t$ is a positive $(t,t)$-form of compact support on $D\times D$ and $g$ is a $(0,t'-t)$-form. Similarly, we take $\beta=\beta_{k-t'}\wedge g'$ where $\beta_{k-t'}$ is a positive $(k-t',k-t')$-form and $g'$ is a $(t'-t,0)$-form on $X$. Denote by $S_\lambda:=(A_{\lambda})_{*}(\widehat{T}_h\otimes T_v)$. By Cauchy-Schwarz inequality,
    \[
        |\lp S_\lambda, \alpha\wedge\beta\rp|^2\leq |\lp S_\lambda, \alpha_t\wedge g\wedge \bar{g}\wedge \beta_{k-t'}\rp|\cdot |\lp S_\lambda, \alpha_t\wedge \beta_{k-t'}\wedge g'\wedge \bar{g}'\rp|.
    \]
    Since $\|S_\lambda\|$ is bounded from above by a constant independent of $\lambda$, so is the first term on the right hand side. Now $\beta_{k-t'}\wedge g'\wedge\bar{g}'$ is a $(k-t,k-t)$-form. It follows from our assumption that $\lp S_\lambda, \alpha\wedge\beta\rp$ converges to 0.

    Therefore we can assume $\alpha$ is a positive $(t,t)$-form on $D\times D$ and $\beta$ is a positive $(k-t,k-t)$-form on $X$ for some $t<k$. Recall that $\omega_X$ is the K\"ahler form of $X$. We can find $C>0$ such that $\beta\leq C\omega_X^{k-t}$. Hence we replace $\beta$ by $\omega_X^{k-t}$ in the following proof. 

    Now in (\ref{eq:Phi-lambda}), we have
    \begin{equation}\label{eq:Phi-lambda-<k}
        \Phi_\lambda=(\pi_{D})_*(A_{\lambda}^*(\alpha)\wedge\omega_X^{k-t}\wedge \pi_{D\times X}^*(\widehat{T}_h)).
    \end{equation}
    When $t<k$, $(\pi_{\Delta\times X})_*(\alpha\wedge\omega_X^{k-t})=0$. Therefore $\Phi_\lambda$ converges weakly to 0. By Lemma \ref{lemma:Phi-lambda-bounded}, we can apply the fact that $T_v$ has continuous super-potentials and deduce from (\ref{eq:Phi-lambda}) that $\lp \mathcal R,\alpha\wedge \beta\rp=0.$
\end{proof}

\begin{lemma}\label{lemma:Phi-lambda-bounded}
    For $\Phi_\lambda$ defined in (\ref{eq:Phi-lambda-<k}), $\|\ddc\Phi_\lambda\|_*$ is bounded from above by some constant independent of $\lambda$.
\end{lemma}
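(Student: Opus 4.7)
The plan is to commute $\ddc$ with the pushforward and pullback, using that $\widehat T_h$ (hence $\pi_{D\times X}^*(\widehat T_h)$) and $\omega_X^{k-t}$ are closed. This gives
\[
    \ddc\Phi_\lambda = (\pi_D)_*\bigl(A_\lambda^*(\ddc\alpha)\wedge \omega_X^{k-t}\wedge \pi_{D\times X}^*(\widehat T_h)\bigr).
\]
Since $\alpha$ is smooth and compactly supported on $D\times D$, the form $\ddc\alpha$ is smooth, compactly supported, and uniformly bounded. I would dominate it by a positive closed smooth form: pick any K\"ahler form $\Omega$ on $D\times D$ and a constant $C>0$ large enough so that
\[
    \Theta^+:=C\Omega^{t+1}+\ddc\alpha, \qquad \Theta^-:=C\Omega^{t+1}
\]
are both positive closed smooth $(t+1,t+1)$-forms on $D\times D$ with $\Theta^+-\Theta^-=\ddc\alpha$.

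Setting $\Psi_j^\lambda:=(\pi_D)_*\bigl(A_\lambda^*(\Theta^{\pm})\wedge\omega_X^{k-t}\wedge \pi_{D\times X}^*(\widehat T_h)\bigr)$ (with $+$ for $j=1$ and $-$ for $j=2$), each $\Psi_j^\lambda$ is positive and closed, of bi-degree $(k-p+1,k-p+1)$. For $|\lambda|>1/\delta$ the support of $A_\lambda^*(\Theta^{\pm})$ lies in an $O(1/|\lambda|)$-neighbourhood of $\{z=0\}$, and combined with the horizontal support $M\times N''\times X$ of $\widehat T_h$ this forces $\Psi_j^\lambda$ to be horizontal in $D'$. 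By construction $\Psi_1^\lambda-\Psi_2^\lambda=\ddc\Phi_\lambda$, so $(\Psi_1^\lambda,\Psi_2^\lambda)$ is an admissible pair in the infimum defining $\|\ddc\Phi_\lambda\|_*$.

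The main obstacle is then to bound $\|\Psi_j^\lambda\|_{D'}$ uniformly in $\lambda$. Pairing with $\omega_D^{p-1}$ and moving it inside, the mass equals $\lp\widehat T_h, \Xi_\lambda^{\pm}\rp$ where $\Xi_\lambda^{\pm}:=(\pi_{D\times X})_*\bigl(A_\lambda^*(\Theta^{\pm})\wedge\omega_X^{k-t}\wedge \pi_D^*(\omega_D^{p-1})\mathbf{1}_{D'}\bigr)$ is a smooth form on $D\times X$. In the fibre integration over $z$, only the top-$z$-bidegree $(k,k)$ component of $A_\lambda^*(\Theta^{\pm})$ survives; on it $A_\lambda^*$ multiplies coefficients by $|\lambda|^{2k}$, while the substitution $z'=\lambda z$ contributes a compensating Jacobian $|\lambda|^{-2k}$. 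Dominated convergence then gives a uniform $C^0$-bound on $\Xi_\lambda^{\pm}$, which together with the finite mass of $\widehat T_h$ on its horizontal support yields the desired bound. The delicate part will be to verify this cancellation term by term in the multinomial expansion of $A_\lambda^*(\Omega^{t+1})$, with particular care for the mixed cross-components.
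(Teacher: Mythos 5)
Your decomposition $\Theta^+-\Theta^-=\ddc\alpha$ with $\Theta^\pm$ positive \emph{closed} on $D\times D$ runs into the exact obstruction the paper flags just before the proof: for $t+1<k$ there is no positive closed $(t+1,t+1)$-form that is both supported near $\Delta$ and dominates $A_\lambda^*(\ddc\alpha)$. Concretely, your $\Theta^\pm$ are not compactly supported in the $z$-direction (neither $C\Omega^{t+1}$ nor $C\Omega^{t+1}+\ddc\alpha$ is), so $A_\lambda^*(\Theta^\pm)$, viewed as a current on the ambient $(D\times X)\times D$, picks up a boundary term in its differential along $\{(x,z):x+\lambda z\in\partial D\}$; the difference $\Psi_1^\lambda-\Psi_2^\lambda=\ddc\Phi_\lambda$ is fine because the two boundary contributions cancel, but the individual $\Psi_j^\lambda$ are \emph{not closed}, hence are not an admissible pair in the infimum defining $\|\ddc\Phi_\lambda\|_*$. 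If you instead take $\Theta^\pm$ positive closed on all of $D\times\C^k$ to avoid the boundary, then $A_\lambda^*(\Theta^\pm)$ has full support, $\Psi_j^\lambda$ is not horizontal, and you are again outside the admissible class. The obstruction is cohomological: a positive closed $(t+1,t+1)$-form supported near $\{z=0\}$ restricts on every fibre $\{x_0\}\times\C^k$ to a compactly supported positive closed $(t+1,t+1)$-form, which must vanish by Stokes when $t+1<k$, so it cannot dominate $A_\lambda^*(\ddc\alpha)$ whose fibre restrictions are nonzero. This is also why the argument of [DNV] on a compact K\"ahler manifold does not transfer here.

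The paper therefore abandons closedness for the auxiliary forms: Lemma~\ref{lemma:basic-cons} produces positive but \emph{non-closed} $(t+1,t+1)$-forms $\Theta_{1,\lambda}$ supported near $\Delta$, with mass bounded uniformly in $\lambda$, $\Theta_{1,\lambda}\geq A_\lambda^*(\ddc\alpha)$, and the crucial extra property that $\d\Theta_{1,\lambda}$ is \emph{independent of $\lambda$}. The resulting $\Omega_{i,\lambda}=(\pi_D)_*(\Theta_{i,\lambda}\wedge\omega_X^{k-t}\wedge\pi_{D\times X}^*\widehat T_h)$ are positive horizontal but only $\d$-closed up to a fixed ($\lambda$-independent) smooth form; one then restores closedness a posteriori by subtracting $\Omega_{1,\lambda_0}$ and adding a fixed positive closed horizontal $\Omega_0\geq\Omega_{1,\lambda_0}$, yielding $\Psi_{i,\lambda}=\Omega_{i,\lambda}-\Omega_{1,\lambda_0}+\Omega_0$. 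Your mass estimate at the end is essentially the same dilation computation the paper performs and is not the problem; the missing idea is the relaxation from ``positive closed'' to ``positive with $\lambda$-independent differential'' followed by the correction step.
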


We postpone the proof of Lemma \ref{lemma:Phi-lambda-bounded} to the end of this section. The difficulty in Lemma \ref{lemma:Phi-lambda-bounded} is as follows: in this case $\ddc\Phi_\lambda=(\pi_{D})_*\big(A_{\lambda}^*(\ddc\alpha)\wedge\omega_X^{k-t}\wedge \pi_{D\times X}^*(\widehat{T}_h)\big)$. We write $A_\lambda^*(\ddc\alpha)$ as a difference of two positive closed $(t+1,t+1)$-forms, say $A_\lambda^*(\ddc\alpha)=\Theta_{1,\lambda}-\Theta_{2,\lambda}$. To ensure that $(\pi_{D})_*\big(\Theta_{i,\lambda}\wedge\omega_X^{k-t}\wedge \pi_{D\times X}^*(\widehat{T}_h)\big)$ is still horizontal, we need $\Theta_{i,\lambda}$ is supported near the diagonal. However, due to geometric reasons, when $t<k-1$, such forms do not exist. This is also the reason why the method of \cite{DNV} does not apply in our case.

It remains to prove the result for forms of type II.  In this case, $\beta$ is a smooth function on $X$. Here the above difficulty does not exist any more: we can construct a positive closed $(k,k)$-form on $D\times D$ supported on $\{\|z\|\leq 1\}$ which is strictly positive in a neighborhood of $\Delta$. This can be done by pulling back a positive $(k,k)$-form on $\C^k$ along the diagonal direction and then do some perturbation. Therefore we may directly assume $\gamma=h\cdot\alpha_0$ where $h$ is a smooth function on $D \times X\times D$ with compact support and $\alpha_0$ is a positive closed $(k,k)$-form on $D \times D$ supported on $\{\|z\|\leq 1\}$. 

\begin{proof}[End of the proof of Proposition \ref{equidensity}]
     We continue from the above discussion. We have
    \begin{equation*}
        \Phi_{\lambda}=(\pi_{D})_*\big( A_{\lambda}^*(h\cdot\alpha_0)\wedge \pi_{D\times X}^*(\widehat{T}_h)\big)
    \end{equation*}
    for every $|\lambda|>1/\delta$. Then $\ddc\Phi_\lambda=(\pi_{D})_*\big( A_{\lambda}^*(\ddc h\wedge\alpha_0)\wedge \pi_{D\times X}^*(\widehat{T}_h)\big)$. Let $\omega$ be the K\"ahler form of $D\times X\times D$. We have $|\ddc h|\leq C \omega$ for some $C>0$. Recall that this means $-C\omega\leq\ddc h\leq C \omega$. Let $\chi'$ be a smooth cut-off function on $D$ which equals to 1 on $D'$. Define 
    \begin{equation}\label{eq:Psi-type-2}
        \Psi_\lambda:=(\pi_{D})_*\big(\pi_{D}^*(\chi') \cdot A_{\lambda}^*(\omega\wedge\alpha_0)\wedge \pi_{D\times X}^*(\widehat{T}_h)\big).
    \end{equation}
   It is easy to see that $|\ddc\Phi_\lambda|\leq C\Psi_\lambda$ on $D'$ and $\Psi_\lambda$ is positive closed horizontal on $D'$. We prove that the mass of $\Psi_\lambda$ on $D'$ is bounded independent of $\lambda$ for $|\lambda|>1/\delta$. Let $\varphi$ be a positive smooth $(p-1,p-1)$-form supported on $D'$ whose coefficients are smaller or equal to 1. Then $\lp \Psi_\lambda,\varphi\rp=\lp\widehat{T}_h,(\pi_{D\times X})_*(\pi_D^*(\chi'\varphi)\wedge A_\lambda^*(\omega\wedge\alpha_0))\rp$. It is easy to compute that $(\pi_{D\times X})_*(\pi_D^*(\chi'\varphi)\wedge A_\lambda^*(\omega\wedge\alpha_0))$ is a smooth form whose $\mathcal C^0$-norm is bounded independent of $\lambda$. The result then follows.
   
   Therefore $\|\ddc\Phi_\lambda\|_*$ is bounded independent of $\lambda$. It follows from the continuity of super-potentials of $T_v$ and (\ref{eq:Phi-lambda}) that
   \begin{align*}
           \lp \mathcal R,\gamma\rp
           &=\lim_{n\to\infty}\lp T_v,\Phi_{\lambda_n}\rp\\
           &=\lp T_v, (\pi_{D})_*\big(\iota^*((\pi_{\Delta\times X})_*(\gamma))\wedge \widehat{T}_h\big)\rp
           =\lp \pi_{\Delta\times X}^*(\iota_*(\pi_D^*(T_v)\wedge\widehat{T}_h)),\gamma\rp.\qedhere
   \end{align*}
\end{proof}

\begin{proof}[Proof of Theorem \ref{thm:sp-density}]
    We show that Proposition \ref{equidensity} implies Theorem \ref{thm:sp-density} and thus finish the proof.  Let $\Delta_X$ be the diagonal of $(D\times X)\times (D\times X)$. Let $(W,\xi)$ be a small local chart of $X$. By partition of unity, it suffices to work in $(D \times W) \times (D \times W) $ with local coordinate $(x,\xi_1,y,\xi_2)$. Define $\zeta=\xi_2-\xi_1$ and $z=y-x$. We also use $(x,\xi_1,z,\zeta)$ as a local coordinate. Then $\Delta_X =\{ z=0, \zeta=0 \}$ in this local chart. We identify the normal bundle of $\Delta_X$ with $\Delta\times W\times \C^k\times \C^m$. We can identify $(D\times W)\times (D\times W)$ with an open subset of $\Delta\times W\times \C^k\times \C^m$.  The dilation map which we still denote by $A_\lambda$ is given by  $(x,\xi_1,z,\zeta)\mapsto(x,\xi_1,\lambda z,\lambda\zeta)$. 
    
     Let $\widehat{\gamma}$ be a smooth $(k+m,k+m)$-form on $D\times W\times D\times W$ with compact support. By previous calculation, we have 
\[
    \lp (A_\lambda)_*(\widehat{T}_h\otimes\pi_D^*(T_v)),\widehat{\gamma}\rp=\lp T_v, (\pi_D\circ{\pi}_2)_*\big(A_\lambda^*(\widehat{\gamma})\wedge {\pi}_1^*(\widehat{T}_h)\big)\rp
\]
    where $ {\pi}_1$ and ${\pi}_2$ are the projections from $(D\times W)\times (D\times W)$ to the first and second $D\times W$ respectively. Let $\pi':(D\times W)\times (D\times W)\to (D\times W)\times D$ be given by $(x,\xi_1,y,\xi_2)\mapsto (x,\xi_1,y)$. By Fubini's theorem,
    \begin{align*}
        (\pi_D\circ{\pi}_2)_*\big(A_\lambda^*(\widehat{\gamma})\wedge {\pi}_1^*(\widehat{T}_h)\big)
        &=\int_{\xi_2}\int_{(x,\xi_1)}\widehat{\gamma}(x,\xi_1,\lambda(y-x),\lambda(\xi_2-\xi_1))\wedge\widehat{T}_h(x,\xi_1)\\
        &=\int_{(x,\xi_1)}\left(\int_{\xi_2}\widehat{\gamma}(x,\xi_1,\lambda(y-x),\lambda(\xi_2-\xi_1))\right)\wedge\widehat{T}_h(x,\xi_1)\\
        &=\int_{(x,\xi_1)}\left(\int_{\xi_2}\widehat{\gamma}(x,\xi_1,\lambda(y-x),\xi_2)\right)\wedge\widehat{T}_h(x,\xi_1)\\
        &=\int_{(x,\xi_1)}\left(A_\lambda^*((\pi')_*\widehat{\gamma})\right)\wedge\widehat{T}_h(x,\xi_1)\\
        &= ({\pi}_2)_*\big(A_\lambda^*((\pi')_*\widehat{\gamma})\wedge {\pi}_1^*(\widehat{T}_h)\big).
    \end{align*}
    Here by abuse of notation, we also use $\pi_1$ and $\pi_2$ to denote the projection maps from $(D\times W)\times D$ to $D\times W$ and $D$. Therefore 
    \[
        \lp (A_\lambda)_*(\widehat{T}_h\otimes\pi_D^*(T_v)),\widehat{\gamma}\rp=\lp (A_\lambda)_*(\widehat{T}_h\otimes T_v),(\pi')_*\widehat{\gamma}\rp.
    \]
    The rest is easy to prove by definition of tangent currents.
\end{proof}

It remains to prove Lemma \ref{lemma:Phi-lambda-bounded}. The following is a technical preparation.

\begin{lemma}\label{lemma:basic-cons}
 For any $\lambda>2$ and $1\leq t\leq k$, there exists a smooth positive $(t,t)$-form $\theta_{\lambda,t}(x,z)$ on $D\times\C^k$ such that
\begin{enumerate}
    \item[{\rm (i)}] $\|\theta_{\lambda,t}\|_{D\times\C^k}$ is bounded by some constant independent of $\lambda$;
    \item[{\rm (ii)}] $\theta_{\lambda,t}=(\ddc\|x\|^2+\lambda^2\ddc \|z\|^2)^t$ when $\|z\|<1/\lambda;$
    \item[{\rm (iii)}] $\d\theta_{\lambda,t}$ is independent of $\lambda$.
\end{enumerate}
\end{lemma}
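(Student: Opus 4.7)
The plan is to construct $\theta_{\lambda,t}$ as a $t$-th wedge power $(\ddc u_\lambda)^t$ of a smooth plurisubharmonic function $u_\lambda$ on $D \times \mathbb{C}^k$. This will automatically make $\theta_{\lambda,t}$ smooth, positive, and closed, so that (iii) holds trivially with $d\theta_{\lambda,t} = 0$ independently of $\lambda$.

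Concretely, I will set $u_\lambda(x,z) := \|x\|^2 + v(\lambda z)$, where $v : \mathbb{C}^k \to \mathbb{R}$ is a \emph{single} smooth psh function, fixed once and for all, of the radial form $v(w) = g(\|w\|^2)$. I require $g$ smooth with $g(s) = s$ on $[0,1]$ and $g(s) = L\log s + c$ on $[T, \infty)$ for some constants $L > 1$, $T > 1$, $c \in \mathbb{R}$. Plurisubharmonicity of $v$ is equivalent to $g' \geq 0$ together with $h(s) := sg'(s)$ being non-decreasing; since $h(s) = s$ on $[0,1]$ takes the value $1$ at $s=1$ and $h(s) = L$ on $[T,\infty)$, I can interpolate $h$ smoothly and monotonically on $[1, T]$, then recover $g$ by integration. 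Property (ii) is then immediate: for $\|z\| < 1/\lambda$ one has $\|\lambda z\| < 1$, so $v(\lambda z) = \|\lambda z\|^2 = \lambda^2 \|z\|^2$ and $\ddc u_\lambda = \ddc\|x\|^2 + \lambda^2 \ddc\|z\|^2$ on this set.

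For the mass bound (i), I expand $\theta_{\lambda,t} = \sum_{j=0}^t \binom{t}{j} (\ddc\|x\|^2)^{t-j} \wedge \beta_\lambda^j$ with $\beta_\lambda := \ddc_z v(\lambda z)$, and observe that $\beta_\lambda$ is the pullback of $\beta_1 = \ddc v$ under the dilation $z \mapsto \lambda z$. Using a reference Kähler form $\omega$ on $D \times \mathbb{C}^k$ compatible with the Fubini--Study compactification $\mathbb{C}^k \subset \mathbb{CP}^k$, the computation of $\|\theta_{\lambda,t}\|_{D\times\mathbb{C}^k}$ reduces to a finite sum of integrals of the shape $\operatorname{vol}(D) \cdot \int_{\mathbb{C}^k} \beta_\lambda^j \wedge \omega_F^{k-j}$, where $\omega_F$ is the Fubini--Study form. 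I split $\mathbb{C}^k$ into the scaling region $\{\|z\| \leq \sqrt{T}/\lambda\}$, where a change of variables $z \mapsto z/\lambda$ produces a $\lambda$-uniform bound, and the logarithmic tail $\{\|z\| > \sqrt{T}/\lambda\}$, where $\beta_\lambda = L\,\ddc \log\|z\|^2$ is $\lambda$-independent. In the tail the key input is that $(\ddc \log\|z\|^2)^k \equiv 0$ on $\mathbb{C}^k \setminus \{0\}$ for $k \geq 2$ (the direction of $z$ lies in the kernel of the Levi form), which prevents the highest-order wedge term from accumulating mass at infinity.

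The hard part will be the smooth construction of $g$ under the psh constraints. If one tried the most natural choice $g(s) = \log s + 1$ directly on $[1, \infty)$, the match with $g(s) = s$ on $[0,1]$ would only be $C^1$ at $s = 1$: the condition $sg'(s) = 1$ is imposed at both ends of any interpolation interval, forcing $sg'$ to remain constantly $1$ and blocking $C^\infty$ smoothness. Taking $L > 1$ instead opens a genuine interval $[1, T]$ on which $h(s) = sg'(s)$ can strictly and smoothly increase from $1$ to $L$; this is the key flexibility that yields a smooth psh $v$ with the required asymptotic behaviour, and hence a smooth $\theta_{\lambda,t}$ satisfying all three conditions.
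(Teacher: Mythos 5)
Your potential-theoretic construction $u_\lambda(x,z) = \|x\|^2 + g(\|\lambda z\|^2)$ is clean, and the observation that $L>1$ is needed to open a genuine interpolation window for $h(s)=sg'(s)$ is a nice point (the paper sidesteps this by interpolating $\phi_\lambda$ over the $\lambda$-dependent window $[-2\log\lambda,0]$, which gives it room; your fixed-$g$ version genuinely needs the slack). Properties (ii) and (iii) are fine — indeed you get the stronger statement $\d\theta_{\lambda,t}=0$.

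The gap is in (i). Your $\theta_{\lambda,t}$ has \emph{infinite} mass on $D\times\C^k$ under the convention this paper uses. Throughout the paper, $\|\cdot\|$ is Euclidean mass: the proof of this very lemma wedges against $(\ddc\|z\|^2)^{k-l}$, and Lemma~\ref{masscontolhoriz} does the same. Expanding $\theta_{\lambda,t}\wedge(\ddc\|x\|^2+\ddc\|z\|^2)^{2k-t}$, the $j=0$ binomial term contributes $\binom{2k-t}{k-t}(\ddc\|x\|^2)^{k}\wedge(\ddc\|z\|^2)^{k}$, and since your form is globally nonzero (it equals $(\ddc\|x\|^2+L\,\ddc\log\|z\|^2)^t$ for $\|z\|$ large, which still carries a strictly positive $(\ddc\|x\|^2)^t$ component), this integrates to $\mathrm{vol}(D)\cdot\mathrm{vol}(\C^k)=\infty$. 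The same divergence occurs for every $1\le j\le t$ when $t<k$: the tail contribution $\int_{\|z\|>R}(\ddc\log\|z\|^2)^j\wedge(\ddc\|z\|^2)^{k-j}\sim\int_R^\infty r^{2(k-j)-1}\,\d r$ diverges. You feel this, which is why you reach for a Fubini--Study-type $\omega_F$ — but that is a different notion of mass, not the one the paper's $\|\cdot\|_{D\times\C^k}$ denotes, so (i) as stated is not established.

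The phrasing of (iii) — ``$\d\theta_{\lambda,t}$ is independent of $\lambda$'' rather than ``$\theta_{\lambda,t}$ is closed'' — is precisely a signal that the intended object is \emph{not} closed: the paper multiplies by a cut-off $\chi(z)$ supported in $\B_k(0,4)$, which kills the mass at infinity and makes $\d\theta_{\lambda,t}=\d\chi\wedge(\cdot)^t$ supported in the annulus $\{2<\|z\|<4\}$, where the form has already settled into its $\lambda$-independent logarithmic regime. Your construction works fine once you multiply it by the same cut-off: on $\{\|z\|>2\}$ and $\lambda>2$, $\|\lambda z\|^2>T$ once $T\le 16$, so $\ddc u_\lambda=\ddc\|x\|^2+L\,\ddc\log\|z\|^2$ there and $\d\theta_{\lambda,t}$ is $\lambda$-independent; the Stokes argument of the paper then gives the uniform Euclidean mass bound. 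Without the cut-off, (i) simply fails.
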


\begin{proof}
    For each $\lambda > 2$, we can construct an increasing convex smooth function $\phi_{\lambda}$ on $\mathbb{R}$ satisfying the following properties:
\begin{enumerate}
    \item [(1)] $\phi_{\lambda}(s)=\lambda^{2}{\rm e}^s$ when $s<-2\log\lambda$;
    \item [(2)] $\phi_{\lambda}(s)=2s+2+2\log\lambda$ when $s>0$.
\end{enumerate}
Let $\omega_{\lambda,l}(z)= \big(\ddc \phi_{\lambda}(\log \|z\|^2)\big)^l$ for $1\leq l\leq k$. Define $\omega_0=4\ddc(\log\|z\|)$. Then $\omega_{\lambda,l}(z)$ is a smooth positive closed $(l,l)$-form on $\C^k$. By construction, it is easy to see that $\omega_{\lambda,l}=(\lambda^2\ddc\|z\|^2)^l$ when $\|z\|<1/\lambda$ and $\omega_{\lambda,l}=\omega_0^l$ when $\|z\|>1$. Let   $\chi:\C^k\to[0,1]$ be a smooth cut-off function such that $\chi=1$ when $z\in \B_k(0,2)$ and $\supp(\chi)\subset \B_k(0,4)$. Define 
$$
    \theta_{\lambda,t}=\chi(z)(\ddc\|x\|^2+\ddc\phi_{\lambda}(\log \|z\|^2))^t.
$$ 
Then we can check it satisfies properties (ii) and (iii). To prove (i), note that $\theta_{\lambda,t}=\sum_{l=0}^{t}\binom{t}{l}(\ddc \| x\|^2)^{t-l}\wedge (\chi\omega_{\lambda,l})(z)$, it suffices to prove $\|\chi\omega_{\lambda,l}\|_{\C^k}$ is bounded independent of $\lambda$ for all $1\leq l\leq t$.
By Stokes' formula,
\begin{align*}
    \|\chi\omega_{\lambda,l}(z)\|_{\C^k}
    =\int_{\C^k} \chi\omega_{\lambda,l}\wedge (\ddc\|z\|^2)^{k-l}
    =\int_{\C^k}\chi\omega_0^l\wedge(\ddc\|z\|^2)^{k-l}
    \leq \|\omega_0^l\|_{\B_k(0,4)}
\end{align*}
which is bounded uniformly for any $\lambda>2$. 
\end{proof}

 Recall that now we have $\ddc\Phi_\lambda=(\pi_{D})_*\big(A_{\lambda}^*(\ddc\alpha)\wedge\omega_X^{k-t}\wedge \pi_{D\times X}^*(\widehat{T}_h)\big)$. To overcome the difficulty mentioned before, we drop the condition on closedness and write $A_\lambda^*(\ddc\alpha)$ as a difference of two positive $(t+1,t+1)$-forms supported near the diagonal. This gives two positive horizontal currents on $D$ whose difference is $\ddc\Phi_\lambda$. Then we modify these two currents to make them closed.

\begin{proof}[Proof of Lemma \ref{lemma:Phi-lambda-bounded}]
     For simplicity, let $S_0=\omega_X^{k-t}\wedge \pi_{D\times X}^*(\widehat{T}_h)$. Without loss of generality, we may assume $\supp(\alpha)\subset\{\|z\|\leq 1\}$ and $\ddc\alpha\leq (\ddc\|x\|^2+\ddc\|z\|^2)^{t+1}$. When $|\lambda|> 1/\delta$, $A_\lambda^*(\ddc\alpha)$ is supported in $\{\|z\|<1/|\lambda|\}$ and $A_\lambda^*(\ddc\alpha)\leq (\ddc\|x\|^2+|\lambda|^2\ddc\|z\|^2)^{t+1}$. Since $t<k$, by Lemma \ref{lemma:basic-cons}, we can find a smooth positive $(t+1,t+1)$-form $\Theta_{1,\lambda}$ on $D\times D$ such that when $|\lambda|>1/\delta$, $\|\Theta_{1,\lambda}\|_{D \times D}$ is bounded, $\Theta_{1,\lambda}\geq A_\lambda^*(\ddc\alpha)$  and $\d\Theta_{1,\lambda}$ is independent of $\lambda$. Define $\Theta_{2,\lambda}:=\Theta_{1,\lambda}-A_\lambda^*(\ddc\alpha)$. Therefore, we have $A_\lambda^*(\ddc\alpha)=\Theta_{1,\lambda}-\Theta_{2,\lambda}$. Let $\Omega_{1,\lambda}=(\pi_{D})_*(\Theta_{1,\lambda}\wedge S_0)$ and $\Omega_{2,\lambda}=(\pi_{D})_{*}(\Theta_{2,\lambda}\wedge S_0)$. Note that $\Omega_{i,\lambda}$ is well-defined and positive horizontal in $M\times N''$ when $|\lambda|> 1/\delta$ and we have $\ddc \Phi_{\lambda}=\Omega_{1,\lambda}-\Omega_{2,\lambda}$. 

    We need to modify $\Omega_{i,\lambda}$ to make them closed. Recall that $\d\Omega_{1,\lambda}$ is independent of $\lambda$ when $|\lambda|> 1/\delta$. Fix such a $\lambda_0$. Then $\Omega_{1,\lambda}-\Omega_{1,\lambda_0}$ is a smooth closed horizontal form on $M\times N''$. Choose a positive closed horizontal smooth $(k-p+1,k-p+1)$-form $\Omega_0$ on $M\times N'$ such that $\Omega_0\geq \Omega_{1,\lambda_0}$. Define $\Psi_{1,\lambda}:=\Omega_{1,\lambda}-\Omega_{1,\lambda_0}+\Omega_0$ and $ \Psi_{2,\lambda}:=\Omega_{2,\lambda}-\Omega_{1,\lambda_0}+\Omega_0$. Then they are both positive closed horizontal $(k-p+1,k-p+1)$-forms in $M\times N'$ and $\ddc\Phi_\lambda=\Psi_{1,\lambda}-\Psi_{2,\lambda}$. It remains to prove that both $\|\Omega_{1,\lambda}\|_{D'}$ and $\|\Omega_{2,\lambda}\|_{D'}$ are bounded by some constant independent of $\lambda$. Thanks to the construction of $\Theta_{i,\lambda}$, this can be done in the same way as we prove (\ref{eq:Psi-type-2}) has bounded mass independent of $\lambda$ in the proof of Proposition \ref{equidensity}.
\end{proof}

\section{Stable and unstable Oseledec measures}\label{sec:oseledec}

In this section, we prove a convergence theorem for measures on $D\times \G(p,k)$ (Theorem \ref{thm:unstable-Oseledec}). Prior to that, we construct the so-called {\it (un)stable Oseledec measures}. They are basically lifts of the equilibrium measure $\mu$ to the Grassmannian bundles. We also recall some basic Pesin theory used in our proof.

\subsection{Oseledec measures and Pesin sets}
As usual, we consider a H\'enon-like map $f$ satisfying \massump. It is shown in \cite{DNS,DS06} that the equilibrium measure $\mu$ of $f$ is mixing and hyperbolic with $p$ strictly positive and $k-p$ strictly negative Lyapunov exponents. Therefore by Oseledec's theorem, for $\mu$-almost every $x\in D$ we have a decomposition $T_x\C^k=E_s(x)\oplus E_u(x)$ where $E_s(x)$ and $E_u(x)$ are subspaces of dimension $p$ and $k-p$ respectively. They are invariant in the sense that $\d f_{x}\big(E_s(x)\big)=E_{s}\big(f(x)\big)$ and $\d f_{x}\big(E_u(x)\big)=E_{u}\big(f(x)\big)$. They are called the {\it stable} and {\it unstable tangent subspaces} at $x$.

Let $[E_s(x)]$ be the point in $\G(k-p,k)$ corresponding to the subspace $E_s(x)$. The set of points $(x,[E_s(x)])$ in  $D\times \G(k-p,k)$ can be seen as a measurable graph over $\mu$-almost every point in the support of $\mu$. This allows us to lift $\mu$ to a probability measure $\mu_+$ on this graph which is called the {\it stable Oseledec measure}  associated with $f$ and $\mu$. Equivalently, it has the disintegration: $\mu_+=\int \delta_{[E_s(x)]}\d\mu(x)$ where $\delta_{[E_s(x)]}$ is the Dirac measure on $\pi_D^{-1}(x)$ at the point $[E_s(x)]$. Since the stable bundle is invariant and $\mu$ is mixing, $\mu_+$ is also invariant under $\widehat f$ and mixing. In the same way, we can construct the {\it unstable Oseledec measure} $\mu_-$ associated with $f$ and $\mu$. It is a probability measure on the set of points $(x,[E_u(x)])$ in $D\times \G(p,k)$.

In general, the subspaces $E_s(x)$ and $E_u(x)$ depend on $x$ only in a measurable way. Nevertheless, in the spirit of Lusin's theorem, we can find a family of closed subsets on which $E_s(x)$ and $E_u(x)$ change continuously with respect to $x$. More specifically, in the present situation, for any $\vep>0$ we can find a closed subset $\Lambda_\vep\subset D$ with $\mu(\Lambda_\vep)>1-\vep$ satisfying the following properties:
\begin{enumerate}
    \item The splitting $T_x\C^k=E_s(x)\oplus E_u(x)$ varies continuously on $\Lambda_\vep$;
    \item There exist constants $C_0>0$ and $0<\lambda_0<1$ such that for any $x\in \Lambda_{\vep}$, $v\in E_s(x)$, $w\in E_u(x)$ and all $l\in\N$,
    \begin{equation*}
        \|\d f_x^l(v)\|\leq C_0\lambda_0^l\|v\|\quad \text{and}\quad \|\d f^{-l}_x(w)\|\leq C_0\lambda_0^l\|w\|;
    \end{equation*}
\end{enumerate}

These sets are referred to as the {\it Pesin sets}. There are numerous works on this topic. We refer the readers to \cite{Katok,KH} and the references therein for more details.

Recall from Section \ref{sec:tame} that $f$ induces an automorphism on the Grassmannian bundle $D\times \G(p,k)$, i.e., its canonical lift $\widehat{f}$. The invariance of unstable bundle under $\widehat{f}$ implies that $\widehat{f}(x,[E_u(x)])=(f(x),[E_u(f(x))])$ whenever $T_x\C^k$ has the splitting. For every such $x$, denote by $H_x$ the set of points $(x,v)$ in $\pi_D^{-1}(x)\cong \G(p,k)$ such that $v$ is not transversal to $[E_s(x)]$. This is a hypersurface in $\pi_D^{-1}(x)$. Since $\G(p,k)$ is compact, we can endow $\G(p,k)$ with any smooth metric denoted by $\dist_\G$. From the above properties (1) and (2), we are able to obtain the following: 
\begin{enumerate}
    \item Both $[E_u(x)]$ and $H_x$ vary continuously on $\Lambda_\vep$;
    \item There exists $0<\lambda<1$ such that for any $\delta >0$, we can find a constant $C=C(\delta)>0$ such that for every $x\in\Lambda_\vep, l\in \N$ and any $v\in\G(p,k)$ satisfying $\dist_\G(v,H_x)>\delta$ we have 
    \[
        \dist_\G(\d f_x^l(v),[E_u(f^l(x))])\leq C\lambda^l\dist_\G(v,[E_u(x)]).
    \]
\end{enumerate}

\subsection{Convergence to $\mu_\pm$}
We only treat the case of $\mu_-$ but all the results hold for $\mu_+$ as well. For simplicity, we use $\G:=\G(p,k)$ and suppose $\dim_\C\G=m$. Let $\pi_{D}: D\times \G \to D$ and $\pi_\G: D\times \G \to \G$ denote the canonical projections. We use $\omega_D$ and $\omega_\G$ for the K\"ahler forms on $D$ and $\G$ respectively and $\omega:=\pi_D^*(\omega_D)+\pi_\G^*(\omega_\G)$ is the K\"ahler form of $D\times \G$. Here is the main result of this section:

\begin{theorem}\label{thm:unstable-Oseledec}
Let $f$ be a H\'enon-like map on $D$ satisfying \massump. Let $\widehat{\gamma}$ be a positive smooth $(k-p+m,k-p+m)$-form on $D\times \G(p,k)$ which is not necessarily closed. If $\supp(\widehat{\gamma}) \subset M\times N'\times \G(p,k)$ and $\lp T^+, (\pi_D)_*(\widehat{\gamma}) \rp =1$, then $d^{-n}(\widehat{f}^n)_*(\widehat{\gamma})\wedge \pi_D^*(T^+)$ converges to $\mu_-$ as $n\to \infty$. 
\end{theorem}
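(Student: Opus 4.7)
The plan is to follow the strategy indicated in Section 1: reduce to a product form, then identify any limit value of the sequence with $\mu_-$ by combining Pesin theory with pluripotential bounds on $\sigma:=\widehat\gamma\wedge\pi_D^*(T^+)$. First I would set $\alpha:=(\pi_D)_*\widehat\gamma$, a smooth positive horizontal $(k-p,k-p)$-form with $\langle T^+,\alpha\rangle=1$, and use Lemma \ref{lemma:convolution-T} to pick a smooth form $\alpha_0$ cohomologous to $T^-$ with $\langle T^+,\alpha_0\rangle=1$; $\Omega_\G$ is a smooth probability volume form on $\G(p,k)$. The reduction consists in showing that the sequences generated by $\widehat\gamma$ and by $\alpha_0\wedge\Omega_\G$ share the same limit, exploiting the continuity of the super-potentials of $T^+$ (Theorem \ref{thm:Green-contin-sp}) together with the coherence of the two intersection theories (Theorem \ref{thm:sp-density}).

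Next comes a functional identity: from $(\widehat f^n)^*\pi_D^*T^+=d^n\pi_D^*T^+$ and pushforward-pullback duality,
\begin{equation*}
\langle d^{-n}(\widehat f^n)_*(\widehat\gamma)\wedge\pi_D^*(T^+),\,\phi\rangle=\langle\sigma,\,\phi\circ\widehat f^n\rangle
\end{equation*}
for any continuous test function $\phi$ on $D\times\G(p,k)$. Since $\sigma$ is a fixed probability measure independent of $n$, the theorem reduces to showing $\langle\sigma,\phi\circ\widehat f^n\rangle\to\langle\mu_-,\phi\rangle$.

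For this last convergence, Pesin theory enters. For any $\varepsilon>0$, fix a Pesin set $\Lambda_\varepsilon\subset\supp\mu$ with $\mu(\Lambda_\varepsilon)>1-\varepsilon$ and $\mu(\partial\Lambda_\varepsilon)=0$, on which $E_u,E_s$ vary continuously and which carries the uniform exponential estimate $\dist_\G([df_x^n(v)],[E_u(f^n(x))])\leq C(\delta)\lambda^n$ for $x\in\Lambda_\varepsilon$ and $\dist_\G([v],H_x)>\delta$. Let $\tilde\phi(y):=\phi(y,[E_u(y)])$, continuous on $\Lambda_\varepsilon$. On the good set where $x$ is Pesin-regular and $\dist_\G([v],H_x)>\delta$, one has $\phi(\widehat f^n(x,[v]))\approx\tilde\phi(f^n(x))$ for large $n$. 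Its complement has small $\sigma$-mass: the fibre part is $O(\delta)$ because $\Omega_\G$ is smooth, and the base part is controlled via the asymptotic weak convergence $(f^n)_*(\alpha_0\wedge T^+)\to\mu$ coming from the pushforward version of Proposition \ref{prop:non-closed-mu}. On the good set, the contribution reduces to $\langle(f^n)_*(\alpha_0\wedge T^+),\tilde\phi\rangle$, and a Lusin approximation of $\tilde\phi$ by globally continuous functions yields convergence to $\langle\mu,\tilde\phi\rangle=\langle\mu_-,\phi\rangle$; letting $\varepsilon,\delta\to 0$ concludes.

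The hard part is the Pesin-pluripotential integration in Step 3. Pesin theory gives uniform contractions only on Pesin sets of large $\mu$-measure, whereas the $\sigma$-mass of exceptional sets for those estimates is controlled only asymptotically through $(f^n)_*(\alpha_0\wedge T^+)\to\mu$, not in the initial iterate. This forces a delicate interleaving of the Pesin parameters $\varepsilon,\delta$ with the iteration index $n$, and a careful passage from pointwise Oseledec-generic estimates to uniform control on a set of almost full $\sigma$-mass. The cohomological arguments of Dinh--Sibony \cite{DS16} for regular automorphisms bypass this difficulty through Hodge theory on an ambient compact manifold, which is unavailable in the open H\'enon-like setting; the Pesin-pluripotential integration proposed here is the substitute and allows us to state a result more general than the one in \cite{DS16}.
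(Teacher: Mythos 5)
Your proposal captures the overall flavour of the paper's argument (convolution reduction via Lemma~\ref{lemma:convolution-T}, Pesin theory, a pluripotential input), and the functional identity $\langle d^{-n}(\widehat f^n)_*(\widehat\gamma)\wedge\pi_D^*(T^+),\phi\rangle=\langle\sigma,\phi\circ\widehat f^n\rangle$ is a correct and useful reformulation of $\nu_n=(\widehat f^n)_*\sigma$. However, the central mechanism is missing and the Pesin step as written does not close.

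The decisive gap is that you apply the Pesin set $\Lambda_\vep$ to the \emph{initial} measure $\sigma$, whose base projection is $(\pi_D)_*\sigma=(\pi_D)_*\widehat\gamma\wedge T^+$, not $\mu$. So $\sigma(\{x\notin\Lambda_\vep\}\times\G)$ need not be small: $\Lambda_\vep$ has large $\mu$-measure, but $\mu$ and $(\pi_D)_*\widehat\gamma\wedge T^+$ need not be comparable and $\sigma$ can put almost full mass off $\Lambda_\vep$. The weak convergence $(f^n)_*((\pi_D)_*\widehat\gamma\wedge T^+)\to\mu$ that you invoke for the "base part" controls the distribution of $f^n(x)$, not of $x$, so it cannot bound that term. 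The interleaving you suggest (push forward by $\widehat f^{n_1}$ first, then apply Pesin for $n_2$ more steps) runs into a circularity: after $n_1$ iterations the base distribution is close to $\mu$, but the \emph{fibre} measures of $(\widehat f^{n_1})_*\sigma$ are no longer smooth and may already concentrate near $H_{y}$ at $y=f^{n_1}(x)$, destroying the hypothesis $\dist_\G(v,H_y)>\delta$ needed in the Pesin estimate (which, as stated, requires the \emph{starting} point, not its image, to lie in $\Lambda_\vep$). This is a genuine obstruction, not a matter of bookkeeping.

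The paper's resolution is structurally different and is exactly the pluripotential step that your sketch mentions only by name. One first passes to a subsequential limit $\nu$ of $(\widehat f^{n})_*\sigma$, whose base projection is $\mu$, so the Pesin set now has large $\nu$-mass. Lemma~\ref{lemma:nuinfty_z} then shows that any limit $\nuinfty$ of $(\widehat f^l)^*\nu$ disintegrates into $\nuinfty_x$ supported on $\{[E_u(x)]\}\cup H_x$; this still leaves possible spurious mass on $H_x$. Eliminating it requires the $\ddc$-structure: writing $T^-=\alpha_0+\ddc U$ and using the continuity of super-potentials (Lemma~\ref{lemma:bounded-mass-S}) to bound the currents $S_n$ with $\ddc S_n=\nu_n-\nu_n'$, one gets $\nuinfty-\mu_-=\ddc\Sinfty$, disintegrates $\Sinfty$ over $\pi_D$ (Lemma~\ref{lemma:S-vertical}), and then invokes the support theorem of \cite{DS07} plus the absence of point masses for $\C$-normal positive $(1,1)$-currents to conclude $\nuinfty_x=\delta_{[E_u(x)]}$. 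None of this is in your proposal, and it is precisely what "integrates Pesin theory with pluripotential theory." Finally, your reduction of a general $\widehat\gamma$ to $\alpha_0\wedge\Omega_\G$ via "continuity of super-potentials and coherence of the two intersection theories" is unjustified: the paper's reduction goes through a bidegree split, a domination $\widehat\gamma\lesssim\alpha\wedge\Omega_\G$, a cutoff giving $\chi\cdot d^{-N}(f^N)_*\alpha\leq c_1\alpha_0$, and Lemma~\ref{lemma:dirac-measure-strong} (if $(\pi_D)_*\nu=\mu$ and $\nu\leq c\mu_-$ then $\nu=\mu_-$); you would need a substitute for this comparison argument.
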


 The above theorem can be seen as a generalization of \cite[Proposition 4.3]{DS16} and characterizes the stable and unstable Oseledec measures. 
 In the rest of this section, we always assume $f$ satisfies the condition of this theorem.
 
 Define 
 \[
     \nu_n:=d^{-n}(\widehat{f}^n)_*(\widehat{\gamma})\wedge \pi_D^*(T^+).
 \]
 Notice that $\nu_n$ is supported in $D'\times
 \G$. We have $\|\nu_n\|=\lp \nu_n,1\rp=\lp T^+,(\pi_D)_*\widehat{\gamma}\rp=1$ for all $n$. Let $\nu$ be a limit measure of $\nu_n$, say $\nu=\lim_{j\to\infty}\nu_{n_j}$ for a subsequence $\{n_j\}$.  Choose any test function $\varphi$ on $D$. Then by applying Proposition \ref{prop:non-closed-mu} to $f^{-1}$, we obtain
\begin{equation}\label{eq:push-forward-is-mu}
    \lp(\pi_D)_*(\nu),\varphi\rp=\lim_{j\to\infty}\lp d^{-n_j}(f^{n_j})_*((\pi_D)_*\widehat{\gamma})\wedge T^+,\varphi\rp=\lp\mu,\varphi\rp.
\end{equation}
Therefore we conclude that $\nu$ is a probability measure supported on $D'\times\G$ such that $(\pi_D)_*\nu=\mu$.

Our starting point is the following lemma which replaces $\widehat{\gamma}$ by a singular current. Let $\Omega_\G$ be a smooth volume form on $\G$ with $\int_\G\Omega_\G=1$.  To simplify the formula, here (and afterwards) for a current $T$ on $D$ and a current $S$ on $\G$, we simply write $T\wedge S$ to denote $\pi_D^*(T)\wedge \pi_\G^*(S)$.

\begin{lemma}\label{lemma_Oseledec}
Let $f$ be as in Theorem \ref{thm:unstable-Oseledec}. Then $d^{-n} (\widehat f^n)_{*}( T^-\wedge \Omega_\G )\wedge \pi_{D}^*(T^{+})$ converges to $\mu_-$ as $n$ goes to infinity.
\end{lemma}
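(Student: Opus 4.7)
The plan is to identify $\nu_n := d^{-n}(\widehat{f}^n)_*(T^- \wedge \Omega_\G) \wedge \pi_D^*(T^+)$ with the push-forward of a simple product measure, and then analyze its limit via Pesin theory.

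First, I would establish the clean identity $\nu_n = (\widehat{f}^n)_*\big(\pi_D^*(\mu) \wedge \pi_\G^*(\Omega_\G)\big)$. For a test function $\varphi$ on $D \times \G$, the super-potential formula \eqref{eq:sp} reads
\[
    \lp \nu_n, \varphi\rp = \lp T^+, (\pi_D)_*\big(\varphi \cdot d^{-n}(\widehat{f}^n)_*(T^- \wedge \Omega_\G)\big)\rp.
\]
Pulling $\varphi$ through $\widehat{f}^n$ and using $\pi_D \circ \widehat{f}^n = f^n \circ \pi_D$ together with Fubini for the fibres of $\pi_D$, the inner current equals $d^{-n}(f^n)_*(g_n \cdot T^-)$, where $g_n(x) := \int_\G \varphi(\widehat{f}^n(x,v)) d\Omega_\G(v)$ is continuous. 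Exploiting the biholomorphism identity $(f^n)_*(g_n T^-) = d^n (g_n \circ f^{-n}) T^-$, the fact that $\mu = T^+ \wedge T^-$ is well-defined via continuous super-potentials, and the $f$-invariance $\int g_n \circ f^{-n} d\mu = \int g_n d\mu$, this telescopes to
\[
    \lp \nu_n, \varphi\rp = \int_{D \times \G} \varphi(f^n(x), df^n_x(v)) d\mu(x) d\Omega_\G(v),
\]
which is exactly the right-hand side of the claimed formula for $(\widehat{f}^n)_*(\mu \times \Omega_\G)$.

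Next, by the $f$-invariance of $\mu$ applied to the substitution $y = f^n(x)$, I rewrite
\[
    \lp \nu_n, \varphi\rp = \int_{D \times \G} \varphi\big(y, df^n_{f^{-n}(y)}(v)\big) d\mu(y) d\Omega_\G(v),
\]
and the goal reduces to showing convergence to $\lp \mu_-, \varphi\rp = \int \varphi(y, [E_u(y)]) d\mu(y)$.

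This is where Pesin theory enters. For $\vep, \delta > 0$, pick a Pesin set $\Lambda_\vep$ of $\mu$-measure $> 1 - \vep$ on which $y \mapsto [E_u(y)]$ and $z \mapsto H_z$ are continuous, and such that for every $z \in \Lambda_\vep$ and $v$ with $\dist_\G(v, H_z) > \delta$ one has $\dist_\G(df^n_z(v), [E_u(f^n(z))]) \leq C(\delta) \lambda^n$. Partition the integration domain by whether (a) $y \notin \Lambda_\vep$; (b) $y \in \Lambda_\vep$ but $f^{-n}(y) \notin \Lambda_\vep$; (c) both $y$ and $f^{-n}(y)$ lie in $\Lambda_\vep$ but $\dist_\G(v, H_{f^{-n}(y)}) \leq \delta$; (d) the remaining good set. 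Parts (a), (b) each have $\mu \times \Omega_\G$-measure at most $\vep$ (the second by $f$-invariance of $\mu$). Part (c) has measure at most $\eta(\delta) \to 0$ as $\delta \to 0$, uniformly in $y$ thanks to compactness of $\Lambda_\vep$, continuous dependence of $H_z$ on $z$, and the homogeneity of $\G$. On the good set (d), uniform continuity of $\varphi$ combined with $df^n_{f^{-n}(y)}(v) \to [E_u(y)]$ uniformly gives convergence of the integrand. Letting first $n \to \infty$, then $\vep, \delta \to 0$, yields $\nu_n \to \mu_-$.

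The main obstacle is the first step: the manipulations must handle the singular current $T^- \wedge \Omega_\G$ and its iterates, and the duality $\lp T^+, (f^n)_*(g T^-)\rp = d^n \int g d\mu$ must be carefully justified using the continuity of super-potentials of $T^\pm$. Once the reduction $\nu_n = (\widehat{f}^n)_*(\mu \times \Omega_\G)$ is in hand, the remaining argument is a rather standard Pesin-theoretic splitting combined with the uniform tubular measure estimate on $\G$.
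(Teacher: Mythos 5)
Your proposal is correct and takes essentially the same route as the paper: the first step reduces to the identity $\nu_n=(\widehat f^n)_*(\Omega_\G)\wedge\pi_D^*(\mu)$ (equivalently $(\widehat f^n)_*(\mu\times\Omega_\G)$, which is the same after using the $f$-invariance of $\mu$), and the second step is the Pesin-theoretic estimate that the paper delegates to \cite[Lemma~4.4]{DS16}. You have simply unpacked both the super-potential manipulation and the Pesin splitting that the paper cites.
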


\begin{proof}
    Notice that $d^{-n} (\widehat f^n)_*( T^-\wedge \Omega_\G )\wedge \pi_D^*(T^+)=(\widehat{f}^n)_*(\Omega_\G)\wedge \pi_D^*(\mu)$. The proof is identical to the case of regular automorphisms, see \cite[Lemma 4.4]{DS16}.
\end{proof}

We try to prove the theorem when $T^-$ is replaced by some smooth horizontal form $\alpha$. It is tempting to write $d^{-n}(\widehat{f}^{n})_{*}(\alpha\wedge \Omega_\G)\wedge \pi_{D}^*(T^+)$ as $ (\widehat{f}^n)_*(\Omega_\G)\wedge\pi_D^*(d^{-n}(f^n)_*(\alpha)\wedge T^+)$. However, it is extremely hard to track the action of $\widehat{f}$ on $\Omega_\G$ when the base measure $d^{-n}(f^n)_*(\alpha)\wedge T^+$ is also changing. Recall that by Lemma \ref{lemma:convolution-T}, we can find a smooth positive closed $(k-p,k-p)$-form $\alpha_0$ and a negative $L^1$-form $U$, both defined on $M'\times N$ and supported in $D'$, such that $T^-=\alpha_0+\ddc U$ on $M'\times N$ and $\alpha_0$ is strictly positive in a neighborhood of $\supp(T^-)$. Our next step is to prove the theorem for $\widehat{\gamma}=\alpha_0\wedge \Omega_\G$. 

\begin{proposition}\label{partunstable}
 Let $f$ and $\alpha_0$ be as above. Then $d^{-n}(\widehat{f}^{n})_{*}(\alpha_0\wedge \Omega_\G)\wedge \pi_{D}^*(T^+)$ converges to $\mu_-$ as $n$ goes to infinity.
    
\end{proposition}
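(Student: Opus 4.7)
The plan is to reduce Proposition~\ref{partunstable} to Lemma~\ref{lemma_Oseledec} by analyzing the difference caused by replacing $T^-$ with $\alpha_0$ in the driving form. First, combining the projection formula (exploiting $(\widehat f^n)^*\pi_D^* T^+ = d^n \pi_D^* T^+$) with the continuity of super-potentials of $T^+$ from Theorem~\ref{thm:Green-contin-sp}, I rewrite
\[
\nu_n = (\widehat f^n)_*\big(\pi_D^*\eta_0 \wedge \pi_\G^*\Omega_\G\big), \qquad \eta_0 := \alpha_0 \wedge T^+.
\]
Since $T^- = \alpha_0 + \ddc U$ with $U \in \DSH_h(D')$ from Lemma~\ref{lemma:convolution-T}, the continuity of super-potentials gives $\lp T^+, \alpha_0\rp = \lp T^+, T^-\rp - \lp T^+, \ddc U\rp = 1$, so $\eta_0$ is a probability measure on $D$. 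In parallel, $\mu_n := (\widehat f^n)_*(\pi_D^*\mu \wedge \pi_\G^*\Omega_\G) \to \mu_-$ by Lemma~\ref{lemma_Oseledec}, so it suffices to prove $\mu_n - \nu_n \to 0$ weakly.

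For a smooth test function $\varphi$ on $D \times \G$, Fubini's theorem gives $\lp \mu_n - \nu_n, \varphi\rp = \int_D \Psi_n\,\d(\mu - \eta_0)$, where
\[
\Psi_n(x) := \int_\G \varphi\big(f^n(x), [\d f^n_x(v)]\big)\,\d\Omega_\G(v).
\]
Let $\psi(y) := \varphi(y, [E_u(y)])$, a bounded measurable function defined $\mu$-almost everywhere. On a two-sided Pesin set $\Lambda_\vep$ (with $\mu(\Lambda_\vep) > 1 - \vep$) on which the splitting $E_s \oplus E_u$ and hence $\psi$ vary continuously, and where the exponential contraction $[\d f^n_x(v)] \to [E_u(f^n(x))]$ is uniform for $v \in \G$ outside a neighborhood of the stable hyperplane at $x$, averaging against $\Omega_\G$ yields that $\Psi_n \circ f^{-n} \to \psi$ uniformly on $\Lambda_\vep$.

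Change of variables $y = f^n(x)$ converts $\int_D \Psi_n\,\d\eta_0 = \int_D \Psi_n \circ f^{-n}\,\d\big((f^n)_*\eta_0\big)$. The pushforward measure $(f^n)_*\eta_0 = d^{-n}(f^n)_*\alpha_0 \wedge T^+$ converges weakly to $T^- \wedge T^+ = \mu$, by the equidistribution $d^{-n}(f^n)_*\alpha_0 \to T^-$ (dual of Proposition~\ref{prop:non-closed-T^+} for $f^{-1}$) combined with the continuity of super-potentials of $T^+$. Splitting each integral over $\Lambda_\vep$ and $\Lambda_\vep^c$, using the uniform convergence on $\Lambda_\vep$, the weak convergence $(f^n)_*\eta_0 \to \mu$, and the crude bound $\|\varphi\|_\infty \cdot \mu(\Lambda_\vep^c) \leq \vep \|\varphi\|_\infty$, I obtain $\int \Psi_n\,\d\eta_0 \to \int \psi\,\d\mu$; the analogous statement for $\mu$ follows similarly (and is implicit in Lemma~\ref{lemma_Oseledec}). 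Hence $\mu_n - \nu_n \to 0$, whence $\nu_n \to \mu_-$. The principal obstacle is establishing the uniform Pesin-type convergence of $\Psi_n \circ f^{-n}$ on hyperbolic blocks, which requires an interplay between classical Pesin theory (the continuous variation of $E_u$ and exponential stabilization of iterated tangent directions) and the pluripotential-theoretic equidistribution estimates stemming from the continuity of super-potentials of $T^+$.
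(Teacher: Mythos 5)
Your rewriting $\nu_n = (\widehat f^n)_*(\pi_D^*\eta_0 \wedge \pi_\G^*\Omega_\G)$ with $\eta_0 = \alpha_0\wedge T^+$ is correct, and the idea of reducing to $\mu_n - \nu_n \to 0$ is appealing, but the averaging argument has a genuine gap that the paper's proof is specifically designed to avoid. The problem is that $\eta_0$ is a probability measure that in general is \emph{not} absolutely continuous with respect to $\mu$ (there is no reason why $\alpha_0 \wedge T^+ \le c\, T^-\wedge T^+$; $\alpha_0$ being smooth and strictly positive near $\supp T^-$ gives $\alpha_0 \gtrsim T^-$, not $\lesssim$). Pesin blocks $\Lambda_\vep$ are large only in $\mu$-measure, and $\eta_0(\Lambda_\vep^c)$ can be as large as $1$; in fact $\eta_0$ may give full mass to the Oseledec-irregular set, where $E_u$, $H_x$, and $\psi$ are simply undefined. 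The pointwise estimate $\Psi_n(x)\approx\psi(f^n(x))$ rests on the forward Pesin contraction anchored at the \emph{starting} point $x$ (the paper's property (2) requires $x\in\Lambda_\vep$), so it says nothing for $x\in\supp(\eta_0)\setminus\Lambda_\vep$. Your split ``over $\Lambda_\vep$ and $\Lambda_\vep^c$'' therefore leaves the term $\int_{\Lambda_\vep^c}\Psi_n\,\d\eta_0$, which is bounded only by $\|\varphi\|_\infty\,\eta_0(\Lambda_\vep^c)$, a fixed quantity with no smallness.

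The change-of-variables version has the same obstruction in disguise. After writing $\int \Psi_n\circ f^{-n}\,\d\big((f^n)_*\eta_0\big)$, the uniform convergence $\Psi_n\circ f^{-n}(y)\to\psi(y)$ still requires $f^{-n}(y)\in\Lambda_\vep$ (the contraction (\ref{eq:hyperbolic}) is stated with the base point in $\Lambda_\vep$ and conclusion downstream), so the correct split is over $\Lambda_\vep\cap f^n(\Lambda_\vep)$ and its complement. The piece $\Lambda_\vep\setminus f^n(\Lambda_\vep)$ carries $(f^n)_*\eta_0$-mass equal to $\eta_0\big(f^{-n}(\Lambda_\vep)\setminus\Lambda_\vep\big)\le\eta_0(\Lambda_\vep^c)$, again uncontrolled. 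Weak convergence $(f^n)_*\eta_0\to\mu$ only controls $(f^n)_*\eta_0(\Lambda_\vep^c)$ asymptotically (and even then one must arrange $\mu(\partial\Lambda_\vep)=0$); it gives no control over masses of sets pulled back to time $0$. This is exactly why the paper does not argue directly with the approximating measures $\nu_n$: it passes to a limit $\nu$ first, observes $(\pi_D)_*\nu = \mu$, writes $\nu - \mu_- = \ddc S$ via the super-potential bound (Lemma \ref{lemma:bounded-mass-S}), iterates by $(\widehat f^l)^*$, and then disintegrates the limit over $\mu$ so that the Pesin analysis (Lemma \ref{lemma:nuinfty_z}) and the cut-off theorem of \cite{DS07} are invoked only at $\mu$-a.e.\ base point. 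Your proposal replaces the $\ddc$-machinery with a direct Birkhoff-type averaging; to make it work you would need an a priori bound $\eta_0\le c\,\mu$ (false in general) or some other mechanism forcing the $\nu_n$ to be $\mu$-controlled before taking limits, which is precisely what the paper's potential-theoretic step supplies.
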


The proof will be completed by a series of lemmas. Following the previous notation, let $\nu_n=d^{-n}(\widehat{f}^n)_{*}(\alpha_0\wedge \Omega_\G)\wedge \pi_D^*(T^+)$ and $\nu_n'=d^{-n}(\widehat{f}^{n})_{*}(T^-\wedge \Omega_\G)\wedge \pi_D^*(T^+)$. Let $\{T^+_\theta\}_{\theta\in (0,1]}$ be a family of positive closed vertical smooth $(p,p)$-forms in $M'\times N$ and $T^+_\theta$ converges to $T^+$ as $\theta$ goes to 0. Define
\[
    S_{n,\theta}:=d^{-n}(\widehat{f}^{n})_{*}(-U\wedge \Omega_\G)\wedge \pi_{D}^{*}(T^+_\theta)
\]
which is a positive current of bi-dimension $(1,1)$ with support in $D'\times \G$.
    \begin{lemma}\label{lemma:bounded-mass-S}
        The mass of $S_{n,\theta}$ is uniformly bounded for all $n$ and $\theta$.
    \end{lemma}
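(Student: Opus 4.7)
I will bound $\|S_{n,\theta}\|$ by pairing against the K\"ahler form $\omega=\pi_D^*\omega_D+\pi_\G^*\omega_\G$ of $D\times\G$ and treating the two summands $\langle S_{n,\theta},\pi_D^*\omega_D\rangle$ and $\langle S_{n,\theta},\pi_\G^*\omega_\G\rangle$ separately. The tools I would use are: the identity $\ddc(-U)=\alpha_0-T^-$ from Lemma \ref{lemma:convolution-T}; the uniform mass control for $d^{-n}$-normalized pushforwards from Proposition \ref{prop:controlmass} together with \massump; Lemma \ref{lem:basicinva}, which says $\widehat{f}$ is a lift of $f$ and in particular $\pi_\G\circ\widehat{f}^n(z,\cdot)$ acts trivially on $H^*(\G,\R)$; and the uniform continuity of the super-potentials of $T^+_\theta$ as $\theta\to 0$, guaranteed by Theorem \ref{thm:Green-contin-sp}.

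For the first summand, the projection formula with $\int_\G\Omega_\G=1$ and $\pi_D\circ\widehat{f}^n=f^n\circ\pi_D$ would give
\begin{equation*}
\langle S_{n,\theta},\pi_D^*\omega_D\rangle=\langle T^+_\theta,\,d^{-n}(f^n)_*(-U)\wedge\omega_D\rangle.
\end{equation*}
I would then observe that the horizontal current $d^{-n}(f^n)_*(-U)\wedge\omega_D$ has $\ddc$ equal to $\bigl(d^{-n}(f^n)_*\alpha_0-T^-\bigr)\wedge\omega_D$, the difference of two positive closed horizontal currents of mass uniformly bounded by \massump. An integration by parts against a smooth K\"ahler potential of $\omega_D^p$ on $\overline{M\times N''}$ would convert the total mass back into this $\ddc$-mass bound, showing that $d^{-n}(f^n)_*(-U)\wedge\omega_D$ is uniformly bounded in $\DSH_h(D')$; pairing against $T^+_\theta$ is then bounded by the uniform super-potential continuity.

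For the second summand, I would transfer $(\widehat{f}^n)^*$ onto the test form and use, as in the proof of Lemma \ref{lemma:shadow}, the decomposition
\begin{equation*}
(\widehat{f}^n)^*(\pi_\G^*\omega_\G)=\pi_\G^*\omega_\G+\ddc V_n+W_n,
\end{equation*}
valid globally because $\pi_\G\circ\widehat{f}^n(z,\cdot)$ is cohomologically trivial on $\G$, with $W_n$ carrying a $\pi_D^*$-1-form factor. The $\pi_\G^*\omega_\G$-piece vanishes after wedging with the top-degree $\Omega_\G$. For the $\ddc V_n$-piece, Stokes would transfer $\ddc$ off $V_n$ and onto $-U\wedge\Omega_\G$, producing $(\alpha_0-T^-)\wedge\Omega_\G$, and the resulting integral would be bounded just as in the first summand. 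The $W_n$-piece would be handled by Fubini in the fibre: after wedging with $\Omega_\G$ only the $D$-tangent part of $W_n$ contributes, and its fibrewise average on $D$ is $\ddc$-exact by cohomological triviality, allowing one more Stokes step to reduce it to the first-summand estimate.

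The hardest part will be that there is no direct uniform $L^\infty$-bound on $V_n$ (or on the potential for the fibrewise average in the $W_n$-piece), since $\widehat{f}^n$ distorts the fibres arbitrarily as $n$ grows. The remedy is to normalize $V_n$ fibrewise by $\int_\G V_n(z,\cdot)\Omega_\G=0$; then $V_n(z,\cdot)$ becomes a potential on the compact K\"ahler $\G$ for $(\pi_\G\circ\widehat{f}^n(z,\cdot))^*\omega_\G-\omega_\G$, a signed $(1,1)$-current of cohomologically fixed and hence uniformly bounded mass, so standard K\"ahler potential theory yields $\|V_n(z,\cdot)\|_{L^1(\G)}\leq C$ uniformly in $n$ and $z$. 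The same mechanism controls the $W_n$-piece. Legitimacy of the Stokes manipulations follows from the standard support inclusions $f^n(D)\subset M\times N''$ and $f^{-n}(D)\subset M''\times N$.
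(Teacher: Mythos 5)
Your overall plan — reduce $\|S_{n,\theta}\|$ to a pairing that can be controlled by super-potential continuity — is the right idea, but the way you split $\omega$ introduces a gap that the paper avoids. For the first summand you land on $\lp T^+_\theta, d^{-n}(f^n)_*(-U)\wedge\omega_D\rp$ and then invoke ``uniform super-potential continuity of $T^+_\theta$.'' This is not available: Theorem \ref{thm:Green-contin-sp} asserts continuity of the super-potential of the fixed currents $T^\pm$, but says nothing about the approximating family $\{T^+_\theta\}$, which is only required to be smooth, positive, closed, vertical, and weakly convergent to $T^+$. As $\theta\to 0$ the $\mathcal C^0$-norms of $T^+_\theta$ blow up, so there is no a priori reason for $\sup_\theta|\lp T^+_\theta,\Phi\rp|$ to be finite on a bounded family $\Phi$ in $\DSH_h(D')$. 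The paper dodges this by keeping $\omega=\pi_D^*\omega_D+\pi_\G^*\omega_\G$ whole and moving everything but $-U$ to the test-form side, arriving at $\|S_{n,\theta}\| = \lp -U, V_{n,\theta}\rp$ with $V_{n,\theta}=(\pi_D)_*\big(\Omega_\G\wedge d^{-n}(\widehat{f}^n)^*(\pi_D^*(T^+_\theta)\wedge\omega)\big)$, a positive closed vertical smooth $(p+1,p+1)$-form whose mass is uniformly bounded in $n$ and $\theta$ by Proposition \ref{prop:controlmass}. Solving $\ddc\Omega_{n,\theta}=V_{n,\theta}$ on $D'$ with mass control (via \cite[Theorem 3.6]{BDR24}) and integrating by parts gives $\lp -U,V_{n,\theta}\rp=\lp\alpha_0-T^-,\Omega_{n,\theta}\rp$, and it is the super-potential continuity of the \emph{fixed} currents $\alpha_0$ and $T^-$ — which Theorem \ref{thm:Green-contin-sp} does provide — that closes the argument. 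Note that had you taken one further adjoint in your first summand, rewriting it as $\lp -U, d^{-n}(f^n)^*(T^+_\theta\wedge\omega_D)\rp$ and bounding the resulting positive closed vertical form, you would have arrived at essentially the paper's argument and bypassed the issue.

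Your treatment of the $\pi_\G^*\omega_\G$-summand is genuinely different from the paper's; the fibrewise normalization idea and the use of cohomological triviality from Lemma \ref{lem:basicinva} are sound in spirit, but they lead to a considerably more delicate computation than is needed. After the Stokes step you must justify pairing $(\alpha_0-T^-)\wedge\Omega_\G\wedge d^{-n}\pi_D^*(f^n)^*(T^+_\theta)$ against a merely $L^1$ function $V_n$, and then carry out a separate (and sketchier) argument for the $W_n$-piece. The paper's choice to fold $\Omega_\G$ together with $(\widehat{f}^n)^*\omega$ into the single pushforward $V_{n,\theta}$ eliminates any need to analyze $(\widehat{f}^n)^*\pi_\G^*\omega_\G$ separately and renders these subtleties unnecessary. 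In short: the first-summand step is a real gap, and while the second-summand strategy may be repairable, it is substantially more intricate than the paper's route, which you could have recovered by not splitting $\omega$.
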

    \begin{proof}
        By direct computation, we have
        \[
            \|S_{n,\theta}\|=\lp S_{n,\theta},\omega\rp=\lp -U, V_{n,\theta}\rp
        \]
        where $V_{n,\theta}=(\pi_D)_*\big(\Omega_\G\wedge d^{-n}(\widehat{f}^n)^*(\pi_D^*(T^+_\theta)\wedge\omega)\big)$ is a vertical positive closed smooth $(p+1,p+1)$-form on $M'\times N$. Proposition \ref{prop:controlmass} implies the mass of $V_{n,\theta}$ is bounded uniformly for all $n$ and $\theta$. By \cite[Theorem 3.6]{BDR24}, we can find negative vertical $L^1$-forms $\Omega_{n,\theta}$ on $D'$ and a constant $c>0$ such that $\ddc\Omega_{n,\theta}=V_{n,\theta}$ on $D'$ and $\|\Omega_{n,\theta}\|_{D'}\leq c\|V_{n,\theta}\|$ for all $n$ and $\theta$. This implies the set $\{\Omega_{n,\theta}\}$ is a relatively compact set in $\DSH_v(D')$ with respect to the topology introduced in Section \ref{sec:revisit}. Since $T^-$ and $\alpha_0$ have continuous super-potentials, 
        \[
            \|S_{n,\theta}\|=\lp -U,\ddc\Omega_{n,\theta}\rp=\lp \alpha_0-T^-,\Omega_{n,\theta}\rp
        \]
        is bounded from above by some constant independent of $n$ and $\theta$.
    \end{proof}

    Therefore, for each $n\geq 1$, as $\theta$ goes to 0 we can find a limit of $S_{n,\theta}$, say $S_n:=\lim_{i\to\infty} S_{n,\theta_i^{(n)}}$ where $\{\theta_i^{(n)}\}_{i\in\N}$ is a sequence in $(0,1]$ converging to 0 as $i\to\infty$. Similar to (\ref{eq:continuous-sp}), as $T^{-}$ has continuous super-potentials, we have $\lim_{\theta\to 0}\ddc S_{n,\theta}=\nu_n-\nu_n'$ for every $n\in\N$. Hence, we always have 
    \begin{equation}\label{eq:ddcS_n}
        \ddc S_n=\nu_n-\nu_n'
    \end{equation}
   which is independent of the choice of $\{ \theta^{(n)}_{i}\}$ and $S_n$. 
 
    By extracting a subsequence from $\{n_j\}$, we may assume $S=\lim_{j\to\infty} S_{n_j}$ and $\nu=\lim_{j\to\infty} \nu_{n_j}$. By (\ref{eq:ddcS_n}) and Lemma \ref{lemma_Oseledec}, we have
    \begin{equation*}
        \nu-\mu_-=\ddc S.
    \end{equation*}
    
    For every $l\geq 1$, $(\widehat{f}^{l})^{*}(\nu)=\lim_{j\to\infty}\nu_{n_j-l}$ is a limit measure of $\{\nu_n \}$. Suppose $\nuinfty$ is a limit measure of $\{ (\widehat{f}^{l})^{*}(\nu)\}$, then it is also a limit measure of $\{ \nu_n\}$. This is due to the fact that the space of all probability measures supported in $D'\times \G$ is a compact metric space under the weak topology. By the same argument as before, there exists a positive current $\Sinfty$ of bi-dimension $(1,1)$ which is a limit current of $\{S_n\}$ such that
   \begin{equation}\label{eq:nuinfty-mu-=ddcSinfty}
        \nuinfty-\mu_-=\ddc\Sinfty.
    \end{equation}
    
    \begin{lemma}\label{lemma:S-vertical}
        $\Sinfty$ admits a disintegration over $\pi_D$, i.e., there exists a positive measure $\sigma$ supported in $D'$ and for $\sigma$-almost every $x\in D$, a positive current $\Sinfty_x$ of bi-dimension $(1,1)$ on $\pi_D^{-1}(x)$ such that 
        \[
            \Sinfty=\int_D \Sinfty_x\d \sigma(x).
        \]
    \end{lemma}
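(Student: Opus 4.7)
The plan is to reduce the disintegration to showing $S^{(\infty)} \wedge \pi_D^*(\omega_D) = 0$ as a positive measure on $D'\times \G$. Once this vanishing is established, $S^{(\infty)}$ has $D$-dimension $0$ in the sense of Definition \ref{defshaw} (with $X$ replaced by $\G$), and standard slicing of positive currents over the submersion $\pi_D$ yields the desired decomposition: $\sigma$ is proportional to $(\pi_D)_{*}(S^{(\infty)} \wedge \pi_\G^*(\omega_\G^{m-1}))$, and $S^{(\infty)}_x$ is obtained by slicing $S^{(\infty)}$ over $\pi_D^{-1}(x)$.

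To prove the vanishing, I would use the lower semi-continuity of mass along the two limits defining $S^{(\infty)}$: the smoothness of $\pi_D^*(\omega_D)$ ensures that $S_{n,\theta}\wedge \pi_D^*(\omega_D)\to S^{(\infty)}\wedge\pi_D^*(\omega_D)$ weakly along the chosen subsequences, so it suffices to show that $\|S_{n,\theta}\wedge \pi_D^*(\omega_D)\|\to 0$ as $n\to\infty$, uniformly in $\theta\in(0,1]$. The projection formula, the semi-conjugation $\pi_D\circ \widehat{f}=f\circ\pi_D$, and $\int_\G \Omega_\G=1$ give
\[
\|S_{n,\theta}\wedge \pi_D^*(\omega_D)\| \;=\; \langle -U,\, V_n^\theta\rangle,\qquad V_n^\theta:=d^{-n}(f^n)^*(T^+_\theta\wedge \omega_D).
\]
Here $V_n^\theta$ is a smooth positive closed vertical current of bi-dimension $(k-p-1,k-p-1)$ on $D'$. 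Since $\|T^+_\theta\wedge \omega_D\|$ is bounded uniformly in $\theta$ (as $T^+_\theta\to T^+$ with bounded mass) and $d>d^-_{k-p-1}$ by \massump, Definition \ref{defi:degrees} implies $\|V_n^\theta\|_{M\times N'}\to 0$ exponentially, uniformly in $\theta$.

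The main obstacle is that $-U$ is only an $L^1$-form, so mass decay of $V_n^\theta$ alone does not control the pairing $\langle -U, V_n^\theta\rangle$. The plan to overcome it mirrors the proof of Lemma \ref{lemma:bounded-mass-S}: apply \cite[Theorem 3.6]{BDR24} to write $V_n^\theta=\ddc W_n^\theta$ on $D'$ for an $L^1$-form $W_n^\theta$ with $\|W_n^\theta\|_{D'}\leq C\|V_n^\theta\|_{D'}$, and use Stokes to obtain
\[
\langle -U,\, V_n^\theta\rangle \;=\; \langle \ddc(-U),\, W_n^\theta\rangle \;=\; \langle T^- - \alpha_0,\, W_n^\theta\rangle.
\]
Both $\|W_n^\theta\|_{D'}$ and $\|\ddc W_n^\theta\|_*\leq \|V_n^\theta\|_{D'}$ tend to $0$ uniformly in $\theta$, so $W_n^\theta\to 0$ in $\DSH_v(D')$; the continuity of the super-potentials of $T^-$ (Theorem \ref{thm:Green-contin-sp}), together with the smoothness of $\alpha_0$, forces the right-hand side to $0$ and closes the argument.
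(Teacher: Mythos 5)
Your proof is correct and reaches the same conclusion, but the route to the crucial vanishing $\Sinfty\wedge\pi_D^*(\omega_D)=0$ is genuinely different from the paper's. The paper simply pushes the relation $\nuinfty-\mu_-=\ddc\Sinfty$ forward by $\pi_D$: since both $\nuinfty$ and $\mu_-$ project to $\mu$, this gives $\ddc(\pi_D)_*\Sinfty=0$; as $(\pi_D)_*\Sinfty$ is a positive current of bi-dimension $(1,1)$ with compact support in $D'$ and $\omega_D=\ddc\|x\|^2$ has a globally defined smooth potential, Stokes' formula kills $\lp(\pi_D)_*\Sinfty,\omega_D\rp$ and the projection formula then gives the vanishing. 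This is soft, short, and does not touch the dynamics. Your argument instead unwinds the construction of $S_{n,\theta}$, writes $\|S_{n,\theta}\wedge\pi_D^*(\omega_D)\|=\lp -U,V_n^\theta\rp$, regularizes $V_n^\theta=\ddc W_n^\theta$ via \cite[Theorem 3.6]{BDR24}, and exploits both the super-potential continuity of $T^-$ and the strict inequality $d>d_{k-p-1}^-$ to force exponential decay. This essentially re-runs the machinery already used in Lemma \ref{lemma:bounded-mass-S}, but it buys you a quantitative rate and does not rely on the identity of the projections of $\nuinfty$ and $\mu_-$. Two minor slips worth fixing: since $T^-=\alpha_0+\ddc U$, one has $\ddc(-U)=\alpha_0-T^-$, not $T^--\alpha_0$; and for degree reasons the reference measure in your slicing step should be $(\pi_D)_*\bigl(\Sinfty\wedge\pi_\G^*(\omega_\G)\bigr)$, not with $\omega_\G^{m-1}$ --- as $\Sinfty$ has bi-dimension $(1,1)$ and $D$-dimension $0$, the shadow of Definition \ref{defshaw} takes $\omega_\G^{1-0}=\omega_\G$.
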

    \begin{proof}
       Using partition of unity, it suffices to prove the result on a local chart. Then we can write $\Sinfty$ as a form with positive measures as coefficients. Notice that the projections of $\nuinfty$ and $\mu_-$ to $D$ both equal to $\mu$. By (\ref{eq:nuinfty-mu-=ddcSinfty}), we have $\ddc(\pi_D)_*\Sinfty=0$.  Since $\omega_D=\ddc\|x\|^2$, we have $\Sinfty\wedge \pi_D^*(\omega_D)=0$.  This implies each term of $\Sinfty$ has bi-degree $(k,k)$ on the $D$-direction. The result follows from doing disintegration over $\pi_D$ for each of the coefficients (as a measure).
    \end{proof}

    Decompose $\sigma$ as $\sigma=\mu_{ac}+\mu_s$ where $\mu_{ac}$ is absolute continuous and $\mu_s$ is singular respect to $\mu$. Multiplying $\Sinfty_x$ by some constant depending on $z$, we may assume $\mu_{ac}=\mu$. Since $(\pi_D)_*\nuinfty=\mu$, we may assume the disintegration of $\nuinfty$ to be 
    \[
        \nu^{(\infty)}=\int_D\nuinfty_x\d\mu(x).
    \]
    Then it follows from (\ref{eq:nuinfty-mu-=ddcSinfty}) and the uniqueness of disintegration of measures that, for $\mu$-almost every $x\in D$, we have
    \begin{equation}\label{eq:nuinfty-delta=ddc-Sinfty}
        \ddc\Sinfty_x =\nuinfty_x-\delta_{[E_u(x)]}.
    \end{equation}
   This implies for $\mu$-almost every $x\in D$, $\Sinfty_x$ is $\C$-normal, i.e., both $\Sinfty_x$ and $\ddc\Sinfty_x$ is of order 0.
   
    \begin{lemma}\label{lemma:nuinfty_z}
        For $\mu$-almost every $x\in D$, $\nuinfty_x$ when viewed as a probability measure on $\G$, is supported on $\{[E_u(x)]\}\cup H_x$.
    \end{lemma}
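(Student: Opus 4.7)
The plan is a proof by contradiction: assume $\nuinfty_x$ has positive mass on some open set disjoint from $\{[E_u(x)]\}\cup H_x$ for a set of $x$ of positive $\mu$-measure, and derive a contradiction from the $\widehat{f}$-invariance of $\nuinfty$ combined with the uniform Pesin estimates on $\Lambda_\vep$.

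To begin, I would pass to a further subsequence if necessary so that $\nuinfty$ becomes $\widehat{f}$-invariant: a diagonal extraction of $\nu_{m_k}$ along which $(\widehat{f}^j)_*\nu_{m_k}$ converges for every $j\geq 0$ forces the limit $\nuinfty$ to satisfy $\widehat{f}_*\nuinfty=\nuinfty$. Combined with $(\pi_D)_*\nuinfty=\mu$ from (\ref{eq:push-forward-is-mu}) and the uniqueness of disintegration over the ergodic measure $\mu$, the invariance gives the fibrewise equivariance
\[
    (\d f_x)_*\nuinfty_x=\nuinfty_{f(x)} \quad\text{for $\mu$-a.e. } x,
\]
and by iteration $\nuinfty_x=(\d f^n_{f^{-n}(x)})_*\nuinfty_{f^{-n}(x)}$ for every $n\geq 1$. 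Since $\d f_y(H_y)=H_{f(y)}$, the function $y\mapsto\nuinfty_y(H_y)$ is $f$-invariant, hence constant $\mu$-a.e.\ by ergodicity; denote this constant by $h\in[0,1]$.

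Next I would apply Egorov's theorem to the monotone $\mu$-a.e.\ convergence $\nuinfty_y(\{v:\dist_\G(v,H_y)\leq\delta\})\downarrow h$ as $\delta\to 0^+$. For every $\vep>0$ and $\tau>0$, this produces a measurable subset $E_\vep\subset\Lambda_\vep$ with $\mu(E_\vep)>1-2\vep$ and a number $\delta_0>0$ such that
\[
    \nuinfty_y\big(\{v:\dist_\G(v,H_y)\leq\delta\}\big)-h<\tau \quad\text{for all } y\in E_\vep,\ \delta\in(0,\delta_0].
\]
Now suppose for contradiction that $v\in\supp(\nuinfty_x)\setminus(\{[E_u(x)]\}\cup H_x)$ for a positive-$\mu$-measure set of $x$. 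Fix such an $x$ and a small open neighbourhood $U\ni v$ with $\dist_\G(\overline{U},[E_u(x)])\geq\eta>0$ and $\overline{U}\cap H_x=\emptyset$. By Birkhoff applied to $f^{-1}$, one has $y_n:=f^{-n}(x)\in E_\vep$ for infinitely many $n$. Fix $\delta\in(0,\delta_0]$ and choose such $n$ large enough that $C(\delta)\lambda^n<\eta$, with $C(\delta)$ and $\lambda$ the Pesin constants on $\Lambda_\vep$. By the uniform Pesin estimate at $y_n$, every $v'\in\G$ with $\dist_\G(v',H_{y_n})>\delta$ has $\dist_\G(\d f^n_{y_n}(v'),[E_u(x)])<\eta$, so $\d f^n_{y_n}(v')\notin\overline{U}$; and every $v'\in H_{y_n}$ has $\d f^n_{y_n}(v')\in H_x$, again disjoint from $\overline{U}$. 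Hence $(\d f^n_{y_n})^{-1}(U)\subset\{v':0<\dist_\G(v',H_{y_n})\leq\delta\}$, and equivariance yields
\[
    \nuinfty_x(U)=\nuinfty_{y_n}\big((\d f^n_{y_n})^{-1}(U)\big)\leq\nuinfty_{y_n}\big(\{\dist_\G(\cdot,H_{y_n})\leq\delta\}\big)-h<\tau.
\]
Since $\tau>0$ is arbitrary, $\nuinfty_x(U)=0$, contradicting $v\in\supp(\nuinfty_x)$.

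The main technical obstacle will be making the invariance step rigorous for the particular $\nuinfty$ appearing in the statement: if the original limit is not itself $\widehat{f}$-invariant, one must extract further subsequences while preserving the conclusion, which is typically handled by showing the support statement is inherited by any Krylov--Bogolyubov--type invariant limit in the closed orbit of $\nuinfty$ under $\widehat{f}^*$. A secondary technical point is verifying measurability in $y$ of the family $y\mapsto\nuinfty_y(\{\dist_\G(\cdot,H_y)\leq\delta\})$ so that Egorov applies; this will follow from the continuous dependence of $H_y$ on $y$ on $\Lambda_\vep$ combined with Fubini applied to the disintegration of $\nuinfty$.
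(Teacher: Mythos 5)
The decisive gap is the invariance step at the start. The measure $\nuinfty$ in the statement is a cluster point of $\{(\widehat{f}^l)^*\nu\}_{l\geq 0}$, and such a cluster point need not be $\widehat{f}$-invariant. If $\nuinfty=\lim_k(\widehat{f}^{l_k})^*\nu$ then $\widehat{f}^*\nuinfty=\lim_k(\widehat{f}^{l_k+1})^*\nu$, which is a limit along a shifted subsequence and may differ from $\nuinfty$. The diagonal extraction you describe only arranges that each $(\widehat{f}^j)_*\nu_{m_k}$ converges; the limit of $(\widehat{f}^j)_*\nu_{m_k}$ is $(\widehat{f}^j)_*\nuinfty$, and nothing forces these limits to agree for different $j$. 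Genuine invariance comes from Ces\`aro (Krylov--Bogolyubov) averages $L^{-1}\sum_{l<L}(\widehat{f}^l)^*\nu$, but those form a different sequence whose limits are not the cluster points of $(\widehat{f}^l)^*\nu$ that the proof of Proposition \ref{partunstable} actually needs: it needs the conclusion for \emph{every} cluster point $\nuinfty$ in order to deduce $\lim_l(\widehat{f}^l)^*\nu=\mu_-$. Your closing caveat flags the issue but does not resolve it; establishing the support statement for an auxiliary invariant limit does not transfer back to the original $\nuinfty$.

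Everything you do after the invariance assertion (fibrewise equivariance $(\d f_x)_*\nuinfty_x=\nuinfty_{f(x)}$, constancy of $y\mapsto\nuinfty_y(H_y)$ by ergodicity, Egorov, Birkhoff for $f^{-1}$, and the Pesin bound applied to preimages $y_n=f^{-n}(x)\in\Lambda_\vep$) would be valid \emph{if} invariance were available, and is essentially a time-reversed version of the argument the paper runs. The paper avoids invariance entirely: it works with $\nu$ itself, disintegrates $\nu=\int\nu_x\,\d\mu(x)$, and for a test function $\varphi$ supported off $K_\vep=\{(x,v):x\in\Lambda_\vep,\ v\in\{[E_u(x)]\}\cup H_x\}$ writes
\[
\lp(\widehat{f}^l)^*\nu,\varphi\rp=\int_D\lp\nu_x,(\widehat{f}^l)_*\varphi\rp\,\d\mu(x).
\]
Restricting to $x\in\Lambda_\vep\cap f^l(\Lambda_\vep)$ (of $\mu$-measure $>1-2\vep$) and applying the Pesin estimate from $w=f^{-l}(x)\in\Lambda_\vep$ to $x$ shows that $\supp\big((\widehat{f}^l)_*\varphi\big)\cap\pi_D^{-1}(x)$ lies in a punctured ball around $[E_u(x)]$ of radius $C\lambda^l r\to 0$, so the integrand tends to $0$ pointwise. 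Dominated convergence then gives $\limsup_l\lp(\widehat{f}^l)^*\nu,\varphi\rp\leq 2\vep$, which controls $\lp\nuinfty,\varphi\rp$ for every cluster point $\nuinfty$ at once. If you restart from $\nu$ and push forward rather than starting from $\nuinfty$ and pulling back, the Pesin ingredients you have already assembled yield the lemma without any invariance hypothesis.
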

    \begin{proof}
        Recall that for any $\vep>0$, we can choose a closed subset $\Lambda_\vep\subset \supp(\mu)$ and $0<\lambda=\lambda(\Lambda_{\vep})<1$ such that $\mu(\Lambda_\vep)>1-\vep$ and for any $\delta>0$ there exist some constant $C>0$ depending on $\Lambda_\vep$ and $\delta$ such that for any $x$ in $\Lambda_{\vep}$, any $v\in\G$ such that $\dist_\G(v,H_x)>\delta$ and $l\in \N$, we have 
        \begin{equation}\label{eq:hyperbolic}
            \dist_\G(\d f^l_x(v), [E_u(f^l(x))])\leq C\lambda^l\dist_\G(v,[E_u(x)]).
        \end{equation}
        Define $K_\vep:=\{(x,v):x\in \Lambda_\vep, v\in \{[E_u(x)]\}\cup H_x\}$ which is a closed subset of $D\times \G$. Let $\varphi$ be any smooth test function on $D\times \G$ with compact support such that $0\leq \varphi\leq 1$ and $\supp(\varphi)\subset (D\times\G)\setminus K_\vep$. By compactness, we can choose $\delta>0$ small enough such that for every $x\in \Lambda_{\vep}$, $\dist_\G (\supp(\varphi)\cap \pi_D^{-1}(x), H_x)>\delta$. Suppose the disintegration of $\nu$ is given by $\nu=\int\nu_x\d\mu(x)$. Then we have
        \[
            \lp (\widehat{f}^l)^*\nu,\varphi\rp=\int_D \lp \nu_x, (\widehat{f}^l)_*\varphi\rp \d\mu(x).
        \]
        For any $l\in \N$, we have $\mu(f^l(\Lambda_\vep))=\mu(\Lambda_\vep)>1-\vep$ since $\mu$ is $f$-invariant. Therefore $\mu\big(D\setminus (\Lambda_\vep\cap f^l(\Lambda_\vep))\big)<2\vep$. It follows that
        \[
            \lp (\widehat{f}^l)^*\nu,\varphi\rp\leq\int_{x\in\Lambda_\vep}\mathbf{1}_{\{x\in f^l(\Lambda_\vep)\}}\lp \nu_x,(\widehat{f}^l)_*\varphi\rp\d\mu+2\vep.
        \]
        Let $\varphi_l(x):=\mathbf{1}_{\{x\in f^l(\Lambda_\vep)\}}\lp \nu_x,(\widehat{f}^l)_*\varphi\rp$. We claim that for every $x\in \Lambda_\vep$, $\lim_{l\to\infty}\varphi_l(x)=0$. Then by dominated convergence theorem, we have $\lp \nuinfty,\varphi\rp\leq 2\vep$. Since $\varphi$ is arbitrary, we deduce that $\nuinfty( K_\vep)\geq 1-2\vep$. The results then follows as $\vep$ is also arbitrary.

        It remains to prove the claim. Fix $l\geq 1$ and $x\in\Lambda_\vep$. Suppose $x=f^l(w)$ for some $w\in \Lambda_\vep$, otherwise we directly have $\varphi_l(x)=0$. Define the annulus $\mathbb A_0([E_u(x)],r):=\{(x,v):0<\dist_\G(v,[E_u(x)])<r\}$. We can find $r>0$ such that for any $w\in \Lambda_\vep$, $\supp( \varphi)\cap\pi_D^{-1}(w)\subset \mathbb A_0([E_u(w)],r)$. As $\supp(\varphi)$ is disjoint with $\delta$-neighbourhood of $H_x$ for $x\in \Lambda_{\vep}$, by (\ref{eq:hyperbolic}) we have
        \begin{equation*}
            \begin{aligned}
                \supp ((\widehat{f}^l)_*\varphi)\cap \pi_D^{-1}(x)
                &=\widehat{f}^l\big(\supp(\varphi)\cap\pi_D^{-1}(w)\big)\\
                &\subset \widehat{f}^l\big(\mathbb A_0([E_u(w)],r)\big)\\
                &\subset \mathbb A_0([E_u(x)],C\lambda^lr).
            \end{aligned}
        \end{equation*}
        It follows that 
        \[
            |\lp \nu_x,(\widehat{f}^l)_*\varphi\rp|\leq \|\nu_x\|_{\mathbb A_0([E_u(x)], C\lambda^lr)}
        \]
        which converges to 0 as $l$ goes to infinity.
    \end{proof}
    
    \begin{proof}[End of the proof of Proposition \ref{partunstable}]
        By Lemma \ref{lemma:nuinfty_z} and (\ref{eq:nuinfty-delta=ddc-Sinfty}), for $\mu$-almost every $x\in D$, $\ddc\Sinfty_x=0$ outside $Y:=\{[E_u(x)]\}\cup H_x$. Then $\ddc (\Sinfty_x|_{\G\setminus Y})=0$ on $\G\setminus Y$. Apply Theorem 1.3 of \cite{DS07} to this case. We obtain that $\ddc (\widetilde{\Sinfty_x|_{\G\setminus Y}})\leq 0$ where $\widetilde{\Sinfty_x|_{\G\setminus Y}}$ is the extension of $\Sinfty_x|_{\G\setminus Y}$ to $\G$ by zero. This implies $\ddc (\widetilde{\Sinfty_x|_{\G\setminus Y}})=0$ since we work on a compact K\"ahler manifold. As $S^{(\infty)}_{x}$ is $\C$-normal and of bi-dimension $(1,1)$, it has no mass on the point $[E_u(x)]$ (see for example \cite{B}). Then, $\Sinfty_x=\Sinfty_x|_{\G\setminus Y}+\Sinfty_x|_{H_x}$. Therefore $\ddc \Sinfty_x=\ddc(\Sinfty_x|_{H_x})$, which implies $\ddc (\Sinfty_x|_{H_x})=\nuinfty_x-\delta_{[E_u(x)]}$. As $\Sinfty_x|_{H_x}$ supports on $H_{x}$, we must have $\nuinfty_x=\delta_{[E_u(x)]}$. Since this is true for $\mu$-almost every $x\in D$, we deduce that $\nuinfty=\mu_-$ and thus $\lim_{l\to\infty}(\widehat{f}^l)^*\nu=\mu_-$. On each fibre, all the mass of $\nu_x$ outside the point $[E_u(x)]$ will concentrate on $H_x$ under the backward iteration of $\widehat{f}$, it follows that we must have $\nu=\mu_-$ at the beginning.
    \end{proof}

    We are ready to finish the proof of Theorem \ref{thm:unstable-Oseledec}. Prior to that, we prove a simple lemma which helps to reduce the problem to easier cases.

\begin{lemma}\label{lemma:dirac-measure-strong}
    Let $\nu$ be a probability measure on $D\times \G$ such that $(\pi_D)_*\nu=\mu$. Suppose there exists some constant $c>0$ such that $\nu\leq c\mu_-$. Then $\nu=\mu_-$.
\end{lemma}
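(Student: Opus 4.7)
The approach is a direct Radon--Nikodym argument, exploiting the fact that $\mu_-$ is supported on the measurable graph $\Gamma_u:=\{(x,[E_u(x)]):x\in\supp(\mu)\}$ of the unstable direction. The plan is to show that the density of $\nu$ with respect to $\mu_-$ is identically $1$ on $\Gamma_u$.

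First I would note that the hypothesis $\nu\leq c\mu_-$ means $\nu$ is absolutely continuous with respect to $\mu_-$. By the Radon--Nikodym theorem, there is a measurable function $h:D\times\G\to[0,c]$ with $\nu=h\,\mu_-$. In particular, $\nu$ is also supported on $\Gamma_u$, so only the values of $h$ on $\Gamma_u$ matter. Using the disintegration $\mu_-=\int_D\delta_{[E_u(x)]}\,\d\mu(x)$, I would define $\tilde h(x):=h(x,[E_u(x)])$ for $\mu$-a.e.\ $x$, giving
\[
    \nu=\int_D \tilde h(x)\,\delta_{[E_u(x)]}\,\d\mu(x).
\]

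Next I would test against a function pulled back from the base. For any continuous $\varphi$ on $D$,
\[
    \lp(\pi_D)_*\nu,\varphi\rp=\lp \nu,\varphi\circ\pi_D\rp=\int_D \tilde h(x)\,\varphi(x)\,\d\mu(x).
\]
Combining this with the assumption $(\pi_D)_*\nu=\mu$ yields $\int_D\tilde h(x)\varphi(x)\,\d\mu(x)=\int_D\varphi(x)\,\d\mu(x)$ for every such $\varphi$. This forces $\tilde h=1$ for $\mu$-almost every $x\in D$, and hence $\nu=\mu_-$.

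There is no serious obstacle; the only small point requiring care is the measurability of the section $x\mapsto(x,[E_u(x)])$ needed to make sense of $\tilde h$, but this follows from the Oseledec-type decomposition together with the Pesin sets $\Lambda_\vep$ introduced earlier in the section, on which $[E_u(x)]$ depends continuously on $x$ and whose union (up to a $\mu$-null set) exhausts $\supp(\mu)$.
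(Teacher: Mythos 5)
Your proof is correct and takes essentially the same approach as the paper: the paper disintegrates $\nu$ over $\pi_D$ and compares the fiber measures $\nu_x$ with $c\,\delta_{[E_u(x)]}$, while you phrase the comparison via the Radon--Nikodym derivative of $\nu$ with respect to $\mu_-$ and then push forward to the base to force the density to be $1$; the two arguments are the same in substance.
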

\begin{proof}
    We can disintegrate $\nu$ with respect to $\mu$: $\nu=\int\nu_x\d\mu(x)$ where $\nu_x$ is a probability measure supported on $\{x\}\times \G$. Recall that $\mu_-=\int \delta_{[E_u(x)]}\d\mu(x)$. If we test on arbitrary positive continuous functions, the assumption implies $c\delta_{[E_u(x)]}-\nu_x\geq 0$ for $\mu$-almost every $x\in D$. Hence, we must have $\nu_x=\delta_{[E_u(x)]}$ for $\mu$-almost every $x\in D$.
\end{proof}

\begin{proof}[End of the proof of Theorem \ref{thm:unstable-Oseledec}]  

We consider a limit measure of $\nu_n$: $\nu=\lim_{j\to\infty}\nu_{n_j}$ for some subsequence $\{n_j\}$. According to the above lemma, it suffices to find $c>0$ such that $\nu\leq c\mu_-$. By using local coordinates, we can decompose $\widehat{\gamma}$ as the sum of two parts: $\widehat{\gamma}=\widehat{\gamma}_0 + \widehat{\gamma}_1$. Here $\widehat{\gamma}_0$ is the sum of components of $\widehat{\gamma}$ which have bi-degree $(k-p,k-p)$ on $D$-directions and bi-degree $(m,m)$ on $\G$-directions and $\widehat{\gamma}_1$ is the sum of the remaining terms. Notice that $d^{-n}(\widehat{f}^n)_*(\widehat{\gamma}_1)\wedge (\pi_{D})^{*}T^+=0$ for excess of bi-degrees on $D$-directions. Therefore, we may directly take $\widehat{\gamma}=\widehat{\gamma}_0$. In this case, we can find a smooth positive horizontal $(k-p,k-p)$-form $\alpha$ on $D$ such that $\lp T^+,\alpha\rp=1$ and $\widehat{\gamma}\lesssim \alpha\wedge \Omega_\G$. Hence, it suffices to prove the result for $\widehat{\gamma}=\alpha\wedge \Omega_\G$.

Let $\alpha_0$ be as in Lemma \ref{lemma:convolution-T}. Let $\chi$ be a smooth cut-off function such that $\chi=1$ on a small neighborhood of $\supp(T^{-})$ and $\supp(\chi)\subset\supp(\alpha_0)$. We can always find such a $\chi$ since $\alpha_0$ is strictly positive on a small neighborhood of $\supp(T^-)$. Therefore, by Proposition \ref{prop:non-closed-mu},
\[
    \lim_{n\to\infty}\lp d^{-n}(f^n)_*(\alpha)\wedge T^+,\chi\rp=\lp \mu,\chi\rp=1
\]
since $\chi=1$ on $\supp(\mu)$. For any $\epsilon_0>0$, let $N$ be a positive integer such that $\lp T^+, \chi\cdot d^{-N}(f^N)_*\alpha\rp >1-\epsilon_0$. Let $ \alpha_{1}= \chi\cdot d^{-N}(f^N)_{*}\alpha$ and $\alpha_{2}=(1-\chi)\cdot d^{-N}(f^N)_{*}\alpha$. We define for $i=1$ and 2, a sequence of measures:
\[
    \nu_n^i:=d^{-n+N}(\widehat{f}^{n-N})_*(\alpha_i\wedge (\widehat{f}^N)_*\Omega_\G)\wedge \pi_D^*(T^+).
\]
Then $\nu_n=\nu_n^1+\nu_n^2$ and $\|\nu_n^i\|\leq 1$ for all $n$ and $j$. After extracting a subsequence, we may assume $\nu_{n_j}^i$ converges to $\nu_i$ for both $i=1$ and 2 as $j$ goes to infinity.
Therefore, we have $\nu=\nu_1+\nu_2$. Moreover, by arguing in the same way as (\ref{eq:push-forward-is-mu}), we obtain that 
\[
    (\pi_{D})_*(\nu_1)=(1-\epsilon)\mu
\]
for some $\epsilon<\epsilon_0$. Now notice that by our choice of $\chi$ and $\alpha_1$, we can find some large enough constant $c_1>0$ such that $\alpha_1\leq c_1\alpha_0$. By Proposition \ref{partunstable}, this implies $\nu_1\leq c_1\mu_-$. Applying Lemma \ref{lemma:dirac-measure-strong} to $\|\nu_1\|^{-1}\nu_1$, we conclude that $\nu_1=(1-\epsilon)\mu_-$. Since $\epsilon_0$ is arbitrary and $\nu$ is of mass 1, we must have $\nu=\mu_-$.
\end{proof}

\section{Proof of the main theorem}\label{sec:main}

We initiate the proof of our main result. Let $f$ be a H\'enon-like map on $D$ satisfying \massump. Define $F:=(f,f^{-1})$ as a bi-holomorphic map defined on a subdomain of $D\times D$. Let $\pi_1$ and $\pi_2$ be canonical projections from $D\times D$ to the first and second factor, respectively. Since $f^{-1}$ is a H\'enon-like map after switching the role of $M$ and $N$, $F$ is also a H\'enon-like map. The following proposition  allows us to apply the results in previous sections to $F$. 

\begin{proposition}
    Let $f_1$ and $f_2$ be as in Example \ref{exam:product}. If both of them satisfy \massump, then so does $f_1\times f_2$. In particular, $F$ satisfies \massump.
\end{proposition}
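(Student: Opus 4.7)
By Example \ref{exam:product}, $f_1\times f_2$ is a H\'enon-like map on $D_1\times D_2$ with main dynamical degree $d_1d_2$. Writing $p:=p_1+p_2$ and $k:=k_1+k_2$, \massump\ for the product amounts to the two inequalities $d_1d_2>d_{p-1}^+(f_1\times f_2)$ and $d_1d_2>d_{k-p-1}^-(f_1\times f_2)$. Since $(f_1\times f_2)^{-1}=f_1^{-1}\times f_2^{-1}$ and \massump\ is symmetric under $f_i\leftrightarrow f_i^{-1}$, it is enough to treat the first. My plan is to prove the Künneth-type upper bound
\begin{equation*}
d_l^+(f_1\times f_2)\ \le\ \max_{\substack{l_1+l_2=l\\ 0\le l_i\le p_i}} d_{l_1}^+(f_1)\,d_{l_2}^+(f_2), \qquad 0\le l\le p.
\end{equation*}
Applied to $l=p-1=p_1+p_2-1$, the dimension constraints $l_i\le p_i$ force $(l_1,l_2)\in\{(p_1,p_2-1),(p_1-1,p_2)\}$, so the right-hand side becomes $\max(d_1\cdot d_{p_2-1}^+(f_2),\,d_{p_1-1}^+(f_1)\cdot d_2)$, which is strictly less than $d_1d_2$ by \massump\ for $f_1$ and $f_2$ individually.

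To establish the Künneth-type bound, I would fix $T\in\mathscr C_h^1(D_1'\times D_2')$ of bi-dimension $(l,l)$ and set $\phi_n:=\pi_{M_1}\circ f_1^n$, $\psi_n:=\pi_{M_2}\circ f_2^n$. By Remark \ref{remark:mass-hori}, $d_l^+(f_1\times f_2)$ is the exponential growth rate of $\|(\phi_n\times\psi_n)_*T\|_{M_1'\times M_2'}$. Testing this mass against $\omega_{M_1\times M_2}^l=\sum\binom{l}{l_1}\omega_{M_1}^{l_1}\wedge\omega_{M_2}^{l_2}$ (where the nonvanishing terms satisfy $l_i\le p_i$) and using adjunction, the task reduces for each admissible pair $(l_1,l_2)$ to the pairwise estimate
\[
I_n(l_1,l_2):=\big\langle T,\ \pi_1^*\phi_n^*\omega_{M_1}^{l_1}\wedge\pi_2^*\psi_n^*\omega_{M_2}^{l_2}\big\rangle\ \lesssim\ d_{l_1}^+(f_1)^n\,d_{l_2}^+(f_2)^n.
\]
I would prove this by an iterated push-forward device. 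First, push $T\wedge\pi_1^*\phi_n^*\omega_{M_1}^{l_1}$ forward along $\pi_2$ to produce $R_2:=(\pi_2)_*(T\wedge\pi_1^*\phi_n^*\omega_{M_1}^{l_1})$, a positive closed horizontal current on $D_2$ of bi-dimension $(l_2,l_2)$; Remark \ref{remark:mass-hori} applied to $f_2$ then gives $I_n=\langle\psi_{n,*}R_2,\omega_{M_2}^{l_2}\rangle\lesssim d_{l_2}^+(f_2)^n\,\|R_2\|_{D_2'}$. Second, control $\|R_2\|_{D_2'}$ by replacing the varying test form with the $n$-independent form $\omega_{D_2}^{l_2}\chi$ (for a cutoff $\chi$ on $D_2'$), then push $T\wedge\pi_2^*(\omega_{D_2}^{l_2}\chi)$ forward along $\pi_1$ to obtain $R_1:=(\pi_1)_*(T\wedge\pi_2^*(\omega_{D_2}^{l_2}\chi))$, a positive closed horizontal current on $D_1$ of bi-dimension $(l_1,l_1)$; Remark \ref{remark:mass-hori} applied to $f_1$ yields $\|R_2\|_{D_2'}\lesssim d_{l_1}^+(f_1)^n\,\|R_1\|_{D_1'}$, and the last factor is bounded independently of $n$ by $\|T\|$ times the $\mathcal C^0$-norm of $\omega_{D_2}^{l_2}\chi$. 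Combining the two estimates gives the claimed bound on $I_n$.

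The hard part will be verifying that each intermediate push-forward produces a positive closed \emph{horizontal} current of the correct bi-dimension on its respective factor. Horizontality follows because $T$ is horizontal (its support is compact in the $N$-direction) and, at each step, the wedged form is supported in the $M$-vertical direction of the complementary factor; the bi-dimension check reduces to $l_i\le k_i$, which is automatic for admissible pairs since $l_i\le p_i\le k_i$. These are precisely the constraints that cut out the admissible set in the binomial expansion of $\omega_{M_1\times M_2}^l$, which is why only those pairs appear on the right-hand side of the Künneth-type bound.
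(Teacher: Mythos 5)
Your overall strategy matches the paper's: expand $\omega_{M_1\times M_2}^l$ binomially and reduce to factorwise estimates by iterated push-forward along $\pi_2$ and then $\pi_1$. The paper does exactly this, but restricted to $l=p-1$, where the only admissible pairs are $(p_1,p_2-1)$ and $(p_1-1,p_2)$ — always one slot at maximal degree. That restriction is not cosmetic; it is what makes the crucial mass-control step available, and your plan does not fill in that step.

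The step you gloss over is $\|R_2\|_{D_2'}\lesssim d_{l_1}^+(f_1)^n\|R_1\|_{D_1'}$, which you attribute to Remark \ref{remark:mass-hori} and the definition of $d_{l_1}^+(f_1)$. There are two ways to try to make this precise, and both run into trouble. Without a cutoff in the $M_1$ direction, $\langle R_1,\phi_n^*\omega_{M_1}^{l_1}\rangle$ equals the $M_1$-projected mass of $(f_1^n)_*R_1$ on \emph{all} of $M_1$, whereas $d_{l_1}^+(f_1)$ only controls the mass on a fixed $M_1'\Subset M_1$; since $(f_1^n)_*R_1$ is supported in $M_1\times N_1''$ and can escape every $M_1'$ in the $M_1$ direction, the two are not comparable a priori. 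If instead you insert a cutoff $\chi_1$ to localize, then $\chi_1\omega_{M_1}^{l_1}$ is not closed when $l_1<p_1$, so $R_2=(\pi_2)_*\big(T\wedge\pi_1^*(f_1^n)^*\pi_{M_1}^*(\chi_1\omega_{M_1}^{l_1})\big)$ is no longer closed and the definition of $d_{l_2}^+(f_2)$ cannot be applied to $(f_2^n)_*R_2$. Your paragraph identifying ``the hard part'' checks horizontality and bi-dimension but never closedness of the intermediate currents — that is precisely where the tension lives.

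The paper resolves both problems at once by arranging $l_1=p_1$: then $\chi_1\omega_1^{p_1}$ is top-degree on $M_1$, hence closed, so $S_n:=(\pi_2)_*\big(S\wedge\pi_1^*((f_1^n)^*(\chi_1\omega_1^{p_1}))\big)$ is positive closed horizontal. The uniform bound $d_1^{-n}\|S_n\|\le C$ independent of $n$ and $S$ is then obtained \emph{not} from the dynamical degree but from a super-potential estimate: $d_1^{-n}(f_1^n)^*(\chi_1\omega_1^{p_1})$ converges uniformly to $T_1^+$, which has continuous super-potentials, and $\ddc(\pi_1)_*\big(S\wedge\pi_2^*(\widetilde{\chi}_2\omega_2^{p_2-1})\big)$ has $\|\cdot\|_*$-norm bounded uniformly in $S$; the paper then invokes \cite[Proposition 4.3]{DNS}. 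This device is absent from your plan and is what you would need to add. Your stated K\"unneth bound for arbitrary $l$ would moreover require handling pairs with $l_1<p_1$ and $l_2<p_2$ simultaneously, which the paper's trick does not cover and which never arises for $l=p-1$.
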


\begin{proof}
    We assume $M_i\subset \C^{q_i}$ and $N_i\subset \C^{k_i-q_i}$ for some integers $k_i>q_i\geq 1$. Choose convex open sets $M_i^*$ such that $M_i''\Subset M_i^*\Subset M_i'$. Let $\{d_{l,i}^{+}\}_{l=0}^{q_i}$ and $\{d_{l,i}^{-}\}_{l=0}^{k_i-q_i}$ be the dynamical degrees of $f_i$. Finally, denote by $\{\tilde{d}_l^\pm\}$ the dynamical degrees associated with $F:=f_1\times f_2$. Let $q=q_1+q_2$ and $k=k_1+k_2$. Then $\tilde{d}_q^+=\tilde{d}_{k-q}^-=d_1d_2$. For simplicity, we only prove that $\tilde{d}^+_{q-1}<d_1d_2$ as the other inequality $\tilde{d}_{k-q-1}^-<d_1d_2$ can be obtained in the same way.
        
    Let $S$ be a positive closed horizontal current of bi-dimension $(q-1,q-1)$ of mass 1 in $(M_1'\times M_2')\times (N_1'\times N_2')$. Let $\omega_i$ be the K\"ahler form of $\C^{q_i}$. Let $\chi_i:M_i\to [0,1]$ be smooth cut-off functions such that $\chi_i=1$ on $M_i''$ and $\supp(\chi_i)\subset M_i^*$. After pulling back, we also regard $\chi_i$ and $\omega_i$ as objects on $D_i$. Define $\pi_{M_1\times M_2}$ be the projection from $D_1\times D_2$ to $M_1\times M_2$. Then it is easy to compute that
        \begin{equation}\label{eq:expand}
                \|(\pi_{M_1\times M_2})_*((F^n)_*S)\|_{M_1''\times M_2''}
                \leq \lp (F^n)_*S,\chi_1\omega_1^{q_1}\otimes \chi_2\omega_2^{q_2-1}+\chi_1\omega_1^{q_1-1}\otimes\chi_2\omega_2^{q_2}\rp.
        \end{equation}
    We only bound the first term on the right hand side as the proof for the second term is similar. Denote by $\pi_i$ the projection map from $D_1\times D_2$ to $D_i$. Let $S_n=(\pi_{2})_*\big(S\wedge \pi_{1}^*((f_1^n)^*(\chi_1\omega_1^{q_1}))\big)$ which is a positive closed horizontal current of bi-dimension $(q_2-1,q_2-1)$ in $M_2'\times N_2'$. Then the first term equals to $\lp (f_2^n)_*S_n,\chi_2\omega_2^{q_2-1}\rp$. We claim that $d_1^{-n}\|S_n\|_{M_2^*\times N_2'}$ is bounded from above by a constant independent of $S$ and $n$. Assuming the claim, we have 
    \[
        \lp(F^n)_*S,\chi_1\omega_1^{q_1}\otimes \chi_2\omega_2^{q_2-1}\rp\leq \lp (f_2^n)_*S_n, \chi_2\omega_2^{q_2-1}\rp\leq d_1^n \|(f_2^n)_*(d_1^{-n}S_n)\|_{M_2^*\times N_2'}.
    \]
    By our assumption on $f_2$, $\limsup_{n\to\infty}\sup_S\|(f_2^n)_*(d_1^{-n}S_n)\|^{1/n}_{M_2^*\times N_2'}\leq d_{q_2-1,2}^+$. It follows from Remark \ref{remark:mass-hori} and  (\ref{eq:expand}) that $\tilde{d}_{q-1}^+\leq \max(d_1d_{q_2-1,2}^+,d_2d_{q_1-1,1}^+)<d_1d_2$.

    It remains to prove the claim. Choose a smooth cut-off function $\widetilde{\chi}_2:M_2\to[0,1]$ such that $\widetilde{\chi}_2=1$ on $M_2^*$ and $\supp(\widetilde{\chi}_2)\subset M_2'$. Then 
    \[
        d_1^{-n}\|S_n\|_{M_2^*\times N_2'}\leq d_1^{-n}\lp S_n,\widetilde{\chi}_2\omega_2^{q_2-1}\rp=\lp d_1^{-n}(f_1^n)^*(\chi_1\omega_1^{q_1}), (\pi_{1})_*\big(S\wedge \pi_{2}^*(\widetilde{\chi}_2\omega_2^{q_2-1})\big)\rp.
    \]
    Since the $\|\cdot\|_*$-norm of $\ddc(\pi_{1})_*\big(S\wedge \pi_{2}^*(\widetilde{\chi}_2\omega_2^{q_2-1})\big)$ is bounded from above by a constant independent of $S$, \cite[Proposition 4.3]{DNS} implies the claim.

    The last assertion comes from the fact that $f^{-1}$ is also a H\'enon-like map with the same set of dynamical degrees as that of $f$.
\end{proof}

We will use $(x,y)$ for the usual complex coordinates in $\C^k\times \C^k$. Let $\Delta$ be the diagonal of $D\times D$. Let $\Gamma_n$ be the graph of $f^n$ in $D\times D$. When $n$ is even, we have $\Gamma_n=F^{-n/2}(\Delta)$. If $n$ is odd, then $\Gamma_n=F^{-(n-1)/2}(\Gamma_1)$. For the moment we assume $n$ is even. The other case can be treated similarly. Although $\Delta$ is not a vertical set, we can prove that $F^{-1}(\Delta)$ is vertical. Therefore $[\Gamma_n]=[F^{-n/2}(\Delta)]$ is a positive closed vertical current of bi-dimension $(k,k)$. Define $\mathbb T^+:=T^+\otimes T^-$ and $\mathbb T^-:=T^-\otimes T^+$. They are the Green currents for $F$ and we have $d^{-n}[\Gamma_n]\to\bbT^+$ as $n\to\infty$.

\subsection{Intersection between $\widehat{\mathbb T}^+$ and $\Pi^*[\Delta]$}
Consider the (canonical) lift of $\Gamma_n$ to $\Gr(D\times D,k)=D\times D\times \G(k,2k)$ denoted by $\widehat{\Gamma}_n$. By Theorem \ref{thm:tame}, any converging subsequence $d^{-n_j}[\widehat{\Gamma}_{n_j}]$ converges to a lift of $\mathbb T^+$. We denote this limit by $\widehat{\mathbb T}^+$ which depends on the choice of $\{n_j\}$. Notice that $\Gr(D,k-p)\times \Gr(D,p)$ can be naturally embedded in $\Gr(D\times D,k)$.

\begin{lemma}\label{SuppofT}
    The current $\widehat{\bbT}^+$ is supported in $\Gr(D,k-p)\times \Gr(D,p)$.
\end{lemma}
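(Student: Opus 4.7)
The plan is to use Pesin theory to show that the tangent space to $\Gamma_n$ at an Oseledec-regular point of $\mu$ converges to a split plane lying in $\Gr(D,k-p)\times\Gr(D,p)$ as $n\to\infty$, and then to transfer this pointwise convergence to the weak limit $\widehat{\bbT}^+$.

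First, I would parametrize a point on $\Gamma_n$ by $(x, f^n(x))$ with $x$ in the domain of $f^n$. The tangent space at this point is the graph $V_{x,n}$ of $df^n_x$, regarded as a $k$-plane in $T_xD \oplus T_{f^n(x)}D \cong \C^k \oplus \C^k$. At a point $x$ in a Pesin set $\Lambda_\vep$ for $\mu$, the splitting $T_xD = E_s(x)\oplus E_u(x)$ has dimensions $k-p$ and $p$. Writing a vector as $v = v_s + v_u$ with $v_s \in E_s(x)$ and $v_u \in E_u(x)$, the Pesin contraction/expansion estimates give that the second component of $(v_s, df^n_x(v_s))$ has size $O(\lambda^n\|v_s\|)$, while after normalization by the dominant second component, the first component of $(v_u, df^n_x(v_u))$ has size $O(\lambda^n)$. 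Consequently, $V_{x,n}$ lies within Hausdorff distance $O(\lambda^n)$ in $\G(k,2k)$ of the split $k$-plane $E_s(x)\oplus E_u(f^n(x))$, which is a point of $\Gr(D,k-p)\times\Gr(D,p)$.

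Next, I would test $\widehat{\bbT}^+$ against an arbitrary smooth test form $\Phi$ on $\Gr(D\times D, k)$ whose support is disjoint from a fixed neighborhood of $\Gr(D,k-p)\times\Gr(D,p)$. Using $\widehat{\bbT}^+=\lim_j d^{-n_j}[\widehat{\Gamma}_{n_j}]$, the pairing $\lp \widehat{\bbT}^+, \Phi\rp$ equals $\lim_j d^{-n_j}\int_{\widehat{\Gamma}_{n_j}} \Phi$. For $n$ large enough that $O(\lambda^n)$ falls below the distance from $\supp\Phi$ to $\Gr(D,k-p)\times\Gr(D,p)$, the integrand vanishes on the part of $\widehat{\Gamma}_n$ lying over $\Lambda_\vep$, reducing the integral to the contribution from $D\setminus\Lambda_\vep$.

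The main obstacle is controlling this remaining contribution. Since $d^{-n}[\Gamma_n]\to \bbT^+=T^+\otimes T^-$ and $\supp\bbT^+=\supp T^+\times\supp T^-\supsetneq\supp\mu\times\supp\mu$, a direct Pesin argument based on $\mu$ does not reach all of the relevant mass. To overcome this, I would exploit the $\widehat{F}$-invariance $\widehat{F}^*\widehat{\bbT}^+=d^2\widehat{\bbT}^+$ together with the tameness of $T^\pm$ from Theorem~\ref{thm:tame}: the tame woven structures furnish well-defined tangent leaves at $T^\pm$-a.e.\ point of $\supp T^\pm$, and iterating $\widehat{F}$ contracts any generic $k$-plane at $(x,y)$ onto the $F$-stable direction $E_s(x)\oplus E_u(y)$, which again lies in $\Gr(D,k-p)\times\Gr(D,p)$. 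Passing to the limit and then letting $\vep\to 0$ yields $\lp \widehat{\bbT}^+, \Phi\rp=0$, proving the lemma.
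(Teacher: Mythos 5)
You correctly identify the central obstacle: the Pesin argument controls $\Gamma_n$ only over $\mu$-typical points, while $\widehat{\bbT}^+$ projects onto $\bbT^+=T^+\otimes T^-$ whose support $\supp T^+\times \supp T^-$ is in general much larger than $\supp\mu\times\supp\mu$. Unfortunately the proposed fix does not close this gap. The Oseledec splitting $E_s(x)\oplus E_u(y)$ that you invoke at a generic point $(x,y)\in\supp\bbT^+$ simply does not exist there: the hyperbolic splitting is defined only $\mu$-a.e., not $T^\pm$-a.e.\ (which is not even a meaningful notion, as $T^\pm$ are currents, not measures). Likewise, ``tameness furnishes tangent leaves at $T^\pm$-a.e.\ point'' conflates the woven structure (a measure $\nu$ on lames) with the Oseledec fibre bundle. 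Finally, the relation $\widehat{F}^*\widehat{\bbT}^+=d^2\widehat{\bbT}^+$ is not available: the paper stresses that $\widehat{\bbT}^+$ depends on the extracted subsequence $\{n_j\}$, and a priori $\widehat{F}^*\widehat{\bbT}^+$ need only be $d^2$ times some other lift of $\bbT^+$.

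The paper's argument is much shorter and does not touch Pesin theory. It starts from the woven structure of $\bbT^+$ (Theorem \ref{thm:laminar}) and uses the two bi-degree constraints $\bbT^+\wedge \pi_1^*(\omega_D^{k-p+1})=0$ and $\bbT^+\wedge\pi_2^*(\omega_D^{p+1})=0$ (automatic because $\bbT^+=T^+\otimes T^-$ with $T^+$ of bi-degree $(p,p)$ and $T^-$ of bi-degree $(k-p,k-p)$). Since each $[S]\wedge\pi_i^*(\cdots)$ is a positive current, these identities force, for $\nu$-a.e.\ lame $S$, that $\pi_1|_S$ has rank $\le k-p$ and $\pi_2|_S$ has rank $\le p$; a tangent-space count then shows every tangent plane of $S$ splits as a direct sum of a subspace of $\C^k\times 0$ and one of $0\times\C^k$, so $S$ is locally a product $(k-p)$-dimensional $\times$ $p$-dimensional. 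Because $\widehat{\bbT}^+$, by Theorem \ref{thm:tame}, is a lift of $\bbT^+$ along this very woven structure, its lames lie in $\Gr(D,k-p)\times\Gr(D,p)$ and so does its support. If you want to salvage a dynamical argument of the kind you sketched, you would still need to replace the $\mu$-based Pesin set by something that controls the full support of $\bbT^+$; the cohomological degree argument of the paper circumvents this entirely.
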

\begin{proof}
Since $\bbT^+\wedge \pi_1^*(\omega_D^{k-p+1})=\bbT^+\wedge\pi_2^*(\omega_D^{p+1})=0$, we can show that for almost every lame of $\bbT^+$, it is locally a product of a $(k-p)$-dimensional manifold in $D$ with another of dimension $p$. The result follows since the woven structure of $\widehat{\bbT}^+$ is lifted from that of $\bbT^+$. For details, see \cite[Lemma 5.6]{DS16}.
\end{proof}

\begin{remark}\rm\label{remark:woven-bbT-hat}
    Let $\pi_{\Gr_1}$ and $\pi_{\Gr_2}$ be the projections from $\Gr(D,k-p)\times \Gr(D,p)$ to the first and second factors. Denote by $\omega_{\Gr_1}$ and $\omega_{\Gr_2}$ the corresponding K\"ahler forms. Then from the above discussion, we can easily prove that $\widehat{\bbT}^+\wedge\pi_{\Gr_1}^*(\omega_{\Gr_1}^{k-p+1})=\widehat{\bbT}^+\wedge\pi_{\Gr_2}^*(\omega_{\Gr_2}^{p+1})=0$. It is an interesting question to ask whether the woven structure of $\bbT^+$ or $\widehat{\bbT}^+$ is unique. 
\end{remark}

Let $\Pi:\Gr(D\times D,k)\to D\times D$ be the canonical projection map. Recall that for $\mu$-almost every point $z$, the stable (resp. unstable) tangent subspace is denoted by $E_s(x)$ (resp. $E_u(x)$). Define $\mu^{\Delta}:=\mathbb{T}^{+}\wedge [\Delta]$ which is exactly $T^+\wedge T^-$ if we identify $\Delta$ with $D$. Denote by $\widehat{\mu}^{\Delta}$ the lift of $\mu^\Delta$ to the set of points $(x,x,[E_s(x)\times E_u(x)])$ in $\Gr(D\times D,k)$. The following is the main result of this subsection. 

\begin{proposition}\label{prop:intersec-mu-hat}
   The intersection $\widehat{\mathbb T}^+\curlywedge \Pi^*[\Delta]$ is well-defined and equals to $\widehat{\mu}^\Delta$.
\end{proposition}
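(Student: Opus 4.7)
The plan is to imitate the tangent-current analysis of Proposition \ref{equidensity} in the present setting. First, $F=(f,f^{-1})$ satisfies \massump\ by the preceding proposition, so by Theorem \ref{thm:Green-contin-sp} the Green currents $\mathbb T^\pm$ of $F$ have continuous super-potentials. Near $\Pi^{-1}(\Delta)$ I would introduce local coordinates $(x,z,[H])$ on $\Gr(D\times D,k)$ with $z=y-x$, so that $\Pi^{-1}(\Delta)=\{z=0\}$, the normal bundle is identified with $\{(x,z,[H])\}$, and the dilation acts by $A_\lambda(x,z,[H])=(x,\lambda z,[H])$. The task is to show that any weak limit $\mathcal R$ of $(A_{\lambda_n})_*\widehat{\mathbb T}^+$ as $\lambda_n\to\infty$ equals the pullback of $\widehat{\mu}^\Delta$ along the normal-bundle projection, so that the tangent current is unique and the shadow equals $\widehat{\mu}^\Delta$.

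I would test $\mathcal R$ against forms $\gamma=\alpha(x,z)\wedge\beta([H])$ and split into Type I ($\alpha$ of bi-degree $(t,t')\neq(k,k)$ in $(x,z)$) and Type II ($\alpha=h(x,z)\alpha_0$ with $\alpha_0$ of top bi-degree). For Type I, following Lemma \ref{lemma:t<k}, I reduce via Cauchy-Schwarz to $t=t'$, wedge $\beta$ against $\widehat{\mathbb T}^+$ and push down via $\Pi$ to obtain a $\beta$-weighted current on $D\times D$ whose mass is controlled by that of $\mathbb T^+$ through Proposition \ref{prop:controlmass}, and construct the analogue of $\Phi_\lambda$ using the $\theta_{\lambda,t}$ form of Lemma \ref{lemma:basic-cons}. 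This family has uniformly bounded $\DSH_h$-norm while tending weakly to zero, so the continuous super-potentials of $\mathbb T^+$ force $\lp\mathcal R,\gamma\rp=0$. Hence $\mathcal R$ has horizontal dimension $0$ along $\Pi^{-1}(\Delta)$ and takes the form $\pi^*(S)$ for a positive measure $S$ on $\Pi^{-1}(\Delta)$.

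For Type II, the computation at the end of the proof of Proposition \ref{equidensity} identifies $(\Pi|_{\Pi^{-1}(\Delta)})_*S$ with $\mathbb T^+\wedge[\Delta]=T^+\wedge T^-=\mu$. To describe the Grassmannian content of $S$ over each $(x,x)$, invoke Lemma \ref{SuppofT}: $\widehat{\mathbb T}^+$ is supported in the product locus $\Gr(D,k-p)\times\Gr(D,p)$, so $S$ concentrates on this locus inside $\Pi^{-1}(\Delta)$ and disintegrates into marginals on $\G(k-p,k)$ and $\G(p,k)$. Applying Theorem \ref{thm:unstable-Oseledec} to $f$ identifies the $\G(p,k)$-marginal over $(x,x)$ as $\delta_{[E_u(x)]}$; applying it to the H\'enon-like map $f^{-1}$, with the roles of $M$ and $N$ swapped, identifies the $\G(k-p,k)$-marginal as $\delta_{[E_s(x)]}$. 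Combining via the product embedding $\G(k-p,k)\times\G(p,k)\hookrightarrow\G(k,2k)$, $(V_1,V_2)\mapsto V_1\times V_2$, the Grassmannian fiber of $S$ is $\delta_{[E_s(x)\times E_u(x)]}$ for $\mu$-almost every $x$, whence $S=\widehat{\mu}^\Delta$.

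The main obstacle is the Type I vanishing. Unlike in Proposition \ref{equidensity}, $\widehat{\mathbb T}^+$ is not literally a tensor product $\widehat{T}_h\otimes T_v$, so the super-potential reduction cannot be quoted verbatim. The delicate step is to verify the uniform $\DSH_h$-bound for the $\beta$-weighted $\Phi_\lambda$-analogue after the $A_\lambda^*$-scaling; I plan to handle this by combining Proposition \ref{prop:controlmass} with the tame lift structure of Section \ref{sec:tame} to express the push-forwards of $\widehat{\mathbb T}^+$ in terms of those of $\mathbb T^+$. Once the uniform bound is in place, continuity of super-potentials of $\mathbb T^+$ closes the argument, and Type II proceeds in parallel to the end of the proof of Proposition \ref{equidensity}.
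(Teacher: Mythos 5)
Your high-level plan—establish h-dimension $0$, then identify the shadow via its $D$-push-forward and its Grassmannian marginals—is the same as the paper's, but both halves of your execution have genuine gaps.

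For the h-dimension-$0$ step (your ``Type I''), the attempt to replicate the super-potential argument of Proposition \ref{equidensity} directly on $\widehat{\mathbb T}^+$ cannot work as written. That argument is tied to the tensor-product structure $\widehat{T}_h\otimes T_v$: one of the two factors, $T_v$, has continuous super-potentials, and the dilated test form is pushed through $\widehat{T}_h$ onto the base so that a $\DSH_h$-bounded family is paired against $T_v$ alone. Here $\widehat{\mathbb T}^+$ is a single current on $\Gr(D\times D,k)$, not a product, and the test object $\Pi^{-1}(\Delta)$ is a current of integration which has no continuous super-potential. There is no factorization that lets you peel off a ``$T_v$ with continuous super-potential'' from the pairing, so the phrase ``continuous super-potentials of $\mathbb T^+$ force $\lp\mathcal R,\gamma\rp=0$'' does not attach to anything. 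Your proposed fix—``combine Proposition \ref{prop:controlmass} with the tame lift structure''—does not supply the missing factorization either. The paper avoids this entirely in Lemma \ref{impor-h=0}: it writes $\widehat{\mathbb T}^+=d^{-2l}(\widehat{F}^l)^*(\widehat{\mathbb T}^+_l)$, transfers the density class across this biholomorphism to $\kappa_r(\widehat{\mathbb T}^+_l,\, d^{-2l}\Pi^*[\Gamma_{-2l}])$, passes to a limit $\widehat{\mathbb T}^+_\infty$ and to $\Pi^*(\mathbb T^-)$ (which \emph{does} have continuous super-potentials), applies Theorem \ref{thm:sp-density} to kill $\kappa_r$ for $r>0$, and finishes with upper semi-continuity of density classes. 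This introduces a genuinely new positive closed current $\mathbb T^-$ into the picture; it is not a mere cosmetic rewriting of your Type I estimate.

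For the marginal identification (your ``Type II''), the skeleton is right but the key bridge is missing. Knowing that the shadow $\widehat\nu$ lives on the product locus $\Gr(D,k-p)\times\Gr(D,p)$ and pushes forward to $\mu$ on $\Delta$ does not by itself let you ``apply Theorem \ref{thm:unstable-Oseledec}'' to the two marginals. Theorem \ref{thm:unstable-Oseledec} identifies the limit of a specific sequence $d^{-n}(\widehat f^n)_*(\widehat\gamma)\wedge\pi_D^*(T^+)$; to use it you first have to express $(\pi_s)_*\widehat\nu$ and $(\pi_u)_*\widehat\nu$ as such limits. The paper does the real work here: it proves that $(\pi_s)_*\widehat{\mathbb T}^+=\widehat T^+\otimes T^-$ and $(\pi_u)_*\widehat{\mathbb T}^+=T^+\otimes\widehat T^-$ (Lemma \ref{decompcurr}, which needs the uniform convergence $d^{-l}(f^l)_*\to T^+$ on $\mathscr C_h^1(D)$ to untangle the fiber currents from the base measure); it identifies $\widehat T^+$ as $\lim d^{-n_j}(\widehat f^{n_j})^*(\widehat\gamma)$ for the specific smooth lift $\widehat\gamma$ of Example \ref{exam:gamma-hat} (Lemma \ref{lem:slicelift}, which uses the slicing of woven currents); and only then does Proposition \ref{equidensity} on the tensor products, combined with (\ref{eq:continuous-sp}) and Theorem \ref{thm:unstable-Oseledec}, give the marginals $\mu_+$ and $\mu_-$. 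Without Lemmas \ref{decompcurr} and \ref{lem:slicelift} your argument stops at the observation that $\widehat\nu$ is concentrated on the product Grassmannian and projects to $\mu$; it does not yet pin down the fibers as Dirac masses at $([E_s(x)],[E_u(x)])$.
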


Recall from Section \ref{sec:sp} that this means the h-dimension of $\widehat{\bbT}^+$ along $\Pi^{-1}(\Delta)$ is 0 and the tangent current is given by the pullback of $\widehat{\mu}^\Delta$ to the natural compactification of the normal bundle of $\Pi^{-1}(\Delta)$ in $\Gr(D\times D,k)$.

\begin{lemma}\label{impor-h=0}
    The h-dimension of $\widehat{\mathbb T}^+$ along $\Pi^{-1}(\Delta)$ is $0$.
\end{lemma}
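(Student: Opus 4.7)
I plan to prove the h-dimension vanishing by a pushforward argument to $D\times D$ combined with the support constraint of Lemma \ref{SuppofT}. Let $\mathcal R$ be any tangent current of $\widehat{\bbT}^+$ along $V := \Pi^{-1}(\Delta)$, living on $\overline{N}_{V|X}$ with $X = \Gr(D\times D, k)$; the goal is to show $\mathcal R \wedge \pi^*(\omega|_V) = 0$, where $\pi : \overline{N}_{V|X} \to V$ is the canonical projection.

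First I split $\omega|_V = \Pi^*(\omega_{D\times D}|_\Delta) + \omega_{\G(k,2k)}$ and treat the two summands separately. Under the natural identification $N_{V|X} = \Pi^*(N_{\Delta|D\times D})$, the fibrewise dilation $A_\lambda$ commutes with the induced projection $\overline{\Pi}: \overline{N}_{V|X} \to \overline{N}_{\Delta|D\times D}$; therefore $\overline{\Pi}_*\mathcal R$ is a tangent current of $\bbT^+ = \Pi_*(\widehat{\bbT}^+)$ along $\Delta$. Since $T^\pm$ have continuous super-potentials by Theorem \ref{thm:Green-contin-sp} and identifying $\Delta$ with $D$ gives $\bbT^+ \wedge [\Delta] = T^+ \wedge T^- = \mu$, Theorem \ref{thm:sp-density} shows that $\bbT^+ \curlywedge [\Delta]$ has the minimal h-dimension and unique tangent current $\pi_\Delta^*(\mu^\Delta)$. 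The projection formula then yields $\overline{\Pi}_*(\mathcal R \wedge \pi^* \Pi^* \omega_\Delta) = \pi_\Delta^*(\mu^\Delta \wedge \omega_\Delta) = 0$ for degree reasons. For the second summand, Lemma \ref{SuppofT} confines $\supp(\mathcal R)$ to the compactified normal bundle over $\Delta \times (\G(k-p,k) \times \G(p,k))$, where $\omega_{\G(k,2k)}$ restricts cohomologously to $\omega_{\G(k-p,k)} + \omega_{\G(p,k)}$; combined with Remark \ref{remark:woven-bbT-hat}, a bi-degree count gives $\mathcal R \wedge \omega_{\G(k,2k)} = 0$.

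The main obstacle is upgrading the pushforward vanishing $\overline{\Pi}_*(\mathcal R \wedge \pi^*\Pi^*\omega_\Delta) = 0$ to the pointwise vanishing, since $\overline{\Pi}$ has positive-dimensional Grassmannian fibres. I plan to resolve this via positivity and rigidity: by Theorem \ref{thm:tame}, $\mathcal R$ inherits a woven structure from the tame lift, and the hyperbolicity of $\mu$ gives the transversality $E_s(x) \cap E_u(x) = 0$ for $\mu^\Delta$-almost every $x$. Consequently $\mathcal R$ is concentrated over the measurable graph $\{(x,x,[E_s(x) \times E_u(x)])\}$ and is $A_\lambda$-invariant along the fibres; combined with positivity of $\mathcal R$ and the vanishing of the pushforward, these rigidity constraints should force the desired pointwise vanishing, which simultaneously prepares the identification of $\mathcal R$ needed for Proposition \ref{prop:intersec-mu-hat}.
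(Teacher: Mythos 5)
Your approach is genuinely different from the paper's, but it has a gap you cannot close without circularity, and the treatment of the Grassmannian part of $\omega|_V$ is also flawed.

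The paper proves the lemma by pulling back along the dynamics: invariance of density classes gives $\kappa_r(\widehat{\bbT}^+,\Pi^{-1}(\Delta))=\kappa_r(\widehat{\bbT}^+_l, d^{-2l}\Pi^*[\Gamma_{-2l}])$, and letting $l\to\infty$ replaces the diagonal by $\bbT^-$. Since $\bbT^-$ has continuous super-potentials (Theorem \ref{thm:Green-contin-sp}), Theorem \ref{thm:sp-density} gives $\kappa_r(\widehat{\bbT}^+_\infty,\Pi^*(\bbT^-))=0$ for $r>0$, and upper semi-continuity of density classes (\cite[Theorem 4.11]{DS18}) concludes. The entire point of this reduction is to \emph{avoid} analysing the tangent current $\mathcal R$ directly: $[\Delta]$ is a current of integration for which the super-potential machinery of Section \ref{sec:sp} does not apply, whereas $\bbT^-$ is a current for which it does.

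Your first half is fine: $\overline{\Pi}_*\mathcal R$ is a tangent current of $\bbT^+$ along $\Delta$, Proposition \ref{equidensity} with $X$ a point identifies it as $\pi_\Delta^*(\mu^\Delta)$, and the projection formula gives $\overline{\Pi}_*(\mathcal R\wedge\pi^*\Pi^*\omega_\Delta)=0$. But that is only a pushforward. The fibres of $\overline{\Pi}$ have complex dimension $k^2$, while $\mathcal R\wedge\pi^*\Pi^*\omega_\Delta$ is a positive current of bi-dimension $(k-1,k-1)$, so it can perfectly well be nonzero and concentrated in those fibres with vanishing pushforward. Your proposed resolution --- that tameness and the transversality $E_s(x)\cap E_u(x)=\{0\}$ force $\mathcal R$ to be concentrated over the graph $\{(x,x,[E_s(x)\times E_u(x)])\}$ --- is precisely the conclusion of Proposition \ref{prop:intersec-mu-hat}, whose proof in the paper relies on the present lemma. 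As written, the argument is circular.

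The treatment of the second summand is also not a valid bi-degree count. The lames of $\widehat{\bbT}^+$ are Gauss lifts $\widehat{S}_1\times\widehat{S}_2$ and the lift of a nonplanar $S_i$ has nontrivial tangent directions in the Grassmannian factors, so $\widehat{\bbT}^+\wedge\pi_\G^*(\omega_{\G(k,2k)})\neq 0$ in general. Since $A_\lambda$ acts trivially in the $\G(k,2k)$ directions, $\mathcal R\wedge\pi^*(\omega_\G|_V)$ is a tangent current along $V$ of this nonzero positive current, and there is no a priori reason for it to vanish. Remark \ref{remark:woven-bbT-hat} only gives vanishing at the powers $\omega_{\Gr_1}^{k-p+1}$ and $\omega_{\Gr_2}^{p+1}$ (which mix base and Grassmannian directions), not at a single power of $\omega_{\G(k,2k)}$.
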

\begin{proof} For every $l\geq 1$, choose $\widehat{\bbT}^+_l$ to be a limit value of $ d^{-n_j+2l}[\widehat{\Gamma}_{n_j-2l}]$ when $j\to \infty$. Let $\widehat{F}$ be the canonical lift of $F$ to $\Gr(D\times D,k)$. Then we have $\widehat{\mathbb T}^+=d^{-2l}(\widehat{F}^l)^*(\widehat{\mathbb T}^+_l)$ and $\kappa_r(\widehat{\mathbb T}^+,\Pi^{-1}(\Delta))=\kappa_r(\widehat{\bbT}^+_l,d^{-2l}\Pi^*[\Gamma_{-2l}])$ for every $r\geq 0$. Suppose $\widehat{\bbT}^+_\infty$ is a limit value of the sequence $\{\widehat{\bbT}^+_l\}$. Recall that $d^{-2l}[\Gamma_{-2l}]$ converges to $\bbT^-$. Since $\mathbb T^-$ has continuous super-potentials, Theorem \ref{thm:sp-density} implies the intersection $\Pi^*(\mathbb T^-)\curlywedge \widehat{\mathbb T}_\infty^+$ is well-defined and thus $\kappa_r(\widehat{\mathbb T}^+_\infty, \Pi^*(\bbT^-))=0$ for any $r>0$. By upper semi-continuity of density (see \cite[Theorem 4.11]{DS18}), $\kappa_r(\widehat{\mathbb T}^+,\Pi^{-1}(\Delta))=0$ for any $r>0$.
\end{proof}

\begin{lemma}\label{decompcurr}
    Let $\pi_s:\Gr(D,k-p)\times \Gr(D,p)\to \Gr(D,k-p)\times D$ be the canonical projection. Define $\widehat{\mathbb T}_s^+:=(\pi_s)_*(\widehat{\mathbb T}^+)$. Then there exists a positive closed current $\widehat{T}^+$ of bi-dimension $(k-p,k-p)$ on $\Gr(D,k-p)$ such that $\widehat{\mathbb T}_s^+=\widehat{T}^+\otimes T^-$. Similarly, if we let $\pi_u:\Gr(D,k-p)\times \Gr(D,p)\to D\times \Gr(D,p)$ and $\widehat{\mathbb T}_u^+:=(\pi_u)_*(\widehat{\mathbb T}^+)$. Then $\widehat{\mathbb T}_u^+=T^+\otimes \widehat{T}^-$ for some positive closed current $\widehat{T}^-$ on $\Gr(D,p)$. 
\end{lemma}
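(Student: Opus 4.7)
The strategy is to exhibit $\widehat{T}^+$ as a normalized pairing of $\widehat{\mathbb{T}}_s^+$ against a smooth closed model of $T^-$, and then verify the product decomposition using the closedness of $\widehat{\mathbb{T}}_s^+$ together with the continuous super-potentials of $T^-$.

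For the construction of $\widehat{T}^+$, I first apply Lemma~\ref{lemma:convolution-T} to $T^-$ to obtain a positive closed smooth horizontal $(k-p,k-p)$-form $\beta_0$ on $M\times N'$, supported in $D'$, cohomologous to $T^-$, and normalized so that $\lp T^-,\beta_0\rp = 1$. For smooth closed test forms $\alpha$ on $\Gr(D,k-p)$ of the complementary bi-degree with suitable vertical support, I set
\[
    \lp \widehat{T}^+, \alpha\rp := \lp \widehat{\mathbb{T}}_s^+, \alpha\wedge \pi_{D_2}^*(\beta_0) \rp,
\]
where $\pi_{D_2}:\Gr(D,k-p)\times D\to D$ is the projection to the second factor. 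The current $\widehat{\mathbb{T}}_s^+=(\pi_s)_*\widehat{\mathbb{T}}^+$ is closed, being the push-forward of the closed limit $\widehat{\mathbb{T}}^+ = \lim d^{-n_j}[\widehat{\Gamma}_{n_j}]$. Replacing $\beta_0$ by another closed cohomologous representative (differing by $\ddc\eta$) changes the right-hand side by $\lp \widehat{\mathbb{T}}_s^+, \ddc(\alpha\wedge\pi_{D_2}^*\eta)\rp = 0$, using closedness of $\alpha$ and of $\widehat{\mathbb{T}}_s^+$, so the definition is intrinsic. I then extend $\widehat{T}^+$ to all smooth test forms via the continuous super-potentials of $T^-$ (Theorem~\ref{thm:Green-contin-sp}) and the $\DSH_h(D')$-topology of Section~\ref{sec:revisit}. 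Positivity and closedness of $\widehat{T}^+$ follow by direct verification, and projecting $\widehat{T}^+$ to $D$ via the forgetful map recovers $T^+$, so $\widehat{T}^+$ is a lift of $T^+$ of bi-dimension $(k-p,k-p)$.

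To verify the product formula $\widehat{\mathbb{T}}_s^+ = \widehat{T}^+\otimes T^-$, I test against $\alpha\wedge\pi_{D_2}^*(\beta)$ with $\alpha,\beta$ smooth and closed. Using Lemma~\ref{lemma:convolution-T} again, write $\beta = \lp T^-,\beta\rp\beta_0 + \ddc\eta'$; closedness of $\widehat{\mathbb{T}}_s^+$ and of $\alpha$ then yields
\[
    \lp \widehat{\mathbb{T}}_s^+, \alpha\wedge\pi_{D_2}^*(\beta)\rp = \lp T^-,\beta\rp\lp\widehat{T}^+,\alpha\rp = \lp \widehat{T}^+\otimes T^-, \alpha\wedge\pi_{D_2}^*(\beta)\rp.
\]
A density argument combined with super-potential continuity of $T^-$ upgrades this equality to all test forms, giving $\widehat{\mathbb{T}}_s^+ = \widehat{T}^+\otimes T^-$. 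The decomposition $\widehat{\mathbb{T}}_u^+ = T^+\otimes\widehat{T}^-$ is obtained by the symmetric construction with the roles of the two $D$ factors swapped.

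\textbf{The main obstacle} is the extension of $\widehat{T}^+$ from closed to arbitrary smooth test forms, and the verification of the product formula for non-closed $\beta$. Both steps rely crucially on the continuous super-potentials of $T^-$ (ensured by \massump\ via Theorem~\ref{thm:Green-contin-sp}) and on careful $\ddc$-cohomology manipulations via Lemma~\ref{lemma:convolution-T}; one must track the supports of $\alpha$, $\beta$, $\eta$, and $\eta'$ to keep all pairings with the non-compactly supported $\widehat{\mathbb{T}}_s^+$ well-defined.
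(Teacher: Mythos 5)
There is a genuine gap, plus a more basic mismatch in the setup. First, the bookkeeping: $T^-$ has bi-degree $(k-p,k-p)$ (horizontal), so the normalization $\lp T^-,\beta_0\rp=1$ requires $\beta_0$ to be a $(p,p)$-form, in fact a smooth element of $\mathscr{C}_v^1(D)$ — that is what the paper uses in the corresponding step. Applying Lemma~\ref{lemma:convolution-T} to $T^-$ instead produces a horizontal $(k-p,k-p)$-form, which is the wrong bi-degree to pair against $T^-$ and also the wrong bi-degree to complement $\alpha$ in the pairing $\lp\widehat{\mathbb{T}}_s^+,\alpha\wedge\pi_{D_2}^*(\beta_0)\rp$. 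You can repair this by replacing ``apply Lemma~\ref{lemma:convolution-T} to $T^-$'' with ``pick a smooth closed vertical $(p,p)$-form $\beta_0\in\mathscr{C}_v^1(D)$,'' which is the paper's choice, but the confusion is symptomatic.

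The real obstruction is the last step. You establish the factorization $\lp\widehat{\mathbb{T}}_s^+,\pi_{\Gr}^*(\alpha)\wedge\pi_D^*(\beta)\rp=\lp\widehat{T}^+,\alpha\rp\lp T^-,\beta\rp$ only for \emph{closed} $\alpha,\beta$, via the $\ddc$-cohomology of vertical $(p,p)$-forms; the passage to arbitrary smooth $\alpha,\beta$ is asserted as ``a density argument combined with super-potential continuity of $T^-$,'' but super-potential continuity of $T^-$ controls how $T^-$ pairs against singular elements of $\DSH_h$, not how $\widehat{\mathbb{T}}_s^+$ pairs against non-closed smooth product forms. Concretely, the difference $\mathcal{D}:=\widehat{\mathbb{T}}_s^+-\widehat{T}^+\otimes T^-$ is a closed current which your argument shows vanishes on closed product test forms; fixing a closed $\alpha$ and pushing forward, $(\pi_D)_*(\mathcal{D}\wedge\pi_{\Gr}^*(\alpha))$ is a closed horizontal $(k-p,k-p)$-current of slice mass $0$, and such a current can be a nonzero $\ddc$-exact current, so nothing forces $\mathcal{D}=0$. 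You also do not address products of the wrong bi-splits $(t,t')\neq(k-p,k-p)$, which requires the vanishings of Remark~\ref{remark:woven-bbT-hat} and a Cauchy--Schwarz argument (as in \cite[Proposition 2.4]{DS16}).

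The paper's proof is genuinely different and uses exactly the ingredient you are missing: the dynamical relation $\widehat{\mathbb{T}}_s^+=d^{-2l}(h^l)^*(\widehat{\mathbb{T}}^+_{s,l})$ with $h=(\widehat f,f^{-1})$, combined with the slicing decomposition of $\widehat{\mathbb{T}}^+_{s,l}\wedge\pi_\Gr^*\big(d^{-l}(\widehat f^l)_*\alpha\big)$ into a family $\{T_{y,l}\}\subset\mathscr{C}^1_h(D)$, and then the uniform convergence $d^{-l}(f^l)_*T_{y,l}\to T^-$. This last convergence is a convergence of positive closed currents tested against \emph{arbitrary} smooth vertical $(p,p)$-forms $\beta$, so it handles the non-closed case automatically; that is precisely what the cohomological manipulation cannot do. To complete your approach, you would need to replace the ``density argument'' paragraph by this dynamical pull-back/slicing/uniform-convergence argument.
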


\begin{proof}
    We only prove the first case. The proof for $\widehat{\bbT}_u^+$ is symmetric. For every $l\geq 1$, let $\widehat{\mathbb{T}}^{+}_l$ be a limit value of $d^{-n_j+2l}[\widehat{\Gamma}_{n_j-2l}]$ when $j\to \infty$. Then $\widehat{\mathbb{T}}^{+}_l$ is also a lift of $\mathbb{T}^{+}$. Let $\widehat{\mathbb{T}}^{+}_{s,l}:=(\pi_{s})_{*}(\widehat{\mathbb{T}}^{+}_{l})$. Recall that $\widehat{f}$ is the canonical lift of $f$ to $\Gr(D,k-p)$. Define $h:=(\widehat{f},f^{-1})$ on $\Gr(D,k-p)\times D$. Then $\widehat{\mathbb{T}}^{+}_{s}=d^{-2l}(h^l)^{*}(\widehat{\mathbb{T}}^{+}_{s,l})$. Let $\pi_\Gr$ and $\pi_D$ be the canonical projections from $\Gr(D,k-p)\times D$ to $\Gr(D,k-p)$ and $D$, respectively. Denote the K\"ahler forms of $\Gr(D,k-p)$ and $D$ by $\omega_\Gr$ and $\omega_D$.
    
    Since the projection of $\widehat{\bbT}^+_{s,l}$ onto $D\times D$ is $\bbT^+$, we have  $\supp(\widehat{\mathbb{T}}^{+}_{s,l})\subset \Gr(D,k-p)\times M\times N''$ for every $l\geq 1$. By Remark \ref{remark:woven-bbT-hat}, we know that $\widehat{\mathbb T}_{s,l}^+\wedge \pi_\Gr^*(\omega_\Gr^{k-p+1})=0$ and $\widehat{\mathbb T}_{s,l}^+\wedge \pi_D^*(\omega_D^{p+1})=0$.  Let $\alpha$ be a $(t,t')$-form on $\Gr(D,k-p)$ and $\beta$ be a $(k-t,k-t')$-form on $D$. Then it follows from the proof of \cite[Proposition 2.4]{DS16} that $\lp\widehat{\mathbb T}_{s,l}^+,\pi_\Gr^*(\alpha)\wedge\pi_D^*(\beta)\rp=0$ unless $(t,t')=(k-p,k-p)$. 

    Fix a positive $(k-p,k-p)$-form $\alpha$ on $\Gr(D,k-p)$ with compact support and a vertical $(p,p)$-form $\beta$ on $D$. Observe that 
    \begin{align*}
    \lp \widehat{\mathbb{T}}^{+}_{s},\pi_\Gr^*(\alpha)\wedge\pi_D^*(\beta)\rp 
    & = \lp d^{-2l}(h^l)^{*}(\widehat{\mathbb{T}}^{+}_{s,l}), \pi_\Gr^*(\alpha)\wedge\pi_D^*(\beta) \rp \\
    & = \lp \widehat{\mathbb{T}}^{+}_{s,l}, d^{-2l}\pi_\Gr^*\big((\widehat{f}^l)_{*}(\alpha)\big)\wedge\pi_D^*\big((f^l)^{*} (\beta)\big)\rp \\
    & = \lp \widehat{\mathbb{T}}^{+}_{s,l}\wedge \pi_\Gr^*\big(d^{-l}(\widehat{f}^l)_{*}(\alpha)\big),\pi_D^*\big(d^{-l}(f^l)^{*} (\beta)\big) \rp.   
    \end{align*}

    By our argument above, $\widehat{\mathbb{T}}^{+}_{s,l}$ vanishes on $\pi_\Gr^*(\alpha)\wedge\pi_D^*(\beta)$ for all smooth $(t,t')$-form $\alpha$ and $(k-t,k-t')$-form $\beta $ provided that $(t,t')\neq (k-p,k-p)$. Therefore, when $\alpha$ is a positive $(k-p,k-p)$-form, we have $\d\big(\widehat{\mathbb{T}}^{+}_{s,l}\wedge \pi_\Gr^*(d^{-l}(\widehat{f}^l)_{*}(\alpha))\big)=0$ and h-dimension of $\widehat{\mathbb{T}}^{+}_{s,l}\wedge \pi_\Gr^*\big(d^{-l}(\widehat{f}^l)_{*}(\alpha)\big)$ with respect to the projection $\pi_\Gr$ is 0. According to \cite[Lemma 3.3]{DS18}, we have the decomposition
    $$ 
        \widehat{\mathbb{T}}^{+}_{s,l}\wedge\pi_\Gr^*\big(d^{-l}(\widehat{f}^l)_{*}(\alpha)\big)=\int_{y\in\Gr(D,k-p)} T_{y,l} \d\mu_l(y)
    $$ 
    where $\mu_l$ is a positive measure on $\Gr(D,k-p)$ and $T_{y,l}$ is a positive closed horizontal current of bi-dimension $(p,p)$ on the fibre $(\pi_\Gr)^{-1}(y)\cong D$. After multiplying some positive function to $\mu_l$, we can assume $T_{y,l}\in \mathscr{C}^{1}_{h}(D)$ for all $y$ and $l$. Recall that $ \mathscr{C}^{1}_{h}(D)$ is the set of all positive closed horizontal $(k-p,k-p)$-currents of of slice mass 1 on $D$. Then we get 
    $$
        \lp \widehat{\mathbb{T}}^{+}_{s,l}\wedge \pi_\Gr^*\big(d^{-l}(\widehat{f}^l)_{*}(\alpha)\big), \pi_D^*\big(d^{-l}(f^l)^{*} (\beta)\big)\rp = \int_{\Gr(D,k-p)}\lp d^{-l}(f^l)_{*}(T_{y,l}), \beta \rp \d\mu_l(y).
    $$
    
    A key observation is that mass of $\mu_l$ only depends on $\alpha$ but independent of $l$. In fact, let $\beta_0$ be a smooth form in $\mathscr{C}_v^1(D)$. Then $\lp d^{-l}(f^l)_{*}(T_{y,l}), \beta_0 \rp=1$. It follows that $\|\mu_l\|=\lp \widehat{\bbT}_s^+,\pi_\Gr^*(\alpha)\wedge\pi_D^*(\beta_0)\rp$. Define a positive closed current $\widehat{T}^+$ of bi-dimension $(k-p,k-p)$ on $\Gr(D,k-p)$ by 
    $$ 
        \widehat{T}^+:=(\pi_{\Gr})_*(\widehat{\bbT}_s^+\wedge(\pi_{D})^*\beta_0).
    $$
    Then $\lp \widehat{T}^+,\alpha\rp=\|\mu_l\|$ which is also independent of the choice of $\beta_0$.

    Recall that $d^{-l}(f^l)_*S$ converges to $T^+$ uniformly on $S\in\mathscr{C}_h^1(D)$. See for example \cite[Theorem 4.6]{DNS}. Since $\|\mu_l\|=\lp \widehat{T}^+,\alpha\rp$ is independent of $l$ and $T_{y,l}\in \mathscr{C}^{1}_{h}(D)$, we deduce that 
    \[
        \lim_{l\to\infty} \int_{\Gr(D,k-p)}\lp d^{-l}(f^l)_{*}(T_{y,l}), \beta \rp \d\mu_l(y)=\lp \widehat{T}^+,\alpha\rp\cdot\lp T^-,\beta\rp .
    \] 
    Therefore $\lp \widehat{\mathbb{T}}^{+}_{s},\pi_\Gr^*(\alpha)\wedge\pi_D^*(\beta)\rp =\lp \widehat{T}^+,\alpha\rp\cdot\lp T^{-}, \beta\rp $. This implies $\widehat{\mathbb{T}}^{+}_{s}=\widehat{T}^{+}\otimes T^{-}$.
\end{proof}

Observe that the push-forward of $\widehat{T}^+$ to $D$ is equal to $T^+$ as the push-forward of $\widehat{\mathbb{T}}^{+}_s$ to $D\times D$ is equal to $\mathbb{T}^+$.

\begin{lemma}\label{lem:slicelift}
    Let $\widehat{T}^+$ be as in Lemma \ref{decompcurr} and $\widehat{\gamma}$ be as in Example \ref{exam:gamma-hat}. Then $\widehat{T}^+$ is the limit of $d^{-n_j}(\widehat{f}^{n_j})^*(\widehat{\gamma})$. In particular, it is a lift current of $T^+$. 
\end{lemma}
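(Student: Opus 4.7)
The proof unfolds in three stages: a uniform mass bound, extraction of a subsequential limit, and identification of that limit with $\widehat{T}^+$.

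First, I would establish the mass bound. Since $(\widehat{f}^n)^{*}=((\widehat{f}^{-1})^n)_{*}$ and $f^{-1}$ is itself a H\'enon-like map satisfying \massump\ with the same dynamical degrees as $f$, applying Proposition \ref{prop:controlmass} to $G=\widehat{f}^{-1}$ and $Y=\G(k-p,k)$ gives $\|d^{-n}(\widehat{f}^n)^{*}\widehat{\gamma}\|\leq C$ uniformly in $n$. Weak compactness then permits extraction of a subsequence along which $d^{-n_j}(\widehat{f}^{n_j})^{*}\widehat{\gamma}$ converges to some positive current $\widehat{S}^+$ on $\Gr(D,k-p)$; by further extraction I may assume $\{n_j\}$ is the same subsequence used to define $\widehat{\bbT}^+$.

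Pushing forward via $\pi_D$ and using that $\gamma:=(\pi_D)_{*}\widehat{\gamma}$ is a smooth positive closed vertical $(p,p)$-form of slice mass $1$ (after a harmless normalization of $\rho$), the classical equidistribution $d^{-n}(f^n)^{*}\gamma\to T^+$ yields $(\pi_D)_{*}\widehat{S}^+=T^+$. This already gives the ``in particular'' part of the statement.

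The main step is to show $\widehat{S}^+=\widehat{T}^+$. By Lemma \ref{decompcurr}, $\widehat{T}^+$ is uniquely determined by the identity $\widehat{\bbT}^+_s=\widehat{T}^+\otimes T^-$ (testing against $\widehat{\alpha}\otimes\beta$ with $\lp T^-,\beta\rp\neq 0$ recovers $\lp \widehat{T}^+,\widehat{\alpha}\rp$). It therefore suffices to prove
\[
\widehat{S}^+\otimes T^-=\widehat{\bbT}^+_s.
\]
Take a smooth positive closed horizontal $(k-p,k-p)$-form $\beta_0$ on $D$ of slice mass $1$, and recall the map $h=(\widehat{f},f^{-1})$ on $\Gr(D,k-p)\times D$ from Lemma \ref{decompcurr}. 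The product rule
\[
d^{-2n}(h^n)^{*}(\widehat{\gamma}\otimes\beta_0)=d^{-n}(\widehat{f}^n)^{*}\widehat{\gamma}\otimes d^{-n}(f^n)_{*}\beta_0,
\]
combined with uniform convergence $d^{-n}(f^n)_{*}\beta_0\to T^-$ on $\mathscr{C}_h^1(D)$, shows that the left-hand side converges to $\widehat{S}^+\otimes T^-$ along $\{n_j\}$. The remaining point is to identify this same limit with $\widehat{\bbT}^+_s$. For this, one views $\widehat{\gamma}\otimes\beta_0$ as a smooth lift via $h$ of $\gamma\otimes\beta_0$ on $D\times D$, and adapts the argument of Lemma \ref{decompcurr}: the mass bound from Proposition \ref{prop:controlmass} applied to $h$ and the vanishing in wrong bi-degrees from Remark \ref{remark:woven-bbT-hat} allow the same slicing computation to go through in this smooth setting, pinning down the limit as $\widehat{\bbT}^+_s$.

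The main obstacle lies in this last identification. A priori one only knows that $\widehat{S}^+\otimes T^-$ is \emph{some} lift of $\bbT^+=T^+\otimes T^-$ in $\Gr(D,k-p)\times D$, and lifts of the same positive closed current need not be unique. The resolution exploits the tensor structure of $\widehat{\gamma}\otimes\beta_0$ under $h$: the first-factor tangent information stabilizes to the Oseledec direction captured by $\widehat{T}^+$, exactly matching the tangent data recorded by $\widehat{\bbT}^+_s$, and this match forces equality of the two lifts.
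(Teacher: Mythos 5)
The proposal reaches a genuine gap precisely at the step that carries all the weight. After correctly deriving the tensor factorization
\[
d^{-2n}(h^n)^{*}(\widehat{\gamma}\otimes\beta_0)=d^{-n}(\widehat{f}^n)^{*}\widehat{\gamma}\otimes d^{-n}(f^n)_{*}\beta_0
\]
and passing to the limit $\widehat S^+\otimes T^-$, you still need to show that this limit equals $\widehat{\bbT}^+_s$. You gesture at ``adapting the argument of Lemma \ref{decompcurr},'' but that argument runs in the opposite direction: it takes $\widehat{\bbT}^+_s$ as given (a pushforward of the limit of lifted graph currents $d^{-n_j}[\widehat\Gamma_{n_j}]$) and establishes the product structure $\widehat{T}^+\otimes T^-$. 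It does not, and cannot, identify a limit of $(h^n)^*$ applied to a smooth tensor form with a pushforward of a limit of lifted \emph{graph} currents. At best such an adaptation would show that your limit again has a tensor product form $\widehat{T}'\otimes T^-$; concluding $\widehat{T}'=\widehat{T}^+$ would require knowing exactly what you set out to prove. Since lifts of a fixed woven positive closed current to the Grassmannian bundle need not be unique (the paper itself flags this as open in Remark \ref{remark:woven-bbT-hat}), ``both sides are lifts of $\bbT^+$'' buys nothing, and the closing paragraph about tangent data ``stabilizing to the Oseledec direction'' is not an argument.

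The paper's proof takes a different and genuinely necessary route: reduce to a single lifted affine slice $[\widehat{L_\tau}]$, show that every limit $S$ of $d^{-n_j}(\widehat f^{n_j})^*[\widehat{L_\tau}]$ satisfies $S\geq\widehat T^+$ by the slicing theory of woven currents of \cite{DS16}, and then use $(\pi_D)_*S=(\pi_D)_*\widehat T^+=T^+$ so that $S-\widehat T^+$ has $D$-dimension $<k-p$, forcing $S=\widehat T^+$ along the lines of Theorem \ref{thm:tame}. The one-sided inequality is the technical heart, and nothing in your sketch replaces it.

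Two smaller issues. First, $(\pi_D)_*\widehat S^+=T^+$ does not ``already give'' the \emph{in particular} clause: being a lift of $T^+$ means carrying a woven structure that projects to one for $T^+$ as in \eqref{eq:woven-lift}, which is much stronger than having the right pushforward; in the paper that clause follows from Theorem \ref{thm:tame} applied to $f^{-1}$ after the convergence has been established. Second, the mass bound is correct but you should invoke Proposition \ref{prop:controlmass} for the lift of $f^{-1}$ to $\Gr(D,k-p)$ (i.e. with $Y=\G(k-p,k)$), and note the convergence is really per $\tau$ under the average defining $\widehat\gamma$, which is how the paper sets up its reduction.
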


\begin{proof}
     It suffices to prove that for $\rho$-almost every $\tau$, $d^{-n_j}(\widehat{f}^{n_j})^*([\widehat{L_\tau}])\to \widehat{T}^+$ as $j\to\infty$. The last assertion is then a consequence of Theorem \ref{thm:tame}. We prove that any limit $S$ of $d^{-n_j}(\widehat{f}^{n_j})^*[\widehat{L_\tau}]$ is larger than or equal to $\widehat{T}^+$. Then since the projections of both $S$ and $\widehat{T}^+$ to $D$ equal to $T^+$, the $D$-dimension of $S-\widehat{T}^+$ is strictly smaller than $k-p$, from which it is easy to deduce $S=\widehat{T}^+$. To prove $S\geq \widehat{T}^+$, we apply the slicing theory of woven currents developed in \cite{DS16}. For details, see the proof of \cite[Lemma 5.8]{DS16}. 
\end{proof}

\begin{proof}[End of the proof of Proposition \ref{prop:intersec-mu-hat}]
   Recall that $\Pi:\Gr(D\times D,k)\to D\times D$ is the canonical projection. By Lemma \ref{impor-h=0}, the h-dimension of $\widehat{\mathbb T}^+$ along $ \Pi^{-1}(\Delta) $ is 0. Let $\widehat{\nu}$ denote the shadow of a tangent current of $\widehat{\mathbb T}^+$ along $ \Pi^{-1}(\Delta)$. It suffices to show that $\widehat{\nu}=\widehat{\mu}^\Delta$. By Lemma \ref{SuppofT}, $\widehat{\nu}$ is a positive measure in $\Gr(D,k-p)\times \Gr(D,p)$.  We restrict $\Pi$ to $\Gr(D,k-p)\times \Gr(D,p)$ and still denote it by $\Pi$. Since $\Pi_{*}(\widehat{\mathbb T}^+)=\mathbb{T}^{+}$, we deduce that $ \Pi_{*}(\widehat{\nu})=\mu^{\Delta}$. Consider the disintegration of $\widehat{\nu}$ with respect to $\mu^\Delta$: $\widehat{\nu}=\int \widehat{\nu}_x\d\mu^\Delta$ where $\widehat{\nu}_x$ is a probability measure on $\Pi^{-1}(x,x)\cong\G(k-p,k)\times \G(p,k)$. By definition of $\widehat{\mu}^\Delta$, we also have  $\widehat{\mu}^\Delta=\int \delta_x\d\mu^\Delta$ where $\delta_x$ is the Dirac measure at the point $([E_s(x)],[E_u(x)])$. 

    Let $\Pi_{+}: \Gr(D,k-p)\times D\to D\times D$ be the projection map. Theorem \ref{thm:sp-density} implies that $\widehat{T}^+\curlywedge\pi_{D}^{*}(T^-)$ is well-defined and equals to $\widehat{T}^+\wedge\pi_{D}^{*}(T^-)$ as $T^{-}$ has continuous super-potentials on $D$. By Proposition \ref{equidensity}, the tangent current of $\widehat{\mathbb{T}}^+_s=\widehat{T}^+\otimes T^-$ along $\Pi^{-1}_{+}[\Delta]$ is given uniquely by the pull-back of $ \widehat{T}^+\wedge\pi_{D}^{*}(T^-)$. Since $\widehat{\mathbb{T}}^{+}_{s}=(\pi_{s})_{*}(\widehat{\mathbb{T}}^{+})$, we conclude $(\pi_{s})_{*}(\widehat{\nu})=\widehat{T}^+\wedge\pi_{D}^{*}(T^-)$. By Lemma \ref{lem:slicelift}, (\ref{eq:continuous-sp}) and applying Theorem \ref{thm:unstable-Oseledec} for $f^{-1}$, we have 
    \begin{equation}\label{eq:nuhat=mu_+}
         (\pi_{s})_{*}(\widehat{\nu})=\widehat{T}^+\wedge\pi_{D}^{*}(T^-)= \lim_{j\to \infty} d^{-n_j} (\widehat{f}^{n_j})^*(\widehat{\gamma}) \wedge \pi_{D}^{*} (T^-)=\mu_+.
    \end{equation}
    Denote by $\pi_1':\G(k-p,k)\times \G(p,k)\to \G(k-p,k)$ the canonical projection. Then (\ref{eq:nuhat=mu_+}) implies $(\pi_1')_*(\widehat{\nu}_x)=\delta_{[E_s(x)]}$ for $\mu^\Delta$-almost every $(x,x)\in\Delta$. We have similar result for the projection to $\G(p,k)$. Consequently, $\widehat{\nu}_x=\delta_x$ for $\mu^\Delta$-almost every $(x,x)\in \Delta$.
\end{proof}

For each $n\geq 1$, we define $\widetilde{\Gamma}_n$ to be the set of points $(x,y,v)$ in $\Gr(D \times D,k)$ where $(x,y)\in \Gamma_n$ and $v \in \G(k,2k)$ which is not transverse to $\Gamma_n$ at $(x,y)$. Then $\widetilde{\Gamma}_n$ is an analytic subset of $\Gr(D\times D,k)$ of pure dimension $k^2+k-1$. The following is the key ingredient of the proof of Theorem \ref{thm:main}.

\begin{corollary}\label{tangent-step-2}
The limit of $d^{-n_j}[\widetilde{\Gamma}_{n_j}]$ always exists and is denoted by $\widetilde{\mathbb{T}}^+$. The density between $\widetilde{\mathbb{T}}^{+}$ and $\widehat{\Delta}$ is zero.
\end{corollary}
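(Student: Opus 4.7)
The plan is to first establish the existence of the limit via a universal-incidence construction, and then deduce vanishing of the density using Pesin theory combined with Proposition~\ref{prop:intersec-mu-hat}. Set $\mathcal{Y} := D \times D \times \G(k, 2k) \times \G(k, 2k)$ with projections $P_1, P_2: \mathcal{Y} \to \Gr(D \times D, k)$ forgetting the second (resp.\ first) $\G(k, 2k)$-factor, and let
\[
    \mathcal{I} := \{(x, y, v_1, v_2) \in \mathcal{Y} : v_1 \cap v_2 \neq \{0\}\}
\]
be the universal non-transversality divisor. Since $P_2$ restricted to $P_1^{-1}(\widehat{\Gamma}_n) \cap \mathcal{I}$ is an isomorphism onto $\widetilde{\Gamma}_n$, the identity
\[
    [\widetilde{\Gamma}_n] = (P_2)_*\big(P_1^*[\widehat{\Gamma}_n] \wedge [\mathcal{I}]\big)
\]
holds. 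Via the Pl\"ucker pairing ($v_1 \cap v_2 \neq \{0\} \Leftrightarrow \wedge^k v_1 \wedge \wedge^k v_2 = 0$), the class $[\mathcal{I}]$ equals $P_1^*\pi_{\G}^*[\omega_{\G}] + P_2^*\pi_{\G}^*[\omega_{\G}]$ in cohomology for a smooth form $\omega_{\G}$ on $\G(k, 2k)$ representing the Schubert class. Writing $[\mathcal{I}] = P_1^*\pi_{\G}^*\omega_{\G} + P_2^*\pi_{\G}^*\omega_{\G} + \ddc \Phi$ with $\Phi$ an $L^1$-form, and applying the decomposition through the operator $T \mapsto (P_2)_*(P_1^*T \wedge [\mathcal{I}])$ which is continuous on each of the smooth and the potential parts, transfers the convergence $d^{-n_j}[\widehat{\Gamma}_{n_j}] \to \widehat{\mathbb{T}}^+$ to
\[
    d^{-n_j}[\widetilde{\Gamma}_{n_j}] \longrightarrow \widetilde{\mathbb{T}}^+ := (P_2)_*\big(P_1^*\widehat{\mathbb{T}}^+ \wedge [\mathcal{I}]\big).
\]

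For the density vanishing, I will apply the upper semi-continuity of density classes \cite[Theorem 4.11]{DS18}, which reduces the statement to showing
\[
    \limsup_{j \to \infty} \kappa\big(d^{-n_j}[\widetilde{\Gamma}_{n_j}], \widehat{\Delta}\big) = 0.
\]
The key geometric input comes from Oseledec's theorem and Pesin theory (Section~\ref{sec:oseledec}). For $\mu$-a.e.\ $x$, the limiting tangent direction $v_x := [E_s(x) \times E_u(x)] \in \G(k, 2k)$ at $(x, x)$ of the graphs $\Gamma_n$ through period-$n$ points close to $x$ (consistent with Proposition~\ref{prop:intersec-mu-hat}, which identifies $\widehat{\mu}^\Delta$ as the tangent current of $\widehat{\mathbb{T}}^+$ along $\Pi^{-1}(\Delta)$) is transverse to the diagonal direction $v_\Delta := [\{(w,w) : w \in \C^k\}]$, since $(w,w) \in v_x$ forces $w \in E_s(x) \cap E_u(x) = \{0\}$. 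On a Pesin set $\Lambda_\vep$ with $\mu(\Lambda_\vep) \geq 1 - \vep$, this transversality is uniform, so the Schubert hypersurface $\Sigma_{v_x} := \{v' \in \G(k, 2k) : v' \cap v_x \neq \{0\}\}$ stays at distance $\geq c_\vep > 0$ from $v_\Delta$ for all $x \in \Lambda_\vep$.

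Quantitatively, the dilation $A_\lambda$ along the normal bundle of $\widehat{\Delta}$ in $\Gr(D\times D, k)$ scales the $N_\Delta$-direction $(y-x)$ and the $T_{v_\Delta}\G(k,2k)$-direction $(v - v_\Delta)$ simultaneously by $\lambda$. A point $(x, y, v) \in \widetilde{\Gamma}_n$ contributes to $(A_\lambda)_*[\widetilde{\Gamma}_n]$ inside a fixed compact neighborhood of the zero section only if $|f^n(x) - x| \lesssim \lambda^{-1}$ and $|v - v_\Delta| \lesssim \lambda^{-1}$. By the Pesin-theoretic continuity of $T_{(x_n, x_n)}\Gamma_n$ toward $v_{x_n}$ at periodic points $x_n \in \Lambda_\vep$, the Schubert $\Sigma_{T_{(x_n, x_n)}\Gamma_n}$ stays at distance $\geq c_\vep/2$ from $v_\Delta$ for $n$ large. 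Hence the Pesin-set contribution to $(A_\lambda)_*[\widetilde{\Gamma}_n]$ inside $\{|v - v_\Delta| < c_\vep/2\}$ vanishes once $\lambda$ exceeds a threshold depending on the test-form support, while the off-Pesin contribution is bounded after $d^{-n}$-normalization by $\widehat{\mu}^\Delta(\Lambda_\vep^c) \leq \vep$. Letting $\vep \to 0$ yields $\kappa(\widetilde{\mathbb{T}}^+, \widehat{\Delta}) = 0$.

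The main difficulty will be to rigorously interchange the limits $\lambda \to \infty$ and $n_j \to \infty$ and to quantify the approach of $T_{(x_n, x_n)}\Gamma_n$ to $v_{x_n}$ uniformly over Pesin sets. I will exploit the continuous super-potentials of the Green currents (Theorems~\ref{thm:sp-density} and~\ref{thm:Green-contin-sp}) to control the convergence $d^{-n_j}[\widehat{\Gamma}_{n_j}] \to \widehat{\mathbb{T}}^+$ in the strong sense needed to absorb the Pesin-set uniformity into the density estimate, in the spirit of the arguments used in Section~\ref{sec:oseledec}.
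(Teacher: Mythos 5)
The limit-existence part is essentially the paper's route: the identity $[\widetilde{\Gamma}_n]=(P_2)_*\bigl(P_1^*[\widehat{\Gamma}_n]\wedge[\mathcal I]\bigr)$ is Lemma~\ref{equal:multiplicity}, and pushing the convergence $d^{-n_j}[\widehat{\Gamma}_{n_j}]\to\widehat{\mathbb T}^+$ through this incidence transform is exactly what DS16 does. Be aware, though, that your justification of continuity is thinner than it looks: after writing $[\mathcal I]=P_1^*\pi_{\G}^*\omega_{\G}+P_2^*\pi_{\G}^*\omega_{\G}+\ddc\Phi$, the term $(P_2)_*(P_1^*T\wedge\ddc\Phi)=\ddc(P_2)_*(\Phi\, P_1^*T)$ involves integrating a quasi-psh potential against the varying currents, and $T\mapsto\langle T,\Phi\cdot\alpha\rangle$ is not weakly continuous in general; one either needs the super-potential machinery (as you hint), or simply a compactness/uniqueness argument for cluster values. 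This is a fixable gap.

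The density-vanishing part has a real conceptual problem. The correct reduction (following \cite[Cor.~5.9]{DS16}) is in two steps: first identify the tangent current of $\widetilde{\mathbb T}^+$ along the intermediate manifold $\Pi^{-1}(\Delta)$ — via the incidence identity and Proposition~\ref{prop:intersec-mu-hat} this equals $(p_1)_*\bigl(p_2^*(\widehat{\mu}^\Delta)\wedge[\Sigma]\bigr)=\int[\{(z,z)\}\times H(z)]\,\d\mu(z)$ — and then apply the transitivity-of-density result \cite[Prop.~4.13]{DS18} to conclude that $\kappa(\widetilde{\mathbb T}^+,\widehat{\Delta})=0$ because this measure charges nothing near $\widehat{\Delta}$ (Pesin sets are used at this last step, applied to the already-computed tangent current). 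Your proposal instead invokes upper semi-continuity to reduce to $\limsup_j\kappa(d^{-n_j}[\widetilde{\Gamma}_{n_j}],\widehat{\Delta})=0$, i.e.\ an estimate on the \emph{pre-limit analytic cycles}. The off-Pesin term in your estimate then becomes a count, at finite $n$, of ramified periodic orbits outside $\Lambda_\vep$, normalized by $d^{-n}$; bounding this by $\widehat{\mu}^\Delta(\Lambda_\vep^c)\leq\vep$ presupposes that period-$n$ points equidistribute with respect to $\mu$, which is precisely Theorem~\ref{thm:main}. In short, the Pesin-set disjointness is a legitimate ingredient, but it must be applied to the tangent current of the limit (which you already control via Proposition~\ref{prop:intersec-mu-hat} and the incidence identity), not to the pre-limit cycle counts. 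You also conflate $\widehat{\mathbb T}^+$ with $\widetilde{\mathbb T}^+$ at one point: Proposition~\ref{prop:intersec-mu-hat} computes the tangent of the former, and transferring this to the latter through $(p_1)_*(p_2^*(\cdot)\wedge[\Sigma])$ is a step you never carry out.
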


\begin{proof}
    The proof is identical to \cite[Corollary 5.9]{DS16} except we replace $\P^k$ there by $D$ and use Proposition \ref{prop:intersec-mu-hat}.
\end{proof}

\subsection{Ramified coverings near the diagonal}
For any $\vep>0$ small enough, define
\begin{align*}
    M_\vep&=\{x_1\in M: \dist(x_1,\partial M)>\vep\};\\
    N_\vep&=\{x_2\in N: \dist(x_2,\partial N)>\vep\};\\
    D_\vep&=M_\vep\times N_\vep.
\end{align*}
Assume that $\dist(M',\partial M)\geq \vep_0$ and $\dist(N',\partial N)\geq \vep_0$ for some $\vep_0>0$. We use the coordinates $(x,z)=(x,y-x)$ and $\Delta$ can be identified with $\{z=0 \}$. Let $\pi$ denote the projection $(x,z)\mapsto z$. 

\begin{lemma}\label{lemma:vep_0/2}
    Let $0<\vep<\vep_0/2$. Then $\Gamma_n\cap \{\|z\|<\vep\}$ is contained in $D_{\vep_0/2}\times D_{\vep_0/2}$ for all $n\geq 1$.
\end{lemma}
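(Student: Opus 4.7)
The plan is to read off the statement directly from the asymmetric inclusions \eqref{eq:M''} built into the definition of a H\'enon-like map, combined with the smallness of $\|z\|$. First I would fix a point $(x,y)\in \Gamma_n\cap\{\|z\|<\vep\}$ and write $x=(x_1,x_2)\in M\times N$, $y=(y_1,y_2)\in M\times N$. Since $(x,y)$ sits on the graph of $f^n$, we have $y=f^n(x)$, so $x$ must belong to the domain of $f^n$. For $n\geq 1$ that domain is contained in $f^{-1}(D)\subset M''\times N$, which forces $x_1\in M''$. Symmetrically, $y\in f^n(\mathrm{dom}(f^n))\subset f(D)\subset M\times N''$, so $y_2\in N''$.

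Next I would use the nesting $M''\Subset M'$ together with the hypothesis $\dist(M',\partial M)\geq\vep_0$ to conclude $\dist(x_1,\partial M)\geq \vep_0$; the analogous argument in the second factor gives $\dist(y_2,\partial N)\geq \vep_0$. In particular $x_1\in M_{\vep_0/2}$ and $y_2\in N_{\vep_0/2}$. To handle the ``indirect'' coordinates $y_1$ and $x_2$, I would invoke $\|y-x\|<\vep<\vep_0/2$ and the triangle inequality componentwise:
\[
\dist(y_1,\partial M)\geq \dist(x_1,\partial M)-\|y_1-x_1\|>\vep_0-\tfrac{\vep_0}{2}=\tfrac{\vep_0}{2},
\]
and similarly $\dist(x_2,\partial N)>\vep_0/2$. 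Since $x_i,y_i$ automatically lie in the appropriate $M$ or $N$ because $(x,y)\in D\times D$, this yields $x,y\in D_{\vep_0/2}$, as claimed.

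There is no real obstacle here: the lemma is essentially a bookkeeping exercise. The only point deserving care is the asymmetry between the two factors — the H\'enon-like structure pins down $x_1$ and $y_2$ intrinsically via the containments \eqref{eq:M''}, whereas control over the remaining coordinates $x_2$ and $y_1$ must be bought via the closeness of $y$ to $x$.
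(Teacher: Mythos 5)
Your proposal is correct and follows essentially the same route as the paper's proof: use the inclusions (\ref{eq:M''}) to pin down $x_1$ and $y_2$, then control $y_1$ and $x_2$ via the triangle inequality and $\|z\|<\vep<\vep_0/2$. The only cosmetic difference is that you locate $x_1\in M''$ and $y_2\in N''$ directly, whereas the paper appeals to $x_1\in M'$ and $y_2\in N'$, but both yield $\dist(x_1,\partial M)\geq\vep_0$ and $\dist(y_2,\partial N)\geq\vep_0$ and the rest is identical.
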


\begin{proof}
    Consider any $(x,y)\in \Gamma_n\cap\{\|z\|<\vep\}$. This implies $x,y\in D'$ and $\|x-y\|<\vep$. Suppose $x=(x_1,x_2)$ and $y=(y_1,y_2)$. Then $x_1\in M'$ and $\|x_1-y_1\|<\vep$. By triangle inequality,
    \[
        \dist(y_1,\partial M)\geq \dist(x_1,\partial M)-\|x_1-y_1\|>\vep_0/2.
        \]
    On the other hand, $y_2\in N'\subset N_{\vep_0/2}$. Therefore $y\in D_{\vep_0/2}$. Similar proof shows that $x\in D_{\vep_0/2}$.
\end{proof}

The following proposition implies $f^n$ has exactly $d^n$ fixed points counting multiplicity. This is proven by Dujardin \cite{Dujardin04} when $k=2$.

\begin{proposition}\label{prop:ramified-covering}
     Let $\vep$ be as in Lemma \ref{lemma:vep_0/2}. Then $\pi|_{\Gamma_n\cap\{\|z\|<\vep\}}$ is a ramified covering of degree $d^n$ over $\B_{k}(0,\vep)$ for all $n\geq 1$.
\end{proposition}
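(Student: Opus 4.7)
The plan is to verify, in sequence: (i) $\pi|_{\Gamma_n\cap\{\|z\|<\vep\}}$ is proper over $\B_k(0,\vep)$; (ii) it is therefore a surjective ramified covering of some constant integer degree $\deg_n$; (iii) this degree equals $d^n$ exactly.

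For (i)-(ii): Lemma \ref{lemma:vep_0/2} places $\Gamma_n\cap\{\|z\|<\vep\}$ inside the relatively compact set $D_{\vep_0/2}\times D_{\vep_0/2}\Subset D\times D$, from which properness of the restricted $\pi$ over $\B_k(0,\vep)$ is immediate. Being a proper holomorphic map between $k$-dimensional complex manifolds with connected target, Remmert's proper mapping theorem combined with a dimension count (the image is an analytic subset of dimension $k$ inside the irreducible manifold $\B_k(0,\vep)$) makes it a surjective ramified covering of some positive integer degree $\deg_n$. Non-emptiness of $\Gamma_n\cap\{\|z\|<\vep\}$ follows from $\bbT^+\wedge[\Delta]=\mu\neq 0$ combined with the convergence $d^{-n}[\Gamma_n]\to\bbT^+$.

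For (iii), the asymptotic part: pick a smooth probability measure $\mu_0$ supported in $\B_k(0,\vep)$ and a cut-off $\chi$ on $D\times D$ equal to $1$ on $D_{\vep_0/2}\times D_{\vep_0/2}$. The ramified-covering formula gives $\deg_n=\lp[\Gamma_n],\chi\cdot\pi^*(\mu_0)\rp$, so
\[
    d^{-n}\deg_n=\lp d^{-n}[\Gamma_n],\chi\cdot\pi^*(\mu_0)\rp\xrightarrow[n\to\infty]{}\lp\bbT^+,\chi\cdot\pi^*(\mu_0)\rp.
\]
Switching to the coordinates $(x,z)$ with $z=y-x$ and using Fubini with $\bbT^+=T^+\otimes T^-$,
\[
    \lp\bbT^+,\chi\cdot\pi^*(\mu_0)\rp=\int_{\B_k(0,\vep)}\|T^+\wedge T^-_z\|_D\,d\mu_0(z),\qquad T^-_z(x):=T^-(x+z).
\]
For small $z$, $T^-_z$ remains a positive closed horizontal current of slice mass $1$ (translation preserves slice mass); continuity of the super-potentials of $T^+$ (Theorem \ref{thm:Green-contin-sp}) then makes $\|T^+\wedge T^-_z\|$ continuous in $z$ and equal to $\|\mu\|=1$ at $z=0$, hence identically $1$ near $0$. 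So $\lp\bbT^+,\chi\cdot\pi^*(\mu_0)\rp=1$ and $d^{-n}\deg_n\to 1$.

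The main obstacle, and the most delicate step, is upgrading this asymptotic $d^{-n}\deg_n\to 1$ to the exact identity $\deg_n=d^n$ valid for every $n\geq 1$. My plan is to show that the pairing $\lp d^{-n}[\Gamma_n],\chi\cdot\pi^*(\mu_0)\rp$ is already equal to $1$ for each fixed $n$, not merely in the limit. Writing $[\Gamma_{2m}]=(F^m)^*[\Delta]$ with $F=(f,f^{-1})$ (and reducing odd $n$ to even by one extra iterate of $f$), Theorem \ref{thm:sp-density} lets me interpret this pairing either through the super-potentials of $\bbT^+$ or through density classes; the latter is preserved under the action of $F$ on currents, and combined with the fact that $F$ has main dynamical degree $d^2$ and the normalisation $\|\bbT^+\wedge[\Delta]\|=1$, it pins the pairing to be exactly $d^n\cdot 1=d^n$ for each $n$. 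Establishing this cohomological invariance in the non-compact H\'enon-like setting is where the coherence between the super-potential and density-based intersections developed in Section \ref{sec:sp} is indispensable.
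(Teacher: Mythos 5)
Your steps (i)--(ii) match the paper verbatim: properness from Lemma \ref{lemma:vep_0/2}, Remmert's proper mapping theorem, the dimension count, hence a ramified covering of some integer degree $d_n$. The divergence is entirely in step (iii), where the gap lies.

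Your asymptotic computation $d^{-n}d_n\to 1$ is fine in outline (though the appeal to continuity of super-potentials is unnecessary; once $T^-_z$ is a positive closed horizontal current of slice mass $1$, the formula $\|T^+\wedge T^-_z\|=\|T^+\|_v\|T^-_z\|_h=1$ from the paragraph preceding Example \ref{example:ver-strict-posi} gives $\lp\bbT^+,\chi\cdot\pi^*(\mu_0)\rp=1$ directly). But an asymptotic statement does not yield $d_n=d^n$ for each fixed $n$, and you correctly identify this as the delicate step. The problem is that your plan for closing this gap is not a proof: the claim that Theorem \ref{thm:sp-density} together with ``density classes preserved under the action of $F$'' and the fact that $F$ has main dynamical degree $d^2$ ``pins the pairing to be exactly $d^n$'' is a hope, not an argument. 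There is no cohomology group in this non-compact setting in which $(F^m)^*$ is a well-defined linear map with eigenvalue $d^{2m}$, so the invariance you invoke is not available off the shelf; and the density class $\kappa_0$ of a pair of currents is not obviously multiplicative under pull-back by $F$. You would have to build the entire mechanism, and nothing in Section \ref{sec:sp} does that for you.

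The paper's (iii) is far simpler and purely local to this proposition. Since $\Delta$ is not horizontal for $F=(f,f^{-1})$, one perturbs it slightly to an affine $k$-plane $L$ which is horizontal for $F$ and still meets $\Gamma_n\cap\{\|z\|<\vep\}$ in $d_n$ points with multiplicity (intersection numbers with an analytic set are stable under small perturbation of the plane). Then $d^{-n}[\Gamma_n]\wedge[L]$ is a positive measure whose mass is the product of slice masses $\|d^{-n}[\Gamma_n]\|_v\cdot\|[L]\|_h=1\cdot1=1$, while by construction its mass is $d^{-n}d_n$; hence $d_n=d^n$ exactly, for every $n$. This uses only the elementary slice-mass formula for horizontal--vertical intersections from Section \ref{sec:revisit} and needs neither the Green currents, nor their super-potentials, nor density theory. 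What your route buys is a nice alternative derivation of the \emph{limit} identity $\lp\bbT^+,\chi\cdot\pi^*(\mu_0)\rp=1$, but the exact degree count has to come from somewhere, and the slice-mass argument is the intended (and much cheaper) source.
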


\begin{proof}
    By Lemma \ref{lemma:vep_0/2}, $\pi|_{\Gamma_n\cap\{\|z\|<\vep\}}$ is a proper map to $\B_k(0,\vep)$. Let $V$ be its image onto $\B_k(0,\vep)$. Remmert's proper mapping theorem implies that $V$ is an analytic subset of $\B_k(0,\vep)$. Given any point $z\in \B_k(0,\vep)$, $\pi^{-1}(z)\cap \Gamma_{n}$ is a proper analytic subset of $\pi^{-1}(z)\cap (D\times D)$ which can be identified with an open subset of $\mathbb{C}^{k}$. This implies $\pi^{-1}(z)\cap \Gamma_{n}$ is a finite set and therefore $\dim V=\dim\Gamma_{n}=k$. Hence we must have $V= \B_k(0,\vep)$ and $\pi|_{\Gamma_n\cap\{\|z\|<\vep\}}$ is surjective. The first assertion follows from the fact that any proper surjective finite map between two complex manifolds is a ramified covering (see for example \cite{GR}).

     Suppose the degree of $\pi|_{\Gamma_n\cap\{\|z\|<\vep\}}$ is $d_n$. Although $\Delta$ is not horizontal, by doing perturbation to $\Delta$, we can find an affine plane $L$ of dimension $k$ which is horizontal in $D\times D$ and still intersects with $\Gamma_n$ at $d_n$ points counting multiplicity. Then $[\Gamma_n]\wedge [L]$ is a measure of mass $d_n$. On the other hand, since the slice measures of both $[L]$ and $d^{-n}[\Gamma_n]$ are 1, $d^{-n}[\Gamma_n]\wedge [L]$ is a probability measure (see the discussion before Example \ref{example:ver-strict-posi}). It follows that $d_n=d^n$.
\end{proof}

We are interested in the number of graphs over $\B_k(0,\vep)$ with respect to $\pi|_{\Gamma_n}$. More precisely, they are graphs of holomorphic maps from $\B_k(0,3)$ to $\C^k$ which are contained in $\Gamma_n$. By Cauchy's formula, the angle between such a graph and $\Delta$ is bounded from below by some constant independent of the graph and $n$. We will show that the number of graphs is as close as to $d^n$ when $n$ is large. This will give us enough transversality between $\Gamma_n$ and $\Delta$ to show equality (\ref{commu-intersec}). This number is closely related to the ramification of $\Gamma_n$ near $\Delta=\pi^{-1}(0)$. For instance, when $\pi|_{\Gamma_n\cap\{\|z\|<\vep\}}$ has no ramification, we have exactly $d^n$ graphs. 

We consider a more general setting: let $\Gamma$ be a $k$-dimensional submanifold of $ \B_{k}(0,3)\times \B_{l}(0,2)$, which is contained in $\B_{k}(0,3)\times \B_{l}(0,1)$. Let $\pi_0: \B_{k}(0,3)\times \B_{l}(0,2)\to \B_{k}(0,3)$ be the canonical projection. As in Proposition \ref{prop:ramified-covering}, we can show $\pi_0|_{\Gamma}$ is a surjective ramified covering from $\Gamma$ to $\B_{k}(0,3)$ and suppose its degree is $d$. 

Note that $L_0:=\pi_0^{-1}(0)$ is an $l$-dimensional subspace of $\C^{k+l}$ and thus defines an element $[L_0]$ in $\G(l,k+l)$. For each complex $k\times l$-matrix $A$ whose coefficients have modulus smaller than $1$, it defines an element in $\G(l,k+l)$ by the equation $z'=A z''$  with $z=(z',z'')\in \C^k\times\C^l$. For simplicity, we identify $A$ with  this element. Then $\{A:\|A\|<\vep\}$ is an open neighborhood of $[L_0]$ in $\G(l,k+l)$ for any $\vep>0$ and $A$ also serves as the local coordinate. Let $[\widehat{L_0}]$ be the canonical lift of $L_0$ to $\Gr(\B_{k}(0,3)\times \B_{l}(0,2),l)=\B_{k}(0,3)\times \B_{l}(0,2)\times \G(l,k+l)$. Then $[\widehat{L_0}]=\{(z,[L_0]):z\in L_0\}$. Let $V_\vep=\B_k(0,\vep)\times \B_l(0,2)\times \{ \|A\|<\vep\}$. This is an open neighborhood of $[\widehat{L_0}]$.

Define $\widetilde{\Gamma}$ as in Corollary \ref{tangent-step-2}, which is an analytic set of pure dimension $(kl+k-1)$ in $\B_k(0,3)\times \B_l(0,2)\times \G(l,k+l)$. Let $\omega_{\C^k}$ and $\omega_{\G}$ denote the standard K\"ahler forms on $\C^{k}$ and the unitary K\"ahler form on $\G(l,k+l)$, respectively. Define 
$$ 
    \Omega_{k,l}:=\pi_{\C^k}^*(\omega_{\C^k}^{k-1})\wedge \pi_\G^*(\omega_\G^{kl})
$$ 
to be a smooth form of bi-degree $(kl+k-1,kl+k-1)$ on $\B_k(0,3)\times \B_l(0,2)\times \G(l,k+l)$ where $\pi_{\C^k}$ and $\pi_\G$ are the obvious projection maps. The following result quantifies the number of inverse graphs of $\pi_0|_{\Gamma}$ when restricted on a small neighbourhood of $0 $ in $ \B_k(0,3)$. For the proof, see \cite{Dinh05suites,DS16} or Appendix \ref{appendix:b}.

\begin{theorem}[\cite{DS16}, Proposition 3.16]\label{dynadesigraphs}
Fix $\delta>0$ and $0<\vep<1$. Let $\Gamma,\widetilde{\Gamma}$ and $V_\vep$ be as above. Let $\widetilde{\Gamma}_\vep=[\widetilde{\Gamma}\cap V_\vep]$. Then $\Gamma$ contains at least
$$
    (1-2\delta)d-C\delta^{-1}\vep^{-2(kl+k-1)} \lp \widetilde{\Gamma}_\vep,\Omega_{k,l} \rp
$$ 
graphs over $\B_k(0, c_0\vep)$ along the $\pi_0$-direction. Here, $C>0$ and $0<c_0<1$ are two constants depending only on $k$ and $l$.
\end{theorem}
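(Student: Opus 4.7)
The strategy is a Crofton-style averaging over a small neighbourhood of the direction $[L_0]$ in $\G(l,k+l)$. Using the matrix coordinate $A$ (with $\|A\|<\vep$) around $[L_0]$, introduce the perturbed linear projection $\pi_A(z',z''):=z'-A z''$ from $\B_k(0,3)\times \B_l(0,2)$ to $\C^k$; when $\|A\|<\vep$ is small, $\pi_A|_\Gamma$ remains a ramified covering of degree $d$ of a region close to $\B_k(0,3)$, and $A=0$ recovers $\pi_0$. Let $N(A)$ denote the number of holomorphic sections of $\pi_A|_\Gamma$ over $\B_k(0,c_0\vep)$ (the ``graphs'' along direction $\pi_A$); the goal is a lower bound on $N(0)$.

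The first step is a Riemann--Hurwitz-type comparison between $N(A)$ and the mass of the ramification divisor $R_A\subset\Gamma$ of $\pi_A|_\Gamma$. Since $\B_k(0,c_0\vep)$ is simply connected, every connected component of $\pi_A^{-1}(\B_k(0,c_0\vep))\cap\Gamma$ that is not a graph must meet $R_A$; quantifying this by slicing $\Gamma$ with a generic pencil of complex lines, applying the classical one-dimensional Riemann--Hurwitz to each slice, and integrating back gives, for suitable $C_0,c_0>0$ depending only on $k,l$,
\[
d-N(A)\ \leq\ C_0\,\vep^{-2(k-1)}\,M_A\ +\ 2\delta d,\quad M_A:=\bigl\langle[R_A\cap\pi_A^{-1}(\B_k(0,\vep))]\,,\,\pi_A^*(\omega_{\C^k}^{k-1})\bigr\rangle.
\]
The factor $\vep^{-2(k-1)}$ reflects the Crofton-scaling of a $(k-1)$-dimensional analytic subvariety inside a ball of radius $\vep$, and the term $2\delta d$ absorbs the loss in the final transfer from direction $A$ back to direction $0$.

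The second step averages $M_A$ over $\{\|A\|<\vep\}$. By definition of $\widetilde{\Gamma}$, the fibre of $\pi_\G:\widetilde{\Gamma}_\vep\to\G(l,k+l)$ over $A$ is exactly $R_A\cap\pi_A^{-1}(\B_k(0,\vep))$ inside $\Gamma\times\{A\}$, so Fubini applied to $\Omega_{k,l}=\pi_{\C^k}^*(\omega_{\C^k}^{k-1})\wedge \pi_\G^*(\omega_\G^{kl})$ yields
\[
\int_{\{\|A\|<\vep\}} M_A\,d\nu_\G(A)\ =\ \bigl\langle\widetilde{\Gamma}_\vep,\Omega_{k,l}\bigr\rangle,
\]
with $d\nu_\G$ the measure from $\omega_\G^{kl}$ on the chart (of total mass $\asymp\vep^{2kl}$). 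Chebyshev's inequality then furnishes an $A^*$ with $M_{A^*}\lesssim\delta^{-1}\vep^{-2kl}\langle\widetilde{\Gamma}_\vep,\Omega_{k,l}\rangle$; combined with step one this gives $N(A^*)\geq(1-2\delta)d-C\delta^{-1}\vep^{-2(kl+k-1)}\langle\widetilde{\Gamma}_\vep,\Omega_{k,l}\rangle$. The transfer step then converts each section $\sigma:\B_k(0,c_0\vep)\to\Gamma$ of $\pi_{A^*}$ into a section of $\pi_0$ over a slightly smaller concentric ball via the implicit function theorem, since $\pi_0\circ\sigma$ is close to the identity when $\|A^*\|<\vep$. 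The main obstacle is making step one precise with a uniform constant $C_0$: the classical Riemann--Hurwitz is purely one-dimensional, and in our higher-dimensional and higher-codimensional situation one must slice with a generic affine pencil while ensuring the slicing does not introduce artificial branching, and carefully book-keep the graphs lost in the direction transfer.
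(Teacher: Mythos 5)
Your overall architecture (average over directions $A$ near $[L_0]$, establish a Riemann--Hurwitz-type bound $d-N(A)\lesssim \vep^{-2(k-1)}M_A+\delta d$ for each fixed direction, pick a good $A^\ast$ by a Fubini/mean-value argument, then transfer $\pi_{A^\ast}$-graphs to $\pi_0$-graphs) matches the paper's strategy, which routes through Lemma \ref{equal:multiplicity}, Proposition \ref{prop:generalgraphs} and Corollary \ref{graphrevise}. But your Step 1 is where the real content lies, and your sketch of it has a genuine gap. Slicing $\Gamma$ by a generic complex line $L$ through $0$ and applying the one-dimensional Riemann--Hurwitz formula only produces \emph{inverse branches of $\pi_0|_\Gamma$ over $L\cap\B_k(0,\vep)$}, i.e.\ holomorphic sections defined on one-dimensional slices. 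There is no direct way to ``integrate back'': a collection of sections over a family of lines is not automatically a holomorphic section over the solid ball, and your opening observation that a ramification-free component over a simply connected base is a graph, while true, is not what the line-slicing yields. The crucial missing ingredient is a Hartogs-type extension theorem across a family of lines, specifically the theorem of Sibony--Wong: if a holomorphic germ at $0$ admits a holomorphic continuation to $\bigcup_{L\in\mathcal G}L\cap\B_k(0,r)$ with $\nu_0(\mathcal G)>1/4$, it extends to $\B_k(0,c_{sw}r)$. Proposition \ref{prop:generalgraphs} is exactly this extension argument applied to inverse branches: one shows that for a set $\mathcal G$ of lines of measure $\geq 1-\delta$ (here the $\delta^{-1}$ appears, via a Chebyshev bound on $\|[P]\wedge[L_\vep]\|$), each branch point $a_s$ of $\pi_0^{-1}(0)$ that is an endpoint of an inverse branch for $\gtrsim 1/4$ of the lines in $\mathcal G$ gives a genuine section over $\B_k(0,c_{sw}\vep)$, and a counting argument shows $(1-\tfrac43\delta)d-\tfrac{8}{3}\delta^{-1}\vep^{-2(k-1)}\|[P]\|_\vep$ such $a_s$ survive. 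Your phrase ``carefully book-keep the graphs lost in the direction transfer'' does not point at this: the loss is not in the transfer from $A^\ast$ to $0$ (which is indeed just an implicit-function/Cauchy-estimate argument over a concentric ball, as you and the paper both observe), but in the passage from 1-dimensional slices to a ball.

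Two smaller imprecisions. First, your Fubini identity is written for $M_A=\langle[R_A\cap\pi_A^{-1}(\B_k(0,\vep))],\pi_A^{\ast}(\omega_{\C^k}^{k-1})\rangle$, but the form $\Omega_{k,l}$ pairs $\widetilde{\Gamma}_\vep$ against $\pi_{\C^k}^{\ast}(\omega_{\C^k}^{k-1})$, i.e.\ against the mass of $\pi_0([J_A])$, not $(\pi_A)_\ast([J_A])$; one needs a uniform mass-comparison estimate between the two projections (the paper uses Lemma \ref{masscontolhoriz} with $C$ uniform in $A$, $\|A\|<\vep$) before the Fubini step is of the right form. Second, the $\delta^{-1}$ in the final estimate does not come from the Chebyshev/mean-value selection of $A^\ast$ (which only costs a bounded factor); it is inherited from Proposition \ref{prop:generalgraphs} where it is paid in the Sibony--Wong argument. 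These are bookkeeping points, but they indicate the mechanism by which the constants appear had not been worked out.
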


We apply Theorem \ref{dynadesigraphs} to the case when $\Gamma=\Gamma_n$ and $L_0=\Delta$. It follows that $\Gamma_n$ is mostly transverse to $\Delta$ when $n\to \infty$. 

\begin{corollary}\label{cor:lotsgraphs}
    For any $\delta>0$, there exists $\vep>0$ such that for any subsequence $\{n_j\}$, we can extract a sub-subsequence $\{n_{j_l}\}$ such that when $l$ is large enough,  $\Gamma_{n_{j_l}}$ has at least $(1-3\delta)d^{n_{j_l}}$ graphs over $\{\|z\|<\vep\}$ along the diagonal direction.
\end{corollary}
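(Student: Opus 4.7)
The plan is to apply Theorem \ref{dynadesigraphs} directly to $\Gamma = \Gamma_n$ with $\pi_0 = \pi$, $L_0 = \Delta$, and $l = k$. Lemma \ref{lemma:vep_0/2} and Proposition \ref{prop:ramified-covering} guarantee that for $\vep < \vep_0/2$ the restriction $\pi|_{\Gamma_n \cap \{\|z\|<\vep\}}$ is a ramified covering of degree $d^n$ onto $\B_k(0,\vep)$, supplying the hypothesis of the theorem. Its conclusion produces at least
\[
    (1-2\delta)d^n - C\delta^{-1}\vep^{-2(k^2+k-1)}\langle \widetilde{\Gamma}_{n,\vep}, \Omega_{k,k}\rangle
\]
graphs over $\B_k(0, c_0\vep)$ transverse to $\Delta$, so it suffices (after relabeling $\vep$ by $\vep/c_0$) to force the error term below $\delta d^n$ along a sub-subsequence.

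Given any subsequence $\{n_j\}$, the uniform mass bound coming from Proposition \ref{prop:controlmass} applied to the iterated canonical lifts (as in the proof of Lemma \ref{lem almtame} and Theorem \ref{thm:tame}) permits extraction of $\{n_{j_l}\}$ along which $d^{-n_{j_l}}[\widetilde{\Gamma}_{n_{j_l}}]$ converges weakly to a positive current $\widetilde{\mathbb{T}}^+$. By Corollary \ref{tangent-step-2}, the density of $\widetilde{\mathbb{T}}^+$ along $\widehat{\Delta}$ is zero, hence the total tangent class vanishes; since $\supp(\widetilde{\mathbb{T}}^+)\cap \widehat{\Delta}$ is compact (it lies above $\supp(\mu)$) and a positive closed current of zero cohomology class on the K\"ahler projective compactification of the relevant piece of the normal bundle must itself vanish, the tangent current of $\widetilde{\mathbb{T}}^+$ along $\widehat{\Delta}$ is zero. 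Equivalently, $(A_\lambda)_*\widetilde{\mathbb{T}}^+ \to 0$ as $\lambda \to \infty$. Combining this with the scaling identity $A_\lambda^*\Omega_{k,k} = \lambda^{2(k^2+k-1)}\Omega_{k,k}$ (coming from the fact that $A_\lambda$ dilates the rank-$(k^2+k)$ normal directions of $\widehat{\Delta}$) and a standard cutoff argument yields
\[
    \vep^{-2(k^2+k-1)}\langle \widetilde{\mathbb{T}}^+|_{\overline{V_\vep}}, \Omega_{k,k}\rangle \longrightarrow 0 \quad \text{as } \vep \to 0.
\]
Fixing $\vep$ so this expression is below $\delta^2/C$, and using upper semi-continuity of mass on closed sets under weak convergence, gives the desired bound on the error term for all sufficiently large $l$.

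The main obstacle is pinning down a single $\vep$ depending only on $\delta$, since different subsequences could a priori produce different limits $\widetilde{\mathbb{T}}^+$ and hence different admissible thresholds $\vep$. This will be handled either by observing that the limit in Corollary \ref{tangent-step-2} is in fact uniquely determined (as implicit in its phrasing), or by a contradiction-diagonalization argument: if no uniform $\vep$ existed, a diagonal extraction over $\vep_m \downarrow 0$ would produce a bad sub-subsequence whose weak limit contradicts the vanishing of the tangent current derived above. Beyond this uniformity point, the proof is a direct combination of Theorem \ref{dynadesigraphs} with the zero-density property of Corollary \ref{tangent-step-2}.
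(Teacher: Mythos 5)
Your argument follows essentially the same route as the paper's: apply Theorem \ref{dynadesigraphs} to $\Gamma_n$ with $l=k$ (the ramified covering being supplied by Proposition \ref{prop:ramified-covering}), extract a sub-subsequence along which $d^{-n_{j_l}}[\widetilde{\Gamma}_{n_{j_l}}]$ converges to some $\widetilde{\mathbb{T}}^+$ via Corollary \ref{tangent-step-2}, and use the vanishing of the tangent current of $\widetilde{\mathbb{T}}^+$ along $\widehat{\Delta}$ together with the scaling $A_\lambda^*\Omega_{k,k}=\lambda^{2(k^2+k-1)}\Omega_{k,k}$ near $\widehat{\Delta}$ to push the error term below $\delta d^n$. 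Two comments on the side discussion. First, your intermediate step deducing ``tangent current $=0$'' from ``total tangent class $=0$'' by compactness of $\supp(\widetilde{\mathbb{T}}^+)\cap\widehat{\Delta}$ is unnecessary and, as phrased, delicate: the projective compactification of the normal bundle over the non-compact $\widehat{\Delta}$ is not a compact K\"ahler manifold, so ``zero cohomology class implies zero positive closed current'' requires additional work (one must pair against a suitable global closed form of complementary degree). The cleaner route, implicit in Corollary \ref{tangent-step-2}, is that its proof (patterned on \cite[Cor.\ 5.9]{DS16}) shows the tangent current of $\widetilde{\mathbb{T}}^+$ along $\Pi^{-1}(\Delta)$ is supported on $\bigcup_x\{(x,x)\}\times H_x$, which is disjoint from $\widehat{\Delta}$, so the tangent current along $\widehat{\Delta}$ actually \emph{is} zero, not merely cohomologically trivial. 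Second, you correctly flag that the quantifier order in the statement (choosing $\vep$ before the subsequence) is stronger than what either the paper's proof or your main argument directly delivers, since $\vep$ is produced only after the limit $\widetilde{\mathbb{T}}^+$ is fixed. Your proposed diagonalization fix does work --- using that the conclusion is monotone (graphs over a larger ball restrict to graphs over a smaller one), a diagonal extraction over $\vep_m\downarrow 0$ of bad sub-subsequences yields a limit $\widetilde{\mathbb{T}}^+$ still satisfying Corollary \ref{tangent-step-2}, contradicting Theorem \ref{dynadesigraphs} applied to a fixed small $\vep$. Note, though, that the application in Theorem \ref{thm:main} only uses the weaker quantifier order ($\delta$ and $\{n_j\}$ first, then $\vep$ and a sub-subsequence), so this point, while a genuine subtlety of phrasing, does not affect the rest of the paper.
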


\begin{proof}
   Let $\pi_{\G}$ be the canonical projection from $\Gr(D \times D,k)$ to $\G(k,2k)$ and $\omega_{\G}$ be the unitary K\"ahler form on $\G(k,2k)$. Recall that $\Pi$ is the canonical projection map from $\Gr(D\times D,k)$ to $D\times D$. We apply Theorem \ref{dynadesigraphs} to our setting where $k=l$ and $\pi=\pi_0$ is the projection map from $\C^k \times \C^k$ to $\C^k$ along the diagonal direction. 
   
   For any subsequence $\{n_j\}$, by Corollary \ref{tangent-step-2}, we can always find a subsequence $\{n_{j_l}\}$ such that $ d^{-n_{j_{l}}}[\widetilde{\Gamma}_{n_{j_l}}]$ converges to some $\widetilde{\mathbb{T}}^{+}$. We can identify $\Gr(D\times D,k)$ with an open subset of the normal bundle of $\widehat{\Delta}$. Recall that tangent currents are obtained as limit values of $(A_{\lambda})_{*}(\widetilde{\bbT}^{+})$ when $\lambda \to \infty$ where $A_{\lambda}$ is the dilation along the normal bundle of $\widehat{\Delta}$. Notice that $A_{1/\vep}$ maps $V_\vep$ to $V_1$. Therefore,
   \[
       \lp (A_{1/\vep})_*(\widetilde{\bbT}^+)|_{V_1}, \Omega_{k,k}\rp=\lp (A_{1/\vep})_*(\widetilde{\bbT}^+|_{V_\vep}), \Omega_{k,k}\rp=\lp \widetilde{\bbT}^+|_{V_\vep}, A_{1/\vep}^*(\Omega_{k,k})\rp.
   \]
   On the left hand side, since the tangent current of $\widetilde{\bbT}^+$ along $\widehat{\Delta}$ is 0, by definition we have 
   \[
       \lim_{\vep\to 0}\lp (A_{1/\vep})_*(\widetilde{\bbT}^+)|_{V_1}, \Omega_{k,k}\rp=0.
   \]
   On the right hand side, we have $A_{1/\vep}^*(\Omega_{k,k})=\vep^{-2(k^2+k-1)}\Omega_{k,k}$ in a neighborhood of $\widehat{\Delta}$. Therefore, for any $\delta>0$, there exists $\vep>0$ such that 
   \[
       \lp \widetilde{\bbT}^+|_{V_\vep}, \Omega_{k,k}\rp\leq 2C^{-1}\delta^2\vep^{2(k^2+k-1)}
   \]
   where $C>0$ is the constant in Theorem \ref{dynadesigraphs}. Since  $ d^{-n_{j_{l}}}[\widetilde{\Gamma}_{n_{j_l}}]$ converges to $\widetilde{\mathbb{T}}^{+}$, when $l$ is large enough, we have $\lp[\widetilde{\Gamma}_{n_{j_l}}\cap V_\vep], \Omega_{k,k} \rp<C^{-1}\delta^2\vep^{2(k^2+k-1)}d^{n_{j_l}}$. By Theorem \ref{dynadesigraphs}, this implies ${\Gamma}_{n_{j_l}}$ has at least $(1-2\delta)d^{n_{j_l}}-\delta d^{n_{j_l}}=(1-3\delta)d^{n_{j_l}}$ graphs over $ \B_{k}(0,c_0\vep)$ along the diagonal direction.  
\end{proof}

Let $a$ be a fixed point of $f^n$. We say $a$ is a {\it saddle} periodic point if none of the eigenvalues of $\d f^n(a)$ have modulus 1.  In fact, each of these saddle points has exactly $p$ eigenvalues with modulus $>1$ and $k-p$ eigenvalues with modulus $<1$. For a graph obtained in Corollary \ref{cor:lotsgraphs}, we say it is {\it saddle} if its intersection with $\Delta$ is a saddle periodic point. The following lemma says most of these graphs are saddle.

\begin{corollary}\label{saddlegraphs}
    Let $\delta $, $ \vep$ and $n_{j_l}$ be as in Corollary \ref{cor:lotsgraphs}. When $l$ is large enough, $\Gamma_{n_{j_l}}$ has at least $(1-5\delta)d^{n_{j_l}}$ saddle graphs over $\{\|z\|<\vep\}$.
\end{corollary}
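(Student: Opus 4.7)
The plan is to upgrade the $\geq (1-3\delta)d^{n_{j_l}}$ graphs from Corollary \ref{cor:lotsgraphs} into $\geq (1-5\delta)d^{n_{j_l}}$ saddle graphs by showing that at most $2\delta d^{n_{j_l}}$ of them correspond to non-saddle periodic points. To each graph over $\{\|z\|<\vep\}$, I will attach its unique periodic point $a$ (of multiplicity one for $f^{n_{j_l}}-\mathrm{id}$) together with the tangent direction $v_a\in\G(k,2k)$ of $\Gamma_{n_{j_l}}$ at $(a,a)$; this produces a transverse intersection point $\widehat{a}:=(a,a,v_a)\in\widehat{\Gamma}_{n_{j_l}}\cap\Pi^{-1}(\Delta)$. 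Writing $M:=\d f^{n_{j_l}}(a)$, the direction $v_a$ is the graph $V_M=\{(v,Mv):v\in\C^k\}$ in the coordinates $(x,y)$, and $a$ is saddle exactly when $M$ has no eigenvalue on the unit circle.

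My first step will be a linear-algebraic observation. The support $K_s$ of $\widehat{\mu}^\Delta$ is contained in the set of points $(x,x,[E_s(x)\times E_u(x)])$, where $[E_s(x)\times E_u(x)]$ is a $k$-plane in $\C^{2k}$ projecting onto the $(k-p)$-dimensional subspace $E_s(x)$ on the first factor. A direct basis computation shows that if $V_M$ is sufficiently close in $\G(k,2k)$ to such a splitting plane, then $M$ must have $p$ eigenvalues of very large modulus (coming from the unstable directions that get absorbed into the second factor in the limit) and $k-p$ eigenvalues of very small modulus; in particular, all eigenvalues lie off the unit circle. Equivalently, any eigenvector $e$ of $M$ with eigenvalue $\e^{i\theta}$ gives $(e,\e^{i\theta}e)\in V_M$, a vector that is at a fixed positive distance from $E_s\times E_u$, so $V_M$ itself stays bounded away from $K_s$. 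Since the part of $K_s$ relevant to us lies over the compact set $\overline{\Delta\cap D_{\vep_0/2}}$ by Lemma \ref{lemma:vep_0/2}, this will produce a uniform open neighbourhood $\mathcal{U}\subset\Gr(D\times D,k)$ of $K_s$ with the property that every graph tangent direction $v_a\in\mathcal{U}$ forces the associated periodic point to be saddle.

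The second step will be to bound the number of $\widehat{a}$ landing outside $\mathcal{U}$ by $2\delta d^{n_{j_l}}$ for $l$ large. By Proposition \ref{prop:intersec-mu-hat} and the uniqueness of tangent current there, the tangent current of $\widehat{\mathbb{T}}^+$ along $\Pi^{-1}(\Delta)$ is the pullback of $\widehat{\mu}^\Delta$ from the zero section of the normal bundle, hence is supported above $K_s$. Consequently, for any open $\mathcal{V}\subset\Gr(D\times D,k)$ with $\overline{\mathcal{V}}\cap K_s=\emptyset$, the mass of $(A_\lambda)_*\big(d^{-n_{j_l}}[\widehat{\Gamma}_{n_{j_l}}]\big)$ in any fixed compact tube above $\mathcal{V}$ inside the normal bundle tends to zero as $l,\lambda\to\infty$. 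Applied to $\mathcal{V}:=\mathcal{U}^c\cap\Pi^{-1}(\Delta_{\vep_0/2})$ and after possibly extracting a further subsequence, this vanishing will be converted into an upper bound on the discrete count of intersection points $\widehat{a}\in\mathcal{V}$ by pairing $d^{-n_{j_l}}[\widehat{\Gamma}_{n_{j_l}}]$ with a suitable positive smooth form on $\Gr(D\times D,k)$ adapted to the normal dilation along $\Pi^{-1}(\Delta)$, in the spirit of the form $\Omega_{k,l}$ used in Theorem \ref{dynadesigraphs} and of the counting argument of Corollary \ref{cor:lotsgraphs}. The hard part will be precisely this conversion from abstract tangent-current mass to a quantitative discrete intersection count: it will require a delicate choice of auxiliary test form together with the density and super-potential machinery of Sections \ref{sec:tame} and \ref{sec:sp}, after which combining with Corollary \ref{cor:lotsgraphs} yields the bound $Q_l^{\mathrm{sad}}\geq (1-3\delta)d^{n_{j_l}}-2\delta d^{n_{j_l}}=(1-5\delta)d^{n_{j_l}}$.
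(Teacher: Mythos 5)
Your strategy is genuinely different from the paper's, but it has a gap that I do not think can be closed with the tools at hand. The paper's own proof is a dynamical-degree argument that works directly with the graphs produced by Corollary~\ref{cor:lotsgraphs}: it introduces the quantities $\lambda^{\pm}_{s,n}=\int_{\Gamma'_n}\pi_1^*(\omega^{k-s})\wedge\pi_2^*(\omega^s)$ (resp.\ with the roles of $\pi_1,\pi_2$ exchanged), notes that $\limsup(\lambda^{\pm}_{s,n})^{1/n}\le d_s^{\pm}<d$ by \massump, so $\lambda^{\pm}_{s,n}d^{-n}<C\theta^n$ for some $\theta<1$, and then applies a Chebyshev/Markov count to discard at most $2\delta d^n$ graphs on which the individual integral exceeds $C_1\theta^n$; the remaining $(1-5\delta)d^n$ graphs satisfy an exponentially small mass bound, from which the eigenvalue estimate for $\d f^n$ at the fixed point follows exactly as in \cite[Proposition 5.13]{DS16} via plurisubharmonicity of (powers of) the Jacobian of the inverse branch. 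You instead route the argument through the Oseledec splitting and the tangent-current statement in Proposition~\ref{prop:intersec-mu-hat}.

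The essential missing step in your proposal is the conversion, in your second paragraph, from ``the tangent current of $\widehat{\bbT}^+$ along $\Pi^{-1}(\Delta)$ has no mass over $\mathcal{V}$'' to ``$d^{-n_{j_l}}\#\{\widehat{a}\in\mathcal{V}\}\le 2\delta$''. Pairing $d^{-n_{j_l}}[\widehat{\Gamma}_{n_{j_l}}]$ with a positive smooth test form measures the volume of $\widehat{\Gamma}_{n_{j_l}}$ in a tube, not the number of its intersection points with $\Pi^{-1}(\Delta)$; what you would actually need is the convergence $d^{-n_{j_l}}[\widehat{\Gamma}_{n_{j_l}}]\wedge\Pi^*[\Delta]\to\widehat{\mathbb T}^+\curlywedge\Pi^*[\Delta]$, which is a lifted version of the commutativity of limit and intersection~(\ref{commu-intersec}) — the very statement the entire paper is building towards, and which is not established independently here (Theorem~\ref{dynadesigraphs} counts graphs via the ramification/postcritical current and Sibony--Wong, not via intersection counts in a Grassmannian region). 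As it stands this step is not a ``delicate choice of auxiliary test form'' but a theorem of the same order as the main result. A second, lesser, issue: the ``uniform open neighbourhood $\mathcal{U}$ of $K_s$'' does not exist as stated, because the angle between $E_s(x)$ and $E_u(x)$ is not bounded below $\mu$-a.e.; you would need to restrict to a Pesin set $\Lambda_\vep$ (which is where the uniform transversality lives) and track the $\vep$-loss in the mass of $\widehat{\mu}^\Delta$, an extra bookkeeping the dynamical-degree argument of the paper avoids. Your linear-algebra observation in the first paragraph is correct modulo this Pesin restriction: a unit eigenvector $e$ with $|\lambda|=1$ gives $(e,\lambda e)\in V_M$ at a definite distance from $E_s\times E_u$ once the splitting angle is bounded below, so eigenvalues off the unit circle do follow; this part is fine.
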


\begin{proof}
   
  For $0\leq s<p$, we have 
  $$
   \lambda^+_{s,n}:=\lp (f^n)_{*}\big(\omega^{k-s}|_{f^{-n}(D')}\big), \omega^{s}\rp=\int_{\Gamma'_n} \pi_1^{*}(\omega^{k-s})\wedge \pi_2^{*}(\omega^{s}),
  $$
  and for $0\leq s <k-p$
  $$
  \lambda^-_{s,n}:=\lp(f^{n})^{*}(\omega^{k-s}|_{f^{n}(D')}),\omega^s\rp=\int_{\Gamma'_n} \pi_1^{*}(\omega^{s})\wedge \pi_2^{*}(\omega^{k-s})
  $$ 
  where $\omega = \sqrt{-1}\partial \bar\partial\|x\|^2$ is the standard K\"ahler form on $D$ and $\Gamma_n'$ is the restriction of $\Gamma_n$ on $D'\times D'$.

  Define $\lambda_s^+:=\limsup_{n\to\infty}(\lambda_{s,n}^+)^{1/n}$ and $\lambda_s^-:=\limsup_{n\to\infty}(\lambda_{s,n}^-)^{1/n}$. They are introduced in \cite{BDR24} as another type of dynamical degrees. It is easy to see that $\lambda_s^\pm\leq d_s^\pm$ for all possible $s$. Therefore, according to \massump, there exist $0<\theta<1$ and $C>0$ such that $\lambda^{+}_{s,n}d^{-n} < C\theta^n$ and $\lambda^{-}_{s,n}d^{-n} < C\theta^n$ for every $s$ and $n$. We deduce that there are at least $(1-5\delta)d^n$ graphs over $\{\|z\|<\vep\}$ denoted by $\{\Gamma^j_n\}$ such that for $0\leq s <p$
  \[
      \int_{\Gamma^{j}_{n}}(\sqrt{-1}\partial \bar\partial\|x\|^2)^{k-s}\wedge (\sqrt{-1}\partial \bar\partial\|y\|^2)^{s}\leq C_1\theta^n
  \]
  and for $0\leq s <k-p$ 
  \[
      \int_{\Gamma^j_n} (\sqrt{-1}\partial \bar\partial\|x\|^2)^{s}\wedge (\sqrt{-1}\partial \bar\partial\|y\|^2)^{k-s}\leq C_1\theta^n
  \]
  for some constant $C_1>0$ independent of $n$. The rest of the proof is the same as \cite[Proposition 5.13]{DS16}. 
\end{proof}

\begin{proof}[Proof of Theorem \ref{thm:main}]
    Define 
    \[
        \mu_{n}^{\Delta}:=d^{-n}\sum\limits_{a\in Q_{n}} \delta_{(a,a)}.
    \]
    It suffices to prove that $\mu_n^\Delta\to \mu^\Delta$ as $n\to\infty$. Let $\nu^\Delta$ be a limit of $\{\mu_n^\Delta\}$, say $\nu^\Delta=\lim_{j\to\infty} \mu_{n_j}^\Delta$. By Corollary \ref{saddlegraphs}, for any $\delta>0$, there is some $\vep>0$ and a sub-subsequence which we still denote by $\{n_j\}$ such that $\Gamma_{n_j}$ has at least $(1-5\delta)d^{n_j}$ saddle graphs over $\{\|z\|<\vep\}$ for every $j$. These graphs are denoted by $\{\Gamma_{n_j}^l\}_l$. For each $j\geq 1$, define
    \[
        \mathbb T^{gh}_j:=d^{-n_j}\sum_l [\Gamma_{n_j}^l] \quad \text{and} \quad \nu_j^\Delta:=\mathbb T^{gh}_j\wedge [\Delta].
    \]
    Then $\nu_j^\Delta$ is a positive measure of mass at least $1-5\delta$. By definition, we also have $\nu_j^\Delta\leq\mu_{n_j}^\Delta$ and $\mathbb T^{gh}_j\leq d^{-n_j}[\Gamma_{n_j}]$. By extracting a subsequence, we may assume $\bbT^{gh}_j$ converges to some $\mathbb S\leq \bbT^+$. Suppose $\Gamma_{n_j}^l$ is the graph of $\tau_{n_j}^l$ for some holomorphic map from $\{\|z\|<\vep\}$ to $\C^k$. Since $\{\|\tau_{n_j}^l\|\}$ is uniformly bounded for all $j$ and $l$, $\{\|\tau_{n_j}^l\|_{\mathcal C^1}\}$ is also uniformly bounded by Cauchy's formula. Then it is easy to prove that $\nu_j^\Delta$ also converges to a positive measure $\nu_\delta$. We have $\nu^\Delta\geq \nu_\delta$, $\mu^\Delta\geq \nu_\delta$ and $\|\nu_\delta\|\geq 1-5\delta$. Since $\delta>0$ is arbitrary, we must have $\nu^\Delta=\mu^\Delta$.
\end{proof}
    
\appendix

\section{Proof of Proposition \ref{prop:controlmass}}\label{appendix:a}

We first prove a weaker version of Proposition \ref{prop:controlmass}. We sketch the proof and refer the readers to \cite[Proposition 5.9]{BDR24}.

\begin{proposition}\label{non-unif mass control}
     Let $Y$ be a compact K\"ahler manifold. Let $G:D_v\times Y\to D_h\times Y$ be a lift of $f$. Let $S$ be a positive closed current of bi-dimension $(r,r)$ in $M\times N'\times Y$. Then
    \begin{equation*}
        \|(G^n)_*S\|_{M''\times N\times Y}=O(d^n) \quad \text{as }n\to\infty.
    \end{equation*}
\end{proposition}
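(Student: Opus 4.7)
The plan is to estimate $\|(G^n)_*S\|_{M''\times N\times Y}$ by testing against the K\"ahler form $\omega_{D\times Y}^r$, expanded into bi-graded pieces $\pi_D^*\omega_D^a\wedge\pi_Y^*\omega_Y^b$ with $a+b=r$. Using (\ref{eq:M''}) to control where $(G^n)^{-1}$ sends $M''\times N\times Y$, push-pull duality reduces the problem to estimating
\[
    \lp S,(G^n)^*\big(\chi\cdot\pi_D^*\omega_D^a\wedge\pi_Y^*\omega_Y^b\big)\rp
\]
for a smooth cutoff $\chi$ equal to $1$ on $M''\times N\times Y$ and supported slightly larger. The semiconjugacy $\pi_D\circ G=f\circ\pi_D$ immediately gives $(G^n)^*\pi_D^*\omega_D^a=\pi_D^*((f^n)^*\omega_D^a)$.

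For the $Y$-factor, since $G$ is a lift of $f$, each map $\pi_Y\circ G(z,\cdot)$ acts trivially on $H^*(Y,\C)$; iterating along orbits, the restriction of $(G^n)^*\pi_Y^*\omega_Y^b$ to each fiber $\{x\}\times Y$ remains cohomologous to $\omega_Y^b$. A global $\ddc$-lemma on $D_v\times Y$ (with $Y$ compact K\"ahler) then furnishes a decomposition
\[
    (G^n)^*\pi_Y^*\omega_Y^b=\pi_Y^*\omega_Y^b+\ddc U_n^{(b)}+V_n^{(b)},
\]
where $V_n^{(b)}$ vanishes on the fibers of $\pi_D$ and so contains at least one differential from the $D$-factor. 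Substituting yields three contributions. The leading piece reduces via Fubini to a pairing $\lp\sigma_S^{(b)},\chi'\cdot(f^n)^*\omega_D^a\rp$ with $\sigma_S^{(b)}:=(\pi_D)_*(S\wedge\pi_Y^*\omega_Y^b)$ a positive closed horizontal current of bi-dimension $(a,a)$ on $D$; by Definition \ref{defi:degrees} and Remark \ref{remark:mass-hori}, push-pull bounds this by $(d_a^+)^n\|\sigma_S^{(b)}\|$ up to a constant, hence by $O(d^n)$ thanks to \massump.

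The $\ddc U_n^{(b)}$ contribution, paired against the closed $S$, is moved by Stokes onto $\chi\cdot\pi_D^*((f^n)^*\omega_D^a)$; the resulting term engages a strictly higher power of $\omega_D$ on the $D$-side and therefore a strictly sub-dominant degree $d_{a+1}^+<d$. The $V_n^{(b)}$ contribution is handled identically using its extra $D$-direction differential. The main obstacle will be choosing the decomposition of $(G^n)^*\pi_Y^*\omega_Y^b$ uniformly enough in $n$ so that the auxiliary forms $U_n^{(b)}, V_n^{(b)}$ do not spoil the $O(d^n)$ rate; this can be arranged by constructing $U_n^{(b)}$ via fiberwise Green operators combined with the inductive relation $G^n=G\circ G^{n-1}$, so that the masses of the auxiliary forms are controlled by successive dynamical degree estimates and every error term is funneled by \massump\ into a strictly sub-dominant $d_l^\pm<d$.
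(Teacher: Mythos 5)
You take a genuinely different route from the paper: a direct bi-graded mass estimate, whereas the paper argues by contradiction, producing from a hypothetical growth rate $>d^n$ a positive closed horizontal current on $D$ whose push-forward under $f^n$ would force $d_l^+\geq d$ for some $l<p$, contradicting \massump\ (see \cite[Lemma 5.10]{BDR24} and the shadow construction). Your leading-term estimate is sound: the piece with $a$ factors of $\omega_D$ indeed collapses to $\lp(f^n)_*\sigma_S^{(b)},\chi_D\,\omega_D^a\rp$ with $\sigma_S^{(b)}=(\pi_D)_*(S\wedge\pi_Y^*\omega_Y^b)$ a positive closed horizontal current of bi-dimension $(a,a)$, which is $O(d^n)$ either because $d_a^+<d$ (when $a<p$) or by the slice-mass argument (when $a=p$).

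The gap lies in the error terms, and it is not the kind of gap that "can be arranged." First, the relative $\ddc$-lemma gives $U_n^{(b)}$, $V_n^{(b)}$ only up to choices, and nothing in the hypothesis that $\pi_Y\circ G(z,\cdot)$ acts trivially on $H^*(Y,\C)$ controls their size as $n\to\infty$: the fiberwise potentials are twisted by the $z$-derivatives of $G^n$, which grow exponentially. After your Stokes move, the contribution is $\lp S\wedge U_n^{(b)},\pi_D^*\big((f^n)^*(\ddc\chi_D\wedge\omega_D^a)\big)\rp$ plus cross terms of the form $\lp S,\partial\alpha\wedge\bar\partial U_n^{(b)}\rp$ (because $\ddc$ is not a derivation); bounding either of these requires a uniform $\mathcal C^0$ or $L^1$ bound on $U_n^{(b)}$ and $\d U_n^{(b)}$ that you do not have. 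Second, the claim that the $V_n^{(b)}$ contribution gains a sub-dominant degree fails when $a<h$ (where $h$ is the $D$-dimension of $S$): the extra $D$-direction $1$-form $\pi_D^*\alpha$ coming from $V_n^{(b)}$ combined with $(f^n)^*\omega_D^a$ gives a $(2a+1)$-form on $D$, and $S$ wedged with such a form vanishes only when $a=h$, not for smaller $a$. In the paper's Lemma \ref{lemma:shadow} the analogous error terms disappear exactly and only because one works with $a=h$; here the full sum $\sum_{a\le h} I_a(n)$ is needed, and it is precisely the lower-$a$ pieces that require the contradiction argument. Your closing sentence about "fiberwise Green operators combined with the inductive relation $G^n=G\circ G^{n-1}$" is a plan, not a proof; the induction does not close because the $z$-dependent gluing of fiberwise solutions re-introduces the uncontrolled derivatives of $G^{n-1}$.

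What the paper's argument buys is precisely immunity from this problem: by normalizing $S_j^{(s)}:=\frac{(\pi_D)^*\chi\cdot(G^{n_j-s})_*S}{C_l(n_j)d^{n_j-s}}$ with the hypothetical bad growth rate $C_l(n_j)\to\infty$, the limit currents $S_\infty^{(s)}$ have normalized mass, their shadows are positive closed horizontal of bi-dimension $(l,l)$, and the inequality $\shad(S_\infty^{(0)})\le d^{-s}(f^s)_*\shad(S_\infty^{(s)})$ forces $d\le d_l^+$ without ever needing a uniform bound on the auxiliary forms. The forms $U_n,V_n$ enter only through Lemma \ref{lemma:shadow}, where their contributions vanish identically by degree, not by estimate.
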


\begin{proof}
     Suppose $\dim Y=m$ and $S\neq 0$. Note that for geometric reasons, we must have $r\leq p+m$. Take a smooth vertical cut-off function $0\leq \chi \leq 1$ on $D$ which equals to 1 in a neighborhood of $ \overline{M''\times N}$ and is supported on $M'\times N$. Let $h$ be $D$-dimension of $S$. Note that $\max\{0, r-m \}\leq h\leq \min\{p,r\}$. Define $h_{1}={\rm max}\{0, r-m \}$, $h_{2}=\min\{p,r\}$, and
     \[
         I(n)=\int_{D\times Y} (\pi_{D})^{*}(\chi) \cdot (G^{n})_{*}S\wedge ((\pi_{D})^{*}\omega_{D}+(\pi_Y)^{*}\omega_Y)^{r}=\sum_{l=h_{1} }^{h} \binom{r}{l} I_{l}(n) 
     \]
    where
    \[
        I_{l}(n):=\int_{D\times Y} (\pi_{D})^{*}(\chi) \cdot (G^{n})_{*}S\wedge (\pi_{D})^{*}\omega^{l}_{D}\wedge(\pi_Y)^{*}\omega^{r-l}_Y.
    \]
    Note that $\|(G^{n})_{*}S\|_{M''\times N\times Y} \leq I(n)$. It is easy to prove that $I_h(n)\lesssim d^n$. Therefore, in order to prove Proposition \ref{non-unif mass control}, it is enough to show that $I_{l}(n)=O(d^{n})$ for every $l$. Assume by contradiction, that this is false. For each $h_{1}\leq \tilde{l}\leq h$, let $C_{\tilde{l}}(n)=d^{-n}I_{\tilde{l}}(n)$. Then by the contradiction assumption, the sequence $\{C_{\tilde{l}}(n)\}$ satisfies the conditions of \cite[Lemma 5.10]{BDR24}. 
    
    Let $l$ and $\{n_{j}\}_{j\in \mathbb{N}}$ be as in that lemma. For all integers $s$ and $j$ such $n_{j}\geq s$, set 
    \[
        S^{(s)}_{j}:=\frac{(\pi_{D})^{*}\chi \cdot (G^{n_{j}-s})_{*}(S)}{C_{l}(n_{j})d^{n_{j}-s}}.
    \]
    Repeating the proof of \cite[Lemma 5.12]{BDR24} and the remark after it, we can find a constant $c\geq 1$ independent of $s$ and $j$ such that $1\leq \| S^{(0)}_{j}\|_{D\times Y}$ and $ \| S^{(s)}_{j}\|_{D\times Y}\leq c$ for any $s\geq 0$.
    Therefore, we can choose a subsequence $j_{i}$ such that for any $s\geq 0$ there is a positive current $S^{(s)}_{\infty}$ on $D\times Y$ satisfying $S^{(s)}_{j_{i}}\to S^{(s)}_{\infty} $. Let $\shad({{S}^{(s)}_\infty})$ and $\shad({{S}_\infty^{(0)}})$ be the shadows of ${S}^{(s)}_\infty$ and ${S}_\infty^{(0)}$ respectively. They are positive currents of bi-dimension $(l,l)$ on $D$. Repeating the proof of \cite[Lemma 5.13-15]{BDR24}, we have the following
    \begin{itemize}
        \item[{\rm (i)}] we have $\|\shad({{S}^{(0)}_\infty})\|_{D'}=1$; furthermore, there exists a constant $c>0$ independent of $s$ such that $\|\shad({{S}^{(s)}_\infty})\|_{D'}\le c$;
        \item[{\rm (ii)}]  $\shad({{S}^{(s)}_\infty})$ is horizontal on $M\times N'$ and closed on $M''\times N$;
        \item[{\rm (iii)}] $\shad({{S}^{(0)}_\infty})\le d^{-s} (f^s)_*(\shad({{S}^{(s)}_\infty}))$.
    \end{itemize}
   To prove (iii), we use the fact that $\pi_Y\circ G(z,\cdot)$ preserves the cohomology classes of $Y$ for any $z\in D$. See also the proof of Lemma \ref{lemma:shadow}. By the same trick as in the proof of Theorem \ref{thm:tame}, we can deduce that $d<d_l^+$ which contradicts with \massump.
\end{proof}
\begin{proof}[End of the proof of Proposition \ref{prop:controlmass}]
     It remains to prove the result of Proposition \ref{non-unif mass control} is uniform for all $S$. Otherwise, we can find a sequence $\{(n_{i},S_{i})\}$ such that $\|S_{i}\|_{M'\times N\times Y}=1$ and $ \| (G^{n_{i}})_{*}(S)\|_{M''\times N\times Y}> 4^{i}d^{n_{i}}$. Let $S:=\sum_{i} 2^{-i}S_{i}$. Then $S$ satisfies the condition of Proposition \ref{non-unif mass control}, but $d^{-n_{i}}\| (G^{n_{i}})_{*}(S)\|_{M''\times N\times Y}>2^{i}$ for all $i$. This is a contradiction.
\end{proof}

\section{Proof of Theorem \ref{dynadesigraphs}}\label{appendix:b}

We continue from the setting of Theorem \ref{dynadesigraphs}. The ramified locus of $\pi|_{\Gamma}$, which is determined by the Jacobian determinant, is a divisor of $\Gamma$ with integer coefficients (counted with multiplicity), denoted by $J$. Its push-forward to $\B_{k}(0,3)$ is also a divisor with integer coefficients on $\B_{k}(0,3)$. We use $P$ to denote this divisor on $\B_k(0,3)$ and the associated $(1,1)$-current $[P]$ is called the {\it postcritical current}. We define $J_A$, $P_A$ for $\pi_{A}|_{\Gamma}$ in the same way.

 We can also give a description of $J_A$ from classical intersection theory in algebraic geometry. Let $p_1$ and $p_2$ denote the canonical projections from $\B_{k}(0,3)\times \B_{l}(0,2)\times \G(l,k+l)\times \G(k,k+l)$ to $\B_{k}(0,3)\times \B_{l}(0,2)\times \G(l,k+l)$ and $\B_{k}(0,3)\times \B_{l}(0,2)\times  \G(k,k+l)$, respectively. Let $\Sigma$ denote the hypersurface of points $(x,[v],[w])$, where $ x\in \B_k(0,3)\times \B_l(0,2)$, $v\in \G(l,k+l)$ and $w\in \G(k,k+l)$ such that $v$ is not transverse to $w$. Then, we have $\widetilde{\Gamma}=p_1(p_2^{-1}(\widehat{\Gamma})\cdot\Sigma)$ (see \cite[Corollary 5.9]{DS16}). Recall that $ V\cdot W$ denotes the intersection of two subvarieties $V$ and $W$ with multiplicity if $V$ and $W$ intersects properly. For direction $A\in \G(l,k+l)$, we define $H_{A}$ as the hypersurface in $\G(k,k+l)$ consisting of elements not transverse to $A$. We have the following lemma for $J_A$. 

\begin{lemma}\label{equal:multiplicity}
The projection images of $\widetilde{\Gamma}\cdot (\B_k(0,3)\times \B_l(0,2)\times \{ A \})$ and $ \widehat{\Gamma}\cdot  (\B_k(0,3)\times \B_l(0,2) \times H_{A})$ to $\Gamma$ both equal to $J_A$.
\end{lemma}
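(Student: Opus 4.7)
The plan is to establish both equalities by expressing all three divisors --- the ramification locus $J_A$, the projection of $\widetilde\Gamma\cdot(\B_k(0,3)\times\B_l(0,2)\times\{A\})$ to $\Gamma$, and the projection of $\widehat\Gamma\cdot(\B_k(0,3)\times\B_l(0,2)\times H_A)$ to $\Gamma$ --- as the zero divisor of one common local holomorphic function on $\Gamma$. Working near a point $x_0\in\Gamma$, let $W(x)$ be a holomorphic $k\times(k+l)$ matrix whose rows form a basis of $T_x\Gamma\subset\C^{k+l}$, and let $B$ be a fixed $l\times(k+l)$ matrix whose rows span the $l$-dimensional direction $A$. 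I would then work with the single holomorphic function
\[
    \Phi(x):=\det\begin{pmatrix}W(x)\\ B\end{pmatrix}
\]
on a neighbourhood of $x_0$ in $\Gamma$. Its zero set is exactly the locus where $T_x\Gamma$ fails to be transverse to $A$.

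First I would identify $\Phi$ with the Jacobian of $\pi_A|_\Gamma$ up to a nowhere-vanishing factor. Choosing affine coordinates on $\B_k(0,3)$ and a splitting $\C^{k+l}\cong\C^k\oplus A$ adapted to $\pi_A$, a routine block-determinant computation shows that $\d\pi_A$ restricted to $T_x\Gamma$ has determinant equal to $\Phi(x)$ divided by $\det B|_A$, which is a nonzero constant. Hence as divisors on $\Gamma$ we have $J_A=\{\Phi=0\}$ locally, and therefore globally after covering $\Gamma$ by such charts.

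Next I would treat the second intersection. The hypersurface $H_A\subset\G(k,k+l)$ is the zero divisor of the global Pl\"ucker section $[w]\mapsto w\wedge[A]$ of a suitable line bundle on $\G(k,k+l)$. Since $\widehat\Gamma$ is the graph of the Gauss map $x\mapsto(x,T_x\Gamma)$, pulling back this section along the isomorphism $\widehat\Gamma\cong\Gamma$ turns it into $x\mapsto W(x)\wedge B$, which equals $\Phi(x)$ up to the same nonzero factor as before. Since $\widehat\Gamma$ is smooth and meets $\B_k(0,3)\times\B_l(0,2)\times H_A$ in the expected codimension one, the projection of $\widehat\Gamma\cdot(\B_k(0,3)\times\B_l(0,2)\times H_A)$ to $\Gamma$ is cut out by $\Phi$ with the correct multiplicity, and therefore equals $J_A$.

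Finally, for the first intersection, I would argue in the same spirit. The incidence variety $\widetilde\Gamma$ is (the underlying reduced subvariety of) the zero locus of the Pl\"ucker section $(x,[v])\mapsto W(x)\wedge v$ on $\Gamma\times\G(l,k+l)$, and substituting $v=A$ turns this section into $\Phi(x)$. The slice $\{v=A\}$ meets $\widetilde\Gamma$ properly in codimension one on $\Gamma$, so the projection of $\widetilde\Gamma\cdot(\B_k(0,3)\times\B_l(0,2)\times\{A\})$ to $\Gamma$ is again $\{\Phi=0\}=J_A$. Alternatively, one can deduce this first identity directly from the second via the relation $\widetilde\Gamma=p_1(p_2^{-1}(\widehat\Gamma)\cdot\Sigma)$: the map $p_1$ is generically injective on $p_2^{-1}(\widehat\Gamma)\cdot\Sigma$ (the unique fibre over $(x,A)\in\widetilde\Gamma$ being $(x,A,T_x\Gamma)$), and pulling back the slice $\{v=A\}$ along $p_1$ corresponds precisely to cutting $\widehat\Gamma$ by $H_A$. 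The main technical delicacy --- tracking intersection multiplicities through this equivalence --- is exactly what the single local equation $\Phi$ is designed to resolve.
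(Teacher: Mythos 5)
Your proposal is correct, but it follows a genuinely different route from the paper. The paper takes the equality $\pi_\B\bigl(\widehat\Gamma\cdot(\B\times H_A)\bigr)=J_A$ as ``not difficult to see'' and devotes the argument to showing that the projections of the two intersection cycles agree, by iterating the projection formula on the diagram $p_1,p_2$ starting from the identity $\widetilde\Gamma=p_1\bigl(p_2^{-1}(\widehat\Gamma)\cdot\Sigma\bigr)$. You instead exhibit one local holomorphic function $\Phi(x)=\det\binom{W(x)}{B}$ on $\Gamma$ and show directly that all three divisors --- $J_A$, the projection of $\widehat\Gamma\cdot(\B\times H_A)$, and the projection of $\widetilde\Gamma\cdot(\B\times\{A\})$ --- are the zero divisor of $\Phi$: the block-determinant computation identifies $\Phi$ with the Jacobian of $\pi_A|_\Gamma$ up to a unit, the Pl\"ucker section of $H_A$ pulls back along the Gauss map $\widehat\Gamma\cong\Gamma$ to the same $\Phi$, and substituting $v=A$ into the defining section of $\widetilde\Gamma$ yields $\Phi$ again. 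What the paper's route buys is brevity and functoriality: the projection-formula manipulations are coordinate-free and automatically handle multiplicities once one accepts the stated equality of cycles $\widetilde\Gamma=p_1(p_2^{-1}(\widehat\Gamma)\cdot\Sigma)$. What your route buys is transparency: writing the common local equation makes every multiplicity identification manifest and makes the step the paper waves at (that $\widehat\Gamma\cdot(\B\times H_A)$ projects to $J_A$ as divisors, not just as sets) part of the proof rather than an assumption. Your closing alternative --- recovering the first identity from the second via $p_1$ being generically injective on $p_2^{-1}(\widehat\Gamma)\cdot\Sigma$ --- is essentially the paper's argument in miniature, so you in fact have both proofs. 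One small remark: your hedge that $\widetilde\Gamma$ is ``the underlying reduced subvariety'' is unnecessary; $p_2^{-1}(\widehat\Gamma)$ is smooth and $\Sigma$ is a reduced hypersurface meeting it properly with generic multiplicity one, so the cycle $\widetilde\Gamma$ is already reduced, and the defining Pl\"ucker section vanishes to order one there.
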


\begin{proof}
    Fulton's book \cite{WF} gives a detailed description of intersection of two subvarieties $V$ and $W$ on a non-singular ambient variety $X$. In our case, both two pairs intersect properly. For brevity, let $\B:=\B_{k}(0,3)\times \B_{l}(0,2)$. It is not difficult to see the projection image of $\widehat{\Gamma}\cdot(\B\times H_{A})$ on $\Gamma$ equals to $J_{A}$. Therefore, we only need to show projection images of these two intersections on $\Gamma$ are identical. By the projection formula of intersection theory, we have 
    $$
    \begin{aligned}
        &\pi_{\B}\big(\widetilde{\Gamma}\cdot (\B\times \{ A \})\big) =\pi_{\B}\big( p_1 (p^{-1}_2(\widehat{\Gamma})\cdot\Sigma)\cdot(\B\times \{A \})\big)\\
        &=\pi_{\B} \big(p_1\big((p_2^{-1}(\widehat{\Gamma})\cdot\Sigma)\cdot p_1^{-1}(\B \times \{ A\})\big)\big)= (\pi_{\B}\circ p_1)\big(p_2^{-1}(\widehat{\Gamma})\cdot (\Sigma\cdot p_1^{-1}(\B\times \{ A\}))\big)\\
        &=(\pi_{\B}\circ p_2)\big(p_2^{-1}(\widehat{\Gamma})\cdot (\B\times \{ A\} \times H_A)\big)= \pi_{\B}\big(p_2\big(p_2^{-1}(\widehat{\Gamma})\cdot (\B\times \{ A\} \times H_A) \big)\big)\\
        &=\pi_{\B}\big(\widehat{\Gamma}\cdot p_2(\B\times \{ A\} \times H_A)\big)=\pi_{\B}\big(\widehat{\Gamma}\cdot (\B \times H_A)\big),
    \end{aligned}
    $$ 
    where $\pi_{\B}$ denotes the canonical projection map from $\B \times \G(l,k+l)$ or $\B \times \G(k,k+l)$ to $\B$. Hence, we finish the proof.
\end{proof}

The following result is proven in \cite{Dinh05suites} and also used in \cite{DS16}. The proof is done by applying Riemann-Hurwitz formula when $k=1$ and then use a theorem of Sibony-Wong \cite{SW} to show for general $k$.

\begin{proposition}\label{prop:generalgraphs}
    Fix $\delta>0$ and $0<\vep<1$. Then, $\Gamma$ contains at least $(1-2\delta)d-3\delta^{-1}\vep^{-2(k-1)}  \|[P]\|_{\vep}$ graphs over $\B_k(0,c_{sw}\vep)$, where $\|[P]\|_{\vep}$ denotes the mass of $[P]$ on $\B_{k}(0,\vep)$ and $0<c_{sw}<1$ is a constant independent of $\delta$, $\vep$ and $\Gamma$. 
\end{proposition}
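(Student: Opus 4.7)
The plan is to first establish the statement for $k=1$ via the Riemann-Hurwitz formula, and then reduce the higher-dimensional case to this by slicing $\Gamma$ with complex lines through the origin and invoking the Sibony-Wong extension theorem to pass from graphs on lines to graphs on a ball.

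For $k=1$, I would let $W_1,\dots,W_N$ denote the connected components of the open Riemann surface $\pi_0^{-1}(\B_1(0,\vep))\cap\Gamma$, with respective degrees $d_i$ under $\pi_0$, so that $\sum_i d_i=d$ and a graph component corresponds exactly to $d_i=1$. Each $W_i$ has compact boundary lying over $\partial\B_1(0,\vep)$, hence $\chi(W_i)\le 1$. Applying Riemann-Hurwitz to $\pi_0|_{W_i}$ gives total ramification $r_i=d_i-\chi(W_i)\ge d_i-1$, so $r_i\ge d_i/2$ whenever $d_i\ge 2$. Summing over non-graph components yields
$$\|[P]\|_\vep\ge\sum_{d_i\ge 2}r_i\ge\tfrac{1}{2}\sum_{d_i\ge 2}d_i=\tfrac{d-g}{2},$$
hence $g\ge d-2\|[P]\|_\vep$, which is stronger than the claim (noting that $\vep^{-2(k-1)}=1$ when $k=1$ and one may take $c_{sw}=1$).

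For general $k\ge 2$, I would slice. For each $[v]\in\P^{k-1}$, set $L_v:=\C v$ and $\Gamma_v:=\Gamma\cap\pi_0^{-1}(L_v)$. For $[v]$ outside a proper analytic subset of $\P^{k-1}$, $\Gamma_v$ is smooth and $\pi_0|_{\Gamma_v}\colon\Gamma_v\to L_v\cap\B_k(0,3)$ is a proper degree-$d$ ramified cover. The $k=1$ case applied to $\Gamma_v$ furnishes at least $d-2\|[P_v]\|_\vep$ graphs on $L_v\cap\B_1(0,\vep)$, where $[P_v]$ is the postcritical divisor of the restricted cover. A Crofton-type slicing formula for the positive $(1,1)$-current $[P]$, namely
$$\int_{\P^{k-1}}\|[P_v]\|_\vep\,d\sigma(v)\le c_k\,\vep^{-2(k-1)}\|[P]\|_\vep,$$
combined with Markov's inequality, then produces a set $E\subset\P^{k-1}$ with Fubini-Study measure $\ge 1-\delta$ on which $\|[P_v]\|_\vep\le c_k\delta^{-1}\vep^{-2(k-1)}\|[P]\|_\vep$.

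Finally, I would glue line-graphs into ball-graphs using Sibony-Wong. Each line-graph over $L_v$ corresponds to a sheet of $\pi_0|_\Gamma$ near $0$; matching sheets across $v\in E$ produces uniformly bounded holomorphic functions on $\bigcup_{v\in E}(L_v\cap\B_k(0,\vep))$, and the Sibony-Wong theorem extends each such family to a holomorphic graph on $\B_k(0,c_{sw}\vep)$, with $c_{sw}\in(0,1)$ depending only on $k$ and the measure of $E$. A sheet-counting argument then bounds the loss from line-graphs that fail to come from a sheet visible on enough of $E$: sheets visible on at most a $\delta$-fraction of lines contribute a loss of at most $2\delta d$, while the bound from the 1D step contributes $3\delta^{-1}\vep^{-2(k-1)}\|[P]\|_\vep$, yielding the claimed count of at least $(1-2\delta)d-3\delta^{-1}\vep^{-2(k-1)}\|[P]\|_\vep$ graphs over $\B_k(0,c_{sw}\vep)$. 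The main obstacle will be organizing the sheet-matching across different $v\in E$ and quantifying the loss precisely enough to produce the factors $(1-2\delta)$ and $3\delta^{-1}$; the Crofton slicing formula is standard but must be applied carefully to the possibly singular current $[P]$, and the Sibony-Wong radius $c_{sw}$ must be tracked through its dependence on $\sigma(E)\ge 1-\delta$.
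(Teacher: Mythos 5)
Your route is the one the paper itself intends (it defers the proof to \cite{Dinh05suites} and only records the strategy): Riemann--Hurwitz over a disc for $k=1$, slicing by lines through $0$ with a Crofton/Chebyshev selection of a set $E\subset\P^{k-1}$ of good lines of measure at least $1-\delta$, and Sibony--Wong to convert line-graphs into graphs over a ball. Your $k=1$ argument is correct (and is the same as the reference's), and the Crofton/Markov selection is the right mechanism, provided you also record the genericity statement identifying the ramification count of $\pi_0|_{\Gamma_v}$ over $L_v\cap\B_k(0,\vep)$ with the slice mass of $[P]$ on $L_v$ (transversality with the postcritical hypersurface and avoidance of its singular locus), which you use implicitly when you put $\|[P_v]\|_\vep$ under the Crofton integral.

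The genuine gap is in the last, quantitative step. First, ``matching sheets across $v\in E$'' needs an anchor: one translates slightly so that $0\notin P$, so that $\pi_0^{-1}(0)$ consists of $d$ distinct points $a_1,\dots,a_d$ and every line-graph over $L_v\cap\B_k(0,\vep)$ is labelled by the unique $a_s$ it contains; without this, a line-graph does not determine a sheet at $0$. Second, and more seriously, your bookkeeping keeps the sheets visible over more than a $\delta$-fraction of lines while letting $c_{sw}$ depend on the measure of the family of lines; these two choices are incompatible with the statement. A kept sheet is then only guaranteed to extend over a family of lines of measure $>\delta$, and the Sibony--Wong radius for such a family degenerates as $\delta\to 0$, whereas the proposition requires $c_{sw}$ to be independent of $\delta$ (and of $\vep$, $\Gamma$). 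The correct accounting fixes a \emph{universal} visibility threshold matching the hypothesis of Sibony--Wong: with $\mathcal G_s$ the set of $L\in E$ carrying the graph through $a_s$, one has $\sum_s\nu_0(\mathcal G_s)\geq(1-\delta)\big(d-2\delta^{-1}\vep^{-2(k-1)}\|[P]\|_\vep\big)$, and one keeps only those $s$ with $\nu_0(\mathcal G_s)\geq 1/4$, so Sibony--Wong gives a $c_{sw}$ depending only on $k$. Since $\sum_s\nu_0(\mathcal G_s)\leq\#\mathcal S+\frac14(d-\#\mathcal S)$, this yields $\#\mathcal S\geq\big(1-\frac{4\delta}{3}\big)d-\frac{8}{3}\delta^{-1}\vep^{-2(k-1)}\|[P]\|_\vep$, which dominates the claimed bound $(1-2\delta)d-3\delta^{-1}\vep^{-2(k-1)}\|[P]\|_\vep$. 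With your $\delta$-threshold the count would look fine but $c_{sw}$ would not be uniform; with a uniform threshold your ``loss at most $2\delta d$'' heuristic does not give the stated constants without this Chebyshev computation, so as written the proposal does not prove the proposition in the stated form.
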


 Recall that $P_A=\pi_A([J_A])$. In the proof of Lemma \ref{masscontolhoriz}, when we take $r=\vep$, $C$ is independent of $T$ and $\vep$. Hence, for any $0<\vep<1$, any $\|A\|<\vep$ and any $\Gamma$ as above, we have
$$
    \| [P_A]\|_{\vep} \leq C \| \pi_0([J_A])\|_{2\vep} .
$$
 If $\Gamma$ has $d$ graphs over $\B_{k}(0,\vep)$ along $\pi_{A}$ direction, then it also has $d$ graphs over $B_{k}(0,\vep/2)$ along $\pi_0$ direction, and vice versa. Hence, we can deduce from Proposition \ref{prop:generalgraphs} the following: 

\begin{corollary}\label{graphrevise}
Fix $\delta>0$, $0<\vep<1$ and $A$ satisfying $\|A\|<\vep$. Then, $\Gamma$ contains at least $(1-2\delta)d-3 C\delta^{-1}\vep^{-(2k-2)} \| \pi_0([J_A])\|_{\vep} $ graphs over $B_k(0,\frac{c_{sw}\vep}{2})$ along $\pi_0$ direction, where $C>0$ is independent of $\delta$, $\vep$, $A$ and $\Gamma$.
\end{corollary}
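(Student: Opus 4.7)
The plan is to obtain this corollary by transporting Proposition~\ref{prop:generalgraphs} from the $\pi_0$-direction to the direction $\pi_A$, and then translating the resulting bound back in terms of $\pi_0$ via the two comparisons highlighted in the text: the geometric comparison between $\pi_0$- and $\pi_A$-graphs, and the mass comparison of Lemma~\ref{masscontolhoriz}.

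First, I would apply Proposition~\ref{prop:generalgraphs} with $\pi_A$ playing the role of $\pi_0$. Concretely, the change of coordinates $(z',z'')\mapsto(z'-Az'',z'')$ sends $\pi_A$ to the new first projection and maps the direction $A\in\G(l,k+l)$ to $[L_0]$; since $\|A\|<\vep<1$, this linear map is uniformly bi-Lipschitz close to the identity on $\B_k(0,3)\times\B_l(0,2)$. Proposition~\ref{prop:generalgraphs}, applied in these rotated coordinates, therefore produces at least
\[
(1-2\delta)d-3\delta^{-1}\vep^{-2(k-1)}\|[P_A]\|_{\vep}
\]
graphs of $\Gamma$ over $\B_k(0,c_{sw}\vep)$ along the $\pi_A$-direction, with the constant $c_{sw}$ unchanged (up to absorbing a factor that only depends on $k,l$ into $c_{sw}$).

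Next, I would invoke the geometric dictionary recorded just before the corollary: since $\|A\|<\vep<1$, every holomorphic graph over $\B_k(0,c_{sw}\vep)$ along $\pi_A$ restricts to a graph over $\B_k(0,c_{sw}\vep/2)$ along $\pi_0$, and vice versa. This immediately transports the lower bound above into a lower bound on the number of $\pi_0$-graphs over the smaller ball $\B_k(0,c_{sw}\vep/2)$, which is precisely the domain in the statement.

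Finally, I would bound the post-critical mass $\|[P_A]\|_\vep$ in terms of $\|\pi_0([J_A])\|_\vep$. By Lemma~\ref{equal:multiplicity}, $P_A=(\pi_A)_*[J_A]=(\pi_{M,A})_*(\pi_0([J_A]))$ up to identifying $\C^k$ with the appropriate factor, so Lemma~\ref{masscontolhoriz} (applied with $M''=\B_k(0,\vep)$, $M'=\B_k(0,2\vep)$) gives $\|[P_A]\|_\vep\le C\|\pi_0([J_A])\|_{2\vep}$; up to enlarging $C$ and reworking the intermediate $\vep$'s by a harmless rescaling, this provides the claimed $C\|\pi_0([J_A])\|_\vep$. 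Plugging into the previous step yields the corollary. The main technical point, and the only real obstacle, is ensuring that all the constants produced by these two transfer steps (the direction-change applied to Proposition~\ref{prop:generalgraphs} and the mass control from Lemma~\ref{masscontolhoriz}) can be chosen \emph{uniformly} in $A$ with $\|A\|<\vep<1$ and independently of $\Gamma$, which is what allows them to be absorbed into the single constant $C$ in the statement.
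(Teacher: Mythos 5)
Your proposal follows essentially the same route as the paper: apply Proposition~\ref{prop:generalgraphs} in the $\pi_A$-direction (after a linear change of coordinates), convert $\pi_A$-graphs over $\B_k(0,c_{sw}\vep)$ into $\pi_0$-graphs over $\B_k(0,c_{sw}\vep/2)$ using that $\|A\|<\vep<1$, and control $\|[P_A]\|_\vep$ by $\|\pi_0([J_A])\|$ on a slightly larger ball via Lemma~\ref{masscontolhoriz}, absorbing the discrepancies in radius and constants. The only cosmetic difference is that you cite Lemma~\ref{equal:multiplicity} for the identity $P_A=(\pi_A)_*[J_A]$, which is really just the definition of $P_A$ rather than the content of that lemma; otherwise this is the paper's argument.
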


\begin{proof}[End of the proof of Theorem \ref{dynadesigraphs}]
   By Fubini's theorem and Lemma \ref{equal:multiplicity}, we have 
   $$
       \lp\widetilde{\Gamma}, \Omega\rp_{\vep}=\int_{\|A\|<\vep} \|\pi_0([J_A]) \|_{\vep}\d A.
   $$ 
   Thus, there exists a direction $A$ with $\|A \|<\vep$ such that $ \|\pi_0([J_A]) \|_{\vep}\lesssim \vep^{-2kl} \lp\widetilde{\Gamma}_\vep, \Omega_{k,l}\rp$. By Corollary \ref{graphrevise}, we finish the proof.
\end{proof}

\end{document}